\documentclass[12pt]{amsart}
\usepackage{amssymb, a4wide, mathdots,url,hyperref}

\newcommand{\grp}[1]{{\mathbf{#1}}}

\newcommand{\C}{\mathbb{C}}
\newcommand{\Z}{\mathbb{Z}}
\newcommand{\Q}{\mathbb{Q}}
\newcommand{\R}{\mathbb{R}}
\newcommand{\N}{\mathbb{N}}

\newcommand{\A}{\mathbb{A}}
\newcommand{\Ker}{\operatorname{Ker}}
\renewcommand{\Re}{\operatorname{Re}}

\newcommand{\orb}{\mathfrak{o}}
\newcommand{\OOO}{\mathcal{O}}             

\newcommand{\rdp}{\mathcal{A}^{rd}}
\newcommand{\grph}{\mathfrak{G}}
\newcommand{\Lie}{\operatorname{Lie}}
\newcommand{\Fam}{\mathcal{F}}
\newcommand{\srts}{\Delta}
\newcommand{\unr}{\operatorname{unr}}
\newcommand{\rd}[1]{C_{#1}(\grp U(\A)M\bs \grp G(\A))}
\newcommand{\rdsmth}[1]{C^{\infty}_{#1}(\grp U(\A)M\bs \grp G(\A))}
\newcommand{\univ}{\mathcal{U}}
\newcommand{\disc}{\operatorname{disc}}
\newcommand{\discdata}{\mathfrak{d}}
\newcommand{\Discdata}{\mathfrak{D}}

\newcommand{\Ind}{\operatorname{Ind}}

\newcommand{\AF}{\mathcal{A}^{mg}}
\newcommand{\AS}[1]{[\grp{#1}]}
\newcommand{\RAS}[2]{[\grp{#1}]_{#2}}
\newcommand{\AUT}{\mathcal{AUT}}
\newcommand{\iii}{\mathrm{i}}
\newcommand{\eps}{\epsilon}
\newcommand{\stng}{\operatorname{st}}
\newcommand{\eisen}{\mathcal{E}}
\newcommand{\Hom}{\operatorname{Hom}}
\renewcommand{\subset}{\subseteq}
\renewcommand{\supset}{\supseteq}
\newcommand{\rlvt}{{relevant}}

\newcommand{\bs}{\backslash}
\newcommand{\der}{\operatorname{der}}
\newcommand{\domain}{\mathfrak{D}}

\newcommand{\data}{\mathfrak{d}}
\newcommand{\Speh}{\operatorname{Speh}}
\newcommand{\pseudospace}{\mathfrak{P}}

\newcommand{\WR}{\mathcal{W}}
\newcommand{\diag}{\operatorname{diag}}
\newcommand{\Hconv}{H\operatorname{-conv}}
\newcommand{\dist}{\operatorname{-dist}}
\newcommand{\type}{\operatorname{-type}}

\newcommand{\GL}{\operatorname{GL}}
\newcommand{\Sp}{\operatorname{Sp}}
\newcommand{\SL}{\operatorname{SL}}
\newcommand{\rest}{\big|}
\newcommand{\cusp}{{\operatorname{cusp}}}
\newcommand{\abs}[1]{\left|{#1}\right|}
\newcommand{\aaa}{\mathfrak{a}}
\newcommand{\pr}{\operatorname{pr}}
\newcommand{\Res}{\operatorname{Res}}
\newcommand{\Sym}{\operatorname{Sym}}
\newcommand{\dsum}{\oplus}
\newcommand{\modulus}{\delta}
\newcommand{\trivchar}{{\bf 1}}
\newcommand{\siegel}{{\mathfrak{S}}}
\newcommand{\inj}{\iota}
\newcommand{\inn}{C}
\newcommand{\p}{{\mathfrak{p}}}   
\newcommand{\h}{\mathcal{H}}
\newcommand{\Ht}{H}
\newcommand{\clss}{\mathcal{C}}
\newcommand{\conj}[2]{[#1]_{#2}}           
\newcommand{\sprod}[2]{\left\langle#1,#2\right\rangle}
\newcommand{\norm}[1]{\lVert#1\rVert}
\newcommand{\dig}{d}
\newcommand{\conteisen}{\mathfrak{IE}}
\newcommand{\openstab}{L_H}

\newcommand{\cuspdatum}{\mathfrak{X}}
\newcommand{\cuspdata}{\mathfrak{E}}
\newcommand{\sings}{\operatorname{Sing}}
\newcommand{\singcls}{\mathfrak{C}}
\newcommand{\sing}{\mathfrak{S}}
\newcommand{\affines}{\mathfrak{A}}
\newcommand{\Ktypes}{\mathfrak{F}}
\newcommand{\alldata}{\mathfrak{B}}
\newcommand{\pointcls}{\mathfrak{Q}}
\newcommand{\imb}{{\imath}}

\newcommand{\sm}[4]{{\bigl(\begin{smallmatrix}{#1}&{#2}\\{#3}&{#4}
\end{smallmatrix}\bigr)}}

\newtheorem{theorem}{Theorem}[section]
\newtheorem{lemma}[theorem]{Lemma}
\newtheorem{proposition}[theorem]{Proposition}
\newtheorem{remark}[theorem]{Remark}
\newtheorem{conjecture}[theorem]{Conjecture}
\newtheorem{definition}[theorem]{Definition}
\newtheorem{corollary}[theorem]{Corollary}

\title{On the distinguished spectrum of $\Sp_{2n}$ with respect to $\Sp_n\times\Sp_n$}
\author{Erez Lapid}
\address{Department of Mathematics, Weizmann Institute of Science, Rehovot 7610001, Israel}
\email{erez.m.lapid@gmail.com}
\author{Omer Offen}
\address{Department of Mathematics, Technion -- Israel Institute of Technology , Haifa 3200003, Israel}
\email{offen@tx.technion.ac.il}
\date{\today}

\begin{document}

\setcounter{tocdepth}{1}

\begin{abstract}
Given a reductive group $G$ and a reductive subgroup $H$, both defined over a number field $F$,
we introduce the notion of the $H$-distinguished automorphic spectrum of $G$
and analyze it for the pairs $(\GL_{2n},\Sp_n)$ and $(\Sp_{2n},\Sp_n\times\Sp_n)$.
In the first case we give a complete description using results of Jacquet--Rallis, Offen and Yamana.
In the second case we give an upper bound, generalizing vanishing results of Ash--Ginzburg--Rallis
and a lower bound, extending results of Ginzburg--Rallis--Soudry.
\end{abstract}

\maketitle
\tableofcontents
\section{Introduction}
Let $G$ be a reductive group over a number field $F$ and let $H$ be a closed subgroup of $G$ defined over $F$.
Let $\A$ be the ring of adeles of $F$.
In the theory of automorphic forms one is often interested in period integrals
\begin{equation} \label{eq: Hperiod}
\int_{H(F)\bs H(\A)}\varphi(h)\ dh
\end{equation}
(assuming convergent) and in automorphic representations of $G(\A)$ on which such an integral is not identically zero.
In certain cases these representations, which are called $H$-distinguished, are characterized by functoriality and the period integral is related to
special values of $L$-functions.
In the analysis of these period integrals one is often lead to study non-convergent integrals which have to be suitably regularized.
More fundamentally, one may ask whether there is a sensible notion of the $H$-distinguished spectrum which captures both
the automorphic representations for which \eqref{eq: Hperiod} converges (and is non-zero) as well as others.
In this paper we propose a candidate for this space and study it in specific cases.
Namely, (fixing a central character which we suppress from the notation for simplicity)
we consider the orthogonal complement $L_{H\dist}^2(G(F)\bs G(\A))$ in $L^2(G(F)\bs G(\A))$
of the space of pseudo Eisenstein series $\varphi$ on $G(F)\bs G(\A)$ such that $\int_{H(F)\bs H(\A)}\varphi(hg)\ dh=0$ for all $g\in G(\A)$.
We will be interested in the spectral decomposition of $L_{H\dist}^2(G(F)\bs G(\A))$, and in particular in its discrete part $L_{\disc,H\dist}^2(G(F)\bs G(\A))$.
(It is possible to consider smooth rapidly decreasing functions instead of pseudo Eisenstein series but we do not know whether this gives rise to the same
space in general. At any rate, it seems that the choice above is the most convenient for computation.)
Arguably, the most curious phenomenon observed in the paper is that $L_{\disc,H\dist}^2(G(F)\bs G(\A))$ may contain an irreducible constituent for which
the integral \eqref{eq: Hperiod} is not convergent! (See Theorem \ref{thm: lwrbnd for distspec} and Remark \ref{rmk: not conv}.)

We will describe $L_{H\dist}^2(G(F)\bs G(\A))$ (and in particular, $L_{\disc,H\dist}^2(G(F)\bs G(\A))$) completely in the case where
$G=\GL_{2n}$ and $H=\Sp_n$ (the symplectic group of rank $n$).
Recall that in this case, by the results of M\oe glin--Waldspurger, the entire space $L_{\disc}^2(G(F)\bs G(\A))$
can be described explicitly in terms of the cuspidal representations of $\GL_m(\A)$
for all divisors $m$ of $2n$ \cite{MR1026752}. It turns out that $L_{\disc,H\dist}^2(G(F)\bs G(\A))$ is the contribution to $L_{\disc}^2(G(F)\bs G(\A))$
of all divisors $m$ of $n$. More generally, in terms of the Langlands decomposition of $L^2(G(F)\bs G(\A))$,
$L_{H\dist}^2(G(F)\bs G(\A))$ consists of the part whose discrete data belongs to $L_{\disc,M_H\dist}^2(M(F)\bs M(\A))$
where $M$ ranges over the Levi subgroups of the form $M=\GL_{2n_1}\times\dots\times\GL_{2n_k}$ and $M_H=\Sp_{n_1}\times\dots\times\Sp_{n_k}$
(so that $L_{\disc,M_H\dist}^2(M(F)\bs M(\A))=\otimes L_{\disc,\Sp_{n_i}\dist}^2(\GL_{2n_i}(F)\bs\GL_{2n_i}(\A))$
is described as above).
The main input for this case is the results of Jacquet--Rallis, the second-named author and Yamana about symplectic periods of automorphic forms
on $\GL_{2n}$ \cite{MR1142486, MR2248833, MR2254544, Ya}.
In fact, we can formulate the same result for a variant of $L_{H\dist}^2(G(F)\bs G(\A))$ where instead of pseudo Eisenstein series
one uses a much bigger space (Corollary \ref{cor: Spndist}).

The results for the pair $(\GL_{2n},\Sp_n)$ suggest a close connection between the distinguished spectrum and the automorphic spectrum of the group
$\GL_n$ through functoriality. However, it is not completely clear how to make this connection precise. (See Remark \ref{rem: functoriality}.)

A more interesting case is the pair $(G,H)=(\Sp_{2n},\Sp_n\times\Sp_n)$ which is the main focus of this paper.
In this case we do not know even a conjectural description of $L_{\disc,H\dist}^2(G(F)\bs G(\A))$.
However, we will be able to identify a certain subspace of $L_{\disc,H\dist}^2(G(F)\bs G(\A))$ which seems to be the most relevant for the descent
construction of Ginzburg--Rallis--Soudry (cf.~\cite{MR1740991}). (We will say more about that in a future paper.)
In particular, we find there representations for which \eqref{eq: Hperiod} does not converge.

In the opposite direction, by results of Jacquet--Rallis and Ash--Ginzburg--Rallis, both cases above are examples of pairs $(G,H)$ for which no
cuspidal representation of $G(\A)$ is $H$-distinguished \cite{MR1233493, MR1142486}.
Recall that $L^2(G(F)\bs G(\A))$ has a coarse decomposition $L^2(G(F)\bs G(\A))=\hat\dsum_{\cuspdatum}L^2_{\cuspdatum}(G(F)\bs G(\A))$
according to cuspidal data $\cuspdatum$.
We will show that for many cuspidal data $\cuspdatum$ we have $L^2_{\cuspdatum}(G(F)\bs G(\A))\cap L_{H\dist}^2(G(F)\bs G(\A))=0$,
extending the abovementioned vanishing results.
Moreover, for the remaining cuspidal data $\cuspdatum$ we will control the affine spaces $\sing$ which potentially contribute to
$L_{H\dist}^2(G(F)\bs G(\A))$ under the finer decomposition (due to Langlands)
\[
L^2_{\cuspdatum}(G(F)\bs G(\A))=\dsum_{\sing}L^2_{\cuspdatum}(G(F)\bs G(\A))_{\sing}
\]
according to intersections of singular hyperplanes (cf.~\cite{MR0579181, MR1361168}).
In the case $(G,H)=(\GL_{2n},\Sp_n)$ this analysis (together with the results of \cite{Ya})
is propitiously sufficient for the precise description of $L^2_{H\dist}(G(F)\bs G(\A))$.
This is to a large extent due to the simple description of $L^2_{\disc}(G(F)\bs G(\A))$.
In the case $(G,H)=(\Sp_{2n},\Sp_n\times\Sp_n)$ the upshot is unfortunately a bit technical to formulate and is stated as Theorem \ref{thm: reformulate upper bound}
in \S\ref{sec: main result}. At any rate, we do not expect that our result gives $L^2_{H\dist}(G(F)\bs G(\A))$ precisely in this case, but only an upper bound.

The main ingredient for our analysis is a formula for the $H$-period of pseudo Eisenstein series.
This kind of formula was considered for other pairs $(G,H)$ where $H$ is the fixed point subgroup of an involution
and is probably quite general \cite{MR1625060, MR2010737, MR2254544}.
It is based on an analysis of double cosets $P\bs G/H$ where $P$ is a parabolic subgroup of $G$
(starting with the fundamental results of Springer \cite{MR803346}).

Once again, the results suggest a relationship between the distinguished spectrum for the pair $(\GL_{2n},\Sp_n)$ and that of the pair
$(\GL_{2n},\GL_n\times\GL_n)$ via functoriality. However, the precise relationship requires further analysis and possibly additional
variants of the notion of distinguished spectrum.

In general, one may wonder whether $L_{H\dist}^2(G(F)\bs G(\A))$ admits a decomposition reminiscent to the Langlands decomposition
of $L^2(G(F)\bs G(\A))$, namely in terms of $L_{\disc,H_M\dist}^2(M(F)\bs M(\A))$ for suitable pairs $(M,H_M)$ where
$M$ is a Levi subgroup of $G$. We are not in a position to formulate a precise conjecture in general but we will do so in the
case $(G,H)=(\Sp_{2n},\Sp_n\times\Sp_n)$ (Conjecture \ref{conj: main}).

We mention that in the more general context of spherical varieties, spectral analysis of period integrals (as well as their local counterparts)
are studied in a recent work by Sakellaridis--Venkatesh \cite{1203.0039}.
However, their focus is somewhat different and in particular we do not know what role does the space $L_{H\dist}^2(G(F)\bs G(\A))$ play
in their theory, if any.

As alluded to above, our main result will be applied in a subsequent paper to analyze the descent
map of Ginzburg--Rallis--Soudry and its image, suggesting a way to study functoriality (in the generic case) without
using the converse theorem or the trace formula.

The structure of the paper is the following.
We start with general notation and auxiliary results (\S\ref{sec: notation}).
The first part of the paper (\S\ref{sec: double cosets}-\S\ref{sec: Hperiods}) is devoted to the
computation of the $H$-period of pseudo Eisenstein series.
To that end we first study the double cosets $P\bs G/H$ where $G=\Sp_{2n}$, $H=\Sp_n\times\Sp_n$
and $P$ is a parabolic subgroup of $G$ and single out the double cosets which ultimately contribute to the formula (\S\ref{sec: double cosets}).
We then study the main analytic object, namely the intertwining periods, their convergence and analytic properties (\S\ref{sec: intertwining periods}).
The formula for the $H$-period of pseudo Eisenstein series is finally derived in \S\ref{sec: Hperiods}.
In the second part of the paper (\S\ref{sec: dist spec}-\S\ref{sec: main result}) we apply this formula to the study of the $H$-distinguished spectrum.
We first define this notion (in a general context) and explain its relation to results of the first part (\S\ref{sec: dist spec}).
Then we analyze the pair $(\GL_{2n},\Sp_n)$ and provide complete results for this case (\S\ref{sec: GL2nSpn}).
Finally, we explicate the results of \S\ref{sec: dist spec} for the pair $(\Sp_{2n},\Sp_n\times\Sp_n)$ to provide an upper bound on the distinguished
spectrum in this case (\S\ref{sec: main result}). We also exhibit in \S\ref{sec: main result} a lower bound for the distinguished spectrum by showing that
it contains certain residual representations considered in \cite{MR1954940}.

\subsection*{Acknowledgement}
Part of this work was done while the authors participated in the program ``Research in Pairs''
in the Mathematisches Forschungsinstitut Oberwolfach. We are very grateful to the MFO for providing ideal working conditions
for collaboration.
The first-named author was partially supported by a grant from the Minerva Stiftung.
The second-named author was partially supported by grant \# 1394/12 from the Israel Science Foundation.

\section{Notation and preliminaries} \label{sec: notation}
\subsection{General notation}
Let $F$ be a number field and $\A=\A_F$ its ring of adeles.
In general, if $\grp X$ is an algebraic variety over $F$ we write $X=\grp X(F)$ for its $F$-points.
For an algebraic group $\grp Q$ defined over $F$ we denote by $X^*(\grp Q)$ the lattice of $F$-rational characters of $\grp Q$.
Let $\aaa_Q^*=X^*(\grp Q)\otimes_{\Z}\R$ and let $\aaa_Q=\Hom_{\R}(\aaa_Q^*,\R)$ be its dual vector space with the natural pairing
$\sprod{\cdot}{\cdot}=\sprod{\cdot}{\cdot}_Q$.
We endow $\aaa_Q$ and $\aaa_Q^*$ with Euclidean norms $\|\cdot\|$.
We denote by $\aaa_\C$ the complexification of a real vector space $\aaa$.
We also set
\[
\grp Q(\A)^1=\{q\in \grp Q(\A):\forall \chi\in X^*(\grp Q),\,\abs{\chi(q)}_{\A^*}=1\}.
\]
There is an isomorphism
\[
H_Q:\grp Q(\A)^1 \bs \grp Q(\A) \rightarrow \aaa_Q
\]
such that $e^{\sprod{\chi}{H_Q(q)}}=\abs{\chi(q)}_{\A^*}$, $\chi\in X^*(\grp Q)$, $q\in \grp Q(\A)$.

Let $\delta_Q$ denote the modulus function of $\grp Q(\A)$. It is a character of $\grp Q(\A)^1 \bs \grp Q(\A)$ and therefore there exists
$\rho_Q\in \aaa_Q^*$ such that
\[
\delta_Q(q)=e^{\sprod{2\rho_Q}{H_Q(q)}}, \ \ \ q\in \grp Q(\A).
\]

Let $\grp G$ be a reductive group over $F$ and $\grp{P_0}$ a minimal parabolic subgroup of $\grp G$ defined over $F$.
Fix a maximal $F$-split torus $\grp T$ of $\grp G$ contained in $\grp P_0$
and a maximal compact subgroup $K$ of $\grp G(\A)$ which is in good position with respect to $P_0$,
so that the Iwasawa decomposition $\grp G(\A)=\grp P_0(\A)K$ holds. We use it to extend the map
$H_0=H_{P_0}:\grp{P_0}(\A)\rightarrow\aaa_{P_0}$ to a right $K$-invariant function on $\grp G(\A)$.
Finally, we also fix a Siegel domain $\siegel_G$ for $G\bs \grp G(\A)$ and let $\siegel_G^1=\siegel_G\cap\grp G(\A)^1$ (cf.~\cite[I.2.1]{MR1361168}).

If $\Omega$ is a compact subset of $\grp G(\A)$ then we have
\begin{equation}\label{eq: exp bd}
\sup_{x\in\Omega, g\in \grp G(\A)}\norm{H_0(gx)-H_0(g)}=\sup_{x\in\Omega, k\in K}\norm{H_0(kx)}<\infty.
\end{equation}

Let $\grp{T_G}$ be the split part of the (Zariski) identity connected component of the center of $\grp G$.
Applying the imbedding $x\mapsto 1\otimes x: \R\to F_\infty=F\otimes_\Q \R\hookrightarrow\A$ we imbed $\grp{T_G}(\R)$ in
$\grp{T_G}(F_\infty) \hookrightarrow \grp{T_G}(\A)$
and denote by $A_G$ the image of the identity component $\grp{T_G}(\R)^\circ$ (in the usual topology) in $\grp{T_G}(\A)$.
Then $H_G:A_G \to \aaa_G$ is an isomorphism. Denote by $\nu\mapsto e^\nu$ its inverse.

It will be convenient to use the shorthand notation
\[
\AS{G}=A_GG\bs\grp G(\A).
\]
More generally, if $\grp{H}$ is a subgroup of $\grp{G}$ defined over $F$ then we set
\begin{equation} \label{def: AGH}
A_G^H=A_G\cap\grp H(\A)
\end{equation}
and
\begin{equation} \label{def: RAS}
\RAS HG=A_G^HH\bs\grp H(\A).
\end{equation}

For $n\in \N$ let $\grp{GL}_n$ be the general linear group of rank $n$.
For a matrix $g=(g_{i,j})\in \grp{GL}_n(\A)$ with $g^{-1}=((g^{-1})_{i,j})$ let
\[
\norm{g}=\norm{g}_{\grp{GL}_n(\A)}=\prod_v\max_{1\le i,\,j\le n}\{\abs{g_{i,j}}_v,\abs{(g^{-1})_{i,j}}_v\}
\]
where the product (here and elsewhere) ranges over all places $v$ of $F$.
Similarly, if $k$ is a local field with normalized absolute value $\abs{\cdot}_k$
we define $\|g\|=\max_{1\le i,\,j\le n}\{\abs{g_{i,j}}_k,\abs{(g^{-1})_{i,j}}_k\}$ for any $g\in \grp{GL}_n(k)$.
(Note that we use the notation $\norm{\cdot}$ in several settings. Hopefully this will be clear from the context.)

Fix a faithful $F$-rational representation $\rho:\grp G \rightarrow \grp{GL}_n$ and define
$\norm{g}_\rho=\norm{\rho(g)}_{\grp{GL}_n(\A)}$. Often, we omit the subscript $\rho$ if it is clear from the context.
We record some standard facts about $\norm{\cdot}$. (See \cite[Lemma I.2.2]{MR1361168} where the convention of $\norm{\cdot}_\rho$
is slightly different, but this entails little change.)
Henceforth, we use the notation $A\ll B$ to mean that there exists a constant $c$ such that $A\le cB$.
The constant $c$ is understood to be independent of the underlying parameters.
If we want to emphasize the dependence of $c$ on other parameters, say $T$, we will write $A\ll_TB$.
(We will suppress the implicit dependence on the group $\grp G$ and the representation $\rho$.)

\begin{subequations}
\begin{equation}
1\ll\norm{g}\text{ for all }g\in\grp G(\A).
\end{equation}
\begin{equation} \label{eq: submult}
\norm{g_1g_2}\ll\norm{g_1}\norm{g_2}\text{ for all }g_1,g_2\in \grp G(\A).
\end{equation}
\begin{equation}\label{eq: bdexpnm}
\norm{\Ht_0(g)}\ll 1+\log\norm{g} \text{ for all }g\in \grp G(\A).
\end{equation}
\begin{equation}\label{eq: bdsieg}
\log\norm{g} \ll 1+\norm{\Ht_0(g)}\text{ for all }g\in \grp \siegel_G^1.
\end{equation}
\begin{equation} \label{eq: gammag bigger}
\norm{g}\ll\norm{\gamma g}\text{  for any $g\in\siegel_G$ and $\gamma\in G$.}
\end{equation}
\begin{equation} \label{eq: agmult}
\text{There exists $N$ such that }\norm{a}\norm{g}\ll\norm{ag}^N\text{ for all }g\in \grp G(\A)^1,\ a\in A_G.
\end{equation}
\end{subequations}

Let $\Sigma=R(T,G)$ be the root system of $G$ with respect to $T$ and $\srts_0=\srts_0^G$
the basis of simple roots with respect to $P_0$, viewed as a subset of $\aaa_0^*$.
For $\alpha\in\Sigma$ we denote by $\alpha^\vee$ the corresponding coroot.
Recall that a standard (resp., semistandard) parabolic subgroup (defined over $F$) is one containing $\grp P_0$ (resp., $\grp T$).
The standard parabolic subgroups of $\grp G$ are parameterized by subsets of $\srts_0^G$.
A semistandard parabolic group $\grp P$ admits a unique Levi decomposition $\grp P=\grp M\ltimes\grp U$ where $\grp M\supset\grp T$.
We call these $\grp M$'s semistandard Levi subgroups (or standard, if $\grp P$ is standard).

For any standard parabolic subgroup $\grp P=\grp M\ltimes \grp U$ we have
\begin{equation} \label{eq: mumult}
\begin{split}
\norm{m}\ll\norm{mu}\text{ for all }m\in \grp M(\A), u\in \grp U(\A).
\end{split}
\end{equation}
(See proof of \cite[Lemma 6.1.1]{MR2010737} or of \cite[Lemme II.3.1]{MR1989693}.)

For a semi-standard parabolic subgroup $\grp P=\grp M\ltimes \grp U$ with semi-standard Levi subgroup $\grp M$ and unipotent radical $\grp U$ we have $\aaa_P^*=\aaa_M^*$.
If $\grp Q$ contains $\grp P$ then there is a unique Levi decomposition $\grp Q=\grp L\ltimes \grp V$ with $\grp L\supseteq \grp M$.
Thus, then $\aaa_L$ is a subspace of $\aaa_M$ and there is a canonical direct sum decomposition $\aaa_M=\aaa_L \dsum \aaa_M^L$.
The dual spaces satisfy the analogous properties.
In particular $\rho_P=\rho_Q+\rho_P^Q$ where $\rho_P^Q\in (\aaa_M^L)^*$ is the unique element such that
$\delta_{P\cap L}(p)=e^{\sprod{2\rho_P^Q}{H_{P}(p)}}$, $p\in(\grp P\cap \grp L)(\A)$.

Set $\aaa_0=\aaa_T$ and more generally, $\aaa_0^M=\aaa_T^M$ and similarly for the dual spaces. Set also $\rho_0=\rho_{P_0}$.
Recall that $\Ht_0:\grp G(\A)\rightarrow \aaa_0$ is defined via the Iwasawa decomposition.
Similarly we can define $H_P:\grp G(\A)\rightarrow\aaa_P=\aaa_M$.
We denote by $\Ht_0^M:\grp G(\A) \rightarrow \aaa_0^M$ the composition of $\Ht_0$ with the orthogonal projection to $\aaa_0^M$ and more generally,
by $\Ht_M^L:\grp G(\A) \rightarrow \aaa_M^L$ the composition of $\Ht_0^L$ with the orthogonal projection to $\aaa_M^L$.

\emph{Henceforth, unless otherwise mentioned all parabolic subgroups and Levi subgroups of $\grp G$ considered will be implicitly assumed to be standard (and defined over $F$).}

For a Levi subgroup $M$ of $G$ the root system $\Sigma^M=R(T,M)$ is a subsystem of $\Sigma$.
Let $\srts_0^M=\Sigma^M\cap \srts_0$ be the set of simple roots in $M$ with respect to $M\cap B$.

For a parabolic subgroup $P=M\ltimes U$ of $G$ let $\Sigma_M=R(T_M,G)\subseteq \aaa_M^*$, $\Sigma_P$ the subset of positive roots in $\Sigma_M$
with respect to $P$ and $\srts_P$ the non-zero projections to $\aaa_M^*$ of elements of $\srts_0$.
For $\alpha\in \Sigma_M$ we write $\alpha>0$ if $\alpha\in \Sigma_P$ and $\alpha<0$ otherwise.
Once again we denote by $\alpha^\vee$ the corresponding coroot (see \cite[I.1.11]{MR1361168}).
More generally, if $P\subset Q=L \ltimes V$ then we write $\srts_P^Q\subset\srts_P$ for the non-zero restrictions to $\aaa_M^*$ of elements of $\srts_0^L$.

Let $W=W^G=N_G(T)/C_G(T)$ be the Weyl group of $G$ with respect to $T$. (In the case where $G$ is split, $C_G(T)=T$.)
We assume that the fixed Euclidean structure on $\aaa_0$ is $W$-invariant.
We consider elements of $W$ as $C_G(T)$-cosets in $N_G(T)$.
In particular, for $w\in W$ we write $n\in w$ whenever $n\in N_G(T)$ represents $w$.
For a Levi subgroup $M$ let ${}_MW_M$ be the set of $w\in W$ such that $w$ has minimal length in $W^M wW^M$.
For any Levi subgroup $M'$ we write $W(M,M')$ for the set of $w\in W$ of minimal length in $wW^M$ such that $wMw^{-1}=M'$.
We also write $W(M)=\cup_{M'}W(M,M')$.
Note that if $w\in W(M,M')$ then $w^{-1}\in W(M',M)$ and if $w_1\in W(M_1,M_2)$ and $w_2\in W(M_2,M_3)$ then $w_2w_1\in W(M_1,M_3)$.
In particular, $W(M,M)$ is a subgroup of $W$, which we can identify with $N_G(M)/M$.

For any Levi subgroups $M\subset L$ we denote by $w_M^L$ the element of maximal length in $W(M)\cap W^L$.
In particular, if $M=C_G(T)$ we simply write $w_0^L$.

\subsection{Some auxiliary results}

Let $(V,\norm{\cdot})$ be a Euclidean space and $R>0$.
We denote by $\clss_R(V)$ the space of continuous functions $f:V\rightarrow\C$ such that
$f(v)e^{R\norm{v}}$ is bounded. Clearly, $\clss_{R'}(V)\subseteq \clss_R(V)$ for $R<R'$.

\label{sec: P^r}
For any $r>0$ we denote by $P^r(V^*)$ the space of holomorphic functions $\phi$ on $\{\lambda\in V^*_{\C}:\norm{\Re\lambda}<r\}$ such that
\[
\sup_{\lambda\in V^*_{\C}:\norm{\Re\lambda}<r}\abs{\phi(\lambda)}(1+\norm{\lambda})^N<\infty,\ \ N=1,2,\dots
\]
Later on we will also use the notation $P^r(V^*;W)$ to denote the space of $W$-valued functions satisfying the condition above,
where $W$ is a finite-dimensional vector space. It is isomorphic to $P^r(V^*)\otimes W$.

\begin{lemma} \label{lem: rdequ}
The following conditions are equivalent for a smooth function $f:V \rightarrow\C$.
\begin{enumerate}
\item For all $r<R$ and a differential operator $D$ on $V$ with constant coefficients $Df\in\clss_r(V)$.
\item For all $r<R$ the function $f(v)e^{r\sqrt{1+\norm{v}^2}}$ is a Schwartz function on $V$.
\item The Fourier transform
\[
\hat f(\lambda)=\int_Vf(v)e^{\sprod{\lambda}v}\ dv
\]
of $f$ admits holomorphic continuation to $\{\lambda\in V^*_{\C}:\norm{\Re\lambda}<R\}$
and belongs to $\cap_{r<R}P^r(V^*)$.
\end{enumerate}
\end{lemma}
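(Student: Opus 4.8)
The plan is to prove the cycle of implications $(2)\Rightarrow(1)\Rightarrow(3)\Rightarrow(2)$, which is the most economical route since $(2)$ is the most hands-on characterization and $(3)$ is the Fourier-analytic one we will ultimately want. Throughout I will abbreviate $\langle v\rangle=\sqrt{1+\norm{v}^2}$ and use the elementary comparisons $\norm{v}\le\langle v\rangle\le 1+\norm{v}$, so that $\clss_r(V)$ can be described equally well with the weight $e^{r\langle v\rangle}$ up to harmless constants; this lets me pass freely between $e^{r\norm{v}}$ and $e^{r\langle v\rangle}$.

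First I would show $(2)\Rightarrow(1)$. Fix $r<R$, pick $r<r'<R$, and apply $(2)$ with $r'$: the function $g(v):=f(v)e^{r'\langle v\rangle}$ is Schwartz. Given a constant-coefficient differential operator $D$, write $f=g\cdot e^{-r'\langle v\rangle}$ and apply the Leibniz rule: $Df$ is a finite sum of terms $(D_1 g)\cdot D_2(e^{-r'\langle v\rangle})$. Since $\langle v\rangle$ is smooth with all partial derivatives bounded (its first derivatives are $v_i/\langle v\rangle$, bounded by $1$, and higher derivatives decay), each $D_2(e^{-r'\langle v\rangle})$ is bounded by a polynomial times $e^{-r'\langle v\rangle}$, and $D_1 g$ is Schwartz hence bounded by any negative power of $\langle v\rangle$. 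Therefore $Df(v)e^{r\langle v\rangle}$ is bounded by $e^{-(r'-r)\langle v\rangle}$ times a polynomial, which is bounded; hence $Df\in\clss_r(V)$.

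Next I would prove $(1)\Rightarrow(3)$. Assume $(1)$. Applying $(1)$ with $D=\mathrm{id}$ shows $f\in\clss_r(V)$ for all $r<R$, so for $\lambda$ with $\norm{\Re\lambda}<R$ the integral defining $\hat f(\lambda)$ converges absolutely (the integrand is $O(e^{(\norm{\Re\lambda}-r)\norm{v}})$ for any $r<R$, and one can take $r$ between $\norm{\Re\lambda}$ and $R$); differentiating under the integral sign — justified by the same bound applied to $v^\alpha f(v)$, which lies in $\clss_r(V)$ by $(1)$ — gives holomorphy on $\{\norm{\Re\lambda}<R\}$. To get the rapid decay in $\lambda$, fix $r<R$ and a constant-coefficient operator $D=D_\lambda$ in the $\lambda$-variables; for any multi-index $\beta$, integration by parts $|\beta|$ times moves $\lambda^\beta$ onto a derivative $\partial^\beta_v f$, and $\partial_v^\beta f\in\clss_r(V)$ by $(1)$, so $\lambda^\beta\hat f(\lambda)$ is bounded on $\{\norm{\Re\lambda}<r\}$ by $\int_V|\partial^\beta_v f(v)|e^{\norm{\Re\lambda}\norm v}\,dv<\infty$; since $r<R$ was arbitrary and $\norm{\Re\lambda}<r$, this yields the polynomial bounds defining $P^r(V^*)$ for every $r<R$, i.e. $\hat f\in\cap_{r<R}P^r(V^*)$.

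Finally I would prove $(3)\Rightarrow(2)$, which I expect to be the main obstacle, since it requires running Fourier inversion in the presence of a contour shift. Assume $(3)$. For real $\xi\in V^*$ with $\norm\xi<R$, the hypothesis says $\lambda\mapsto\hat f(\xi+i\mu)$, for $\mu\in V^*$ real, is a Schwartz function of $\mu$ uniformly in $\xi$ on compacta, so by Fourier inversion and a contour-shift argument (Cauchy's theorem in several variables, using the rapid decay in $\mathrm{Im}\,\lambda$ to push the contour from $\mathrm{Re}\,\lambda=0$ to $\mathrm{Re}\,\lambda=\xi$) one recovers, for each such $\xi$,
\[
f(v)=c\int_{V^*}\hat f(\xi+i\mu)e^{-\langle\xi+i\mu,v\rangle}\,d\mu,
\]
with $c$ the inversion constant; the point is that the left side is independent of $\xi$. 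Now fix $r<R$; I want $f(v)e^{r\langle v\rangle}$ Schwartz. For a derivative $\partial^\alpha_v$, differentiating the integral brings down a factor $(\xi+i\mu)^\alpha$, still integrable against the Schwartz function $\hat f(\xi+i\cdot)$; and for the decay, given $v\ne 0$ choose $\xi=r v/\norm v$ (so $\norm\xi=r<R$), which produces the factor $e^{-\langle\xi,v\rangle}=e^{-r\norm v}$ outside the integral, while the remaining integral $\int_{V^*}(\xi+i\mu)^\alpha\hat f(\xi+i\mu)e^{-i\langle\mu,v\rangle}\,d\mu$ is, up to the polynomial factor in $\xi$ which is bounded since $\norm\xi=r$, bounded uniformly in $v$ by the $L^1$-norm of a Schwartz function, uniformly over the compact set $\norm\xi=r$. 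Combining, $\partial^\alpha_v f(v)=O(e^{-r\norm v})$, and a short additional argument (Leibniz again, exactly as in $(2)\Rightarrow(1)$ but in reverse) upgrades this to: $f(v)e^{r\langle v\rangle}$ has all derivatives bounded by negative powers of $\langle v\rangle$, i.e. is Schwartz. The delicate points here are the justification of the multidimensional contour shift — which I would handle by truncating the $\mu$-integral to a large box, applying Cauchy's theorem coordinate by coordinate, and letting the box grow, using the $P^r$-bounds to kill the boundary contributions — and the uniformity in $\xi$ over the sphere $\norm\xi=r$, which follows from continuity of $\xi\mapsto\hat f(\xi+i\cdot)$ as a map into the Schwartz space, itself a consequence of the uniform $P^r$-estimates. $\qed$
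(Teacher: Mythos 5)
Your proposal is correct and follows essentially the same route as the paper's proof: $(1)\Leftrightarrow(2)$ via boundedness of all derivatives of $\sqrt{1+\norm{v}^2}$, $(1)\Rightarrow(3)$ via $\widehat{Df}=\hat D\hat f$, and the converse via Fourier inversion with a contour shift choosing $\lambda_0$ in the direction of $v$ with $\norm{\lambda_0}=r$. The only cosmetic difference is that you organize the argument as a three-step cycle $(2)\Rightarrow(1)\Rightarrow(3)\Rightarrow(2)$ rather than the two biconditionals $(1)\Leftrightarrow(2)$ and $(1)\Leftrightarrow(3)$, and you spell out the multivariable contour shift more explicitly than the paper does.
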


\begin{proof}
The equivalence of the first two conditions follows from the elementary fact that all the derivatives
of the function $\sqrt{1+x^2}$ are bounded.

If $\abs{f(v)}\le Ce^{-r\norm{v}}$ and $\norm{\Re\lambda}\le r'<r$ then
\[
\int_V\abs{f(v)e^{\sprod{\lambda}v}}\ dv\le C\int_Ve^{-(r-r')\norm{v}}\ dv<\infty.
\]
In particular, $\hat f$ is holomorphic for $\norm{\Re\lambda}<r$ and bounded for $\norm{\Re\lambda}\le r'$.
Moreover, if all derivatives of $f$ satisfy $\abs{Df(v)}\ll_{f,D} e^{-r\norm{v}}$ then for any $n$
\[
\abs{\hat f(\lambda)}(1+\norm{\lambda})^n
\]
is bounded for $\norm{\Re\lambda}\le r'$ (since $\widehat{Df}=\hat D\hat f$ and $\hat D$ is an arbitrary polynomial).
Thus, the first property implies the third.

Conversely, if $f$ satisfies the third condition then by Fourier inversion and shift of contour
\[
f(v)=\int_{\Re\lambda=\lambda_0}\hat f(\lambda)e^{-\sprod{\lambda}v}\ \abs{d\lambda}
\]
for a suitably chosen Haar measure and any $\lambda_0$ such that $\norm{\lambda_0}<R$. Let $r<R$. Taking $\lambda_0$ such that $\norm{\lambda_0}=r$
and $\sprod{\lambda_0}v=r\norm{v}$, and using the bounds on $\hat f$ we get
\[
\abs{f(v)}\ll_{r,f}e^{-r\norm{v}}.
\]
Similarly for the derivatives of $f$.
\end{proof}

Fix a parabolic subgroup $\grp P=\grp M\ltimes\grp U$ of $\grp G$.
For any $f\in\clss_R(\aaa_0^M)$ we define
\[
\theta^M_f(g)=\sum_{\gamma\in P_0\cap M\bs M}e^{\sprod{\rho_0}{\Ht_0(\gamma g)}}f(\Ht_0^M(\gamma g)),\ \ \ g\in\grp G(\A).
\]
Whenever convergent, $\theta^M_f:A_G\grp U(\A) M\bs \grp G(\A)\rightarrow \C$ is a right $K$-invariant function satisfying
$\theta^M_f(ag)=e^{\sprod{\rho_P}{\Ht_0(a)}}\theta^M_f(g)$, $a\in A_M$.
\begin{lemma} \label{lem: pseudobnd}
For $R$ sufficiently large, the sum defining $\theta^M_f$ is absolutely convergent for any $f\in\clss_R(\aaa_0^M)$.
Moreover, for any $N>1$ there exists $R$ and $N'$ such that for any $f\in\clss_R(\aaa_0^M)$ we have
\begin{equation} \label{eq: bndpseudo}
\sup_{m\in\siegel_M^1}\abs{\theta^M_f(mg)}\norm{m}^N\ll_{N,f}\norm{g}^{N'}, \ \ \ g\in\grp G(\A).
\end{equation}
\end{lemma}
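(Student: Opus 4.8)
The plan is to reduce the estimate \eqref{eq: bndpseudo} to the classical bound for a genuine pseudo-Eisenstein series (or more precisely, for the Eisenstein-like majorant one obtains by crudely dominating $f$). The starting point is that $f\in\clss_R(\aaa_0^M)$ means $\abs{f(v)}\ll_f e^{-R\norm{v}}$, so
\[
\abs{\theta^M_f(g)}\le C_f\sum_{\gamma\in P_0\cap M\bs M}e^{\sprod{\rho_0}{\Ht_0(\gamma g)}}e^{-R\norm{\Ht_0^M(\gamma g)}}.
\]
Since $\Ht_0^M(\gamma g)$ is, for $\gamma\in M$, the $\aaa_0^M$-component of $\Ht_0^M(\gamma g)$, and since $\rho_0=\rho_{P_0\cap M}+\rho_P$ decomposes along $\aaa_0^M\oplus\aaa_M$ — and the $\aaa_M$-component of $\Ht_0(\gamma g)$ does not depend on $\gamma\in M$ — the sum over $\gamma$ is, up to the factor $e^{\sprod{\rho_P}{\Ht_0(g)}}$ (bounded by a power of $\norm g$ by \eqref{eq: bdexpnm} and \eqref{eq: submult}), exactly a sum over $P_0\cap M\bs M$ of $e^{\sprod{\rho_{P_0\cap M}}{\Ht_0^M(\gamma g)}}e^{-R\norm{\Ht_0^M(\gamma g)}}$. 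First I would record this decomposition carefully, absorbing the $\aaa_M$-part into a polynomial error, so that the problem becomes purely a statement inside the group $M$ about the function $g\mapsto\sum_{\gamma\in P_0\cap M\bs M}e^{\sprod{\rho_{P_0\cap M}}{\Ht_0^M(\gamma g)}}e^{-R\norm{\Ht_0^M(\gamma g)}}$.

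Next I would dominate $e^{-R\norm{\Ht_0^M(\gamma g)}}$ by $e^{\sprod{-R'\varpi}{\Ht_0^M(\gamma g)}}$ for a suitable dominant weight direction: on a Siegel domain $\Ht_0^M(\gamma g)$ lives (up to a bounded set) in the obtuse cone spanned by the negatives of simple coroots of $M$, so $\norm{\Ht_0^M(\cdot)}$ is comparable to $\sprod{\varpi}{-\Ht_0^M(\cdot)}$ for $\varpi$ a sum of fundamental weights; choosing $R$ large makes $\rho_{P_0\cap M}-R'\varpi$ as anti-dominant as we like. This turns the inner sum into (a bound by) a standard Eisenstein series $E^M(g,\Lambda)=\sum_{\gamma\in P_0\cap M\bs M}e^{\sprod{\Lambda+\rho_{P_0\cap M}}{\Ht_0^M(\gamma g)}}$ evaluated at a sufficiently anti-dominant real point $\Lambda$, for which convergence and the majorization by $\norm{m}^{-N}$ on $\siegel_M^1$ (times a power of $\norm g$ from the $K$-part via \eqref{eq: exp bd} and \eqref{eq: submult}) are classical — this is exactly the content cited from \cite[I.2.4, II.1.5]{MR1361168}, combined with \eqref{eq: bdsieg} to convert the exponential decay in $\Ht_0$ into decay in $\norm m$. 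The uniformity of the implied constant in $f$ (beyond the factor $C_f$) is automatic since everything after the first step is independent of $f$.

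The main obstacle I anticipate is bookkeeping rather than substance: keeping track of the two orthogonal decompositions $\aaa_0=\aaa_0^M\oplus\aaa_M$ and making sure that (a) replacing $g$ by $mg$ with $m\in\siegel_M^1$ interacts correctly with the Iwasawa coordinate $\Ht_0$ — here \eqref{eq: exp bd} controls the discrepancy $\Ht_0(kg)-\Ht_0(g)$ over $k\in K$ and the Iwasawa decomposition $M(\A)=(P_0\cap M)(\A)K_M$ lets one pass from $m\in\siegel_M^1$ to its torus part — and (b) that the extra automorphy factor $e^{\sprod{\rho_P}{\Ht_0(g)}}$ and all the $K$-translation errors are genuinely bounded by a fixed power $\norm g^{N'}$ uniformly, for which \eqref{eq: submult}, \eqref{eq: bdexpnm}, \eqref{eq: mumult} and \eqref{eq: bdsieg} are the relevant inputs. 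Once the reduction to a classical Eisenstein series majorant is in place, the proof is routine; I would state the lemma's $R$ as "sufficiently large" precisely so that $\rho_{P_0\cap M}-R'\varpi$ is anti-dominant enough for the cited convergence result to apply with the desired exponent $N$.
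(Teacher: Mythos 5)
Your plan — dominate $f$ by a pure exponential, majorize the sum by an Eisenstein series on $M$ at a very anti-dominant real parameter, and then invoke the classical bound for such a series on a Siegel domain — is a genuinely different route from the paper's. The paper instead reduces to $g=e$ by a left-translation trick (writing $mm'=\gamma m_1$ with $m_1\in\siegel_M^1$ and using \eqref{eq: gammag bigger}, \eqref{eq: submult}), then proves the resulting bound directly via the lattice-point count $\#\{\gamma\in P_0\cap M\bs M:\norm{\Ht_0^M(\gamma m)}\le X\}\ll (e^X+\norm m)^{N_1}$ from Arthur together with the inequality $\norm{\Ht_0^M(m)}\ll 1+\norm{\Ht_0^M(\gamma m)}$ from \cite{MR3156857}. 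Your route repackages the same hard content (the polynomial bound on a translated Eisenstein series over a Siegel domain) rather than avoiding it; what it buys is a cleaner conceptual picture at the cost of delegating the real work to an external citation that needs to carry the full weight.

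There are two concrete problems. First, the assertion that ``on a Siegel domain $\Ht_0^M(\gamma g)$ lives (up to a bounded set) in the obtuse cone spanned by the negatives of simple coroots of $M$'' is false: the identity coset already contributes $\Ht_0^M(g)$, which for $g\in\siegel_M^1$ lies in a translate of the \emph{positive} chamber and can be arbitrarily far from the stated cone; more generally all one has is Langlands' inequality $\sprod{\varpi}{\Ht_0^M(\gamma g)}\le\sprod{\varpi}{\Ht_0^M(g)}+c$. Fortunately this claim is not needed: the domination $e^{-R\norm v}\le e^{\sprod{-R'\varpi}{v}}$ holds for all $v$ by Cauchy--Schwarz once $R\ge R'\norm\varpi$, so you should simply invoke that and delete the cone discussion. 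Second, the references you cite ([I.2.4, II.1.5] of \cite{MR1361168}) give Siegel-domain generalities and the Godement convergence criterion, but neither gives the decay estimate you actually need, namely that for $\Lambda$ sufficiently anti-dominant the Eisenstein series $E^M(m,\Lambda)$ on $\siegel_M^1$ is $\ll\norm m^{-N}$ with $N$ growing linearly in $\norm\Lambda$. That bound is true and provable, but it requires essentially the same counting argument that the paper uses (and cites from \cite{MR518111}), so at minimum the citation should be corrected and the polynomial-decay bound established or pinpointed; also note the paper cites Proposition II.1.10 of \cite{MR1361168} for the convergence half of the lemma, which is the closer reference.
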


\begin{proof}
The first part follows from \cite[Proposition II.1.10]{MR1361168} (and will also follow from the argument below).
The relation \eqref{eq: bndpseudo} is right-$K$-invariant in $g$, and therefore we may assume that $g\in\grp P(\A)$.
Write $g=m'u'$ with $m'\in\grp M(\A)$ and $u'\in\grp U(\A)$.
Since $\theta^M_f(mg)=\theta^M_f(mm')$ for all $m\in\grp M(\A)$
we may assume by \eqref{eq: mumult} that $g=m'\in\grp M(\A)$.
By a similar reasoning, using \eqref{eq: agmult} and \eqref{eq: bdexpnm} we may assume that $g=m'\in\grp M(\A)^1$,
in which case we will show that we can take $N'=N$.
Let $m\in\siegel_M^1$ and let $\gamma\in M$ and $m_1\in\siegel_M^1$ be such that $mm'=\gamma m_1$.
By \eqref{eq: gammag bigger} and \eqref{eq: submult} we have
\[
\norm{m}\ll\norm{\gamma^{-1}m}=\norm{m_1(m')^{-1}}\ll \norm{m_1}\norm{m'}.
\]
Therefore, since $\theta^M_f(mm')=\theta^M_f(m_1)$ it suffices to consider the case $g=e$, i.e.,
to show that for any $N\gg 1$ there exists $R$ such that
\[
\sup_{m\in\siegel_M^1}\abs{\theta^M_f(m)}\norm{m}^N<\infty
\]
for any $f\in\clss_R(\aaa_0^M)$.
This follows from the inequality
\[
\norm{\Ht_0^M(m)}\ll 1+\norm{\Ht_0^M(\gamma m)}, \ \ \gamma\in M, m\in\siegel_M^1
\]
(\cite[Lemma 2.1]{MR3156857}) and the fact that there exists $N_1$ such that
\[
\#\{\gamma\in P_0\cap M\bs M:\norm{\Ht_0^M(\gamma m)}\le X\}\ll (e^X+\norm{m})^{N_1},\ \ X\ge 0, m\in\siegel_M^1
\]
(an easy consequence of \cite[Lemma 5.1]{MR518111}) together with \eqref{eq: bdsieg}.
\end{proof}

The following standard lemma is a variant of \cite[Proposition II.1.10]{MR1361168}.
For convenience we include a proof.
\begin{lemma} \label{lem: bndpseudo}
For any $N>0$ there exists $R>0$ such that
\[
\sup_{g\in\siegel_G^1}\sum_{\gamma\in P\bs G}\abs{\phi(\gamma g)}\norm{g}^N<\infty
\]
and in particular,
\[
\sup_{g\in \grp G(\A)} \sum_{\gamma\in P\bs G}\abs{\phi(\gamma g)}<\infty,
\]
for any function $\phi$ on $A_G\grp U(\A)M\bs \grp G(\A)$ satisfying
\begin{equation} \label{eq: rdbnd}
\sup_{m\in\siegel_M^1,a\in A_M,k\in K}\modulus_P(a)^{-\frac12}\abs{\phi(amk)}\norm{m}^te^{R\norm{\Ht_P^G(a)}}<\infty,\ \ t=1,2,3\dots
\end{equation}
\end{lemma}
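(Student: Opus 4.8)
The plan is to reduce the double sum over $P\bs G$ to a single sum over a minimal parabolic and then invoke Lemma \ref{lem: pseudobnd}, or rather the argument behind it. First I would use the Iwasawa decomposition $\grp G(\A)=\grp P_0(\A)K$ together with right $K$-invariance of the function $g\mapsto \sum_{\gamma\in P\bs G}\abs{\phi(\gamma g)}$ (which follows from the right $K$-invariance of $\norm{\cdot}$ on $\siegel_G^1$ after absorbing $K$, and from the fact that \eqref{eq: rdbnd} is phrased in terms of $\abs{\phi(amk)}$) to reduce the estimate to $g\in\siegel_G^1$ of the form $g=am'$ with $a\in A_M$—wait, more precisely, to reduce to bounding $\sum_{\gamma\in P\bs G}\abs{\phi(\gamma g)}$ for $g$ in a Siegel domain. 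Then, writing the coset space $P\bs G$ as fibered over $P_0\bs G$ via $P_0\cap M\bs M$, i.e. using $\sum_{\gamma\in P\bs G}=\sum_{\delta\in P_0\bs G}$ after noting $P_0\subset P$, I would pass to $\sum_{\delta\in P_0\bs G}\abs{\phi(\delta g)}$ where now each $\phi(\delta g)$ is controlled using \eqref{eq: rdbnd}: the Iwasawa coordinates of $\delta g$ give $\phi(\delta g)$ bounded by $\modulus_P(a_\delta)^{1/2}\norm{m_\delta}^{-t}e^{-R\norm{\Ht_P^G(a_\delta)}}$, and $\modulus_P(a_\delta)^{1/2}=e^{\sprod{\rho_P}{\Ht_0(\delta g)}}$ up to the $\Ht_0^M$-part.

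Concretely, the key identity to exploit is that for $g\in\siegel_G^1$ the function $\sum_{\delta\in P_0\bs G}e^{\sprod{\rho_0}{\Ht_0(\delta g)}}\Phi(\Ht_0(\delta g))$, for $\Phi$ decaying like $e^{-R\norm{\cdot}}$ on $\aaa_0$, is exactly of the shape handled in the proof of Lemma \ref{lem: pseudobnd} (with $M$ there replaced by $G$): one decomposes $\Ht_0 = \Ht_0^G \oplus \Ht_G$, notes $\Ht_G$ is essentially constant on $\siegel_G^1$, and the decay in the $\aaa_0^G$-directions together with the hypothesis \eqref{eq: rdbnd} (which supplies both the $\norm{m}^t$-decay transverse to $A_M$ and the $e^{-R\norm{\Ht_P^G(a)}}$-decay along $A_M/A_G$) makes the sum converge and satisfy the polynomial bound $\ll\norm{g}^{-N}$ for $R$ large. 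The counting input is the same as in Lemma \ref{lem: pseudobnd}: the number of $\delta\in P_0\bs G$ with $\norm{\Ht_0^G(\delta g)}\le X$ grows at most like $(e^X+\norm{g})^{N_1}$ by \cite[Lemma 5.1]{MR518111}, so choosing $R>N_1+N$ plus a margin absorbs this. The second displayed inequality then follows from the first by the reduction $\grp G(\A)=A_G\siegel_G^1 K$ together with the $A_G$-equivariance: $\phi$ transforms under $A_G$ by a unitary character (central character is fixed/trivial modulo $A_G$) so $\sum_{\gamma\in P\bs G}\abs{\phi(\gamma g)}$ is $A_G$-invariant, hence bounded on all of $\grp G(\A)$ by its supremum on $\siegel_G^1$, where $\norm{g}\gg1$.

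I would organize the write-up as: (i) reduce to $g\in\siegel_G^1$ via Iwasawa and $K$-invariance; (ii) rewrite the sum over $P\bs G$ as a sum over $P_0\bs G$; (iii) bound each term using \eqref{eq: rdbnd} applied in Iwasawa coordinates, producing $e^{\sprod{\rho_0}{\Ht_0(\delta g)}}$ times a factor decaying exponentially in $\norm{\Ht_0^G(\delta g)}$ (the $\norm{m}^t$ controls the $\aaa_0^M$-directions, the $e^{-R\norm{\Ht_P^G(a)}}$ the $\aaa_M^G$-directions, and the two together an arbitrary exponential in $\norm{\Ht_0^G(\delta g)}$); (iv) apply the convergence-plus-polynomial-bound estimate exactly as in the last paragraph of the proof of Lemma \ref{lem: pseudobnd}, using \cite[Lemma 2.1]{MR3156857}, \cite[Lemma 5.1]{MR518111} and \eqref{eq: bdsieg}; (v) deduce the global bound via $A_G$-invariance.

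The main obstacle I anticipate is bookkeeping rather than anything deep: one must carefully match the ``Siegel-domain-transverse'' decay $\norm{m}^t$ in hypothesis \eqref{eq: rdbnd} against the full decay needed in $\aaa_0^G$, keeping track that $\aaa_0^G = \aaa_0^M \oplus \aaa_M^G$ and that \eqref{eq: rdbnd} gives polynomial decay in one block and exponential decay in the other, which together yield exponential decay in the whole block $\aaa_0^G$ provided $R$ is taken large enough relative to $N$ and the counting exponent $N_1$. A secondary point to be careful about is that representatives $\delta g$ may not land in a fixed Siegel domain of $M$, so one needs the inequality $\norm{\Ht_0^M(m)}\ll 1+\norm{\Ht_0^M(\gamma m)}$ from \cite[Lemma 2.1]{MR3156857} exactly as invoked in Lemma \ref{lem: pseudobnd} to transfer the bound; this is routine once one has set up the reduction correctly.
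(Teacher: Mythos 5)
Your plan is essentially the paper's proof: dominate $\abs{\phi}$ by a theta series built out of \eqref{eq: rdbnd} (with $\norm{m}^t$ giving exponential decay in the $\aaa_0^M$-directions via \eqref{eq: bdexpnm} and $e^{-R\norm{\Ht_P^G(a)}}$ handling $\aaa_M^G$), fold the resulting double sum over $P\bs G$ and $P_0\cap M\bs M$ into a single sum over $P_0\bs G$, and finish by invoking Lemma \ref{lem: pseudobnd} with $M=G$. One phrasing slip worth fixing when you write it up: the intermediate expression $\sum_{\delta\in P_0\bs G}\abs{\phi(\delta g)}$ in your sketch is identically $+\infty$ whenever $\phi\not\equiv 0$, because $\phi$ is left $M$-invariant and each $P$-coset contains infinitely many $P_0$-cosets; the order of operations must be to first bound $\abs{\phi(g)}\ll\sum_{\gamma\in P_0\cap M\bs M}e^{\sprod{\rho_0}{\Ht_0(\gamma g)}}f(\Ht_0^G(\gamma g))$ with $f(v)=e^{-R\norm{v}}$ (this step already absorbs the Siegel-domain correction you flag at the end via \cite[Lemma 2.1]{MR3156857}), and only then sum over $P\bs G$, which collapses to the genuine sum over $P_0\bs G$ of the majorant — not of $\abs{\phi}$.
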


\begin{proof}
It is enough to prove the lemma for $N\gg 1$. Let $f(v)=e^{-R\norm{v}}$ for $v\in\aaa_0^G$.
The condition on $\phi$ together with \eqref{eq: bdexpnm} implies
\[
\abs{\phi(g)}\ll_{\phi,R}e^{\sprod{\rho_P}{\Ht_P(g)}}f(\Ht_0^G(g)), \ \ g\in\siegel_M K.
\]
It follows that
\[
\abs{\phi(g)}\ll_{\phi,R}\sum_{\gamma\in P_0\cap M\bs M}e^{\sprod{\rho_0}{\Ht_0(\gamma g)}}f(\Ht_0^G(\gamma g)), \ \ g\in \grp G(\A).
\]
Therefore
\[
\sum_{\gamma\in P\bs G}\abs{\phi(\gamma g)}\ll_{\phi,R}\sum_{\gamma\in P_0\bs G}e^{\sprod{\rho_0}{\Ht_0(\gamma g)}}f(\Ht_0^G(\gamma g)), \ \ g\in \grp G(\A).
\]
The lemma now follows from Lemma \ref{lem: pseudobnd} with $M=G$.
\end{proof}

Let $\AF_P(G)$ be the space of continuous functions $\varphi$ on $\grp U(\A)M\bs \grp G(\A)$ of moderate growth such that
$\varphi(ag)=e^{\sprod{\rho_P}{\Ht_0(a)}}\varphi(g)$ for all $a\in A_M$, $g\in \grp G(\A)$.
Denote by $\rdp_P(G)$ the subspace of $\AF_P(G)$ consisting of $\varphi$ such that for all $N>0$
\[
\sup_{m\in \siegel_M^1, k\in K}\abs{\varphi(mk)}\norm{m}^N<\infty.
\]
For instance, it follows from \cite[Lemma I.2.10]{MR1361168} that $\rdp_P(G)$ contains the space of smooth functions $\varphi\in\AF_P(G)$
of uniform moderate growth such that $m\mapsto\modulus_P(m)^{-\frac12}\varphi(mg)$ is a cuspidal function on $\AS{M}$ for all $g\in \grp G(\A)$.

For $\varphi\in\AF_P(G)$ and $\lambda\in \aaa_{M,\C}^*$ let
\[
\varphi_\lambda(g)=e^{\sprod{\lambda}{H_P(g)}}\varphi(g),\ g\in \grp G(\A).
\]
Let $w\in W(M)$ and let $\grp{P'}=\grp{M'}\ltimes \grp{U'}$ be the parabolic subgroup of $\grp G$ such that $\grp{M'}=w\grp Mw^{-1}$.
For any $\varphi\in\AF_P(G)$ and $\lambda\in\aaa_{M,\C}^*$ the integral
\[
M(w,\lambda)\varphi(g)=e^{\sprod{-w\lambda}{\Ht_{P'}(g)}}\int_{\grp{U'}(\A)\cap w\grp U(\A)w^{-1}\bs \grp{U'}(\A)}\varphi_\lambda(w^{-1}ug)\ du
\]
converges provided that $\Re\sprod{\lambda}{\alpha^\vee}\gg1$, $\alpha\in\srts_P$
(cf.~proof of \cite[Proposition II.1.6]{MR1361168}).

\label{sec: rddef}

For any $R>0$ let $\rd{R}$ be the space of continuous functions $\phi$ on $A_G\grp U(\A)M\bs \grp G(\A)$ satisfying
\eqref{eq: rdbnd} such that $\phi(\cdot g)$ is a cuspidal function on $M\bs \grp M(\A)$ for all $g\in\grp G(\A)$.

For $R\gg1$ and any $\phi\in\rd{R}$ define
\[
\theta_\phi(g)=\sum_{\gamma\in P\bs G}\phi(\gamma g)
\]
which converges by Lemma \ref{lem: bndpseudo}.
For any $\lambda\in\aaa_{M,\C}^*$ with $\norm{\Re\lambda}<R$ we write
\[
\phi[\lambda](g)=e^{-\sprod{\lambda}{\Ht_P(g)}}\int_{A_G\bs A_M}e^{-\sprod{\lambda+\rho_P}{\Ht_P(a)}}\phi(ag)\ da.
\]
We have $\phi[\lambda]\in\rdp_P(G)$.

Let $\rdsmth{R}$ be the smooth part of $\rd{R}$, i.e., the space of smooth functions $\phi$ on $\grp U(\A)M\bs \grp G(\A)$ such that
$X*\phi\in\rd{R}$ for all $X\in\univ(\mathfrak{g})$ (the universal enveloping algebra of the Lie algebra of $G$).
Let $\phi\in\rdsmth{R}$. Then
\begin{equation} \label{eq: invMeltrns}
\phi(g)=\int_{\lambda_0+\iii(\aaa_M^G)^*}\phi[\lambda]_\lambda(g)\ d\lambda
\end{equation}
for any $\lambda_0\in(\aaa_M^G)^*$ with $\norm{\lambda_0}<R$. Moreover, it easily follows from Lemma \ref{lem: rdequ} (or more precisely, its proof)
that for any $R'<R$ and $N>0$ we have
\begin{equation} \label{eq: bndonFT}
\sup_{m\in\siegel_M^1,k\in K,\lambda\in(\aaa_M^G)_{\C}^*:\norm{\Re\lambda}\le R'}\abs{\phi[\lambda](mk)}(\norm{m}+\norm{\lambda})^N<\infty.
\end{equation}
Thus, we may think of $\phi\in\rdsmth{R}$ as a holomorphic map on $\{\lambda\in(\aaa_M^G)_{\C}^*:\norm{\Re\lambda}<R\}$
with values in $\rdp_P(G)$ satisfying \eqref{eq: bndonFT}.

\subsection{Symplectic groups}

For $n\in \N$ let
\[
\grp{Sp}_n=\{g\in\grp{GL}_{2n}:{}^t g J_n g=J_n\}
\]
be the symplectic group of rank $n$ where
\[
J_n=\sm{0}{w_n}{-w_n}{0}
\]
and $w_n=(\delta_{i,n+1-j})\in\GL_n$ is the permutation matrix with ones on the non-principal diagonal.
Let ${}^*$ be the automorphism of $\grp{GL}_n$ given by $g\mapsto g^*=w_n {}^t g^{-1}w_n$.
The imbedding $g\mapsto \diag(g,g^*):\grp{GL}_n\rightarrow\grp{Sp}_n$ identifies $\grp{GL}_n$
with the Siegel Levi subgroup of $\grp{Sp}_n$.

Let $\grp B_n$ be the Borel subgroup of $\grp{Sp}_n$ consisting of upper triangular matrices.
It has a Levi decomposition $\grp B_n=\grp T_n\ltimes\grp N_n$ where $\grp T_n$ is the subgroup of diagonal matrices and
$\grp N_n$ is the subgroup of upper unitriangular matrices in $\grp B_n$.
The parabolic and Levi subgroups of $\grp{Sp}_n$ are parameterized by tuples of non-negative integers of the form $\gamma=(n_1,\dots,n_k;r)$ where
$k,r\ge0$, $n_1,\dots,n_k>0$, and $n_1+\cdots+n_k+r=n$.
Explicitly, to such $\gamma$ we associate the parabolic subgroup $\grp P=\grp P_\gamma=\grp M\ltimes \grp U$ consisting of block
upper triangular matrices in $\grp{Sp}_n$ where
\[
\grp M=\grp M_\gamma=\{\diag(g_1,\dots,g_k,h,g_k^*,\dots,g_1^*):h\in\grp{Sp}_r,\,g_i\in\grp{GL}_{n_i},\,i=1,\dots,k\}.
\]
In particular, $\grp{Sp}_n=\grp P_{(;n)}$ whereas $\grp P_{(n;0)}$ is the Siegel parabolic subgroup of $\grp{Sp}_n$.

We denote by
\[
\inj_\gamma=\inj_M:\grp{GL}_{n_1}\times\dots\times\grp{GL}_{n_k}\times\grp{Sp}_r\rightarrow\grp M
\]
the isomorphism defined by
\[
\inj_M(g_1,\dots,g_k;h)=\diag(g_1,\dots,g_k,h,g_k^*,\dots,g_1^*).
\]
If $r=0$ we simply write $\inj_M(g_1,\dots,g_k)$.
Also, if $M$ is clear from the context we will suppress it from the subscript.

Let $\delta_n=\diag(1,-1,1,\dots,(-1)^{n-1})\in\GL_n$ and $\epsilon_n=\diag(\delta_n,\delta_n^*)\in\Sp_n$.

\subsection{The setup} \label{sec: setup}
From now on, unless otherwise specified, we fix $n\in \N$ and let $\grp G=\grp{Sp}_{2n}$, $\epsilon=\epsilon_{2n}$ and
$\grp H=\grp{C_G}(\epsilon)\simeq\grp{Sp}_n\times\grp{Sp}_n$, the centralizer of $\epsilon$ in $\grp G$.
(Occasionally, we will also use $n$ as a running variable. Hopefully this will not cause any confusion.)
We identify $\grp G/\grp H$ with the $\grp G$-conjugacy class $\grp X$ of $\epsilon$,
a closed subvariety of $\grp G$, via $g \grp H\mapsto g\epsilon g^{-1}$.
Note that $X=G/H$ because the first Galois cohomology of $\grp H$ is trivial.

We take $\grp P_0$ to be the Borel subgroup $\grp B=\grp  B_{2n}=\grp T\ltimes \grp N$ where $\grp T=\grp T_{2n}$ and $\grp N=\grp N_{2n}$. Note that
\[
\grp T=\{\diag(a_1,\dots,a_{2n},a_{2n}^{-1},\dots,a_1^{-1}):a_1,\dots,a_{2n}\in\mathbb{G}_m\}
\]
and $\aaa_T^*$ is naturally identified with $\R^{2n}$.

Let $\gamma=(n_1,\dots,n_k;r)$ with $n_1+\cdots+n_k+r=2n$. For $\grp M=\grp M_\gamma$ the space $\aaa_M^*\simeq \R^k$ is imbedded in $\aaa_T^*\simeq \R^{2n}$
as elements of the form
\[
(\overbrace{\lambda_1,\dots,\lambda_1}^{n_1},\dots,\overbrace{\lambda_k,\dots,\lambda_k}^{n_k},\overbrace{0,\dots,0}^r),\ \ \ (\lambda_1,\dots,\lambda_k)\in \R^k.
\]

Under the identification $\aaa_T^*\simeq\R^{2n}$ we have $\Sigma=\{\pm e_i \pm e_j:1\le i,\,j\le 2n\}\setminus\{0\}$ where $\{e_i:1\le i\le 2n\}$
is the standard basis of $\R^{2n}$. Also, $\srts_0=\{\alpha_1,\dots,\alpha_{2n}\}$ where
$\alpha_i=e_i-e_{i+1}$, $i=1,\dots,2n-1$ (the short simple roots) and $\alpha_{2n}=2e_{2n}$ (the long simple root).

\section{Double cosets} \label{sec: double cosets}
In this section we study the double cosets $P\bs G/H$ for any parabolic subgroup $P$ of $G$.
Equivalently, $P\bs G/H$ parameterizes the $P$-orbits in $X$ under conjugation.
For $g\in G$ and a subgroup $\grp Q$ of $\grp G$ defined over $F$ we denote by $\conj{g}{Q}$ the $Q$-orbit of $g$ under conjugation
and by $\grp Q_g=\grp{C_Q}(g)$ the centralizer of $g$ in $\grp Q$.

Recall the following elementary result (e.g., \cite[Theorem 1]{MR0027764}).
\begin{lemma}\label{lem: symp inv}
For any involution $g\in G$ there exists a unique decomposition $2n=p+q$ such that
$g\in [\inj(I_p,-I_q)]_G$, i.e., every involution in $G$ is $G$-conjugate to $\inj(I_p,-I_q)$ for unique $p$ and $q$.
Thus, two involutions in $G$ which are conjugate in $\GL_{4n}$ are conjugate in $G$.
\end{lemma}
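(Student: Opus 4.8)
The plan is to classify involutions $g \in G = \Sp_{2n}(F)$ up to conjugacy by diagonalizing them and analyzing the symplectic form restricted to the two eigenspaces. First I would note that an involution $g$ satisfies $g^2 = I$, so over $F$ (characteristic zero) we have a direct sum decomposition $F^{4n} = V_+ \oplus V_-$ into the $(+1)$- and $(-1)$-eigenspaces of $g$; set $p = \dim V_+$, $q = \dim V_-$, so $p + q = 4n$. The key observation is that since $g$ preserves the symplectic form $\langle\cdot,\cdot\rangle$ attached to $J_n$ (here $J_{2n}$), for $v \in V_+$ and $w \in V_-$ we have $\langle v, w\rangle = \langle gv, gw\rangle = \langle v, -w\rangle = -\langle v,w\rangle$, hence $\langle v,w\rangle = 0$. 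Thus $V_+$ and $V_-$ are orthogonal with respect to the symplectic form, and since the form is non-degenerate on the whole space, its restriction to each of $V_+$ and $V_-$ is a non-degenerate symplectic form. In particular $p$ and $q$ are both even, say $p = 2a$, $q = 2b$ with $a + b = 2n$. By Witt's theorem (or the classification of symplectic spaces over $F$, all of which are isometric in a fixed dimension), one may choose a symplectic basis of $F^{4n}$ adapted to the decomposition $V_+ \oplus V_-$, and conjugating by the corresponding change-of-basis element of $\Sp_{2n}(F)$ carries $g$ to the standard involution $\inj(I_{p}, -I_{q})$ in the notation of the paper (the diagonal element with $+1$ in the first $a$ and last $a$ GL-coordinates and $-1$ in the middle; concretely $\inj(I_a, I_b, \dots)$ built from $\pm 1$ blocks). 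This proves existence of the decomposition.

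For uniqueness, I would observe that $p = \dim V_+$ and $q = \dim V_-$ are conjugation-invariant: if $g' = xgx^{-1}$ then $x$ maps the eigenspaces of $g$ isomorphically onto those of $g'$, so the pair $(p,q)$ is an invariant of the conjugacy class, forcing $\inj(I_p, -I_q)$ and $\inj(I_{p'}, -I_{q'})$ to be non-conjugate in $G$ unless $(p,q) = (p',q')$. The final sentence of the statement — that two involutions conjugate in $\GL_{4n}$ are already conjugate in $G$ — then follows immediately: conjugacy in $\GL_{4n}(F)$ means equal eigenspace dimensions (with equal multiplicities of $+1$ and $-1$), hence the same $(p,q)$, hence $G$-conjugacy by the uniqueness just established.

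The main obstacle, such as it is, is the step asserting that a symplectic basis of $F^{4n}$ can be chosen compatibly with the orthogonal decomposition $V_+ \oplus V_-$, and that the resulting element lies in $\Sp_{2n}(F)$ rather than just $\Sp_{2n}(\bar F)$; this is exactly where one uses that over any field of characteristic zero a non-degenerate symplectic space of given even dimension is unique up to isometry (unlike the orthogonal case there is no discriminant or Hasse invariant obstruction), so no rationality issue arises. Since this is a classical fact — and the statement is explicitly attributed to \cite{MR0027764} in the excerpt — I would simply cite it rather than reproduce the argument. Everything else is bookkeeping: matching the block form $\inj(I_p, -I_q)$ to the paper's coordinates and checking that the conjugating matrix respects $J_{2n}$.
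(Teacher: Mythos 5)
Your argument is correct and is exactly the classical one (eigenspace decomposition, orthogonality of $V_+$ and $V_-$ under the form, non-degeneracy on each piece, Witt's theorem). The paper itself offers no proof but merely cites Hua \cite{MR0027764}, which is the same reference you note you would invoke, so the two approaches coincide.
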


\subsection{Borel orbits}

We start with the case $P=B$.

\begin{lemma}(cf. \cite[Lemma 4.1]{MR803346})\label{eq: Borel orbit}
The map $\conj{x}{B}\mapsto N_G(T)\cap\conj{x}{B}$ defines a bijection between the $B$-orbits
in $X$ and the $T$-orbits in $N_G(T)\cap X$.
\end{lemma}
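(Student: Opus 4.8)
Here is the plan I would follow.

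The proof will rest on two facts, from which the bijection follows by a short formal argument. First I will show that every $B$-orbit on $X$ meets $N_G(T)\cap X$; second, that if two elements of $N_G(T)\cap X$ lie in a common $B$-orbit then they lie in a common $T$-orbit. Granting these, one checks formally that $\conj{x}{B}\mapsto N_G(T)\cap\conj{x}{B}$ is well defined (its value is a nonempty union of $T$-orbits by the first fact, hence a single $T$-orbit by the second), injective (two $B$-orbits whose images overlap must coincide), and surjective with inverse $\mathcal{O}\mapsto\conj{n}{B}$ for any $n\in\mathcal{O}$.

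For the first fact, fix $x\in X$, so that $x$ is an involution and $\operatorname{Int}(x)$ is an involutive automorphism of $\grp G$. Since $x^2=1$, the group $\grp C:=\grp B\cap x\grp Bx^{-1}$ is stable under $\operatorname{Int}(x)$ (indeed $x\grp Cx^{-1}=x\grp Bx^{-1}\cap\grp B=\grp C$). It is connected and solvable, and since any two Borel subgroups of $\grp G$ have a common maximal torus, $\grp C$ contains a maximal torus of $\grp G$; consequently the maximal tori of $\grp C$ are exactly the maximal tori of $\grp G$ contained in $\grp C$. Since $\operatorname{Int}(x)|_{\grp C}$ is a semisimple (order $\le 2$) automorphism of the connected group $\grp C$, it stabilizes a maximal torus $\grp S$ of $\grp C$ (Steinberg). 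As $\grp S\subseteq\grp C\subseteq\grp B$ and all maximal tori of the connected solvable group $\grp B$ are conjugate, $\grp S=b\grp Tb^{-1}$ for a suitable $b$; then $b^{-1}xb$ normalizes $\grp T$, and, $X$ being a conjugacy class, $b^{-1}xb\in N_G(T)\cap X$ lies in $\conj{x}{B}$.

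For the second fact, suppose $n_1,n_2\in N_G(T)\cap X$ and $n_2=bn_1b^{-1}$ with $b\in B$; write $b=tu$, $t\in T$, $u\in N$. Replacing $n_2$ by the $T$-conjugate $t^{-1}n_2t$, we may assume $b=u$, i.e.\ $n_2=un_1u^{-1}$. Now $n_2\in Bn_1B$, so by the Bruhat decomposition $n_1$ and $n_2$ represent the same element $w\in W$; since $C_G(T)=T$, this forces $n_2n_1^{-1}\in T$. On the other hand $n_2n_1^{-1}=u\,(n_1u^{-1}n_1^{-1})$ with $n_1u^{-1}n_1^{-1}\in n_1Nn_1^{-1}$, and factoring $n_1Nn_1^{-1}=(n_1Nn_1^{-1}\cap N)(n_1Nn_1^{-1}\cap N^-)$ gives $n_2n_1^{-1}\in NN^-$. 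As $T\cap NN^-=\{1\}$, we get $n_1=n_2$, so the original $n_1,n_2$ are $T$-conjugate. (This part uses neither that $x$ is an involution nor any special feature of $\grp G$.)

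The point requiring genuine care — and the reason for the reference to \cite[Lemma 4.1]{MR803346} — is rationality: the discussion above takes place over an algebraic closure of $F$, whereas the lemma concerns $F$-rational orbits, so the $\operatorname{Int}(x)$-stable maximal torus $\grp S$, equivalently the conjugating element $b$, must be produced over $F$. I expect this to be the main obstacle, and would handle it by combining the $F$-rational form of Steinberg's theorem for the split connected solvable group $\grp C$ with a Galois-cohomology argument controlling the torsor of $\operatorname{Int}(x)$-stable maximal tori of $\grp B$, in the same spirit as the triviality of $H^1(F,\grp H)$ invoked in \S\ref{sec: setup}.
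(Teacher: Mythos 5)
Your argument takes the same route as the paper, which defers non-emptiness of $N_G(T)\cap\conj{x}{B}$ to Springer's Lemma~4.1 (asserting, with \cite[Lemma 4.1.1]{MR2010737} for details, that the algebraically-closed argument carries over to $F$-points verbatim) and the single-$T$-orbit assertion to Bruhat uniqueness, which is also the engine of your big-cell computation. The one place where you hedge but the paper does not — the $F$-rationality of the Steinberg-type conjugation producing an $\operatorname{Int}(x)$-stable maximal torus of $\grp C$ — in fact needs no extra Galois-cohomological apparatus: the conjugating element is found in the split unipotent radical of $\grp C=\grp B\cap x\grp Bx^{-1}$, and cohomology of split unipotent groups over $F$ vanishes, so the paper's ``verbatim'' is meant literally.
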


The crux of the matter is to show that $N_G(T)\cap\conj{x}{B}$ is not empty. This is proved in \cite[Lemma 4.1]{MR803346} in the case of an algebraically closed field.
However, the proof carries over verbatim to our case. See \cite[Lemma 4.1.1]{MR2010737} for more details.
The fact that $N_G(T)\cap\conj{x}{B}$ is a unique $T$-orbit follows from the uniqueness in the Bruhat decomposition as in \cite[Proposition 4.1.1]{MR2010737}.
That the map is bijective is now straightforward.

Recall that any involution $w\in W$ can be written in the form $w=s_{\beta_1}\cdots s_{\beta_r}$
where $\beta_1,\dots,\beta_r$ are pairwise orthogonal roots and $s_\beta\in W$ is the reflection associated to a root $\beta\in \Sigma$
(\cite{MR647211}). Moreover,
\begin{equation} \label{eq: r is determined}
\text{$r$ is determined by the conjugacy class of $w$.}
\end{equation}
Let
\[
\WR=\{w\in W: w\cap X \ne \emptyset\}
\]
so that
\[
N_G(T)\cap X=\coprod_{w\in\WR} w\cap X.
\]
Clearly $\WR$ is a union of conjugacy classes of involutions in $W$.
We can describe the set $\WR$ explicitly.

\begin{definition}
An involution $w\in W$ is called \emph{minimal} if there exists a Levi
subgroup $M$ of $G$ such that $w=w_0^M$ and $w\alpha=-\alpha$ for all $\alpha\in \srts_0^M$.
\end{definition}

Recall that every involution is conjugate to a minimal one (cf. \cite[Proposition 3.3]{MR803346}).

For any $k=0,\dots,n$ let $L_k=M_{(2^{(k)},1^{(2n-2k)};0)}$ be the Levi subgroup of semisimple rank $k$ such that
$\srts_0^{L_k}=\{\alpha_1,\alpha_3,\dots,\alpha_{2k-1}\}$. (Here $a^{(r)}$ is the $r$-tuple $(a,\dots,a)$.) Note that $w_0^{L_k}$ is a minimal involution.
Let $\WR_k$ be the conjugacy class of $w_0^{L_k}$ in $W$.

\begin{lemma}
We have
\begin{equation}\label{eq: wk}
\WR_k=\{s_{\beta_1}\cdots s_{\beta_k}: \beta_1,\dots,\beta_k \text{ are pairwise strongly orthogonal
\emph{short} roots}\}
\end{equation} and
\begin{equation}\label{part: conj of min}
\WR=\coprod_{k=0}^n\WR_k.
\end{equation}
Moreover, for any $w\in\WR_k$ there are $2(n-k)\choose n-k$ $T$-orbits in $w\cap X$.
\end{lemma}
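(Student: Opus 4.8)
The plan is to prove the three assertions of the lemma in order, using the classification of involutions in the Weyl group of type $C_{2n}$ together with Lemma~\ref{lem: symp inv}. For the first identity \eqref{eq: wk}, recall that $W = W^{\Sp_{2n}}$ is the hyperoctahedral group, and a reflection $s_\beta$ for a \emph{short} root $\beta = e_i - e_j$ (resp.\ $\beta = e_i + e_j$) acts on $\R^{2n}$ by transposing $e_i \leftrightarrow e_j$ (resp.\ $e_i \leftrightarrow -e_j$), whereas a reflection in the long root $2e_i$ sends $e_i \mapsto -e_i$. First I would recall the standard fact (from \cite{MR647211}) that $w_0^{L_k} = s_{\alpha_1}s_{\alpha_3}\cdots s_{\alpha_{2k-1}}$, a product of $k$ reflections in pairwise strongly orthogonal short roots, and that any element of the form $s_{\beta_1}\cdots s_{\beta_k}$ with the $\beta_i$ pairwise strongly orthogonal short roots is an involution. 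To see these all lie in a single conjugacy class, I would note that the Weyl group acts transitively on sets of $k$ pairwise strongly orthogonal short roots that can be completed appropriately — more simply, one checks directly that the signed cycle type of such a product (namely $k$ ``positive'' $2$-cycles and $2n-2k$ fixed points, no negative cycles) is exactly the signed cycle type of $w_0^{L_k}$, and in the hyperoctahedral group conjugacy classes are determined by signed cycle type. Conversely, any involution of that signed cycle type is visibly such a product.

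For \eqref{part: conj of min}, the plan is to use the criterion $\WR = \{w : w \cap X \neq \emptyset\}$ together with Lemma~\ref{lem: symp inv} and the structure of $X$ as the conjugacy class of $\epsilon = \epsilon_{4n}$, which is an involution in $G = \Sp_{4n}$ conjugate (in $\GL_{8n}$, hence in $G$) to $\inj(I_{2n}, -I_{2n})$. So $\WR$ is precisely the set of involutions $w \in W$ such that some representative $n_w \in N_G(T)$ lies in the $G$-conjugacy class of $\inj(I_{2n}, -I_{2n})$; equivalently, $n_w$ has eigenvalues $+1$ and $-1$ each with multiplicity $2n$. Writing an involution $w$ via its signed cycle type as a product of $a$ positive $2$-cycles, $b$ negative $2$-cycles, $c$ fixed points and $d$ sign changes ($2a + 2b + c + d = 2n$ in the $C_{2n}$ coordinates, but here we must be careful: $G = \Sp_{4n}$ so $W$ has rank $2n$ — wait, let me recompute), I would analyze the eigenvalue multiplicities of a torus-normalizing representative and show the eigenvalue-balancing condition forces $b = 0 = d$, i.e.\ $w$ is a product of short-root reflections only, hence $w \in \WR_k$ for $k = a$. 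This gives $\WR \subseteq \coprod_k \WR_k$; the reverse inclusion follows because the explicit minimal involution $w_0^{L_k}$ does have a representative with the correct eigenvalue balance (one picks the representative of each $s_{\alpha_{2i-1}}$ that is conjugate to $\inj(I_1,-I_1)$ inside the relevant $\SL_2$), so $\WR_k \cap X \neq \emptyset$. The disjointness is immediate since distinct $\WR_k$ are distinct conjugacy classes.

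For the last assertion — that $w \cap X$ is a disjoint union of $\binom{2(n-k)}{n-k}$ $T$-orbits for $w \in \WR_k$ — the approach is to fix a representative $n_w$ and parametrize all $t n_w t^{-1} \in X$ modulo $T$-conjugacy, or better, all elements of $w$ (i.e.\ $T$-cosets, but since we are in $\Sp$ we work with $C_G(T) = T$) lying in $X$. An element $n' \in w \cap X$ differs from $n_w$ by an element of $T$, and the condition $n' \in X$ is that $n'$ is $G$-conjugate to $\inj(I_{2n}, -I_{2n})$, i.e.\ $(n')^2 = 1$ (automatic, $w$ is an involution) and the $(+1)$- and $(-1)$-eigenspaces each have dimension $2n$. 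Writing $n' = n_w \cdot \diag(t)$, the eigenvalue data on each short $2$-cycle $\{i,j\}$ where $w$ swaps $e_i \leftrightarrow e_j$: the $2\times 2$ block is $\sm{0}{t_i}{t_j^{-1}}{0}$-type up to the symplectic pairing, and the sign pattern on the corresponding $2$-dimensional subspace splits as one $+1$ and one $-1$ depending on whether a certain product is a square / has a given sign — more precisely over a general field it is governed by a quadratic-form / Hasse-invariant condition, but the counting of $T$-orbits reduces to a finite combinatorial choice on the $n-k$ pairs of $2$-cycles (the $2k$ short-cycle indices among $4n$ come in, hmm). \textbf{The main obstacle} I anticipate is precisely this last count: getting the number $\binom{2(n-k)}{n-k}$ to come out correctly requires carefully tracking how the balancing constraint ``exactly $2n$ plus-eigenvalues and $2n$ minus-eigenvalues'' distributes over the $2$-cycles of $w$, the $n-2k$ (or however many) long ``sign'' coordinates, and accounting for which assignments are $T$-conjugate — this is the kind of bookkeeping done in \cite[Lemma~4.1]{MR803346} and \cite[\S4]{MR2010737} for the split case, and I would follow that template, reducing to counting lattice points / sign vectors on a set of size $2(n-k)$ subject to a ``half must be positive'' constraint, which indeed yields $\binom{2(n-k)}{n-k}$.
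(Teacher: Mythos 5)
Your signed-cycle-type argument for \eqref{eq: wk} is correct and takes a different, arguably more direct route than the paper. Pairwise strong orthogonality of short roots forces disjoint supports; each $s_{e_i\pm e_j}$ is a positive $2$-cycle on that pair; and conjugacy in the hyperoctahedral group $W$ is governed by signed cycle type. So every such product lies in the class of $w_0^{L_k}=s_{\alpha_1}s_{\alpha_3}\cdots s_{\alpha_{2k-1}}$, and conversely any element of that class, being $k$ disjoint positive $2$-cycles on the remaining fixed coordinates, is such a product. The paper instead proves the conjugacy by an explicit induction on $k$ using the embedding $\Sp_{2n-2}\hookrightarrow\Sp_{2n}$; both work, and yours avoids the induction.

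Your argument for \eqref{part: conj of min} has a genuine gap. You propose that an ``eigenvalue-balancing condition forces $b=0=d$''. The first is vacuous: an involution in a hyperoctahedral group has no negative $2$-cycles at all, since such a cycle squares to a nontrivial sign change. More seriously, eigenvalue balance does \emph{not} exclude sign changes. If a sign change on coordinate $i$ had an involution representative, it would be a matrix $\sm{0}{a}{a^{-1}}{0}$ in the coordinates $(i,4n+1-i)$ with eigenvalues $+1$ and $-1$, hence \emph{perfectly balanced} and invisible to the multiplicity constraint. (What the balance condition actually constrains is the $\pm1$ entries on the \emph{fixed} coordinates, since a fixed coordinate contributes two equal eigenvalues; each $2$-cycle is automatically balanced.) The real obstruction is one of existence: the rank-one subgroup of $\Sp_{2n}$ attached to a long root $2e_i$ is an $\SL_2$, and $N_{\SL_2}(T)\setminus T$ consists of elements $\sm{0}{a}{-a^{-1}}{0}$, which square to $-I_2$ and never to $I_2$. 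Thus no representative of a $w$ with a sign change can even be an involution, let alone lie in $X$. This symplectic constraint is precisely what the paper's remark that ``an involution in $\Sp_r$ whose non-zero entries are on the non-principal diagonal is clearly impossible'' is recording. To fix your argument you would need to replace the eigenvalue count by this existence argument for sign changes, keeping the eigenvalue count only to determine the sign pattern on the fixed coordinates.

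Your sketch for the final count is essentially the paper's argument. The bookkeeping you flag as an obstacle resolves cleanly: on a positive $2$-cycle the torus acts transitively on the off-diagonal entry (conjugation by $\diag(t_1,t_2)$ sends $a\mapsto (t_2/t_1)a$ with $t_2/t_1$ ranging over all of $F^*$), so these parameters contribute a single $T$-orbit; the $\pm1$'s on the $2(n-k)$ fixed coordinates are $T$-invariant; and since each $2$-cycle contributes balanced eigenvalues, the constraint that the whole matrix have exactly $2n$ eigenvalues of each sign forces exactly $n-k$ of those fixed-coordinate signs to be $+1$. This gives $\binom{2(n-k)}{n-k}$ orbits, matching the paper's explicit representatives $n_A$.
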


\begin{proof}
Note that $w_0^{L_k}=s_{\alpha_1} s_{\alpha_3}\cdots s_{\alpha_{2k-1}}$ and therefore every element of $\WR_k$
is a product of reflections associated to pairwise strongly orthogonal short roots.
We show by a simple induction on $k$ that if $\beta_1,\,\dots,\beta_k$ are pairwise strongly orthogonal
short roots then $ s_{\beta_1}\cdots s_{\beta_k}$ is $W$-conjugate to $w_0^{L_k}$.
The case $k=1$ is immediate from the fact that $W$ acts transitively on the short roots.
For $k>1$, after conjugating we may assume without loss of generality that $\beta_1=\alpha_1$.
The imbedding $x\mapsto \inj(I_2;x):\Sp_{2n-2}\rightarrow \Sp_{2n}$ induces an imbedding of Weyl groups $W^{\Sp_{2n-2}}\hookrightarrow W$.
The image of this imbedding commutes with $s_{\alpha_1}$ and, by strong orthogonality, contains $s_{\beta_2},\dots,s_{\beta_k}$.
The claim therefore follows by induction on $n$.

This shows \eqref{eq: wk}.
The disjointness of the $\WR_k$'s follows from \eqref{eq: r is determined}.
To show \eqref{part: conj of min} it is enough to show that every minimal involution $w\in\WR$
is conjugate to $w_0^{L_k}$ for some $k=0,\dots,n$ and that $w_0^{L_k}\cap X\ne \emptyset$.

Let $L=M_{(n_1,\dots,n_k;r)}$ be a Levi subgroup of $G$ such that $w_0^L\in\WR$ is a minimal involution.
Note first that $r=0$ since otherwise we would have an involution in $\Sp_r$ whose non-zero entries are on the non-principal diagonal,
which is clearly impossible.
Thus $\inj_L(w_{n_1},\dots,w_{n_k})\in w_0^L$.
It easily follows from the property $w_0^L\alpha=-\alpha$, $\alpha\in \srts_0^L$ that $n_1,\dots,n_k\le  2$ and that $w_0^L$ is conjugate to $w_0^{L_k}$ where
$k=\#\{i=1,\dots,k:n_i=2\}$. This shows that $\WR \subseteq \coprod_{k=0}^n\WR_k$.

To show that $\coprod_{k=0}^n\WR_k \subseteq \WR$ it is enough to see that $w_0^{L_k}\cap X\ne\emptyset$.
Note that
\begin{multline*}
w_0^{L_k}\cap X=\{\inj_{L_k}(\sm0{a_1^{-1}}{a_1}0,\dots,\sm0{a_k^{-1}}{a_k}0,b_1,\dots,b_{2(n-k)}):\\ a_1,\dots,a_k\in F^*,\,b_1,\dots,b_{2(n-k)}=\pm 1,\,\#\{i: b_i=1\}=n-k\}.
\end{multline*}

Let $\alpha=\sm0110$. For any subset $A$ of $\{1,\dots,2(n-k)\}$ of size $n-k$ let
\[
b_i=\begin{cases}1 & i\in A \\-1 & i\not\in A\end{cases}
\]
and
\[
n_A=\inj_{L_k}(\overbrace{\alpha,\dots,\alpha}^k,b_1,\dots,b_{2(n-k)}).
\]
Then $n_A\in w_0^{L_k}\cap X$ and \eqref{part: conj of min} follows. Furthermore every $T$-orbit in $w_0^{L_k}\cap X$
contains $n_A$ for a unique subset $A$ of size $n-k$. The Lemma follows.
\end{proof}

\subsection{P-orbits}
Consider now the case of a general parabolic subgroup of $G$.

For any parabolic subgroup $\grp P$ of $\grp G$ with a given Levi decomposition $\grp P=\grp M\ltimes \grp U$ denote by $\pr_M:\grp P \rightarrow \grp M$
the projection to the Levi part of $\grp P$.
Assume for the rest of this section that $\grp P=\grp M\ltimes \grp U$ is a parabolic subgroup of $\grp G$.

Recall that for any two (not necessarily standard) parabolic subgroups $\grp Q_i$, $i=1,2$ of $\grp G$ with Levi decompositions
$\grp Q_i=\grp L_i \ltimes \grp V_i$, $\pr_{L_1}(\grp Q_1\cap \grp Q_2)$ is a parabolic subgroup of $\grp L_1$.
For $w\in   {}_MW_M$ let
\[
\grp P(w)=\pr_M(\grp P\cap w\grp Pw^{-1})=\grp M\cap w\grp Pw^{-1}.
\]
Then $\grp P\cap w\grp Pw^{-1}=\grp P(w)(\grp U\cap w\grp Pw^{-1})$ and $\grp P(w)$
is a standard parabolic subgroup of $\grp M$ with Levi decomposition
\[
\grp P(w)=\grp M(w)\ltimes \grp U(w)\text{ where }\grp M(w)=\grp M\cap w\grp Mw^{-1}\text{ and }\grp U(w)=\grp M\cap w\grp Uw^{-1}.
\]

By the Bruhat decomposition, for $g\in G$ there exists a unique element
$w\in {}_MW_M$ such that $PwP=PgP$. Let $p\in P$ be such that $g\in pwP$. Then
\[
\grp P\cap g\grp Pg^{-1}=p(\grp P\cap w\grp Pw^{-1})p^{-1}.
\]
It follows that
\begin{equation}\label{eq: projm}
\pr_M(\grp P\cap g\grp Pg^{-1})=\pr_M(p)\grp P(w)\pr_M(p)^{-1}.
\end{equation}
In particular, the following conditions are equivalent
\begin{enumerate}
\item $\pr_M(\grp P\cap g\grp Pg^{-1})=\grp M$,
\item $(\grp P\cap g\grp Pg^{-1})\grp U=\grp P$,
\item $\grp P(w)=\grp M$,
\item $\grp M(w)=\grp M$,
\item $\grp U(w)=1$,
\item $w\grp M\subseteq \grp{N_G(M)}$.
\end{enumerate}
If these conditions are satisfied we say that $g\in G$ is $M$-admissible.
This condition depends only on $PgP$.

\begin{lemma}\label{lem: char adm}
An element $g\in G$ is $M$-admissible if and only if $g\in UN_G(M)U$.
\end{lemma}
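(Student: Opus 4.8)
The plan is to prove both directions using the equivalent characterizations of $M$-admissibility established just before the statement, and in particular the Bruhat-theoretic criterion that $g$ is $M$-admissible if and only if the Weyl element $w \in {}_MW_M$ with $PgP = PwP$ satisfies $wM \subseteq \grp{N_G(M)}$ (equivalently $w\grp Mw^{-1} = \grp M$, i.e.\ $w \in W(M,M)$ viewed inside ${}_MW_M$).

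First, for the ``if'' direction, suppose $g \in U\,N_G(M)\,U$, say $g = u_1 n u_2$ with $u_1,u_2 \in U$ and $n \in N_G(M)$. Let $w \in {}_MW_M$ be the Weyl element with $n \in MwM$ (the double coset of $n$ in $P\bs G/P$ is the same as that of $g$, since $u_1, u_2 \in U \subseteq P$). Since $n$ normalizes $\grp M$, conjugation by $n$ permutes the root subgroups of $\grp M$, hence $n$ normalizes $\grp B \cap \grp M$ up to an element of $W^M$; writing $n = m_1 \dot w m_2$ with $m_i \in M$ and $\dot w$ a representative of $w$, one gets $\dot w \grp M \dot w^{-1} = \grp M$, so $w \in W(M,M)$ and condition (6) holds. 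Therefore $g$ is $M$-admissible. (Alternatively and more cleanly: $N_G(M) = M \cdot (W(M,M)\text{-representatives})$, and any representative of $w \in W(M,M)$ lying in ${}_MW_M$ automatically satisfies $\grp P(w) = \grp M$.)

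For the ``only if'' direction, suppose $g$ is $M$-admissible, and let $w \in {}_MW_M$ be the associated Weyl element, so $PgP = PwP$ and $w \in W(M,M)$, i.e.\ a representative $\dot w \in N_G(T)$ satisfies $\dot w \grp M \dot w^{-1} = \grp M$, hence $\dot w \in N_G(M)$. Write $g = p_1 \dot w p_2$ with $p_1, p_2 \in P$. Decompose $p_1 = m_1 u_1$, $p_2 = m_2 u_2$ with $m_i \in M$, $u_i \in U$. Then $g = m_1 u_1 \dot w m_2 u_2 = m_1 (u_1) (\dot w m_2 \dot w^{-1})\dot w u_2$; since $\dot w$ normalizes $\grp M$, the element $\dot w m_2 \dot w^{-1}$ lies in $M$, so $g = (m_1) u_1 (m_3 \dot w) u_2$ with $m_3 = \dot w m_2 \dot w^{-1} \in M$. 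Now $m_1 u_1 = u_1' m_1$ with $u_1' = m_1 u_1 m_1^{-1} \in U$, and $m_1 m_3 \dot w \in N_G(M)$ since $m_1 m_3 \in M$ and $\dot w \in N_G(M)$. Thus $g = u_1'\,(m_1 m_3 \dot w)\,u_2 \in U\,N_G(M)\,U$, as required.

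The main obstacle — such as it is — is bookkeeping: one must be careful that the Weyl element $w$ attached to $g$ via the Bruhat decomposition $PgP = PwP$ is genuinely the same as the one attached to any $N_G(M)$-representative in the same double coset, and that the condition $wM \subseteq \grp{N_G(M)}$ from the list of equivalences is correctly identified with $w$ normalizing $\grp M$; once that identification is made, both inclusions are immediate manipulations with the Levi decomposition $P = M \ltimes U$ and the fact that $N_G(M)$ is generated by $M$ together with representatives of $W(M,M) \subseteq {}_MW_M$. No delicate convergence or growth estimates enter here.
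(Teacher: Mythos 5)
Your proof is correct and follows essentially the same path as the paper's: both directions reduce to the equivalent characterizations of $M$-admissibility and the fact that the condition depends only on $PgP$. The only cosmetic difference is in the ``if'' direction, where the paper argues directly via condition (1) (from $g\in N_G(M)$ one has $\grp M\subset \grp P\cap g\grp Pg^{-1}$, so $\pr_M(\grp P\cap g\grp Pg^{-1})=\grp M$), whereas you route through condition (6) by identifying the Weyl element of $n$ with an element of $W(M,M)$; your parenthetical ``alternatively and more cleanly'' remark is in fact the cleaner version of that step, and the intermediate claim about $n$ normalizing $\grp B\cap\grp M$ up to $W^M$ is not needed.
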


\begin{proof}
If $g\in N_G(M)$ then clearly $M\subset P\cap gPg^{-1}$ and therefore $\pr_M(P\cap gPg^{-1})=M$.
Since $M$-admissibility depends only on $PgP$ it follows that every element of $ UN_G(M)U$ is $M$-admissible.
Conversely, suppose that $g$ is $M$-admissible and let $w\in {}_MW_M$ be such that $PgP=PwP$.
Then $w\subseteq N_G(M)$. Let $u_1,\,u_2\in U$, $n\in w$ and $m_1,\,m_2\in M$ be such that $g=u_1m_1 nm_2u_2$. Then $g\in u_1 nMu_2$.
\end{proof}

\begin{lemma}(cf. \cite[Proposition 4.2.1]{MR2010737}) \label{lem: intersection}
Let $x\in X$ and let $w\in {}_MW_M$ be such that $PxP=PwP$.
Then $wM(w)\cap \conj{x}P$ is non-empty.
\end{lemma}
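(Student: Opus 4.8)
The plan is to reduce the general case to the Borel case (Lemma \ref{eq: Borel orbit}) by a Springer-type argument, following the model of \cite[Proposition 4.2.1]{MR2010737}. First I would pick a representative $x\in X$ of the $P$-orbit and use the fact that $B\subseteq P$ together with Lemma \ref{eq: Borel orbit} to conjugate $x$ by an element of $P$ into $N_G(T)\cap X$; more precisely, the $B$-orbit $\conj{x}{B}$ meets $N_G(T)\cap X$, so after replacing $x$ by a $B$-conjugate (hence a $P$-conjugate) we may assume $x=n_v$ for some $n_v\in N_G(T)$ representing an involution $v\in\WR$. Now $PxP=PwP$ forces $v$ and $w$ to lie in the same double coset ${}_MW_MwW$... but in fact the cleaner route is: since $v\in N_G(T)$ and $v\in P v P$, we have $v\in W^M v' W^M$-related data, and by the minimal-length property we may further arrange (conjugating $v$ by $W^M$, i.e., by $N_M(T)\subseteq P$) that $v\in w W^M$ with $v$ itself normalizing $M(w)$ — the point being that $v$ can be taken in $w\cdot(W^M\cap W(M(w)))$ so that $v$ already lands in $wM(w)$ once we pass to its coset. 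The essential combinatorial input, as in Springer, is that within a fixed $(W^M,W^M)$-double coset the involutions can be conjugated by $W^M$ to one lying in the "small Weyl group" $wW^{M(w)}w^{-1}\cdot\{w\}$; I would extract this from the structure of ${}_MW_M$ and the description of $M(w)=M\cap wMw^{-1}$.

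Next I would promote this Weyl-group statement to the group $X$. Having arranged a representative $n\in N_G(T)$ of our orbit with $n\in wN_{M(w)}(T)$ (a representative of an element of $wW^{M(w)}$), I claim $n\in wM(w)$ literally, not just $n$ representing an element of $wW^{M(w)}$ in the Weyl group — here one uses that $n$ is a diagonal-times-monomial matrix and that conjugating by the torus $T\subseteq M(w)$ (again inside $P$) can be used to absorb the diagonal part into $M(w)$, exactly as in the explicit description of $w_0^{L_k}\cap X$ given in the proof of the previous lemma. So after a final conjugation by an element of $T\subseteq P$ we obtain an element of $wM(w)\cap\conj{x}{P}$, which is what is claimed.

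The main obstacle I anticipate is the bookkeeping in the first paragraph: making precise the passage "from $w\in{}_MW_M$ with $PxP=PwP$" to "a representative of $x$ lying in $wM(w)$" requires carefully tracking how the $P$-conjugation (as opposed to mere $G$-conjugation) interacts with the Bruhat cell structure, and in particular verifying that the reductions are carried out by elements of $P$ and not just of $G$. Concretely, one must check that when we conjugate $x$ into $N_G(T)$ via Lemma \ref{eq: Borel orbit} and then adjust by $N_M(T)$ and by $T$, every move stays inside $P$; the torus and $N_M(T)$ clearly lie in $P$, and $B\subseteq P$ handles the Borel step, so the only real content is the combinatorial claim that the $W^M$-conjugation can be chosen to land the involution in $w\cdot(W^{M(w)})$. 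I would isolate that as a small lemma about ${}_MW_M$ and involutions, or simply cite the parallel argument in \cite[Proposition 4.2.1]{MR2010737}, which treats the $\GL$ case by an identical mechanism; the symplectic case differs only in the explicit combinatorics of $\WR$, which has already been worked out above. The remaining steps are then routine.
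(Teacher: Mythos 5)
Your proposal follows essentially the same route as the paper: use Lemma~\ref{eq: Borel orbit} to place a $B$-conjugate (hence $P$-conjugate) of $x$ inside $N_G(T)$, then conjugate within the Weyl group by $W^M$ (realized by $N_M(T)\subseteq P$) to land in $wW^{M(w)}$, and observe that this already sits inside $wM(w)$. That is the correct skeleton.

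However, you treat the central combinatorial step --- that an involution in $W^M w W^M$ can be $W^M$-conjugated into $W^{M(w)}w$ --- as a ``small lemma'' to be ``extracted from the structure of ${}_MW_M$'' or simply cited. This is not a routine bookkeeping step; it is the entire content of the proof, and as stated it requires an argument. The paper's mechanism is: first note $w^2=1$ (from $PwP=PxP=(PxP)^{-1}=(PwP)^{-1}$ and minimality of $w$), then take $w'$ of \emph{minimal length} in the image of $\conj{x}{P}\cap N_G(T)$ in $W$, write the \emph{unique} reduced factorization $w'=w_1w''ww_2$ with $w_1^{-1},w_2\in W^M$ left $M(w)$-reduced and $w''\in W^{M(w)}$, invert both sides using $w^2=(w')^2=1$, and conclude from uniqueness that $w_2=w_1^{-1}$, so $w''w=w_1^{-1}w'w_1$ lies in the same $W^M$-orbit. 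Your proposal correctly flags this as ``the main obstacle,'' but without the uniqueness-plus-inversion trick you have not closed the gap, and the cited reference in the $\GL$ setting is what carries this weight --- it is not a formality.

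A minor point going the other way: the final ``absorb the diagonal part into $M(w)$'' step that you single out is actually automatic and needs no argument. Any $y\in N_G(T)$ whose image in $W$ lies in $W^{M(w)}w$ already lies in $M(w)w$, since $T\subseteq M(w)$ and $N_{M(w)}(T)\subseteq M(w)$, and $w$ normalizes $M(w)$; there is nothing to absorb. You also never establish that $w$ itself is an involution, which your step (2) tacitly uses.
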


\begin{proof}
Since $w$ is reduced and $PwP=PxP=(PxP)^{-1}=(PwP)^{-1}$ it follows that $w^2=1$.
Let $w'\in W$ be an element of minimal length in the image of $\conj{x}P\cap N_G(T)$
(a non-empty set by Lemma \ref{eq: Borel orbit}) under the natural map $N_G(T)\rightarrow W$.
Then $w'$ is an involution such that $Pw'P=PwP$ and therefore there exists a reduced expression
$w'=w_1 w'' w w_2$ with $w_1^{-1},\,w_2\in W^M$ both left $M(w)$-reduced and $w''\in W^{M(w)}$.
Such a decomposition is unique.
Since both $w$ and $w'$ are involutions we also have $w'=w_2^{-1}w(w'')^{-1}w_1^{-1}$.
It follows from the uniqueness of the decomposition that $w_2=w_1^{-1}$.
Thus, $w''w$ is $W^M$-conjugate to $w'$ and hence from the definition of $w'$, $w''w$ also has a representative in $\conj{x}P \cap N_G(T)$.
The minimality of $w'$ and the fact that $w_1 w'' w w_2$ is a reduced decomposition implies that $w'=w''w$.
This shows that there exists $y\in \conj{x}P \cap M(w)w$ as required.
\end{proof}

\begin{lemma}(cf. \cite[Proposition 4.2.2]{MR2010737}) \label{lem: std rep}
Let $w\in {}_MW_M$ and $x\in wM(w)\cap X$.
Then $\grp U(w)$ is a normal subgroup of $\pr_M(\grp P_x)$ contained in $\pr_M(\grp R(x))$ where $\grp R(x)$
is the unipotent radical of $\grp P_x$.
\end{lemma}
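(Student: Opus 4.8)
The plan is to work concretely with the given $x \in wM(w) \cap X$, writing $x = nm$ with $n \in w$ (a representative in $N_G(T)$) and $m \in M(w)$, and to exploit that $x$ is an involution. First I would record the structural consequence of $x$ being $M$-admissible-on-its-Bruhat-cell, namely that $w \grp M \subseteq \grp{N_G(M)}$ so $w$ normalizes $\grp M$ and $\grp U(w) = \grp M \cap w\grp Uw^{-1}$ is a legitimate subgroup of $\grp M$; the parabolic $\grp P(w) = \grp M(w) \ltimes \grp U(w)$ of $\grp M$ has $\grp M(w) = \grp M \cap w\grp Mw^{-1}$. Conjugation by $x$ on $\grp M$ decomposes as conjugation by $n$ (which sends $\grp M \cap w\grp Uw^{-1}$ into $\grp M$, since $w$ normalizes $\grp M$) followed by conjugation by $m \in M(w) \subseteq M(w) \cap w\grp M(w) w^{-1}$. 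The key point I would isolate is that $\conj{x}{} = \conj{n}{}$-conjugation on the subgroup $\grp U(w)$ behaves predictably: because $n \in w$ and $w \in {}_MW_M$, conjugation by $n$ maps $\grp U(w) = \grp M \cap w\grp Uw^{-1}$ onto $\grp M \cap w^{-1}\grp Uw$ ... hold on — rather, because $w^2 = 1$ (shown in Lemma \ref{lem: intersection}, and the same argument gives it here from $PxP = PwP = (PwP)^{-1}$), conjugation by $n$ sends $w\grp Uw^{-1}$ to $w^{-1}\grp Uw = w\grp Uw^{-1}$ up to the torus ambiguity, hence normalizes $\grp U(w)$.

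Next I would verify $\grp U(w) \trianglelefteq \pr_M(\grp P_x)$. Take $p \in \grp P_x$, so $p$ centralizes $x$. Write $p = m_p u_p$ with $m_p = \pr_M(p) \in \grp M$, $u_p \in \grp U$. From $pxp^{-1} = x$ and $x \in wM(w) \subseteq N_G(T)\grp M$, I would project to $\grp M$ (using that $x$ normalizes $\grp P$ because $x \in \grp P \cap w\grp Pw^{-1}$-type considerations: indeed $PxP = PwP$ and $x \in wM(w) \subseteq \grp P w \grp P$ lies in a set on which $\pr_M$ of the relevant stabilizer is controlled by \eqref{eq: projm} with $p = e$) to conclude $\conj{m_p}{} $ together with the inner automorphism $\conj x{}$ preserves $\grp M(w)$ and $\grp U(w)$. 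Concretely: $x$ normalizes $\grp U(w)$ by the previous paragraph, and the centralizer equation forces $m_p \, (x m_p^{-1} x^{-1}) \in \grp M(w)$, whence conjugation by $m_p$ normalizes the parabolic $\grp P(w) \cap x\grp P(w)x^{-1}$ of $\grp M$; unwinding, $m_p$ normalizes $\grp U(w)$. Since $\grp U(w)$ is already normal in $\grp M(w)$ (being the unipotent radical of $\grp P(w)$) and the full group $\pr_M(\grp P_x)$ is generated by such $m_p$'s together with $\grp M(w) \cap \pr_M(\grp P_x)$, normality follows.

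Finally I would show $\grp U(w) \subseteq \pr_M(\grp R(x))$, where $\grp R(x)$ is the unipotent radical of $\grp P_x = \grp C_P(x)$. The idea is that $\grp U(w)$ is unipotent, normal in $\pr_M(\grp P_x)$, and — crucially — lies in the centralizer $\grp P_x$ itself: an element $v \in \grp U(w) = \grp M \cap w\grp Uw^{-1}$ need not centralize $x$ outright, so instead I would argue that $\grp U(w)$, being normal unipotent in $\pr_M(\grp P_x)$, maps into the unipotent radical of $\pr_M(\grp P_x)$, and then identify $\pr_M(\grp R(x))$ with (or show it contains) that unipotent radical using that $\pr_M$ is a quotient map $\grp P \to \grp M$ with unipotent kernel $\grp U$, so it sends the unipotent radical of any subgroup into the unipotent radical of the image. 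The main obstacle I anticipate is the bookkeeping in the second step: correctly tracking how conjugation by a general $p \in \grp P_x$ (not just by $x$ or by $w$) acts on the two-step flag $\grp U(w) \subseteq \grp P(w) \subseteq \grp M$, and ensuring the torus-coset ambiguity in the choice of representative $n \in w$ does not interfere — this is exactly the kind of point handled in \cite[Proposition 4.2.2]{MR2010737}, whose argument I would adapt, using $w^2 = 1$ and $w \in {}_MW_M$ to pin down the relevant intersections.
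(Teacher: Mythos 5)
Your plan identifies the right targets but has a circularity that undercuts the whole argument, and the auxiliary fact you cite points in the wrong direction.

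First, observe that the normality assertion is essentially free: since $x\in wM(w)$ we have $x\grp Px^{-1}=w\grp Pw^{-1}$, so $\grp P_x\subseteq\grp P\cap w\grp Pw^{-1}$ and hence $\pr_M(\grp P_x)\subseteq\grp P(w)$; as $\grp U(w)$ is the unipotent radical of $\grp P(w)$, it is automatically normalized by the subgroup $\pr_M(\grp P_x)$ of $\grp P(w)$. There is no need for the elaborate tracking of $m_p$ and $xm_p^{-1}x^{-1}$ in your second paragraph — but this argument, like yours, only shows that $\pr_M(\grp P_x)$ \emph{normalizes} $\grp U(w)$, not that $\grp U(w)\subseteq\pr_M(\grp P_x)$. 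That containment is part of what the lemma asserts, and it follows only once you have $\grp U(w)\subseteq\pr_M(\grp R(x))$. So the normality statement should be regarded as a corollary of the containment statement, not a stepping stone towards it.

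Your third paragraph is therefore circular: to place $\grp U(w)$ inside $R_u(\pr_M(\grp P_x))$ you already assume $\grp U(w)\subseteq\pr_M(\grp P_x)$, which is exactly what is at stake. Moreover, the general fact you invoke — that $\pr_M$ sends the unipotent radical of a subgroup into the unipotent radical of the image — yields $\pr_M(\grp R(x))\subseteq R_u(\pr_M(\grp P_x))$, the \emph{opposite} inclusion to the one you need; establishing $R_u(\pr_M(\grp P_x))\subseteq\pr_M(\grp R(x))$ requires a separate argument (in characteristic $0$ one can use that an extension of a connected unipotent group by a connected unipotent group is connected unipotent, applied to $\pr_M^{-1}(R_u(\pr_M(\grp P_x)))\cap\grp P_x$), and even then it does not help without first knowing $\grp U(w)\subseteq\pr_M(\grp P_x)$.

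The missing idea is a \emph{lifting} argument: given $u\in\grp U(w)$, one must actually produce $p\in\grp R(x)$ with $\pr_M(p)=u$. The paper does this by writing $\grp P\cap x\grp Px^{-1}=\grp M(w)\ltimes\grp Z$ with $\grp Z=\grp U(w)(\grp U\cap w\grp Pw^{-1})$, so $\grp P_x=\grp M(w)_x\ltimes\grp Z_x$ and $\grp R(x)=\grp Z_x$, and then setting $v=xux\in\grp U\cap w\grp Pw^{-1}$ and $z=[v^{-1},u^{-1}]$. One checks $z\in\grp U':=\grp U\cap w\grp Uw^{-1}$ and that $z$ satisfies the cocycle condition $z\,\theta(z)=1$ for the involution $\theta(g)=xgx$, whence $H^1(\langle\theta\rangle,\grp U')=1$ (because $\grp U'$ is unipotent) produces $u'\in\grp U'$ with $z=u'\theta(u')^{-1}$, and finally $uvu'\in\grp Z_x$ with $\pr_M(uvu')=u$. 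This explicit cohomological lift — not any abstract statement about images of unipotent radicals — is the heart of the proof, and it is absent from your plan.
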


\begin{proof}
As in the proof of Lemma \ref{lem: intersection} we have $w^2=1$.
Note that $\grp P_x\subseteq \grp P\cap x\grp Px^{-1}=\grp P\cap w\grp Pw^{-1}$ (since $x\in wM(w)$) and therefore $\pr_M(\grp P_x)\subseteq \grp P(w)$.
Since $\grp U(w)$ is normal in $\grp P(w)$ it is enough to show that $\grp U(w)\subseteq \pr_M(\grp R(x))$.

Note that $\grp P\cap x\grp Px^{-1}=\grp M(w)\ltimes \grp Z$ is a Levi decomposition where
$\grp Z=\grp U(w)(\grp U\cap w\grp Pw^{-1})=\grp U(w)(\grp U\cap w\grp Mw^{-1})(\grp U\cap w\grp Uw^{-1})$ and we have
$x\grp M(w)x=\grp M(w)$ and $x\grp Zx=\grp Z$.
It follows that $\grp P_x=\grp M(w)_x\ltimes \grp Z_x$ and that $\grp R(x)=\grp Z_x$.

Let $u\in \grp U(w)$ and let $v=xux$. Then $v\in \grp U\cap w\grp Pw^{-1}\subseteq \grp Z$ and (since $u\in \grp M$ and $v\in \grp U$) also $u^{-1}vu\in \grp U$.
Therefore the commutator $z:=[v^{-1},u^{-1}]\in \grp U$. Thus, $xzx=[u^{-1},v^{-1}]=z^{-1}\in \grp U$ and therefore $z\in \grp{U'}:=\grp U\cap w\grp Uw^{-1}$.
Thus, $z$ satisfies the cocycle condition $z\theta(z)=1$ with respect to the involution $\theta(g)=xgx$ on $\grp{U'}$.
Since $\grp{U'}$ is a unipotent group we have $H^1(\langle\theta\rangle,\grp{U'})=1$, i.e., $z$ must be a co-boundary.
There exists therefore $u'\in \grp{U'}$ such that $z=u' \theta(u')^{-1}$.
Note that this means that $v^{-1}u^{-1} xux u= u' x{u'}^{-1}x$, i.e., that
$uv u'\in \grp Z_x$. But $vu'\in \grp U$ and therefore $\pr_M(uvu')=u$. The Lemma follows.
\end{proof}

Let $x\in X$. Recall that $\pr_M(\grp P\cap x\grp Px^{-1})$ is a parabolic subgroup of $\grp M$. Let $\grp U(x)$ be its unipotent radical
and as before let $\grp R(x)$ be the unipotent radical of $\grp P_x$.

\begin{lemma} \label{lem: nonadmissible}
Let $x\in X$.
\begin{enumerate}
\item \label{part: rad} The kernel of $\pr_M:\grp P_x\rightarrow \grp M$ is contained in $\grp R(x)$;
\item \label{part: norm} $\grp U(x)$ is a normal subgroup of $\pr_M(\grp P_x)$ contained in $\pr_M(\grp R(x))$;
\item \label{part: 0int}Let $\chi$ be a character of $\grp P_x(\A)^1\bs \grp P_x (\A)$.
Then for every function $f: \grp U(\A)M\bs \grp P(\A)\to \C$ such that
\[
\int_{U(x) \bs \grp U(x)(\A)} f(up)\ du=0,\ p\in \grp P(\A)
\]
we have
\[
\int_{P_x\bs \grp P_x(\A)} f(p)\chi(p)\ dp=0
\]
(provided that the integral converges).
In particular,
\[
\int_{P_x\bs \grp P_x(\A)} f(p)\modulus_{P_x}^{-1}(p)\ dp=0.
\]
\end{enumerate}
\end{lemma}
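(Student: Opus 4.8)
The plan is to establish the three assertions in turn; the first two are structural, and the third is an unfolding built on them.

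\emph{First assertion.} The kernel of $\pr_M\colon\grp P_x\to\grp M$ is $\grp P_x\cap\grp U=\grp U_x$, and this is normal in $\grp P_x$: for $q\in\grp P_x$ and $u\in\grp U_x$ we have $quq^{-1}\in\grp U$ (since $\grp U\trianglelefteq\grp P$) and $x(quq^{-1})x=quq^{-1}$ (since $x$ centralises $q$ and $u$). In characteristic zero $\grp U_x$ is connected: its Lie algebra is the $\operatorname{Ad}(x)$-fixed subspace of $\Lie\grp U$, a unipotent group in characteristic zero is determined by its Lie algebra, and it has no non-trivial finite subgroups. A connected normal unipotent subgroup of $\grp P_x$ lies in its unipotent radical $\grp R(x)$, which is \eqref{part: rad}. (Alternatively this drops out of the proof of Lemma \ref{lem: std rep}.)

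\emph{Second assertion.} I would reduce to the representatives handled by Lemma \ref{lem: std rep}. Let $w\in{}_MW_M$ be the element with $PwP=PxP$; by Lemma \ref{lem: intersection} there is $p\in P$ with $x':=pxp^{-1}\in wM(w)\cap X$. Since $p$ normalises $\grp P$, conjugation by $p$ gives $\grp P_{x'}=p\grp P_xp^{-1}$, $\grp R(x')=p\grp R(x)p^{-1}$ and $\grp P\cap x'\grp Px'^{-1}=p(\grp P\cap x\grp Px^{-1})p^{-1}$; applying $\pr_M$ (a homomorphism on $\grp P$) yields $\grp U(x')=\pr_M(p)\grp U(x)\pr_M(p)^{-1}$, $\pr_M(\grp P_{x'})=\pr_M(p)\pr_M(\grp P_x)\pr_M(p)^{-1}$, and the analogue for $\grp R$. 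On the other hand, if $x'=nm_0$ with $n\in w$ and $m_0\in M(w)\subseteq M$, then $x'\grp Px'^{-1}=n\grp Pn^{-1}=w\grp Pw^{-1}$ because $m_0\in\grp M$ normalises $\grp P$; hence $\grp U(x')=\grp U(w)$. Lemma \ref{lem: std rep} applied to $x'$ gives that $\grp U(w)$ is normal in $\pr_M(\grp P_{x'})$ and contained in $\pr_M(\grp R(x'))$, and conjugating back by $\pr_M(p)^{-1}$ gives \eqref{part: norm}.

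\emph{Third assertion.} Set $\grp S=\{q\in\grp P_x:\pr_M(q)\in\grp U(x)\}$, the preimage of $\grp U(x)$ under $\pr_M\colon\grp P_x\twoheadrightarrow\pr_M(\grp P_x)$. By \eqref{part: norm}, $\grp S\trianglelefteq\grp P_x$; moreover $\grp S$ sits in an exact sequence $1\to\grp U_x\to\grp S\xrightarrow{\pr_M}\grp U(x)\to1$, so it is unipotent, whence $X^*(\grp S)=0$, $\grp S(\A)\subseteq\grp P_x(\A)^1$ (so $\chi\equiv1$ on $\grp S(\A)$), and $S\bs\grp S(\A)$ is compact. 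Unfolding the (assumed convergent) integral along $\grp S$ and using $\chi(sp)=\chi(p)$,
\[
\int_{P_x\bs\grp P_x(\A)}f(p)\chi(p)\ dp=\int_{P_x\grp S(\A)\bs\grp P_x(\A)}\chi(p)\Big(\int_{S\bs\grp S(\A)}f(sp)\ ds\Big)\ dp,
\]
so it is enough to show the inner integral vanishes for each $p$. Using the exact sequence above to fibre $S\bs\grp S(\A)$ over $U(x)\bs\grp U(x)(\A)$ with compact fibre $\grp U_x(F)\bs\grp U_x(\A)$, and fixing a measurable section $\mu\mapsto\sigma(\mu)$ of $\pr_M\colon\grp S(\A)\to\grp U(x)(\A)$, the inner integral equals, up to a positive constant,
\[
\int_{U(x)\bs\grp U(x)(\A)}\int_{\grp U_x(F)\bs\grp U_x(\A)}f\big(c\,\sigma(\mu)\,p\big)\ dc\ d\mu=\int_{U(x)\bs\grp U(x)(\A)}f\big(\sigma(\mu)\,p\big)\ d\mu,
\]
the last step by left $\grp U(\A)$-invariance of $f$ since $\grp U_x\subseteq\grp U$. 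Finally, writing $\sigma(\mu)=\mu\,u(\mu)$ with $u(\mu)\in\grp U(\A)$ the $\grp U$-part of $\sigma(\mu)$ in $\grp P=\grp M\ltimes\grp U$ (its $\grp M$-part being $\pr_M(\sigma(\mu))=\mu$) and using that $\mu\in\grp M(\A)$ normalises $\grp U(\A)$, left $\grp U(\A)$-invariance gives $f(\sigma(\mu)p)=f\big((\mu\,u(\mu)\,\mu^{-1})\,\mu p\big)=f(\mu p)$; hence the inner integral is a constant multiple of $\int_{U(x)\bs\grp U(x)(\A)}f(\mu p)\ d\mu$, which vanishes by hypothesis since $p\in\grp P(\A)$. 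Taking $\chi=\modulus_{P_x}^{-1}$, a character of $\grp P_x(\A)^1\bs\grp P_x(\A)$, gives the last assertion.

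\emph{Main obstacle.} The substantive point is isolating the right group to integrate over in the third part, namely the auxiliary unipotent $\grp S$ rather than $\grp R(x)$ itself, together with the remark that the $\grp U(\A)$-component of a section of $\pr_M$ is immaterial because $\grp U(x)\subseteq\grp M(\A)$ normalises $\grp U(\A)$. The conjugation bookkeeping in the second part and the measure-theoretic routine (compactness of unipotent quotients, existence of measurable sections, Fubini from the assumed convergence) are straightforward.
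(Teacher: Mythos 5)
Your proof is correct, and parts \eqref{part: rad} and \eqref{part: norm} are essentially the paper's argument (for \eqref{part: rad} the paper is terser, simply observing that the kernel is a unipotent normal subgroup of $\grp P_x$ and hence contained in the unipotent radical; your aside about connectedness in characteristic zero, which makes this automatic, is a reasonable thing to point out). In part \eqref{part: 0int} your decomposition differs from the paper's in the bookkeeping but not in substance: the paper descends both $f$ and $\chi$ along $\pr_M$ to the image $\grp S=\pr_M(\grp P_x)$, turning the integral over $P_x\bs\grp P_x(\A)$ into one over $S\bs\grp S(\A)$, and then unfolds $\grp U(x)\trianglelefteq\grp S$ in one step; you instead work upstairs in $\grp P_x$, unfold along the unipotent preimage $\pr_M^{-1}(\grp U(x))\cap\grp P_x$, and then fibre the resulting compact inner integral over $U(x)\bs\grp U(x)(\A)$ using a measurable section. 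Both routes reduce to the vanishing of $\int_{U(x)\bs\grp U(x)(\A)}f(u\cdot)\,du$; the paper's version is slightly cleaner in that it avoids choosing a section (since $f$, being left $\grp U(\A)M$-invariant, literally factors through $\pr_M$), but your handling is sound, including the observation that the $\grp U(\A)$-component of a section is harmless because $\grp U(x)\subseteq\grp M$ normalises $\grp U$.
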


\begin{proof}
Since the kernel of $\pr_M:\grp P_x\rightarrow \grp M$ is contained in $\grp U$, it is a unipotent normal subgroup of $\grp P_x$. Part \eqref{part: rad} follows.
Let $w\in {}_MW_M$ be such that $PxP=PwP$ and let $y\in \conj{x}P\cap M(w)w$ (which exists by Lemma
\ref{lem: intersection}). Let $p\in P$ be such that $x=pyp^{-1}$. Then
\[
\pr_M(\grp P_x)=\pr_M(p)\pr_M(\grp P_y)\pr_M(p)^{-1} \ \ \ \text{and}\ \ \ \grp R(x)=p\grp R(y)p^{-1}.
\]
Note further that $\grp P\cap x\grp Px^{-1}=p(\grp P\cap y\grp P y^{-1})p^{-1}$ and therefore
\[
\pr_M(\grp P\cap x\grp Px^{-1})=\pr_M(p)\pr_M(\grp P\cap y\grp P y^{-1})\pr_M(p)^{-1}.
\]
Part \eqref{part: norm}  therefore follows from
Lemma \ref{lem: std rep}.

Let $\grp S=\pr_M(\grp P_x)$. Clearly $\grp R(x)(\A)\subseteq \grp P_x(\A)^1$, $\grp R(x)$ being a unipotent group.
Therefore, by part \eqref{part: rad} $\chi$ induces a quasi-character $\delta:S\bs \grp S(\A)\to \C^*$.
By the invariance properties of $f$ we have a normalization of measures such that
\[
\int_{P_x\bs \grp P_x(\A)} f(p)\chi(p)\ dp=\int_{S\bs \grp S(\A)} f(s) \delta(s)\ ds.
\]
From part \eqref{part: norm} $\grp U(x)$ is a normal unipotent subgroup of $\grp S$. Therefore we have
\[
\int_{S\bs \grp S(\A)} f(s) \delta(s)\ ds=\int_{\grp U(x)(\A)S\bs \grp S(\A)}  \int_{\AS{U(x)}} f(us)\ du\ \delta(s)    \ ds.
\]
Part \eqref{part: 0int} follows.
\end{proof}

\begin{lemma}\label{lem: factor adm}
For $x\in N_G(M)\cap X$ we have $\grp P_x=\grp M_x\ltimes \grp U_x$.
\end{lemma}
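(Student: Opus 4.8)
The plan is to identify $\grp P_x$ with the fixed‑point subgroup of conjugation by $x$ on $\grp P\cap x\grp Px^{-1}$ and to show that this conjugation respects a Levi decomposition of the latter group.

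First I would record that $x^2=1$: every element of $X$ is $G$‑conjugate to the involution $\epsilon$, hence is itself an involution. Since $x\in N_G(M)$ we have $x\grp Mx^{-1}=\grp M\subseteq\grp P$, so $\grp M\subseteq\grp P\cap x\grp Px^{-1}$; in particular $x$ is $M$‑admissible. Applying Dedekind's modular law to $\grp M\subseteq x\grp Px^{-1}$ inside $\grp P=\grp M\grp U$ then gives the Levi decomposition $\grp P\cap x\grp Px^{-1}=\grp M\ltimes\grp W$ with unipotent radical $\grp W:=\grp U\cap x\grp Px^{-1}$. The key observation is that conjugation by $x$ stabilizes $\grp W$; this is where $x^2=1$ enters. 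Indeed,
\[
x\grp Wx^{-1}=x\grp Ux^{-1}\cap x^2\grp Px^{-2}=x\grp Ux^{-1}\cap\grp P,
\]
and the right‑hand side is again the unipotent radical of $\grp P\cap x\grp Px^{-1}$ — this time read off from the presentation $x\grp Px^{-1}=\grp M\ltimes x\grp Ux^{-1}$ by the same modular‑law argument — hence equals $\grp W$. Thus conjugation by $x$ is an involutive automorphism of $\grp P\cap x\grp Px^{-1}$ stabilizing each of the two factors $\grp M$ and $\grp W$.

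Next I would note that $\grp P_x\subseteq\grp P\cap x\grp Px^{-1}$, since $p\in\grp P$ with $xp=px$ satisfies $p=xpx^{-1}\in x\grp Px^{-1}$. Hence $\grp P_x$ is exactly the subgroup of elements of $\grp M\ltimes\grp W$ fixed by conjugation by $x$. Writing such an element as $mu$ with $m\in\grp M$ and $u\in\grp W$, its image under conjugation by $x$ is $(xmx^{-1})(xux^{-1})$ with $xmx^{-1}\in\grp M$ and $xux^{-1}\in\grp W$; by uniqueness of the decomposition, $mu$ is fixed if and only if $m$ and $u$ are each fixed. Therefore $\grp P_x=\grp M_x\ltimes\grp W_x$, where $\grp W_x=\{u\in\grp W:xu=ux\}$. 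Finally $\grp W_x=\grp U_x$: the inclusion $\grp W_x\subseteq\grp U_x$ is immediate from $\grp W\subseteq\grp U$, and conversely $\grp U_x\subseteq\grp P_x\subseteq\grp M\grp W$ together with $\grp U_x\subseteq\grp U$ and $\grp M\cap\grp U=1$ (so that $\grp M\grp W\cap\grp U=\grp W$) forces $\grp U_x\subseteq\grp W$, hence $\grp U_x\subseteq\grp W_x$. This yields $\grp P_x=\grp M_x\ltimes\grp U_x$.

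The only point needing a little care is the assertion that $\grp P\cap x\grp Px^{-1}=\grp M\ltimes\grp W$ is a genuine Levi decomposition — in particular that $\grp W$ is connected and is the unipotent radical, as used in both presentations above — but this is standard structure theory of parabolic subgroups in relative position and is implicit in \S\ref{sec: double cosets}. Beyond that, the whole argument rests on the single special feature that $x$ is an involution: this is precisely what makes $\grp W$ stable under conjugation by $x$, and without it both the proof and the statement would break down.
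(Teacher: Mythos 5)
Your proof is correct and follows essentially the same route as the paper: exhibit a Levi decomposition of $\grp P\cap x\grp Px^{-1}$ with Levi factor $\grp M$, observe that conjugation by the involution $x$ preserves this decomposition, note that $\grp P_x=(\grp P\cap x\grp Px^{-1})_x$, and take fixed points. The only cosmetic difference is that you write the unipotent radical as $\grp U\cap x\grp Px^{-1}$ while the paper writes it as $\grp U\cap x\grp Ux^{-1}$; these coincide (both being the unipotent radical of $\grp P\cap x\grp Px^{-1}$), and the paper's form makes the $x$-stability of that factor slightly more immediate.
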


\begin{proof}
This follows from the fact that
\[
	\grp P\cap x\grp Px^{-1}=\grp M\ltimes (\grp U\cap x\grp Ux^{-1})
\]
is a Levi decomposition which is invariant under conjugation by $x$
and $\grp P_x=(\grp P\cap x\grp Px^{-1})_x$.
\end{proof}

We now describe explicitly the $M$-admissible $P$-orbits in $X$.

\begin{lemma}\label{lem: M orbit map}
The map $\conj{x}P\mapsto\conj{x}P\cap N_G(M)$ is a bijection between the $M$-admissible $P$-orbits in $X$
and the $M$-orbits in $N_G(M)\cap X$.
\end{lemma}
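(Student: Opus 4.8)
The plan is to establish the bijection by analyzing both directions, using the $M$-admissibility criterion of Lemma \ref{lem: char adm} and the structural results already in place. First I would check that the map is well-defined: if $x \in X$ is $M$-admissible, then $PxP = PwP$ for a unique $w \in {}_MW_M$ with $w \subseteq N_G(M)$ (this is condition (6) in the list of equivalent conditions preceding Lemma \ref{lem: char adm}), so by the argument in the proof of Lemma \ref{lem: char adm} we have $x \in u_1 n M u_2$ for some $u_1, u_2 \in U$ and $n \in w$. The key point is that $\conj{x}{P} \cap N_G(M)$ is non-empty; I would deduce this from Lemma \ref{lem: intersection}, which gives $y \in wM(w) \cap \conj{x}{P}$, combined with $M$-admissibility (condition (4), $\grp M(w) = \grp M$), so $y \in wM \subseteq N_G(M)$. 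Since $N_G(M)$ is normalized by $M$, conjugating the $P$-orbit representative in $N_G(M)$ by elements of $M$ lands back in $N_G(M)$; this shows $\conj{x}{P} \cap N_G(M)$ contains a full $M$-orbit.

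Next I would show the assignment lands in $M$-orbits and is injective. For this, suppose $x, x' \in N_G(M) \cap X$ are in the same $P$-orbit, say $x' = pxp^{-1}$ with $p \in P$; writing $p = mu$ with $m \in M$, $u \in U$, I want to conclude $x'$ and $x$ (or rather $mxm^{-1}$) differ by something controlled. The idea is that $x' (mxm^{-1})^{-1}$ should be forced into $U$, and then a unipotent cohomology or direct computation argument — along the lines used in Lemma \ref{lem: std rep} — forces it to be trivial, giving $x' = mxm^{-1}$, i.e. $x$ and $x'$ lie in the same $M$-orbit. Concretely: since $x, x' \in N_G(M)$, conjugation by each preserves the Levi decomposition $\grp P \cap x \grp P x^{-1} = \grp M \ltimes (\grp U \cap x\grp Ux^{-1})$ as in Lemma \ref{lem: factor adm}, so $\grp P_x = \grp M_x \ltimes \grp U_x$ and likewise for $x'$; comparing the $\pr_M$-images of $\grp P_{x'} = p \grp P_x p^{-1}$ via \eqref{eq: projm} and using $\pr_M(\grp P_x) = \grp M_x$ pins down $m$ modulo $M_x$, after which the $U$-component is handled by the $H^1$-vanishing for unipotent groups.

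For surjectivity: given any $M$-orbit $\conj{y}{M}$ in $N_G(M) \cap X$, the element $y$ lies in $X$ and is $M$-admissible (it is in $N_G(M) = UN_G(M)U$, so Lemma \ref{lem: char adm} applies), and clearly $\conj{y}{P} \cap N_G(M) \supseteq \conj{y}{M}$; I would then need that this intersection is exactly one $M$-orbit, which is precisely the injectivity statement applied within a single $P$-orbit. So surjectivity and injectivity really come down to the same lemma: for a fixed $M$-admissible $x$, $\conj{x}{P} \cap N_G(M)$ is a single $M$-orbit. I expect the main obstacle to be this last assertion — specifically, showing that two elements of $N_G(M) \cap X$ conjugate by $p = mu \in P$ are already conjugate by $m \in M$. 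The technical heart is the unipotent cocycle argument: writing $z = (mxm^{-1})^{-1} x'$, one shows $z \in \grp U$ and that $z$ satisfies $z \cdot (m x m^{-1}) z (mxm^{-1})^{-1} = 1$ or a similar twisted condition, then invokes $H^1(\langle\theta\rangle, \grp U) = 1$ as in Lemma \ref{lem: std rep} to write $z$ as a coboundary and absorb it, concluding $z = 1$. Once this is in hand, well-definedness, injectivity, and surjectivity all follow formally, so I would organize the write-up around first proving "$\conj{x}{P} \cap N_G(M)$ is a single non-empty $M$-orbit for $M$-admissible $x$" and then deriving the bijection as an immediate consequence.
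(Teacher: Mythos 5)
Your structural reduction is exactly the one the paper uses: first show that for an $M$-admissible $x$, $\conj{x}{P}\cap N_G(M)$ is a single non-empty $M$-orbit, and then observe that well-definedness, injectivity and surjectivity of the map all follow formally. The non-emptiness step is also correct and matches the paper: Lemma~\ref{lem: intersection} gives $y\in wM(w)\cap\conj{x}{P}$, and $M$-admissibility forces $M(w)=M$, so $y\in wM\subseteq N_G(M)$.

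The gap is in your treatment of the ``single orbit'' claim, which you rightly flag as the technical heart. Your concrete suggestions there do not go through.
First, the element $z=(mxm^{-1})^{-1}x'$ is \emph{not} in $U$. Using $x'=muxu^{-1}m^{-1}$ and $x^2=1$, one computes $z=m\bigl(xux\cdot u^{-1}\bigr)m^{-1}$. Here $xux=xux^{-1}$ lies in $xUx^{-1}=wUw^{-1}$, which is \emph{not} $U$ unless $w=1$; so $z\in wUw^{-1}\cdot U$, which is strictly larger than $U$. (What \emph{is} true, and what saves the argument if you pursue the direct-computation route, is that $z\in M$: both $mxm^{-1}$ and $x'$ lie in $\conj{x}{P}\cap N_G(M)\subseteq PwP\cap N_G(M)=Mw$, so $z\in w^{-1}Mw=M$. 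Combining $z\in M$ with $z\in wUw^{-1}\cdot U\subseteq \bar U\cdot U$ and the Bruhat factorization $\bar U M U$ then yields $z=1$. This works, but it is a different mechanism from the one you describe.)
Second, the $H^1$-vanishing from Lemma~\ref{lem: std rep} does not transfer to this setting. There, the cocycle lives in $U'=U\cap wUw^{-1}$, which is $\theta$-stable under $\theta(g)=xgx$; here $\mathrm{Ad}(x)$ \emph{swaps} $U$ and $wUw^{-1}$ rather than preserving $U$, so there is no $\theta$-stable unipotent group in which to place your putative cocycle, and no analogue of the coboundary argument.
Third, the $\pr_M$-comparison of stabilizers is correct as far as it goes ($M_{x'}=mM_xm^{-1}$), but it cannot ``pin down'' anything beyond the stabilizer level: equal stabilizers do not imply that $x'$ equals $mxm^{-1}$.

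The paper's own proof avoids all of this with a different trick. It takes an auxiliary $n\in\conj{x}{P}\cap N_G(T)$ (existence from the Borel-orbit Lemma~\ref{eq: Borel orbit}) and notes that $xn^{-1},yn^{-1}\in M$. From $x=pyp^{-1}$ it derives the relation $xn^{-1}=p(yn^{-1})(np^{-1}n^{-1})$, which is an identity among elements of $P$; applying the group homomorphism $\pr_M:P\to M$ annihilates all $U$-contributions in one stroke and produces $x=mym^{-1}$ directly. If you want to keep your framing, you should replace the $H^1$ sketch either by this $\pr_M$ argument or by the $z\in M\cap(\bar U U)=\{1\}$ factorization argument above.
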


\begin{proof}
Let $x\in X$ be $M$-admissible and let $w\in {}_MW_M$ be such that $PxP=PwP$.
From the definition of $M$-admissibility it follows that $wM\subseteq N_G(M)$.
It also follows from Lemma \ref{lem: intersection} that without loss of generality we may assume $x\in wM$.
Let $y\in \conj{x}P\cap N_G(M)$. Recall that $N_G(M)$ is the disjoint union of $M\sigma$ over all $\sigma\in {}_MW_M$ such that
$\sigma M\sigma^{-1}=M$ and $G$ is the disjoint union of $P\sigma P$ over all $\sigma\in {}_MW_M$.
Since $\conj{y}P=\conj{x}P\subseteq PwP$, it follows that $y\in PwP\cap N_G(M)=Mw$, i.e., $Mx=My$.
Let $n\in \conj xP\cap N_G(T)$. Then since $PwP\cap N_G(T)\subseteq Mw$ we see that $n\in Mw$, i.e., $xn^{-1},\,yn^{-1}\in M$.

Let $p=mu\in P$ be such that $x=pyp^{-1}$, with $m\in M$ and $u\in U$.
Then
\begin{equation}\label{eq: over P}
xn^{-1}=p(yn^{-1})(np^{-1}n^{-1}).
\end{equation}
In particular, $np^{-1}n^{-1}\in P$ and therefore $p\in P\cap n^{-1}Pn=M(U\cap n^{-1}Un)$, i.e. $u\in U\cap n^{-1}Un$.
It follows that $\pr_M(np^{-1}n^{-1})=nm^{-1}n^{-1}$ and therefore
applying $\pr_M$ to \eqref{eq: over P} we get
\[
xn^{-1}=m(yn^{-1})(nm^{-1}n^{-1}).
\]
Therefore $x=mym^{-1}$. This shows that $\conj xP\mapsto \conj xP\cap N_G(M)$ is a well-defined map from $M$-admissible $P$-orbits
in $X$ to $M$-orbits in $N_G(M)\cap X$. It is clearly injective, and it is surjective from the definition of $M$-admissibility.
\end{proof}

Next we analyze the $M$-orbits in $N_G(M)\cap X$.
Recall that $W(M,M)$ is a subgroup of $W$ which can be identified with $N_G(M)/M$.
We denote the resulting isomorphism by $\imb_M: N_G(M)/M\simeq W(M,M)\hookrightarrow W$ (which we also view as a homomorphism
$N_G(M)\rightarrow W$).

The following definitions are given in \cite{MR2010737}.
\begin{definition}\label{def: adm min}
We denote by $W(M,M)_2$ the set of involutions in $W(M,M)$.
An element $w\in W(M,M)_2$ is $M$-minimal if it is of the form $w_M^L$ for some Levi subgroup $\grp L$ containing $\grp M$
and $w_M^L$ acts as $-1$ on $\aaa_M^L$.
\end{definition}

\begin{lemma}\label{lem: conj min} (\cite[Corollary 3.3.1]{MR2010737}).
For every $w\in W(M,M)_2$ there exists a Levi subgroup $M'$ and $\sigma\in W(M,M')$ such that $\sigma w\sigma^{-1}\in W(M',M')_2$ is $M'$-minimal.
\end{lemma}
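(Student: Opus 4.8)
The plan is to reduce, by a conjugation, to the situation where $w$ stabilizes a standard parabolic and then argue inside a smaller group. First I would note that $w\in W(M,M)_2$ means $w$ is an involution with $wMw^{-1}=M$, so $w$ normalizes $\aaa_M$ and acts on $\aaa_M^G$ as an involution. Decompose $\aaa_M^G = V_+ \oplus V_-$ into the $(+1)$- and $(-1)$-eigenspaces of $w$. The key structural observation is that $V_-$, being the $(-1)$-eigenspace, determines a Levi subgroup: let $\grp L$ be the smallest Levi subgroup containing $\grp M$ with $\aaa_L \supseteq V_+$ (equivalently, such that $\aaa_M^L$ is the smallest ``standard'' subspace containing $V_-$). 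One then wants $w$ to be, up to conjugacy, exactly $w_M^L$ for a suitable position of $M$ inside $L$.

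The main step is the following: after replacing $M$ by $M' = \sigma M\sigma^{-1}$ for an appropriate $\sigma\in W(M,M')$ (so that $\sigma w\sigma^{-1}$ has minimal length in its coset and still lies in $W(M',M')_2$), we may assume that $w$ maps the set of positive roots in $\aaa_M^L$ (for the chamber attached to our fixed $P\supseteq P_0$) to negative ones, i.e.\ $w$ is the longest element $w_M^L$ of $W(M)\cap W^L$. Concretely I would argue as follows. Among all $M''$ conjugate to $M$ (via elements of the appropriate $W(M,M'')$) and all conjugates $w''$ of $w$, choose one for which the ``positive part'' of the action on $\aaa_{M''}^L$ is as large as possible, or equivalently minimize the length contribution of $w''$ transverse to $L$. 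If $w''$ did not act as $-1$ on $\aaa_{M''}^L$, there would be a simple root $\alpha\in\srts_{M''}^L$ with $w''\alpha>0$; using that $W(M)\cap W^L$ acts simply transitively on the relevant chambers one produces a conjugate with strictly larger positive part, a contradiction. This forces $\sigma w\sigma^{-1}=w_{M'}^L$ and $w_{M'}^L$ to act as $-1$ on $\aaa_{M'}^L$, which is precisely $M'$-minimality by Definition \ref{def: adm min}.

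The main obstacle I anticipate is bookkeeping with the coset representatives: one must ensure at each stage that the conjugating element $\sigma$ can be taken in $W(M,M')$ (minimal length in $\sigma W^M$) and that $\sigma w\sigma^{-1}$ remains of minimal length in its $W^{M'}$-coset, so that it genuinely lies in $W(M',M')_2$ and the statement is about the correct objects. This is where the analogous argument for the Borel case (the proof of \eqref{part: conj of min}, reducing a minimal involution to $w_0^{L_k}$) is the model: there the statement ``$n_1,\dots,n_k\le 2$ and $w_0^L$ is conjugate to $w_0^{L_k}$'' came out of exactly such a normalization, and here the role of the ambient group $G$ is played by $L$ and the role of $T$ by $M$. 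Since this lemma is quoted from \cite[Corollary 3.3.1]{MR2010737}, I would in fact simply cite that reference rather than reproduce the argument; the sketch above indicates the shape of the proof given there.
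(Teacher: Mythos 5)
Your proposal ends by citing \cite[Corollary~3.3.1]{MR2010737}, which is precisely what the paper does: no independent proof of Lemma~\ref{lem: conj min} is given, only the citation embedded in the statement. So the proposal matches the paper's approach. That said, the sketch you offer of the cited argument does not quite reflect it, and two points are worth flagging. First, $M$-minimality (Definition~\ref{def: adm min}) packages two conditions: that $w=w_M^L$ for some Levi $L\supseteq M$, and that $w_M^L$ acts as $-1$ on $\aaa_M^L$. If $L$ is taken with $\aaa_L$ equal to the $+1$-eigenspace $V_+$ of $w$ (this is the $L(x)$ construction recalled just before Definition~\ref{def: stdx}), the second condition holds automatically; what genuinely needs a conjugation is the first, and your extremal step (``enlarge the positive part of the action on $\aaa_{M''}^L$'') does not isolate that obstruction — indeed the direction of the optimization as stated (maximize the positive part versus minimize the transverse length) points two different ways. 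Second, the actual proof in \cite{MR2010737} is not an extremal-choice-plus-contradiction argument but a direct descent along the edges of the quiver $\grph$ that appears in \S\ref{sec: exponents} of the present paper: one repeatedly conjugates by $n_\alpha\in s_\alpha M$ with $x\alpha<0$, $x\alpha\ne-\alpha$, strictly decreasing a length at each step until the minimal position is reached. This is the same mechanism that underlies Corollary~\ref{cor: exp w} here. Since the lemma is handled by citation in both the paper and your proposal, these issues do not affect correctness, but the sketch should not be read as a faithful account of the reference.
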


Let
\[
\WR_M=\{w\in W(M,M):wM\cap X\ne\emptyset\}\subset W(M,M)_2
\]
so that
\begin{equation}\label{eq: w union}
N_G(M)\cap X=\coprod_{w\in \WR_M}wM \cap X.
\end{equation}
In other words $\WR_M=\imb_M(N_G(M)\cap X)$.

It is immediate from Lemma \ref{lem: M orbit map} that $x\in X$ is $M$-admissible if and only if $PxP=PwP$ where $w\in\WR_M$.

It is also clear that
\begin{equation}\label{eq: conj adm}
\text{if $w\in \WR_M$ and $\sigma\in W(M,M')$ then $\sigma w\sigma^{-1}\in \WR_{M'}$}.
\end{equation}


\begin{definition}\label{def: std basic}
A pair $(M,L)$ of Levi subgroups with $M\subset L$ is called \emph{standard \rlvt} if $M$
and $L$ are of the form $M=M_{(r_1,r_1,\dots,r_k,r_k,s_1,\dots,s_l,t_1,\dots,t_m;u)}$ and $L=M_{(2r_1,\dots,2r_k,s_1,\dots,s_l;v)}$
(with $k$, $l$, $m$, $u$ or $v$ possibly zero) where $t_1,\dots,t_m$ are even and $v=u+t_1+\dots+t_m$. Thus,
\[
M\simeq\overbrace{\GL_{r_1}\times\GL_{r_1}\times\dots\times\GL_{r_k}\times\GL_{r_k}}^{M_1}
\times\overbrace{\GL_{s_1}\times\dots\times\GL_{s_l}}^{M_2}\times\overbrace{\GL_{t_1}\times\dots\times\GL_{t_m}\times\Sp_u}^{M_3},
\]
and
\[
L\simeq\overbrace{\GL_{2r_1}\times\dots\times\GL_{2r_k}}^{L_1}\times\overbrace{\GL_{s_1}\times\dots\times\GL_{s_l}}^{L_2}\times\overbrace{\Sp_v}^{L_3}
\]
with $M_1\subset L_1$, $M_2=L_2$, $M_3\subset L_3$.
\end{definition}

For instance, the standard \rlvt\ pairs $(M,L)$ with $M=T$ are $(T,M_{(2^{(k)},1^{(2n-2k)};0)})$, $k=0,\dots,n$.

More generally, a pair $(M,L)$ consisting of a Levi subgroup $M$ and a \emph{semistandard} Levi subgroup $L$ containing $M$
is \rlvt\ if there exists $w\in W(M)$ such that $(wMw^{-1},wLw^{-1})$ is a standard \rlvt\ pair.

\begin{lemma}\label{lem: min is basic}
Let $M\subseteq L$ be Levi subgroups of $G$.
\begin{enumerate}
\item \label{part: min basic}
Assume that $w_M^L\in \WR_M$ is an $M$-minimal involution. Then there exists $\sigma\in W(L)\cap W^{M_{(2n;0)}}$ such that
$(\sigma M\sigma^{-1},\sigma L\sigma^{-1})$ is a standard \rlvt\ pair. In particular, $(M,L)$ is a \rlvt\ pair.
\item\label{part: basic min} If $(M,L)$ is a standard \rlvt\ pair then $w_M^L\in \WR_M$ is an $M$-minimal involution.
\end{enumerate}
\end{lemma}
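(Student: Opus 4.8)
The plan is to work with the standard identity $w_M^L=w_0^Lw_0^M$ (the minimal-length representative of the coset $w_0^LW^M$, which is the longest element of $W(M)\cap W^L$) and with Lemma \ref{lem: symp inv}, which identifies $X=[\epsilon_{2n}]_G$ with the set of involutions $x$ of $\Sp_{2n}\subset\GL_{4n}$ whose $+1$- and $-1$-eigenspaces in $F^{4n}$ both have dimension $2n$. Throughout I write $\omega$ for the symplectic form on $F^{4n}$, and for a standard Levi $M$ of $G$ I let $V=\bigoplus_iU_i\oplus U_0\oplus\bigoplus_iU_i^{*}$ be the associated decomposition, so that the $i$-th $\GL$-factor of $M$ acts on $U_i$ (and on $U_i^{*}$), the $\Sp$-factor acts on $U_0$, $U_i$ is paired with $U_i^{*}$, and distinct $U_i\oplus U_i^{*}$ are mutually orthogonal nondegenerate symplectic subspaces.

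\emph{The second assertion.} Let $(M,L)$ be a standard \rlvt\ pair and write $L\simeq L_1\times L_2\times L_3$, $M\simeq M_1\times M_2\times M_3$ as in Definition \ref{def: std basic}. First I would check, factor by factor, that $w_0^L$ normalizes $M$: on each $\GL_{2r_s}$ of $L_1$ the longest element is the reversal, which interchanges the two $\GL_{r_s}$-blocks of $M_1$; on $L_2=M_2$ it is inner; and on $L_3=\Sp_v$ the longest element $w_0^{\Sp_v}$ sends each $\GL_{t_j}$ to $\GL_{t_j}^{*}$ and preserves $\Sp_u$. Hence $w_0^LM(w_0^L)^{-1}=M$, so $w_M^L\in W(M,M)$, and since $w_0^L$ then normalizes $W^M$ it commutes with $w_0^M$, whence $(w_M^L)^2=1$. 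As $w_0^M$ fixes $\aaa_M$ pointwise, $w_M^L|_{\aaa_M}=w_0^L|_{\aaa_M}$, and $\aaa_M^L$ is spanned by the differences $\lambda_{2s-1}-\lambda_{2s}$ (from $M_1\subset L_1$) together with $\aaa_{M_3}=\aaa_{M_3}^{L_3}$, while $\aaa_{M_2}^{L_2}=0$; on these $w_0^L$ acts by $-1$ — the block interchange on the first part, $w_0^{\Sp_v}=-1$ on the last — so $w_M^L$ is $M$-minimal. It then remains to produce an involution $x\in N_G(M)\cap X$ with $\imb_M(x)=w_M^L$: on each $\GL_{r_s}$-pair let $x$ be the block interchange $\sm{0}{I_{r_s}}{I_{r_s}}{0}$ (and its mirror), which contributes a balanced $\pm1$-eigenspace; on each $\GL_{t_j}$ let $x$ restrict on the $2t_j$-dimensional symplectic space $U_j\oplus U_j^{*}$ to an involution exchanging the Lagrangians $U_j$ and $U_j^{*}$ — such an involution exists precisely because $t_j$ is even — again contributing a balanced $(t_j,t_j)$-eigenspace; on each $\GL_{s_j}$ let $x$ act by an involution with $a_j$-dimensional $+1$-eigenspace (and dually on $U_i^{*}$); and on $\Sp_u$ let $x$ act by a diagonal $\pm1$-involution with $b$-dimensional $+1$-eigenspace on the $\GL_1$-slots. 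A factor-by-factor check gives $\imb_M(x)=w_0^Lw_0^M=w_M^L$. The $+1$-eigenspace of $x$ on $F^{4n}$ has dimension $2\sum r_s+\sum t_j+2\sum a_j+2b$; since $2\sum r_s+\sum s_j+\sum t_j+u=2n$ and each $t_j$ is even, $\sum s_j+u$ is even, so one may choose $a_j\in\{0,\dots,s_j\}$ and $b\in\{0,\dots,u\}$ with $\sum a_j+b=\frac12(\sum s_j+u)$, making this dimension equal to $2n$. By Lemma \ref{lem: symp inv}, $x\in X$, so $w_M^L\in\WR_M$.

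\emph{The first assertion.} Put $w=w_M^L$; by hypothesis $w\in W(M,M)_2$ and $w$ acts by $-1$ on $\aaa_M^L$, so (as $w\in W^L$ fixes $\aaa_L$ pointwise) the $w$-fixed subspace of $\aaa_M$ is exactly $\aaa_L$. Writing $M\simeq\prod_i\GL_{n_i}\times\Sp_r$, the involution $w$ is a signed permutation of the $\GL$-slots times an element of $W^{\Sp_r}$, and I would classify the slots by the action of $w$ on the coordinates $\lambda_i$ of $\aaa_M$: (a) interchanged in pairs $\lambda_i\leftrightarrow\lambda_j$ (with $n_i=n_j$); (a$'$) $\lambda_i\mapsto-\lambda_j$, $\lambda_j\mapsto-\lambda_i$; (b) fixed, $\lambda_i\mapsto\lambda_i$; (c) flipped, $\lambda_i\mapsto-\lambda_i$. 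For a pair of type (a$'$) the $w$-fixed vector $\lambda_i-\lambda_j$ would lie in $\aaa_L$, but $\aaa_L$ for the standard Levi $L$ is spanned by sums of consecutive $\lambda$'s and contains no such difference, so type (a$'$) does not occur; a short computation with $w=w_0^Lw_0^M$ on the symplectic factor of $L$ also shows $w$ acts trivially on the $\Sp_r$-factor. Now choose $x\in wM\cap X$ (nonempty since $w\in\WR_M$): for each type-(c) slot $i$, $x$ restricts on the $2n_i$-dimensional symplectic subspace $U_i\oplus U_i^{*}$ to an involution exchanging the transverse Lagrangians $U_i,U_i^{*}$, so the pairing $(u,u')\mapsto\omega(u,xu')$ on $U_i$ is nondegenerate and (as $x$ is a symplectic involution) alternating, forcing $n_i$ even. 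Finally, conjugating by a permutation $\sigma\in W(L)\cap W^{M_{(2n;0)}}$ that moves the type-(a) pairs to the front (as $\GL_{r_1}\times\GL_{r_1}\times\cdots$), then the type-(b) slots, and finally the type-(c) slots next to the $\Sp_r$-factor puts $(\sigma M\sigma^{-1},\sigma L\sigma^{-1})$ in the standard \rlvt\ form, with the type-(a) pair sizes, the type-(b) sizes, and the (even) type-(c) sizes playing the roles of the $r_s$, $s_j$, $t_j$, and $u=r$.

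The only genuine computation underlying both halves is the linear-algebra fact that $\Sp_{2m}$ contains an involution interchanging two transverse Lagrangian subspaces if and only if $m$ is even (equivalently, that a nondegenerate alternating form on an $m$-dimensional space exists iff $m$ is even); this is exactly what produces the parity condition ``$t_1,\dots,t_m$ even'' of Definition \ref{def: std basic}, and is the step I expect to require the most care. Everything else is the combinatorics of standard Levi subgroups of $\Sp_{2n}$, together with the bookkeeping that translates between the Weyl-group element $w_M^L$, the geometric involution $x$, and its eigenspace dimensions in $F^{4n}$.
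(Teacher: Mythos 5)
Your overall strategy matches the paper's: construct an explicit involution with balanced eigenspaces to land in $X$ via Lemma \ref{lem: symp inv}, read off the combinatorial structure of $w_M^L$ from its action on $\aaa_M$, reshuffle with $\sigma\in W(L)\cap W^{M_{(2n;0)}}$, and extract the parity condition on the $t_j$'s. Where you genuinely differ is in the parity argument: you observe abstractly that if $x$ swaps the transverse Lagrangians $U_i,U_i^{*}$ then $(u,u')\mapsto\omega(u,xu')$ is a nondegenerate alternating form on $U_i$, whereas the paper extracts the same skew-symmetry from the concrete identity $(mn_0)^2=I_{4n}$; your version is coordinate-free and arguably cleaner, but the content is the same. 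One step you should tighten: the exclusion of type-(a$'$) pairs. The assertion that $\aaa_L$ ``contains no such difference $\lambda_i-\lambda_j$'' is false in general --- if $i$ and $j$ sit in singleton $L$-blocks then $e_i-e_j$ does lie in $\aaa_L$. What actually rules that case out is that $w_M^L\in W^L$ only permutes within $L$-blocks, so a type-(a$'$) pair can only involve indices in the $\Sp_v$-block; there $\aaa_L$ vanishes, and your argument then closes cleanly. This is also what forces $|S_t|\le 2$ in general, which your type classification encodes implicitly but which the paper states directly via the eigenspace-dimension count. Finally, the side remark that $w$ ``acts trivially on the $\Sp_r$-factor'' is neither needed nor obviously justified; $M$-minimality only constrains the action on $\aaa_M$, and you can simply drop that sentence.
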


\begin{proof}
For the first part, assume that $w_M^L\in \WR_M$ is $M$-minimal. Write
\[
M=M_{(n_1,\dots,n_a;u)}\simeq\GL_{n_1} \times \cdots \times\GL_{n_a}\times\Sp_u
\]
and
\[
L=M_{(m_1,\dots,m_b;v)}\simeq\GL_{m_1} \times \cdots \times\GL_{m_b}\times\Sp_v.
\]
The inclusion $M\subseteq L$ implies that $\{1,2,\dots,a\}$ can be partitioned into sets $S_1,\dots,S_b,R$ such that $s_i < s_{i+1}<r$
for every $i\le b-1$, $s_i\in S_i$, $s_{i+1}\in S_{i+1}$ and $r\in R$,
$\sum_{s\in S_i} n_s=m_i$ and $v=u+\sum_{r\in R} n_r$.
If $\abs{S_i}>2$ for some $i$ then it is easily observed that $w_M^L$ does not act as $-1$ on $\aaa_M^L$.
Therefore the $M$-minimality of $w_M^L$ implies that $\abs{S_i}\le 2$ for all $i=1,\dots,b$.
Note further that the $M$-admissibility of $w_M^L$ implies that if $S_i=\{s,s+1\}$ then $n_s=n_{s+1}$.
To any permutation $\sigma$ of $\{1,\dots,b\}$ corresponds a unique element (that we still denote by $\sigma$) of $W(L)\cap W^{M_{(2n;0)}}$ such that
$\sigma L\sigma^{-1}=M_{(m_{\sigma^{-1}(1)},\dots,m_{\sigma^{-1}(b)};v)}$.
If $k$ is the number of indices $i$ such that $\abs{S_i}=2$ let $\sigma$ be such that
$\abs{S_{\sigma^{-1}(i)}}=2$ if and only if $i=1,\dots,k$.
Then $\sigma w_M^L \sigma^{-1}=w_{M'}^{L'}$ where
$(M',L')=(\sigma M\sigma^{-1},\sigma L\sigma^{-1})$ has the form
\[
M'=\overbrace{\GL_{r_1}\times\GL_{r_1}\times\dots\times\GL_{r_k}\times\GL_{r_k}}^{M_1}
\times\overbrace{\GL_{s_1}\times\dots\times\GL_{s_l}}^{M_2}\times\overbrace{\GL_{t_1}\times\dots\times\GL_{t_m}\times\Sp_u}^{M_3},
\]
and
\[
L'=\overbrace{\GL_{2r_1}\times\dots\times\GL_{2r_k}}^{L_1}\times\overbrace{\GL_{s_1}\times\dots\times\GL_{s_l}}^{L_2}\times\overbrace{\Sp_v}^{L_3}.
\]

Assume now further that $w_M^L\in \WR_M$. It follows from \eqref{eq: conj adm} that $w_{M'}^{L'}\in \WR_{M'}$.
Recall the elements $\delta_n$ of $\GL_n$ defined in \S\ref{sec: notation}.
Write $\delta_{s_1+\cdots+s_l+u}=\diag(\gamma_1,\dots,\gamma_l,\gamma)$ where $\gamma_i\in\{\pm \delta_{s_i}\}$
and $\gamma\in \{\pm \delta_u\}$ and let $\beta=\diag(\gamma,\gamma^*)\in\Sp_u$ (in fact $\beta\in\{\pm\epsilon_u\}$).
Let
\[
n_0=\inj_{L'}(\sm0{I_{r_1}}{I_{r_1}}0,\dots,\sm0{I_{r_k}}{I_{r_k}}0,\gamma_1,\dots,\gamma_l;n_1)
\]
where
\[
n_1=\left(\begin{smallmatrix}
 & & & & & & \delta_{t_1} \\ & & & & & \iddots & \\ & & & & \delta_{t_m} & &  \\ & & & \beta & & & \\ & & -\delta_{t_m}^* & & & & \\ & \iddots & & & & &
 \\ -\delta_{t_1}^* & & & & & &
\end{smallmatrix}\right).
\]
Then $n_0\in w_{M'}^{L'}$. Therefore, there exists $m=\inj_{M'}(a_1,b_1,\dots,a_k,b_k,c_1,\dots,c_l,d_1,\dots,d_m;h)\in M'$
(with $a_i,\,b_i\in\GL_{r_i}$, $c_i\in\GL_{s_i}$, $d_i\in \GL_{t_i}$ and $h\in \Sp_u$) such that $mn_0\in X$. In particular, $(mn_0)^2=I_{4n}$ and therefore
\[
\sm0{d_i \delta_{t_i}}{-d_i^*\delta_{t_i}^*}0^2=I_{2t_i},
\]
i.e., $-(d_i\delta_{t_i})(d_i\delta_{t_i})^*=I_{t_i}$.
In other words $w_{t_i}d_i\delta_{t_i}$ is a non-degenerate skew-symmetric matrix and therefore $t_i$ is even.
This shows that $(M',L')$ is a standard \rlvt\ pair. Part \eqref{part: min basic} follows.

For the second part, it suffices to note that $n_0\in X$. This follows for instance from Lemma \ref{lem: symp inv}
and the fact that as an element of $\GL_{4n}$, the dimensions of the $\pm1$-eigenspaces of $n_0$ coincide.
\end{proof}

\begin{lemma}\label{lem: min M}
We have
\[
\WR_M=\{\sigma w_{M'}^{L'}\sigma^{-1}: (M',L')\text{ is a standard \rlvt\ pair}\text{ and } \sigma\in W(M',M)\}.
\]
\end{lemma}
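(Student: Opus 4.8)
The plan is to prove the two inclusions separately, using Lemma \ref{lem: min is basic} together with the conjugation-invariance \eqref{eq: conj adm} and the structural results on $M$-minimal involutions.

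First I would prove the inclusion $\supseteq$. Let $(M',L')$ be a standard \rlvt\ pair and $\sigma\in W(M',M)$. By Lemma \ref{lem: min is basic}\eqref{part: basic min}, $w_{M'}^{L'}\in\WR_{M'}$. Since $\sigma\in W(M',M)$, the property \eqref{eq: conj adm} gives $\sigma w_{M'}^{L'}\sigma^{-1}\in\WR_M$. This direction is essentially immediate from what has already been established.

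Next I would prove the inclusion $\subseteq$, which is the substantive direction. Let $w\in\WR_M\subset W(M,M)_2$. By Lemma \ref{lem: conj min} there is a Levi subgroup $M_1$ and $\tau\in W(M,M_1)$ such that $w_1:=\tau w\tau^{-1}\in W(M_1,M_1)_2$ is $M_1$-minimal; say $w_1=w_{M_1}^{L_1}$ for some Levi $L_1\supseteq M_1$ which acts as $-1$ on $\aaa_{M_1}^{L_1}$. By \eqref{eq: conj adm} again, $w_1\in\WR_{M_1}$, so $w_{M_1}^{L_1}\in\WR_{M_1}$ is an $M_1$-minimal involution. Now apply Lemma \ref{lem: min is basic}\eqref{part: min basic}: there is $\rho\in W(L_1)\cap W^{M_{(2n;0)}}$ such that $(M',L'):=(\rho M_1\rho^{-1},\rho L_1\rho^{-1})$ is a standard \rlvt\ pair. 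Since $\rho$ conjugates $L_1$ to $L'$ and $M_1\subset L_1$ to $M'\subset L'$, and $\rho$ has minimal length in its coset so that it lies in $W(M_1,M')$, one checks that $\rho w_{M_1}^{L_1}\rho^{-1}=w_{M'}^{L'}$ (the element of maximal length in $W(M')\cap W^{L'}$ is carried to the element of maximal length in $W(\rho M'\rho^{-1})\cap W^{\rho L'\rho^{-1}}$ under conjugation by $\rho$; here I would be slightly careful to record that conjugation by $\rho$ respects lengths relative to the relevant root subsystems). Setting $\sigma:=(\rho\tau)^{-1}\cdot(\text{correction to minimal length})$, more precisely taking $\sigma$ to be the minimal-length representative in $W(M',M)$ of $\tau^{-1}\rho^{-1}$, we obtain $w=\sigma w_{M'}^{L'}\sigma^{-1}$ with $\sigma\in W(M',M)$, as desired.

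The main obstacle I anticipate is purely bookkeeping: making sure that the composite element $\sigma$ genuinely lands in $W(M',M)$ (i.e.\ has minimal length in $\sigma W^{M'}$ and conjugates $M'$ to $M$), and that conjugation by it produces exactly $w_{M'}^{L'}$ rather than merely a $W^{M'}$-conjugate of it. Here I would use the composition property of the sets $W(M,M')$ recorded in \S\ref{sec: notation} (if $w_1\in W(M_1,M_2)$ and $w_2\in W(M_2,M_3)$ then $w_2w_1\in W(M_1,M_3)$), together with the fact that $w_{M'}^{L'}$ is characterized intrinsically as the maximal-length element of $W(M')\cap W^{L'}$, to conclude that any $\sigma\in W(M',M)$ with $\sigma(\text{something in }\WR_M)\sigma^{-1}=w_{M'}^{L'}$ does the job. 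The group-theoretic inputs are all in place; the only real work is assembling the chain $M\leadsto M_1\leadsto M'$ and verifying the minimal-length normalizations along the way.
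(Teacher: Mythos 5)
Your proof is correct and follows the same route as the paper: the inclusion $\supseteq$ is Lemma \ref{lem: min is basic}\eqref{part: basic min} combined with \eqref{eq: conj adm}, and $\subseteq$ chains Lemma \ref{lem: conj min} with Lemma \ref{lem: min is basic}\eqref{part: min basic}. Two minor remarks: the ``correction to minimal length'' you anticipate is unnecessary, because $W^{M_1}\subset W^{L_1}$ forces any element of minimal length in $\rho W^{L_1}$ to also be of minimal length in $\rho W^{M_1}$, so $\rho\in W(M_1,M')$ and $\tau^{-1}\rho^{-1}\in W(M',M)$ automatically by the composition property; and the identity $\rho w_{M_1}^{L_1}\rho^{-1}=w_{M'}^{L'}$, which you rederive via length-preservation of conjugation, is in fact established explicitly in the course of the proof of Lemma \ref{lem: min is basic}\eqref{part: min basic}, which is what the paper tacitly invokes.
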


\begin{proof}
Let $w\in \WR_M$. It follows from Lemma \ref{lem: conj min} that $w_1=\sigma_1 w\sigma_1^{-1}$ is $M_1$-minimal for some $\sigma_1\in W(M)$ where
$M_1=\sigma_1 M \sigma_1^{-1}$ and from \eqref{eq: conj adm} that $w_1\in \WR_{M_1}$.
It now follows from Lemma \ref{lem: min is basic} \eqref{part: min basic} that $\sigma_2 w_1\sigma_2^{-1}$ is of the form $w_{M'}^{L'}$ for some
$\sigma_2\in W(M_1)$ and a standard \rlvt\ pair $(M',L')$. It follows that $w=\sigma w_{M'}^{L'}\sigma^{-1}$ where
$\sigma=\sigma_1^{-1}\sigma_2^{-1}\in W(M',M)$.

The other inclusion follows from \eqref{eq: conj adm} and \ref{lem: min is basic} \eqref{part: basic min}.
\end{proof}

Let $\grp M$ be a Levi subgroup of $\grp G$ and $x\in N_G(M)\cap X$. The group $N_G(M)/M$ acts on $\aaa_M^*$ and in particular,
$x$ acts as an involution on $\aaa_M^*$ and decomposes it into a direct sum of the $\pm 1$-eigenspaces which we
denote by $(\aaa_M^*)_x^\pm$. (A similar decomposition applies to the dual space $\aaa_M=(\aaa_M)_x^+\dsum (\aaa_M)_x^-$.)
For any such $x$ let $\grp L=\grp L(x)$ be the intersection of all \emph{semistandard} Levi subgroups containing
$M$ and $x$. Then $\grp L$ is a semistandard Levi subgroup and we have $(\aaa_M^*)_x^+=\aaa_L^*$, or equivalently, $(\aaa_M^*)_x^-=(\aaa_M^L)^*$ (cf.~ \cite[p. 1299]{MR681738}).

\begin{definition}\label{def: stdx}
With the above notation we say that $x$ is $M$-minimal if $L(x)$ is standard.
Similarly, we say that $x$ is $M$-standard \rlvt\ if the pair $(M,L(x))$ is standard \rlvt\ (see Definition \ref{def: std basic}).
\end{definition}

\begin{remark}\label{rmk: Mmin}
If $w=\imb_M(x)\in \WR_M$ then $L(x)$ and the above decomposition of $\aaa_M^*$ depend only on $w$. Furthermore, $x$ is $M$-minimal if and only if
$w$ is an $M$-minimal involution, in which case $w=w_M^{L(x)}$.
\end{remark}

\begin{corollary}\label{cor: xadm}
Let $M$ be a Levi subgroup of $G$ and $x\in N_G(M)\cap X$. Then there exists $n\in N_G(T)$ such that $nMn^{-1}$ is a Levi subgroup of $G$,
$nxn^{-1}$ is $nMn^{-1}$-standard \rlvt\ and $L(nxn^{-1})=nL(x)n^{-1}$.
\end{corollary}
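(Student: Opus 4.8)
The plan is to reduce Corollary \ref{cor: xadm} to the explicit description of the set $\WR_M$ given in Lemma \ref{lem: min M}, and to produce the required $n$ as a Weyl-group representative occurring in that description. Conjugation by an element of $N_G(T)$ moves the Levi $M$, the involution $x$, and the associated Levi $L(x)$ in a compatible way, so the only genuine content is to arrange that we land on a \emph{standard} \rlvt\ pair and then to bookkeep these compatibilities.

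Concretely, I would first set $w=\imb_M(x)\in\WR_M$. By Lemma \ref{lem: min M} there are a standard \rlvt\ pair $(M',L')$ and an element $\sigma\in W(M',M)$ with $w=\sigma w_{M'}^{L'}\sigma^{-1}$. Next I would choose $n\in N_G(T)$ representing $\sigma^{-1}\in W(M,M')$, so that $nMn^{-1}=M'$ is a standard Levi subgroup; then $y:=nxn^{-1}\in N_G(M')\cap X$, and the (standard) equivariance of the isomorphisms $\imb_M$, $\imb_{M'}$ under conjugation gives $\imb_{M'}(y)=\sigma^{-1}w\sigma=w_{M'}^{L'}$. By Lemma \ref{lem: min is basic}\eqref{part: basic min}, $w_{M'}^{L'}\in\WR_{M'}$ is an $M'$-minimal involution, so by Remark \ref{rmk: Mmin} the element $y$ is $M'$-minimal and $w_{M'}^{L(y)}=w_{M'}^{L'}$. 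Since an $M'$-minimal involution of the form $w_{M'}^L$ is recovered from $L$ (its $+1$-eigenspace on $\aaa_{M'}$ is $\aaa_L$, and a semistandard Levi containing $M'$ is determined by the corresponding subspace of $\aaa_{M'}$), this forces $L(y)=L'$; as $(M',L')$ is standard \rlvt, $y$ is $nMn^{-1}$-standard \rlvt\ in the sense of Definition \ref{def: stdx}. Finally, conjugation by $n$ carries the semistandard Levi subgroups containing $M$ bijectively onto those containing $M'$, and among them those that also contain $x$ onto those that contain $y$; intersecting yields $L(y)=nL(x)n^{-1}$, which is the last required identity.

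The steps that require genuine (if routine) care are: (i) the compatibility $\imb_{M'}(nxn^{-1})=\sigma^{-1}\imb_M(x)\sigma$, which is the same conjugation-equivariance already implicit in \eqref{eq: conj adm}; and (ii) the fact that a semistandard Levi subgroup is determined by its central subspace of $\aaa_T$, used both to pin down $L(y)=L'$ and to identify $nL(x)n^{-1}$ with $L(nxn^{-1})$. Neither is deep, and I expect the only mild subtlety to be keeping the directions of the conjugations straight ($\sigma$ versus $\sigma^{-1}$, and $W(M,M')$ versus $W(M',M)$); there is no analytic or combinatorial obstacle beyond assembling the already-proved structural lemmas.
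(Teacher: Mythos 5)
Your proposal is correct and follows essentially the same route as the paper: both set $w=\imb_M(x)$, invoke Lemma~\ref{lem: min M} to conjugate $w$ to some $w_{M'}^{L'}$ with $(M',L')$ a standard \rlvt\ pair, take $n$ to be a representative of the corresponding Weyl element, and then observe that $(M',L(nxn^{-1}))=(M',L')$ is standard \rlvt\ while $L(nxn^{-1})=nL(x)n^{-1}$. You supply slightly more detail than the paper at the step where $L(y)=L'$ is pinned down (via Lemma~\ref{lem: min is basic}\eqref{part: basic min}, Remark~\ref{rmk: Mmin}, and the fact that a semistandard Levi containing $M'$ is determined by its $\aaa$-subspace), whereas the paper compresses this into ``by definition'' and deduces $L(x')=nL(x)n^{-1}$ from the matching of the $\pm$-eigenspaces of $x$ and $x'$ on $\aaa_M$ and $\aaa_{M'}$; these are equivalent bookkeeping arguments, not a genuine divergence of method.
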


\begin{proof}
Let $w=\imb_M(x)\in \WR_M$ and let $(M',L')$ be a standard \rlvt\ pair and $\sigma\in W(M,M')$ be such that $w=\sigma^{-1}w_{M'}^{L'}\sigma$, as in
Lemma \ref{lem: min M}.
Let $n\in \sigma$ and set $x'=nxn^{-1}$. By definition $x'$ is $M'=nMn^{-1}$-standard \rlvt. Note that
$(\aaa_M)_x^{+}=\sigma((\aaa_{M'})_{x'}^{+})$,
$(\aaa_M)_x^{-}=\sigma((\aaa_{M'})_{x'}^{-})$
and therefore also $L(x')=\sigma L(x)\sigma^{-1}=n L(x)n^{-1}$.
\end{proof}

Recall the notation \eqref{def: AGH}.
\begin{lemma}\label{lem: exp iso}
For every $x\in N_G(M)\cap  X$ the restriction of $\Ht_M$ to $\grp M_x(\A)$ defines a surjective homomorphism
\[
\Ht_M:\grp M_x(\A)\rightarrow(\aaa_M)_x^+.
\]
Moreover, the restriction of $\Ht_M$ to $A_M^{M_x}$ defines an isomorphism
\[
\Ht_M:A_M^{M_x}\rightarrow(\aaa_M)_x^+.
\]
\end{lemma}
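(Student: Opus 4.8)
The statement concerns the restriction of $H_M$ to the centralizer $\grp M_x(\A)$ for $x \in N_G(M)\cap X$, and the claim has two parts: surjectivity onto $(\aaa_M)_x^+$ of the homomorphism on all of $\grp M_x(\A)$, and the fact that on the subgroup $A_M^{M_x}$ it is an isomorphism onto the same space. The natural strategy is to reduce to an explicit situation using Corollary~\ref{cor: xadm}, which produces $n \in N_G(T)$ conjugating $(M,x)$ to an $M'$-standard relevant pair. Conjugation by $n$ intertwines $H_M$ with $H_{M'}$ (up to the linear isomorphism $\sigma$ on $\aaa$-spaces) and carries $\grp M_x$ to $\grp M'_{x'}$, $A_M^{M_x}$ to $A_{M'}^{M'_{x'}}$, and $(\aaa_M)_x^\pm$ to $(\aaa_{M'})_{x'}^\pm$; so it suffices to treat the standard relevant case.

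So assume $(M,L)=(M,L(x))$ is a standard relevant pair, with the block decomposition $M \simeq M_1 \times M_2 \times M_3$ of Definition~\ref{def: std basic}, where $M_1 = \prod_i(\GL_{r_i}\times\GL_{r_i})$, $M_2 = \prod_j \GL_{s_j}$, $M_3 = \prod_i \GL_{t_i}\times\Sp_u$. Here $x$ (a standard relevant representative, e.g.\ the element $n_0$ from the proof of Lemma~\ref{lem: min is basic}) acts on $\aaa_M^* \simeq \R^{k+k+l+m}$ (one coordinate per $\GL$-block, $\Sp_u$ contributing nothing) by swapping the two coordinates in each $\GL_{r_i}\times\GL_{r_i}$ pair, fixing the $\GL_{s_j}$ coordinates, and sending each $\GL_{t_i}$ coordinate to its negative. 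Thus $(\aaa_M^*)_x^+$ is spanned by the diagonal in each $r_i$-pair together with all $s_j$-coordinates, so $\dim (\aaa_M)_x^+ = k + l$. The next step is to identify $\grp M_x$ explicitly: in the $\GL_{r_i}\times\GL_{r_i}$ block, the centralizer of the swap-type element $\sm0{I_{r_i}}{I_{r_i}}0$ is $\{(g,g):g\in\GL_{r_i}\}$, so its image under $H_M$ in that coordinate pair is the diagonal line; in the $\GL_{s_j}$ block, $x$ acts trivially so the centralizer is all of $\GL_{s_j}$, contributing the full $s_j$-coordinate; in the $\GL_{t_i}$ block, the centralizer of $d_i\mapsto -d_i^*\delta_{t_i}^2$-type element is a unitary-type group whose rational characters are trivial (determinant forced to be a root of unity, or more precisely $|\det|_\A = 1$ on the adelic points), so it contributes $0$; and $\Sp_u$ has no rational characters. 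Hence the image of $H_M|_{\grp M_x(\A)}$ is exactly the span of the $k$ diagonal lines and the $l$ coordinate lines, which is precisely $(\aaa_M)_x^+$. This gives surjectivity.

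For the second part, $A_M^{M_x} = A_M \cap \grp M_x(\A)$ where $A_M$ is the image of the identity component of $\grp{T_M}(\R)$. Since $H_M: A_M \to \aaa_M$ is already an isomorphism (by the general setup in \S\ref{sec: notation}), it suffices to show $H_M(A_M^{M_x}) = (\aaa_M)_x^+$, because $H_M$ is then automatically injective on $A_M^{M_x}$ and the image will be the right space. One inclusion is clear: $A_M^{M_x} \subseteq \grp M_x(\A)$, and the image of the latter is $(\aaa_M)_x^+$ by part one, while $A_M^{M_x}$ is central in $\grp M_x$ so lies in the $x$-fixed part — i.e.\ $H_M(A_M^{M_x}) \subseteq (\aaa_M)_x^+$. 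Conversely, given $\nu \in (\aaa_M)_x^+$, the element $e^\nu \in A_M$ satisfies $x e^\nu x^{-1} = e^{x\nu} = e^\nu$ (since $x$ acts on $A_M$ through its action on $\aaa_M$, and $x$ fixes $\nu$), so $e^\nu \in A_M \cap \grp M_x(\A) = A_M^{M_x}$ and $H_M(e^\nu) = \nu$. This proves $H_M: A_M^{M_x} \to (\aaa_M)_x^+$ is a bijection, hence an isomorphism.

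**Main obstacle.** The one point requiring genuine care is the computation of the image of $H_M|_{\grp M_x(\A)}$ in the $\GL_{t_i}$ blocks — i.e.\ verifying that the "unitary-type" centralizer group appearing there has $|\det|_\A \equiv 1$ on its adelic points, so contributes nothing to $(\aaa_M)_x^+$, and dually that $x$ indeed acts as $-1$ on those coordinates so that they do not belong to $(\aaa_M)_x^+$ anyway (the latter is built into $L(x)$ being the relevant standard Levi and was essentially recorded in Remark~\ref{rmk: Mmin} and Lemma~\ref{lem: min is basic}). Everything else is bookkeeping with the explicit block structure of a standard relevant pair and the fact that conjugation by elements of $N_G(T)$ intertwines the $H$-maps.
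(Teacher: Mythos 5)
Your proof reaches the right conclusions, and for the ``converse'' direction of the isomorphism statement it uses the same key fact as the paper ($x e^\nu x^{-1} = e^{x\nu}$). But overall it is much heavier than it needs to be. The paper's proof is two lines and requires no reduction to standard relevant pairs: for any $m\in\grp M(\A)$ one has $H_M(xmx^{-1}) = xH_M(m)$ (immediate from the definition of $H_M$ via the $W$-equivariant pairing with $X^*(\grp M)$), hence $H_M(\grp M_x(\A)) \subseteq (\aaa_M)_x^+$; and since $H_M\colon A_M\to\aaa_M$ is an isomorphism with inverse $\nu\mapsto e^\nu$ and $x e^\nu x^{-1} = e^{x\nu}$, the condition $e^\nu \in A_M^{M_x}$ is equivalent to $x\nu = \nu$, giving the bijection $A_M^{M_x}\to(\aaa_M)_x^+$ and, in particular, the reverse inclusion for the first part. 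You have all the ingredients in your write-up but never isolate the global equivariance $H_M(xmx^{-1}) = xH_M(m)$, which is what lets you skip Corollary~\ref{cor: xadm} and the entire block-by-block computation.

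There is also a small slip in your explicit computation: in a $\GL_{s_j}$ factor, $x$ acts by conjugation as an involution conjugate to $\diag(I_{p_j},-I_{q_j})$, so its centralizer is $\GL_{p_j}\times\GL_{q_j}$, not all of $\GL_{s_j}$ (unless $p_j$ or $q_j$ vanishes). Your conclusion survives because $\abs{\det}_{\A}$ on $\GL_{p_j}(\A)\times\GL_{q_j}(\A)$ still surjects onto the positive reals, so the $s_j$-coordinate of $\aaa_M$ is hit; but it is exactly the kind of bookkeeping hazard the paper's direct argument avoids. Similarly, the centralizer in a $\GL_{t_i}$ block is a symplectic group $\Sp_{t_i/2}$ (cf.\ \S\ref{subsec: orbits}) rather than a ``unitary-type'' group; the relevant fact, $\abs{\det}_{\A}\equiv 1$, is of course still correct.
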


\begin{proof}
The second part follows from the fact that $xe^{\nu}x^{-1}=e^{x\nu}$ for any $\nu\in\aaa_M$.
The first part follows from the second part and the fact that $\Ht_M(\grp M_x(\A))\subset (\aaa_M)_x^+$, since
$\Ht_M(xmx^{-1})=x\Ht_M(m)$ for any $m\in \grp M(\A)$.
\end{proof}

In view of Lemma \ref{lem: exp iso}, for any $x\in N_G(M)\cap X$ let $\rho_x\in (\aaa_M^*)_x^+$ be the unique element such that
\begin{equation}\label{eq: rhox}
e^{\sprod{\rho_x}{H_M(a)}}=\modulus_{P_x}(a)\modulus_P(a)^{-\frac12} \ \text{ or equivalently}\ \ \
\modulus_{P_x}(a)=e^{\sprod{\rho_x+\rho_P}{H_M(a)}}, \ \ \ a\in A_M^{M_x}.
\end{equation}
Note that $\rho_x$ depends only on $[x]_M$.

\begin{remark}
The vector $\rho_x$ (with a slightly different convention) was encountered in the setup of \cite{MR2254544}.
It does not show up in the cases considered in \cite{MR2010737} by [ibid., Proposition 4.3.2].
Note that in our case $\modulus_{P_x}$ is non-trivial, in general, on $\grp M_x(\A)\cap\grp M(\A)^1$. In other words we will \emph{not} necessarily have
$\modulus_{P_x}(m)=e^{\sprod{\rho_x+\rho_P}{H_M(m)}}$ for all $m\in \grp M_x(\A)$.
This is in contrast with the cases considered in \cite{MR2010737} and \cite{MR2254544} where $\grp M_x(\A)\cap\grp M(\A)^1=\grp M_x(\A)^1$.
\end{remark}

\begin{lemma}\label{lem: stab vx}
Suppose that $x\in  N_G(M)\cap X$ is $M$-minimal and let $\grp L=\grp L(x)$. Let $\grp Q=\grp L\ltimes \grp V$ be the parabolic subgroup
of $\grp G$ with Levi subgroup $\grp L$.
Then $\grp U_x=\grp V_x$ and therefore $\modulus_{Q_x}\rest_{\grp P_x(\A)}=\modulus_{P_x}$.
\end{lemma}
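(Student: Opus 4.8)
The plan is to prove the essential assertion $\grp U_x=\grp V_x$ — the inclusion $\grp V_x\subseteq\grp U_x$ is clear since $\grp P\subseteq\grp Q$ forces $\grp V\subseteq\grp U$ — and then to deduce the equality of modulus functions formally. Since $x$ is $M$-minimal, $\grp L=\grp L(x)$ is a standard Levi subgroup with $x\in\grp L$; hence $x\in N_L(M)\cap X$ and, by Remark \ref{rmk: Mmin}, $w:=\imb_M(x)=w_M^L$ acts as $-1$ on $\aaa_M^L$. From $\grp P\subseteq\grp Q$ we obtain a decomposition $\grp U=\grp V\rtimes\grp U_L$, where $\grp U_L:=\grp U\cap\grp L$ is the unipotent radical of the standard parabolic $\grp P\cap\grp L$ of $\grp L$ and $\grp V$, being normal in $\grp Q$, is normal in $\grp U$.

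The key point is that conjugation by $x$ stabilizes $\grp V$ and carries $\grp U_L$ into $\bar{\grp U}\cap\grp L=:\bar{\grp U}_L$, the unipotent radical of the parabolic of $\grp L$ opposite to $\grp P\cap\grp L$. The first claim holds because $x\in\grp L\subseteq\grp Q$ and $\grp V$ is normal in $\grp Q$. For the second, $x$ normalizes $\grp T_M$ and acts on $\aaa_M^*$ as $w=w_M^L$, so conjugation by $x$ sends the $\grp T_M$-root subgroup attached to $\beta\in\Sigma_M$ to the one attached to $w\beta$. Now $\grp U_L$ is the product of the root subgroups attached to the $\beta\in\Sigma_P\cap(\aaa_M^L)^*$, and for such $\beta$ the $M$-minimality gives $w\beta=-\beta\in\Sigma_{\bar P}$; hence $x\grp U_Lx^{-1}\subseteq\bar{\grp U}$, and since $x\in\grp L$ also $x\grp U_Lx^{-1}\subseteq\bar{\grp U}\cap\grp L=\bar{\grp U}_L$. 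As $\grp U_L$ and $\bar{\grp U}_L$ are opposite unipotent subgroups of $\grp L$, it follows that $\grp U_L\cap x\grp U_Lx^{-1}=\{1\}$.

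Now I would take $u\in U_x$ and write $u=vu_L$ with $v\in V$ and $u_L\in U_L$. Since $u$ commutes with $x$, $u=xux^{-1}=(xvx^{-1})(xu_Lx^{-1})$ with $xvx^{-1}\in V$ and $xu_Lx^{-1}\in\bar U_L\subseteq L$. Applying the Levi projection $\pr_L\colon\grp Q\to\grp L$ — whose kernel is $\grp V$ — yields $u_L=xu_Lx^{-1}\in U_L\cap\bar U_L=\{1\}$, so $u=v\in V$, i.e.\ $u\in V_x$; thus $\grp U_x=\grp V_x$. For the modulus statement, Lemma \ref{lem: factor adm} applied to $(\grp P,\grp M)$ and to $(\grp Q,\grp L)$ (note $x\in N_G(L)\cap X$) gives Levi decompositions $\grp P_x=\grp M_x\ltimes\grp U_x\subseteq\grp L_x\ltimes\grp V_x=\grp Q_x$ with $\grp M_x\subseteq\grp L_x$. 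Since $\grp U_x=\grp V_x$ is the unipotent radical of both and the reductive groups $\grp M_x\subseteq\grp L_x$ are unimodular, for $p\in\grp P_x(\A)$ with Levi component $m\in\grp M_x(\A)$ we get $\modulus_{P_x}(p)=\abs{\det(\operatorname{Ad}(m)|_{\Lie\grp U_x})}_\A=\modulus_{Q_x}(p)$.

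The main obstacle is the second paragraph — pinning down the image of $\grp U_L$ under conjugation by $x$ inside the opposite unipotent radical of $\grp L$ — which is exactly where the $M$-minimality of $x$, equivalently the fact that $w_M^L$ acts by $-1$ on $\aaa_M^L$, is used. The other steps are routine bookkeeping with standard parabolic and Levi decompositions together with the already-established Lemma \ref{lem: factor adm}.
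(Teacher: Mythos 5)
Your proof is correct and follows essentially the same route as the paper: decompose $\grp U=\grp U_L\ltimes\grp V$, observe that $x$ stabilizes $\grp V$ and conjugates $\grp U_L$ into the opposite unipotent radical $\bar{\grp U}_L$ of $\grp L$, and use this to force the $\grp U_L$-component of any $u\in\grp U_x$ to vanish. The only cosmetic differences are that you extract the conclusion via the Levi projection $\pr_L$ (the paper instead computes $xu_1x^{-1}=u\cdot xu_2^{-1}x^{-1}\in\grp U_L^t\cap\grp U=1$ directly), you supply the root-theoretic justification for $x\grp U_Lx^{-1}\subseteq\bar{\grp U}_L$ rather than asserting it, and you spell out the modulus comparison $\modulus_{P_x}=\modulus_{Q_x}|_{\grp P_x(\A)}$ via $\abs{\det\operatorname{Ad}(m)|_{\Lie\grp U_x}}$ where the paper just says it follows from Lemma~\ref{lem: factor adm}.
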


\begin{proof}
Let $\grp{U_L}=\grp L\cap \grp U$ be the unipotent radical of the parabolic subgroup $\grp{P_L}:=\grp L\cap \grp P$ of $\grp L$ (with Levi subgroup $\grp M$).
Then $\grp U=\grp{U_L}\ltimes \grp V$. Note that $x\in L$ and therefore $x\grp Vx^{-1}=\grp V$. On the other hand, since $x\in w_M^L M$ we have
$x \grp{U_L} x^{-1}=\grp{U_L}^t$ (the image of $\grp{U_L}$ under transpose). It follows that if $u=u_1u_2\in \grp U_x$ with $u_1\in \grp{U_L}$ and $u_2\in \grp V$ then
$xu_1x^{-1}=uxu_2^{-1}x^{-1}\in \grp{U_L}^t\cap \grp U=1$ and therefore $u_1=e$ and $u\in \grp V_x$. Thus, $\grp U_x=\grp V_x$.
By Lemma \ref{lem: factor adm} we now have $\grp P_x=\grp M_x\ltimes \grp V_x$ whereas $\grp Q_x=\grp L_x\ltimes \grp V_x$. The rest of the Lemma follows.
\end{proof}

\subsection{Orbit representatives}\label{subsec: orbits}
Our purpose here is to give an explicit description of the fibers of $M$-orbits $[x]_M$ with $x\in wM\cap X$
that lie over an $M$-minimal involution $w\in\WR_M$. For each such orbit we choose a convenient representative $x$ and explicate the centralizer $M_x$.

Suppose first that $(M,L)$ is a standard \rlvt\ pair and use the notation in Definition \ref{def: std basic}.
Let $x\in w_M^L M\cap X \subseteq L$ and write
\[
x=\inj(x_1,x_2;x_3)
\]
where
\[
x_1=\diag(\sm{0}{y_1}{y_1^{-1}}{0},\dots,\sm{0}{y_k}{y_k^{-1}}{0})\in L_1,
\]
with $y_i\in\GL_{r_i}$,
\[
x_2=\diag(z_1,\dots,z_l)\in L_2
\]
with $z_i\in\GL_{s_i}$ which is $\GL_{s_i}$-conjugate to $\diag(I_{p_i},-I_{q_i})$ for some decomposition $p_i+q_i=s_i$ and
$x_3\in L_3$ is of the form
\[
x_3=\left(\begin{smallmatrix}
 & & & & & & a_1 \\ & & & & & \iddots & \\ & & & & a_m & &  \\ & & & h & & & \\ & & -a_m^* & & & & \\ & \iddots & & & & &
 \\ -a_1^* & & & & & &
\end{smallmatrix}\right)
\]
with $a_i\in\GL_{t_i}$ such that $w_{t_i}a_i$ is anti-symmetric and $h\in\Sp_u$ is an involution.
We have
\[
M_x=\inj((M_1)_{x_1},(M_2)_{x_2};(M_3)_{x_3}).
\]
Note that
$(M_1)_{x_1}$ is the product of $\GL_{r_i}$, $i=1,\dots,k$ embedded in $\GL_{r_i}\times\GL_{r_i}$;
$(M_2)_{x_2}$ is the product of centralizers of involutions in $\GL_{s_i}$, $i=1,\dots,l$;
$(M_3)_{x_3}$ is the product of symplectic groups in $\GL_{t_i}$, $i=1,\dots,m$
and a centralizer of an involution in $\Sp_u$.
More explicitly,
\[
(M_1)_{x_1}=\{\diag(g_1,y_1^{-1}g_1y_1,\dots, g_k,y_k^{-1}g_ky_k):g_i\in\GL_{r_i}\}.
\]
Note that after conjugation by an element of $M_1$ we may assume that $y_i=I_{r_i}$, $i=1,\dots,k$.
Similarly,
\[
(M_2)_{x_2}=\diag(C_{\GL_{s_1}}(z_1),\dots,C_{\GL_{s_l}}(z_l)).
\]
After conjugation in $M_2$ we may assume that $z_i=\diag(I_{p_i},-I_{q_i})$ and then
\[
C_{\GL_{s_i}}(z_i)=\GL_{p_i} \times\GL_{q_i}.
\]
Finally,
\[
(M_3)_{x_3}=\inj(\Sp(w_{t_1}a_1),\dots,\Sp(w_{t_m}a_m);C_{\Sp_u}(h)).
\]
After conjugation in $M_3$ we may assume that $a_i=\diag(I_{t_i/2},-I_{t_i/2})$, i.e., $w_{t_i}a_i=J_{t_i/2}$
and by Lemma \ref{lem: symp inv} that $h=\inj(I_p,-I_q)$ for some decomposition $u=p+q$ such that (since $x\in[\epsilon]_G$)
\begin{equation}\label{eq: signature cond}
p+\sum_{i=1}^l p_i=q+\sum_{i=1}^l q_i.
\end{equation}

To summarize, for $x\in w_M^L M\cap X$ (or $M$-orbit $[x]_M\subseteq w_M^L M\cap X$) we associate the data
\begin{equation}\label{eq: pq data}
\p=(p_1,q_1,\dots,p_l,q_l;p,q)
\end{equation}
 satisfying \eqref{eq: signature cond} and such that $u=p+q$ and $s_i=p_i+q_i$, $i=1,\dots,l$.
We further choose a convenient representative $x_\p\in [x]_M$ as follows.
For integers $s,p,q$ write $\dig_{2s}=\sm{I_s}{}{}{-I_s}$ and $\dig_{p,q}=\left(\begin{smallmatrix}I_p&&\\&-I_{2q}&\\&&I_p\end{smallmatrix}\right)$.
For $\alpha=(r_1,\dots,r_k)$ let $x_\alpha=\diag(\sm0{I_{r_1}}{I_{r_1}}0,\dots,\sm0{I_{r_k}}{I_{r_k}}0)$.
For $\beta=(p_1,q_1,\dots,p_l,q_l)$ let $y_\beta=\diag(I_{p_1},-I_{q_1},\dots,I_{p_l},-I_{q_l})$.
For $\gamma=(t_1,\dots,t_m;p,q)$ with $t_i$ even let
\begin{equation} \label{eq: zgamma}
z_\gamma=
\left(\begin{smallmatrix}
 & & & & & & \dig_{t_1} \\ & & & & & \iddots & \\ & & & & \dig_{t_m} & &  \\ & & & \dig_{p,q} & & & \\
  & & \dig_{t_m} & & & & \\ & \iddots & & & & &
 \\ \dig_{t_1} & & & & & &
\end{smallmatrix}\right).
\end{equation}
We set
\[
x_\p=\inj(x_\alpha,y_\beta;z_\gamma).
\]

Let
\[
H_\gamma=(M_3)_{z_\gamma}=\inj_{M_3}(\Sp_{t_1},\dots,\Sp_{t_m};H_{p,q})
\]
where $H_{p,q}=C_{\Sp_u}(d_{p,q})\simeq\Sp_p \times\Sp_q \hookrightarrow\Sp_u$.
We have
\[
M_{x_\p}=\inj_L(\GL_{r_1}^{\triangle},\dots,\GL_{r_k}^{\triangle},\GL_{p_1} \times\GL_{q_1},\dots,\GL_{p_l} \times\GL_{q_l};H_\gamma)
\]
where $\GL_r^{\triangle}=\{\diag(g,g): g\in \GL_r\}$ and
\[
L_{x_\p}=\inj_L(\inn_{r_1},\dots,\inn_{r_k},\GL_{p_1} \times\GL_{q_1},\dots,\GL_{p_l} \times\GL_{q_l};(\Sp_v)_{z_\gamma})
\]
where $\inn_r=\{\sm{a}{b}{b}{a}\in\GL_{2r}: a,\,b\in \operatorname{Mat}_{r\times r}\}$.
Note that for
\begin{equation} \label{def: eta}
\eta_r:=\sm{I_r}{I_r}{I_r}{-I_r}
\end{equation}
we have
\[
\eta_r^{-1}\inn_r\eta_r=\{\sm{g_1}{}{}{g_2}:g_1,\,g_2\in\GL_r\}.
\]

Consider now, more generally, a Levi subgroup $M'$ of $G$ and $w\in \WR_{M'}$ an $M'$-minimal involution.
Let $L'$ be the Levi subgroup containing $M'$ such that $w=w_{M'}^{L'}$. By Lemma \ref{lem: min is basic} \eqref{part: min basic} there exists
$\sigma\in W(L')\cap W^{M_{(2n;0)}}$ such that
$(M,L)=(\sigma M' \sigma^{-1},\sigma L' \sigma^{-1})$ is a standard \rlvt\ pair.
For $(M,L)$ we use the notation of Definition \ref{def: std basic}.
Let $b=l+k$ and $(m_1,\dots,m_b)=(2r_1,\dots,2r_k,s_1,\dots,s_l)$ so that $L=M_{(m_1,\dots,m_b;v)}$.
As in the proof of Lemma \ref{lem: min is basic} we may view $\sigma$ as a permutation of $\{1,\dots,b\}$ and $L'=M_{(m_{\sigma^{-1}(1)},\dots,m_{\sigma^{-1}(b)};v)}$.

Let $m=m_1+\cdots+m_b$. There is a unique permutation matrix $n_0\in\GL_m$ such that
$n_0\diag(g_{\sigma^{-1}(1)},\dots,g_{\sigma^{-1}(b)})n_0^{-1}=\diag(g_1,\dots,g_b)$ whenever $g_i\in\GL_{m_i}$, $i=1,\dots,b$.
Then $n=\iota(n_0;I_{2v})\in \sigma$ and therefore $(M,L)=(nM'n^{-1},nL'n^{-1})$.
Furthermore, the map $x\mapsto x'=n^{-1}xn: w_M^L M\cap X\to w_{M'}^{L'} M'\cap X$ is a bijection that maps $[x]_M$ to $[x']_{M'}$ and $M'_{x'}=n^{-1} M_x n$.

Let $x'_\p=n^{-1}x_\p n$.
Then the different $M'$-orbits in $w_{M'}^{L'}M'\cap X$ are precisely $[x'_\p]_{M'}$ for data $\p$ as in \eqref{eq: pq data} satisfying \eqref{eq: signature cond}.
We further have
\begin{equation}\label{eq: stabx'}
M'_{x'_\p}=\inj_{L'}(A_1,\dots,A_b;H_\gamma)
\end{equation}
where $A_i$ is the subgroup of $\GL_{m_{\sigma^{-1}(i)}}$ given by
\[
A_{\sigma(i)}=\begin{cases} \GL_{r_i}^{\triangle} & i=1,\dots,k \\\GL_{p_{i-k}}\times\GL_{q_{i-k}} & i=k+1,\dots,b \end{cases}
\]
and
\[
L'_{x_\p'}=\inj_{L'}(\inn_1',\dots,\inn_b';(\Sp_v)_{z_\gamma})
\]
where
\[
\inn_{\sigma(i)}'=\begin{cases} \inn_{r_i} & i=1,\dots,k \\\GL_{p_{i-k}}\times\GL_{q_{i-k}} & i=k+1,\dots,b.\end{cases}
\]

\subsection{Exponents} \label{sec: exponents}
Let $M$ be a Levi subgroup of $G$ and $x\in N_G(M)\cap X$. Recall that $P_x=M_x\ltimes U_x$ (Lemma \ref{lem: factor adm}).
We study the modulus function $\modulus_{P_x}$.

Let $M'$ be a Levi subgroup of $G$, $x'\in N_G(M')\cap X$ and $L'=L(x')$ (a \emph{semistandard} Levi subgroup of $G$).
By Corollary \ref{cor: xadm} there exists $n\in N_G(T)$ such that $M=nM'n^{-1}$ is standard, $x=nx'n^{-1}$ is $M$-standard \rlvt\ and $L:=L(x)=nL' n^{-1}$.
Recall further that $(\aaa_M)_x^{+}=n((\aaa_{M'})_{x'}^{+})$ and
$(\aaa_M)_x^{-}=n((\aaa_{M'})_{x'}^{-})$.
We keep using the same notation as in \S\ref{subsec: orbits}. In particular $M=M_{(r_1,r_1,\dots,r_k,r_k,s_1,\dots,s_l,t_1,\dots,t_m;u)}$ and
$\p=(p_1,q_1,\dots,p_l,q_l;p,q)$ is the data associated to $x$ by \eqref{eq: pq data}.
Under the natural identification $\aaa_M \simeq \R^{2k+l+m}$ we have
\begin{equation}\label{eq: plus sp}
(\aaa_M)_x^{+}=\aaa_L=\{(\lambda_1,\lambda_1,\dots,\lambda_k,\lambda_k,\mu_1,\dots,\mu_l,\overbrace{0.\dots,0}^m):\lambda_1,\dots,\lambda_k,\,\mu_1,\dots,\mu_l\in\R\}
\end{equation}
and
\[
(\aaa_M)_x^{-}=\aaa_M^L=\{(\lambda_1,-\lambda_1,\dots,\lambda_k,-\lambda_k,\overbrace{0.\dots,0}^l,\mu_1,\dots,\mu_m):
\lambda_1,\dots,\lambda_k,\,\mu_1,\dots,\mu_m\in\R\}.
\]

Let $\grp P=\grp M\ltimes \grp U$ be a parabolic subgroup of $\grp G$ and let $\alpha\in \srts_P$.
Denote by $s_\alpha\in W(M)$ the elementary symmetry associated to $\alpha$ as in \cite[\S I.1.7]{MR1361168}.

We define a directed edge-labeled graph $\grph$ in the spirit of \cite[\S3.3]{MR2010737} as follows.
The vertices of $\grph$ are pairs $(M,x)$ where $M$ is a Levi subgroup of $G$ and $x\in N_G(M)\cap X$.
The (labeled) edges of $\grph$ are given by $(M,x)\xrightarrow{n_\alpha}(M',x')$ provided that:
\begin{enumerate}
\item $\alpha\in\srts_P$,
\item $n_\alpha\in s_\alpha M$,
\item $x\alpha\ne\pm\alpha$,
\item $M'=s_\alpha M s_\alpha^{-1}=n_\alpha M n_\alpha^{-1}$,
\item $x'=n_\alpha xn_\alpha^{-1}$.
\end{enumerate}
We will write $(M,x)\overset{n_\alpha}\searrow(M',x')$ if $(M,x)\xrightarrow{n_\alpha}(M',x')$ and $x\alpha<0$
(but $x\alpha\ne-\alpha$).
Note that if $(M,x)\xrightarrow{n_\alpha}(M',x')$ then also $(M',x')\xrightarrow{n_\alpha^{-1}}(M,x)$.
Moreover, either $(M,x)\overset{n_\alpha}\searrow(M',x')$ or $(M',x')\overset{n_\alpha^{-1}}\searrow(M,x)$ but not both.
For a finite sequence of edges
\[
(M,x)=(M_1,x_1)\xrightarrow{n_{\alpha_1}}(M_2,x_2)\xrightarrow{n_{\alpha_2}}\cdots\xrightarrow{n_{\alpha_k}}(M_{k+1},x_{k+1})=(M',x')
\]
in $\grph$ we will write $(M,x)\overset{n}\curvearrowright(M',x')$ where $n=n_{\alpha_k}\dots n_{\alpha_1}\in G$.
Note that $n$ conjugates $(M,x)$ to $(M',x')$.
Similarly, we write $(M,x)\overset{n}\downarrow(M',x')$ if there exists a finite sequence
\[
(M,x)=(M_1,x_1)\overset{n_{\alpha_1}}\searrow(M_2,x_2)\overset{n_{\alpha_2}}\searrow\cdots\overset{n_{\alpha_k}}\searrow (M_{k+1},x_{k+1})=(M',x').
\]

\begin{lemma}\label{lem: alpha}
Suppose that $(M,x)$ and $(M',x')$ are vertices in $\grph$ and $(M,x)\overset{n_\alpha}\searrow(M',x')$ for some $\alpha\in\srts_P$.
Let $\grp Q=\grp L\ltimes \grp V$ be the parabolic subgroup of $\grp G$ containing $\grp P$ such that $\srts_P^Q=\{\alpha\}$
and let $\grp{P'}=\grp{M'}\ltimes\grp{U'}$ be the parabolic subgroup of $\grp Q$ such that $\srts_{P'}^Q=\{-s_\alpha \alpha\}$.
Then
\begin{enumerate}
\item $\grp V_{x'}=n_\alpha \grp U_x n_\alpha^{-1}$ and in particular $n_\alpha \grp U_x n_\alpha^{-1}\subseteq \grp U'_{x'}$.
\item \label{part: exact seq} We have the following short exact sequence of subgroups normalized by $\grp M'_{x'}$:
\[
1\longrightarrow n_\alpha \grp U_x n_\alpha^{-1} \longrightarrow \grp U'_{x'} \mathop{\longrightarrow}\limits^{\pr_L} \grp L\cap \grp{U'} \longrightarrow 1.
\]
\item \label{part: int unip} For any function $f$ on $\grp V(\A)\bs \grp{U'}(\A)$ we have
\[
\int_{n_\alpha \grp U_x(\A) n_\alpha^{-1}\bs \grp U'_{x'}(\A)} f(u)\ du=\int_{\grp V(\A)\bs \grp{U'}(\A)}f(u)\ du=\int_{(\grp L\cap \grp{U'})(\A)} f(u)\ du
\]
(whenever the integral is defined).
\item \label{part: int f} $n_\alpha \grp P_x n_\alpha^{-1} \subseteq \grp P'_{x'}$ and a semi-invariant measure on
$n_\alpha \grp P_x(\A) n_\alpha^{-1}\bs \grp P'_{x'}(\A)$ is given by integration over $n_\alpha \grp U_x(\A) n_\alpha^{-1}\bs \grp U'_{x'}(\A)$.
\item We have
\begin{equation} \label{eq: pxp'x'}
 \modulus_{P_x}(m)=(\modulus_{P'_{x'}}\modulus_{P'\cap L}^{-1})(n_\alpha m  n_\alpha^{-1}),\ \ \ m\in \grp M_x(\A)
\end{equation}
and
\begin{equation} \label{eq: modulchang}
(\modulus_P^{-\frac12} \modulus_{P_x})(m)=(\modulus_{P'}^{-\frac12} \modulus_{P'_{x'}})(n_\alpha m  n_\alpha^{-1}),\ \ \ m\in \grp M_x(\A).
\end{equation}
In particular,
\begin{equation}\label{eq: exp eqv simp}
n_\alpha\rho_x=\rho_{x'}.
\end{equation}
\end{enumerate}
\end{lemma}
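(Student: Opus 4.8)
The plan is to prove (1)--(4) by direct computations inside the unipotent radicals, and then to deduce (5) formally.

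First the geometry. Since $n_\alpha$ represents the elementary symmetry $s_\alpha$, it lies in $W^L$ and normalizes $\grp M$; hence $n_\alpha$ normalizes $\grp Q$, $\grp L$ and $\grp V$, carries $\grp M$ onto $\grp{M'}$, and carries $\grp P$ onto the parabolic $\grp R=\grp{M'}\ltimes\grp{U_R}$ of $\grp Q$ whose intersection with $\grp L$ is \emph{opposite} to $\grp{P'}\cap\grp L$ --- this is exactly the role of the sign in $\srts_{P'}^Q=\{-s_\alpha\alpha\}$, since $n_\alpha$ sends the root $\alpha$ of $\grp U\cap\grp L$ to the root $s_\alpha\alpha=-(-s_\alpha\alpha)$, which is \emph{negative} for $\grp{P'}\cap\grp L$. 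Thus $n_\alpha\grp Un_\alpha^{-1}=\grp{U_R}$, so $n_\alpha\grp U_xn_\alpha^{-1}=(\grp{U_R})_{x'}$, $n_\alpha\grp P_xn_\alpha^{-1}=\grp R_{x'}$ and $n_\alpha\grp M_xn_\alpha^{-1}=\grp{M'}_{x'}$. As $x\in N_G(M)\cap X$ and $x'\in N_G(M')\cap X$ are vertices, Lemma \ref{lem: factor adm} yields the Levi decompositions $\grp P_x=\grp M_x\ltimes\grp U_x$, $\grp{P'}_{x'}=\grp{M'}_{x'}\ltimes\grp{U'}_{x'}$, $\grp R_{x'}=\grp{M'}_{x'}\ltimes(\grp{U_R})_{x'}$; in particular $n_\alpha\grp P_xn_\alpha^{-1}=\grp{M'}_{x'}\ltimes(\grp{U_R})_{x'}$.

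The computational heart is the following. Put $\gamma:=-x\alpha$. By hypothesis $\gamma\in\Sigma_P$ and $\gamma\ne\alpha$, and since $x^2=1$ also $\gamma\ne2\alpha$; hence $\gamma\notin\Sigma^L$, so $\gamma$ --- and $2\gamma$ when it is a root --- is a root of $\grp V$, and accordingly $s_\alpha\gamma$, $2s_\alpha\gamma$ are roots of $\grp V$ and $-s_\alpha\gamma$, $-2s_\alpha\gamma$ roots of $\overline{\grp V}$. Now $\operatorname{Ad}(x')$ permutes the $T_{M'}$-weight spaces $\mathfrak g_\beta$ of $\Lie\grp G$ via $\imb_{M'}(x')=s_\alpha\imb_M(x)s_\alpha^{-1}$, which sends $\pm s_\alpha\alpha\mapsto\mp s_\alpha\gamma$. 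Thus $\operatorname{Ad}(x')$ carries $\mathfrak g_{s_\alpha\alpha},\mathfrak g_{2s_\alpha\alpha}$ (spanning $\Lie(\grp{U_R}\cap\grp L)$) into $\Lie\overline{\grp V}$, and $\mathfrak g_{-s_\alpha\alpha},\mathfrak g_{-2s_\alpha\alpha}$ (spanning $\Lie(\grp L\cap\grp{U'})$) into $\Lie\grp V$. Assertion (1) follows from the first of these: if $u\in(\grp{U_R})_{x'}$ then $u=x'ux'^{-1}$ lies in $\grp{U_R}$, whereas its $T_{M'}$-weight component in the weight $-s_\alpha\gamma$ --- which is a weight of $\overline{\grp V}$, not of $\grp{U_R}$, and to which no commutator among the weights of $\grp{U_R}$ contributes --- equals $\operatorname{Ad}(x')$ applied to the $\mathfrak g_{s_\alpha\alpha}$-component of $u$, forcing that component (and likewise the $\mathfrak g_{2s_\alpha\alpha}$-component) to vanish; so $u\in\grp V$. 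Thus $(\grp{U_R})_{x'}\subseteq\grp V_{x'}$, and since $\grp V\subseteq\grp{U_R}$ the reverse inclusion is immediate, giving $n_\alpha\grp U_xn_\alpha^{-1}=\grp V_{x'}$. In (2) the first arrow is the inclusion, whose image $\grp V_{x'}=\grp{U'}_{x'}\cap\grp V$ is the kernel of $\pr_L|_{\grp{U'}_{x'}}$, and $\grp{M'}_{x'}$ normalizes all three groups (it centralizes $x'$ and normalizes $\grp V$, $\grp{U'}$, $\grp L\cap\grp{U'}$). The one substantive point is surjectivity of $\pr_L\colon\grp{U'}_{x'}\to\grp L\cap\grp{U'}$: given $v_0\in\grp L\cap\grp{U'}$ one must produce $v\in\grp V$ with $x'(v_0v)x'^{-1}=v_0v$; since $x'v_0x'^{-1}\in\grp V$ by the second half of the weight computation, the equation becomes a coboundary condition for the order-two group generated by conjugation by $x'$ on a unipotent group, solvable by the same $H^1$-triviality argument as in the proof of Lemma \ref{lem: std rep} (cf.\ the analogous computations in \cite{MR2010737, MR2254544}). \emph{I expect this surjectivity to be the step requiring the most care.} Assertion (3) then follows because $\pr_L$ identifies $\grp V\bs\grp{U'}$ with $\grp L\cap\grp{U'}$ and, by (2), likewise $n_\alpha\grp U_xn_\alpha^{-1}\bs\grp{U'}_{x'}=\grp V_{x'}\bs\grp{U'}_{x'}$, so a left-$\grp V(\A)$-invariant integrand yields the same integral over all three quotients; and (4) follows from (1): $n_\alpha\grp P_xn_\alpha^{-1}=\grp{M'}_{x'}\ltimes\grp V_{x'}\subseteq\grp{M'}_{x'}\ltimes\grp{U'}_{x'}=\grp{P'}_{x'}$, and cancelling the common Levi factor exhibits $n_\alpha\grp P_x(\A)n_\alpha^{-1}\bs\grp{P'}_{x'}(\A)$ as $\grp V_{x'}(\A)\bs\grp{U'}_{x'}(\A)=n_\alpha\grp U_x(\A)n_\alpha^{-1}\bs\grp{U'}_{x'}(\A)$ with the asserted semi-invariant measure.

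For (5): since $\modulus_{P_x}$ is the modulus of $\grp P_x(\A)=\grp M_x(\A)\ltimes\grp U_x(\A)$, one has $\modulus_{P_x}(m)=\abs{\det(\operatorname{Ad}(m)\rest_{\Lie\grp U_x})}$; conjugating by $n_\alpha$ and using (1) this is $\abs{\det(\operatorname{Ad}(n_\alpha mn_\alpha^{-1})\rest_{\Lie\grp V_{x'}})}$, and the exact sequence in (2), read on Lie algebras as one of $\grp{M'}_{x'}$-modules, shows the quotient $\Lie(\grp L\cap\grp{U'})$ contributes the factor $\modulus_{P'\cap L}$, whence \eqref{eq: pxp'x'}. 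Then \eqref{eq: modulchang} follows formally from \eqref{eq: pxp'x'} together with the factorizations $\modulus_P=\modulus_Q\cdot\modulus_{P\cap L}$ on $\grp M(\A)$ and $\modulus_{P'}=\modulus_Q\cdot\modulus_{P'\cap L}$ on $\grp{M'}(\A)$, the $n_\alpha$-invariance of $\modulus_Q$ (as $n_\alpha$ normalizes $\grp Q$, $\grp L$, $\grp V$), and $\modulus_{P'\cap L}(n_\alpha mn_\alpha^{-1})=\modulus_{P\cap L}(m)^{-1}$ (since $n_\alpha$ carries $\grp P\cap\grp L$ to the parabolic of $\grp L$ opposite to $\grp{P'}\cap\grp L$). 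Finally, evaluating \eqref{eq: modulchang} at $m=e^\nu$ for $\nu\in(\aaa_M)_x^+$ --- which lies in $A_M^{M_x}$ by Lemma \ref{lem: exp iso} --- the left side equals $e^{\sprod{\rho_x}{\nu}}$ by \eqref{eq: rhox}, while $n_\alpha e^\nu n_\alpha^{-1}=e^{n_\alpha\nu}$ with $n_\alpha\nu\in(\aaa_{M'})_{x'}^+$ makes the right side $e^{\sprod{\rho_{x'}}{n_\alpha\nu}}$; letting $\nu$ range over $(\aaa_M)_x^+$ gives $n_\alpha\rho_x=\rho_{x'}$, i.e.\ \eqref{eq: exp eqv simp}.
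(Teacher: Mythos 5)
Your overall plan mirrors what one would extract from \cite[Lemma 4.3.1 and Proposition 4.3.2]{MR2010737}, to which the paper defers for this proof; the weight-space argument for part (1), the formal deductions in parts (3)--(4), and the modulus computation in part (5) are all sound. (In (5) you recover the identity $s_\alpha\rho_P+2\rho_{P'}^Q=\rho_{P'}$ cited in the paper from the factorizations $\modulus_P=\modulus_Q\modulus_{P\cap L}$ and $\modulus_{P'}=\modulus_Q\modulus_{P'\cap L}$, the $n_\alpha$-invariance of $\modulus_Q$, and the observation that $n_\alpha$ takes $\grp P\cap\grp L$ to the parabolic of $\grp L$ opposite to $\grp{P'}\cap\grp L$; this is a valid and essentially equivalent derivation.) The step you correctly flagged --- surjectivity of $\pr_L\colon\grp{U'}_{x'}\to\grp L\cap\grp{U'}$ in (2) --- does, however, contain a genuine gap. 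Write $\theta$ for conjugation by $x'$. The relevant cocycle $z:=v_0^{-1}\theta(v_0)$ is an element of $\grp{U'}$, and indeed of the $\theta$-stable group $\grp Y:=\grp{U'}\cap\theta(\grp{U'})$, but \emph{not} of $\grp V$: since $v_0\in\grp L$ while $\theta(v_0)\in\grp V$, $z$ has a nontrivial $\grp L$-part. Applying $H^1(\langle\theta\rangle,\grp Y)=1$ as in the proof of Lemma~\ref{lem: std rep} produces an element $w$ of $\grp Y$, not of $\grp V$; the resulting $\theta$-fixed $u=v_0w^{\pm 1}\in\grp{U'}_{x'}$ then satisfies only $\pr_L(u)=v_0\pr_L(w)^{\pm1}$, and since $\pr_L(\grp Y)$ is all of $\grp L\cap\grp{U'}$ (note $-s_\alpha\alpha=\theta(s_\alpha\gamma)$ is a root of $\grp Y$), this gives no control over $\pr_L(\grp{U'}_{x'})$.

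The $H^1$-triviality should instead be fed through a filtration rather than applied at the top. Set $\grp{U_R}:=n_\alpha\grp U n_\alpha^{-1}$ and $\grp{Y_0}:=\grp{U_R}\cap\theta(\grp{U_R})$. The root computation you carried out for (1) shows that $\grp{Y_0}\subseteq\grp V$ (its roots are exactly the roots of $\grp V$ that $x'$ sends to roots of $\grp V$), with $(\grp{Y_0})_{x'}=\grp V_{x'}$; moreover $\grp{Y_0}$ is a normal, $\theta$-stable subgroup of $\grp Y$, and on $\grp Y/\grp{Y_0}$ the involution $\theta$ interchanges the image of $U^{s_\alpha\gamma}$ (resp.\ $U^{2s_\alpha\gamma}$) with that of $U^{-s_\alpha\alpha}$ (resp.\ $U^{-2s_\alpha\alpha}$), so that $(\grp Y/\grp{Y_0})_{x'}$ is a ``graph'' carried isomorphically by $\pr_L$ onto $\grp L\cap\grp{U'}$. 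The exact sequence of pointed sets
\[
1\longrightarrow\grp V_{x'}\longrightarrow\grp Y_{x'}\longrightarrow(\grp Y/\grp{Y_0})_{x'}\longrightarrow H^1(\langle\theta\rangle,\grp{Y_0})=1,
\]
together with $\grp Y_{x'}=\grp{U'}_{x'}$, then yields $\pr_L(\grp{U'}_{x'})=\grp L\cap\grp{U'}$. In short, the vanishing of $H^1$ must be invoked for the subgroup $\grp{Y_0}\subseteq\grp V$, not for the ambient $\theta$-stable group $\grp Y$ that the reduction to Lemma~\ref{lem: std rep} implicitly produces.
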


\begin{proof}
The first four parts are proved exactly as \cite[Lemma 4.3.1]{MR2010737}. We omit the details.
Moreover, as in the proof of \cite[Proposition 4.3.2]{MR2010737} the relation \eqref{eq: pxp'x'} follows from part \eqref{part: exact seq}.
It is also observed in the proof of [ibid.] that $s_\alpha \rho_P+2\rho_{P'}^Q=\rho_{P'}$ and therefore
$\modulus_P^{-\frac12}(m)=(\modulus_{P'}^{-\frac12}\modulus_{P'\cap L})(n_\alpha m  n_\alpha^{-1})$.
The identity \eqref{eq: modulchang} follows.
Finally, the identity \eqref{eq: exp eqv simp} follows by restricting \eqref{eq: modulchang} to $A_M^{M_x}$.
\end{proof}

A straightforward consequence of the lemma is
\begin{corollary}\label{cor: graph}
Suppose that $(M,x)\overset{n}\curvearrowright(M',x')$ in $\grph$. Then
\[
(\modulus_P^{-\frac12} \modulus_{P_x})(m)=(\modulus_{P'}^{-\frac12} \modulus_{P'_{x'}})(n m  n^{-1}),\ \ \ m\in \grp M_x(\A).
\]
In particular, $n\rho_x=\rho_{x'}$.
\end{corollary}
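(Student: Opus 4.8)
The plan is to reduce everything to a single edge, where the assertion is exactly \eqref{eq: modulchang} of Lemma \ref{lem: alpha} (possibly after transposing), and then to chain the identities along the path by induction on its length. The first step is to record the single-edge case in a form symmetric in the two endpoints. Let $(M,x)\xrightarrow{n_\alpha}(M',x')$ be an edge of $\grph$. By the remarks following the definition of $\grph$, exactly one of $(M,x)\overset{n_\alpha}\searrow(M',x')$ and $(M',x')\overset{n_\alpha^{-1}}\searrow(M,x)$ holds. In the first case \eqref{eq: modulchang} gives directly
\[
(\modulus_P^{-\frac12}\modulus_{P_x})(m)=(\modulus_{P'}^{-\frac12}\modulus_{P'_{x'}})(n_\alpha m n_\alpha^{-1}),\qquad m\in\grp M_x(\A).
\]
In the second case one applies \eqref{eq: modulchang} to the edge $(M',x')\overset{n_\alpha^{-1}}\searrow(M,x)$ and then substitutes $m\mapsto n_\alpha m n_\alpha^{-1}$, using $n_\alpha\grp M_x n_\alpha^{-1}=\grp M'_{x'}$, to obtain the same identity. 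Thus the displayed identity holds for every edge of $\grph$.

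The second step is a straightforward induction on the length $k$ of a path $(M_1,x_1)\xrightarrow{n_{\alpha_1}}\cdots\xrightarrow{n_{\alpha_k}}(M_{k+1},x_{k+1})$ realizing $(M,x)\overset{n}\curvearrowright(M',x')$ with $n=n_{\alpha_k}\cdots n_{\alpha_1}$. The case $k\le1$ is the first step. For $k>1$ write $n=n'n_{\alpha_1}$ with $n'=n_{\alpha_k}\cdots n_{\alpha_2}$; apply the edge identity to the first edge and the inductive hypothesis to the path $(M_2,x_2)\overset{n'}\curvearrowright(M',x')$ evaluated at $n_{\alpha_1}mn_{\alpha_1}^{-1}\in(\grp M_2)_{x_2}(\A)$, and compose the two. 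This yields the asserted identity on $\grp M_x(\A)$.

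Finally, for the assertion $n\rho_x=\rho_{x'}$ I would restrict the identity to $a\in A_M^{M_x}$. By the definition \eqref{eq: rhox} of $\rho_x$ the left-hand side is $e^{\sprod{\rho_x}{H_M(a)}}$, while the right-hand side is $e^{\sprod{\rho_{x'}}{H_{M'}(nan^{-1})}}$; since $n$ conjugates $A_M^{M_x}$ onto $A_{M'}^{M'_{x'}}$ and $H_{M'}(nan^{-1})=nH_M(a)$, this equals $e^{\sprod{n^{-1}\rho_{x'}}{H_M(a)}}$. As $H_M$ restricts to an isomorphism $A_M^{M_x}\to(\aaa_M)_x^+$ by Lemma \ref{lem: exp iso}, and both $\rho_x$ and $n^{-1}\rho_{x'}$ lie in $(\aaa_M^*)_x^+$ (because $n^{-1}$ carries $(\aaa_{M'}^*)_{x'}^+$ onto $(\aaa_M^*)_x^+$, the involutions $x$ and $x'$ acting compatibly via $n$), we conclude $\rho_x=n^{-1}\rho_{x'}$; alternatively this last point is just the iteration of \eqref{eq: exp eqv simp} along the path. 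There is no analytic content anywhere; the only point requiring any care is the orientation bookkeeping in the first step, and it is disposed of by the symmetry $(M,x)\xrightarrow{n_\alpha}(M',x')\iff(M',x')\xrightarrow{n_\alpha^{-1}}(M,x)$ already recorded in the text.
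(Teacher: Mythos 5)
Your proof is correct and follows the same route the paper intends when it calls the corollary a ``straightforward consequence'' of Lemma~\ref{lem: alpha}: chain the single-edge identity \eqref{eq: modulchang} along the path, inverting it for edges traversed against the $\searrow$ orientation, and then deduce $n\rho_x=\rho_{x'}$ by restricting to $A_M^{M_x}$ (or equivalently by iterating \eqref{eq: exp eqv simp}).
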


Using \cite[Lemma 3.2.1 and Proposition 3.3.1]{MR2010737} we get:

\begin{corollary}\label{cor: exp w}
Let $M$ be a Levi subgroup of $G$ and $x\in N_G(M)\cap X$. Then there exists $n\in G$ such that $M'=nMn^{-1}$ is standard,
$x'=nxn^{-1}$ is $M'$-minimal and $(M,x)\overset{n}\downarrow(M',x')$.
Therefore,
\[
(\modulus_P^{-\frac12} \modulus_{P_x})(m)=(\modulus_{P'}^{-\frac12} \modulus_{P'_{x'}})(n m  n^{-1}),\ \ \ m\in \grp M_x(\A).
\]
In particular, $n\rho_x=\rho_{x'}$.
\end{corollary}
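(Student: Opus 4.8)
The plan is to deduce the corollary from the combinatorial descent result of \cite{MR2010737}, in exactly the same way that Corollary \ref{cor: graph} was deduced from Lemma \ref{lem: alpha}. The key observation is that the graph $\grph$ introduced above is the literal analogue of the graph of \cite[\S3.3]{MR2010737}: its vertices are pairs (Levi subgroup $M$, element $x\in N_G(M)\cap X$), its edges are controlled by the elementary symmetries $s_\alpha$ together with the condition $x\alpha\ne\pm\alpha$, and by Remark \ref{rmk: Mmin} the eigenspace decomposition $\aaa_M^*=(\aaa_M^*)_x^+\dsum(\aaa_M^*)_x^-$ as well as the notion of $M$-minimality depend only on the involution $\imb_M(x)\in\WR_M\subset W(M,M)_2$ and not on the particular form of $\epsilon=\epsilon_{2n}$.

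First I would apply \cite[Proposition 3.3.1]{MR2010737} (whose proof uses \cite[Lemma 3.2.1]{MR2010737}): from any vertex $(M,x)$ of $\grph$ there is a descending path
\[
(M,x)=(M_1,x_1)\overset{n_{\alpha_1}}\searrow\cdots\overset{n_{\alpha_r}}\searrow(M_{r+1},x_{r+1})=(M',x')
\]
ending at a vertex with $M'$ standard and $x'$ an $M'$-minimal element of $N_G(M')\cap X$. Setting $n=n_{\alpha_r}\cdots n_{\alpha_1}$ we then have $M'=nMn^{-1}$, $x'=nxn^{-1}$ and $(M,x)\overset{n}\downarrow(M',x')$. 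Checking that the cited results do apply amounts to noting that their proofs use only: (i) that every involution in $W(M,M)$ is $W^M$-conjugate to an $M$-minimal one, which is Lemma \ref{lem: conj min} (i.e.~\cite[Corollary 3.3.1]{MR2010737}), itself a consequence of \cite[Proposition 3.3]{MR803346}; and (ii) the single-edge descent step, which in our situation is precisely Lemma \ref{lem: alpha}. Both ingredients are available here verbatim.

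Finally, a descending path is in particular a path in $\grph$, so $(M,x)\overset{n}\curvearrowright(M',x')$, and Corollary \ref{cor: graph} applies directly: it yields the identity
\[
(\modulus_P^{-\frac12}\modulus_{P_x})(m)=(\modulus_{P'}^{-\frac12}\modulus_{P'_{x'}})(nmn^{-1}),\qquad m\in\grp M_x(\A),
\]
and, as the special case obtained by restricting to $A_M^{M_x}$ and invoking the defining relation \eqref{eq: rhox} for $\rho_x$, the equality $n\rho_x=\rho_{x'}$. This completes the proof.

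I expect the only genuine obstacle to be the transfer of \cite[Proposition 3.3.1]{MR2010737} to the present setting: since that reference deals with a different involution, one has to be confident that the descent argument there is purely formal --- relying solely on Weyl-group combinatorics, the reduction of involutions to minimal ones, and the local descent step --- so that it carries over without change. All of this has been arranged in advance; the one place where the specific geometry of the pair $(\Sp_{2n},\Sp_n\times\Sp_n)$ intervenes is Lemma \ref{lem: alpha}, and with that lemma and Corollary \ref{cor: graph} in hand, the corollary above is a formality.
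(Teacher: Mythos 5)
Your proposal is correct and follows essentially the same route as the paper: the paper derives Corollary~\ref{cor: exp w} precisely by invoking \cite[Lemma 3.2.1 and Proposition 3.3.1]{MR2010737} to produce a descending path $(M,x)\overset{n}\downarrow(M',x')$ with $M'$ standard and $x'$ $M'$-minimal, and then applying Corollary~\ref{cor: graph} together with the restriction to $A_M^{M_x}$ to obtain $n\rho_x=\rho_{x'}$. Your extra care in checking that the cited combinatorial descent from \cite{MR2010737} transfers to this setting (the only geometry-specific ingredient being Lemma~\ref{lem: alpha}) is a welcome elaboration of a step the paper leaves implicit, but the argument is identical in substance.
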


\subsection{Cuspidal orbits}
The following definition will be central for the analysis of periods of (pseudo) Eisenstein series.
\begin{definition}
Let $M$ be a Levi subgroup of $G$, $x\in N_G(M)\cap X$ and $L=L(x)$ (a \emph{semistandard} Levi subgroup of $G$ containing $M$). We say that $x$ is $M$-standard cuspidal if
$(M,L)$ is a standard \rlvt\ pair such that in the notation of Definition \ref{def: std basic}
$v=0$ (i.e., $L\subseteq M_{(2n;0)}$) and there exists $0\le l_1\le l$ such that $s_1,\dots,s_{l_1}$ are even and $s_i=1$, $l_1+1\le i\le l$
and moreover the data $\p$ associated to $x$ by \eqref{eq: pq data} satisfies $p_i=q_i$, $i=1,\dots,l_1$.

More generally, we say that $x$ is $M$-cuspidal if there exists $n\in N_G(T)$ such that $nMn^{-1}$ is a standard Levi subgroup of $G$ and
$nxn^{-1}$ is $nMn^{-1}$-standard cuspidal.
\end{definition}

\begin{remark}\label{rmk: cusp std rlvt}
Suppose that $x$ is $M$-cuspidal and $M$-standard \rlvt\ (see Definition \ref{def: stdx}).
Then $M=M_{(r_1,r_1,\dots,r_k,r_k,s_1,\dots,s_l;0)}$,
$s_j$ is either even or $1$ for every $j=1,\dots,l$ and $x\in N_G(M)\cap X$ is $M$-conjugate to
$\inj(\sm{0}{I_{r_1}}{I_{r_1}}{0},\dots,\sm{0}{I_{r_k}}{I_{r_k}}{0},h_1,\dots,h_l)$
where $h_j=\sm{0}{I_{s_j/2}}{I_{s_j/2}}{0}$ if $s_j$ is even and $h_j=\pm 1$ if $s_j=1$. (Clearly, $\sm{0}{I_{s_j/2}}{I_{s_j/2}}{0}$ is
$\GL_{s_j}$-conjugate to $\diag(I_{s_j/2},-I_{s_j/2})$.)
Moreover,
\[
\#\{j:h_j=1\}=\#\{j:h_j=-1\}.
\]
\end{remark}

Denote by $\grph_{\cusp}$ the full subgraph of $\grph$ whose vertices are $(M,x)$ where $x$ is $M$-cuspidal.
Note that $\grph_{\cusp}$ is a union of connected components of $\grph$.
We will explicate the graph $\grph_{\cusp}$ and the elements $\rho_x$ where $(M,x)$ is a vertex in $\grph_{\cusp}$.

\begin{lemma}\label{lem: s,r,r}
Any connected component of $\grph_{\cusp}$ contains a vertex $(M,x)$ such that $x$ is $M$-standard \rlvt.
\end{lemma}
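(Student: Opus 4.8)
The plan is to show that any connected component of $\grph_{\cusp}$ contains a vertex $(M,x)$ with $x$ being $M$-standard \rlvt\ by a two-stage reduction, exploiting the edges of $\grph$ to move within a connected component while preserving cuspidality.

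\textbf{Step 1: reduce to an $M$-minimal vertex.} Starting from an arbitrary vertex $(M,x)$ in $\grph_{\cusp}$, apply Corollary \ref{cor: exp w} (which rests on \cite[Lemma 3.2.1 and Proposition 3.3.1]{MR2010737}): there exists $n\in G$ such that $(M,x)\overset{n}\downarrow(M',x')$ with $M'$ standard and $x'$ being $M'$-minimal. Since $\grph_{\cusp}$ is a union of connected components of $\grph$, the vertex $(M',x')$ still lies in $\grph_{\cusp}$, i.e.\ $x'$ is $M'$-cuspidal. So without loss of generality we may assume from the outset that $x$ is $M$-minimal and $M$ is standard.

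\textbf{Step 2: conjugate the minimal vertex to a standard \rlvt\ one.} Now that $x$ is $M$-minimal, let $L=L(x)$, so that $w=\imb_M(x)=w_M^L$ is an $M$-minimal involution (Remark \ref{rmk: Mmin}) lying in $\WR_M$. By Lemma \ref{lem: min is basic}\eqref{part: min basic} there exists $\sigma\in W(L)\cap W^{M_{(2n;0)}}$ such that $(\sigma M\sigma^{-1},\sigma L\sigma^{-1})$ is a standard \rlvt\ pair; equivalently, picking $n\in\sigma$ and setting $M''=nMn^{-1}$, $x''=nxn^{-1}$, the pair $(M'',L(x''))=(M'',nLn^{-1})$ is standard \rlvt, so $x''$ is $M''$-standard \rlvt\ (Definition \ref{def: stdx}). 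The one thing to verify is that the conjugation by this particular $n$ is realized by a path in $\grph$, i.e.\ that $(M,x)\overset{n}\curvearrowright(M'',x'')$, so that $(M'',x'')$ lies in the same connected component. This is where the bulk of the work lies: one must check that $\sigma$ (which merely permutes the $\GL$-blocks of $L$, grouping the size-$2r_i$ paired blocks to the left) can be written as a product of elementary symmetries $s_\alpha$ with $\alpha\in\srts_P$ at each stage and $x\alpha\ne\pm\alpha$, so that each elementary move is a genuine edge of $\grph$. Since $\sigma\in W^{M_{(2n;0)}}$ permutes blocks within the Siegel Levi and $x$ (being minimal, hence essentially a signed permutation on the relevant coordinates) does not fix the simple roots crossed in such a block transposition up to sign — here one uses that the blocks being swapped are genuinely moved by the involution, which holds because in a standard \rlvt\ pair the $\GL$-factors are paired nontrivially by $w_M^L$ — every intermediate edge is legitimate. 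Then $(M'',x'')\in\grph_{\cusp}$ (again since $\grph_{\cusp}$ is a union of connected components), and it is $M''$-standard \rlvt, as desired.

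\textbf{Main obstacle.} The delicate point is Step 2: verifying that the block-permutation $\sigma$ provided by Lemma \ref{lem: min is basic} can be decomposed into a chain of elementary symmetries each of which is an edge of $\grph$ — that is, satisfies the condition $x\alpha\ne\pm\alpha$ at every stage. One handles this by an induction on the length of $\sigma$, at each step choosing a simple transposition of adjacent blocks of $L$ that $\sigma$ still needs to perform; the associated simple root $\alpha$ connects two $\GL$-blocks of unequal or differently-paired type, and $x$ sends $\alpha$ to a root supported on different blocks, hence $x\alpha\ne\pm\alpha$. Alternatively, one can invoke the combinatorial machinery of \cite[\S3.3]{MR2010737} directly, where exactly such graph-theoretic reductions of minimal involutions to standard representatives are carried out; our $\grph$ is modeled on that construction and the same argument applies mutatis mutandis. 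The remaining bookkeeping — that conjugation by a product of such $n_\alpha$'s indeed conjugates $(M,x)$ to $(M'',x'')$ — is immediate from the definition of the edges and the composition rule $(M,x)\overset{n}\curvearrowright(M'',x'')$.
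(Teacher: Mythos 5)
Your two-step plan---first reduce to a minimal vertex via Corollary \ref{cor: exp w}, then realize the block-reordering implicit in Lemma \ref{lem: min is basic} \eqref{part: min basic} as a path in $\grph$---is exactly the structure of the paper's proof, and you correctly identify that the crux lies in the second step. But the execution of Step 2 is too vague to count as a proof, and it contains an inaccuracy that obscures the actual mechanism.

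The defect is that you speak of ``a simple transposition of adjacent blocks of $L$'' and ``the associated simple root $\alpha$'' as if each $L$-block transposition is a single elementary move. An $L$-block of type $\GL_{2r}$ contains two $M$-blocks $\GL_r\times\GL_r$, so moving a singleton $\GL_s$ past it requires two elementary moves; this is precisely what the paper spells out: $(\dots,s,r,r,\dots)\xrightarrow{n_\alpha}(\dots,r,s,r,\dots)\xrightarrow{n_\beta}(\dots,r,r,s,\dots)$. The intermediate vertex $(M_{\gamma'},x')$ is no longer $M_{\gamma'}$-minimal (after $n_\alpha$ the two $\GL_r$'s swapped by $x'$ are not adjacent, so $L(x')$ is only semistandard), so one must still verify that $n_\beta$ is a legitimate $\grph$-edge from that non-minimal configuration---a point your sketch never addresses. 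Your justification ``the blocks being swapped are genuinely moved by the involution, which holds because in a standard relevant pair the $\GL$-factors are paired nontrivially by $w_M^L$'' is also inaccurate: the singleton $\GL_s$-factors are \emph{not} paired to another block (they are fixed setwise by $w_M^L$); what actually forces $x\alpha\ne\pm\alpha$ at each step is that on one side of $\alpha$ sits a $\GL_r$-factor which $x$ sends to a \emph{disjoint} $\GL_r$-factor. Finally, the fallback to \cite[\S 3.3]{MR2010737} does not cover Step 2: those results enter the paper only through Corollary \ref{cor: exp w} (your Step 1); the block-reordering in Step 2 is carried out by the paper's own explicit computation and is not a direct consequence of that reference.
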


\begin{proof}
Let $(M,x)\in \grph_{\cusp}$. By Corollary \ref{cor: exp w} we may assume that $x$ is $M$-minimal.
Let $\gamma=(n_1,\dots,n_k;0)$ be such that $M=M_\gamma$ and assume that $(n_{i-1},n_i,n_{i+1})=(s,r,r)$ for some $1<i<k$ and that $x$ has the form
$x=\inj(x_1,x_2,x_3)$ where $x_1\in \GL_{n_1+\cdots+n_{i-2}}$, $x_3\in \GL_{n_{i+2}+\cdots+n_k}$ and
\[
x_2=\left(\begin{smallmatrix} h & 0 & 0 \\ 0 & 0 & y \\ 0 & y^{-1} & 0 \end{smallmatrix}\right),
\ \ \ \text{with}\ \ \ y\in \GL_r,\  h\in \GL_s\text{ and } h\in [\sm{0}{I_{s/2}}{I_{s/2}}{0}]_{\GL_s} \text{ if }s \text{ is even}.
\]
Let
\[
\gamma'=(n_1,\dots,n_{i-2},r,s,r,n_{i+2},\dots,n_k;0) \ \ \ \text{and}\ \ \ \gamma''=(n_1,\dots,n_{i-2},r,r,s,n_{i+2},\dots,n_k;0).
\]
Let $\alpha\in \srts_{P_\gamma}^G$ be such that $n_\alpha=\inj(I_{n_1+\cdots+n_{i-2}},\sm{0}{I_r}{I_s}{0},I_{n_{i+1}+\cdots+n_k})\in s_\alpha M_\gamma$
and similarly, let $\beta\in \srts_{P_{\gamma'}}^G$ be such that
$n_\beta=\inj(I_{n_1+\cdots+n_{i-2}+r},\sm{0}{I_r}{I_s}{0},I_{n_{i+2}+\cdots+n_k})\in s_\beta M_{\gamma'}$. Set $x'=n_\alpha x n_\alpha^{-1}$ and
$x''=n_\beta x' n_{\beta}^{-1}$.
It is easy to see that
$(M,x)\xrightarrow{n_\alpha}(M_{\gamma'},x')\xrightarrow{n_\beta}(M_{\gamma''},x'')$ and
$x''=\inj(x_1,y_2,x_3)$ where
\[
y_2=\left(\begin{smallmatrix} 0 & y & 0 \\ y^{-1} & 0 & 0 \\ 0 & 0 & h \end{smallmatrix}\right)
\]
is $M_{\gamma''}$-minimal.
The Lemma therefore follows from the analysis of \S\ref{subsec: orbits}, Remark \ref{rmk: Mmin} and Lemma \ref{lem: min is basic} \eqref{part: min basic}.
\end{proof}

We will reduce the computation of $\rho_x$ to the case where $x$ is $M$-standard cuspidal.
Indeed,

\begin{lemma}
Let $(M,x)\in \grph_{\cusp}$ be such that $x$ is $M$-standard \rlvt.
In the notation of Remark \ref{rmk: cusp std rlvt} assume that $j\in\{1,\dots,l-1\}$ is such that exactly one of $s_j$ and $s_{j+1}$ is even (and the other equals $1$).
Let $\alpha\in\srts_P$ be the projection to $\aaa_M^*$ of the simple root $\alpha_t$ where $t=2(r_1+\cdots +r_k)+s_1+\cdots+s_j$.
Let $n_\alpha\in s_\alpha M$ and $x'=n_\alpha xn_\alpha^{-1}$.
Then $s_\alpha\rho_x=\rho_{x'}$.
\end{lemma}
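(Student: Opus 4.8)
The plan is to reduce $s_\alpha\rho_x=\rho_{x'}$ to the vanishing of a single trace, which is the only place the hypothesis on $s_j,s_{j+1}$ is used. Since $x$ is $M$-standard \rlvt\ it is in particular $M$-minimal (Definition~\ref{def: stdx}), so $\grp L:=\grp L(x)$ is standard; write $\grp Q=\grp L\ltimes\grp V$ for its parabolic. Likewise, with $\grp{M'}=s_\alpha\grp Ms_\alpha^{-1}$ and $x'=n_\alpha xn_\alpha^{-1}$, the element $x'$ is $M'$-minimal and $\grp{L'}:=\grp L(x')=s_\alpha\grp Ls_\alpha^{-1}$ is standard --- it is $\grp L$ with the $\GL_{s_j}$- and $\GL_{s_{j+1}}$-blocks interchanged --- with parabolic $\grp{Q'}=\grp{L'}\ltimes\grp{V'}$. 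Because $H_{M'}(n_\alpha an_\alpha^{-1})=s_\alpha(H_M(a))$ and $\rho_x$ (resp.~$\rho_{x'}$) is characterised by \eqref{eq: rhox} on $A_M^{M_x}$ (resp.~on $A_{M'}^{M'_{x'}}$, onto which $n_\alpha$ conjugates $A_M^{M_x}$), the claim is equivalent to
\[
(\modulus_P^{-\frac12}\modulus_{P_x})(a)=(\modulus_{P'}^{-\frac12}\modulus_{P'_{x'}})(n_\alpha an_\alpha^{-1}),\qquad a\in A_M^{M_x}.
\]

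I would then pass to the pair $(\grp L,\grp Q)$. One checks $A_M^{M_x}=A_L$: scalars in $\grp L$ centralise $x\in\grp L$ so $A_L\subset\grp M_x$, while an element of $A_M$ commuting with $x$ must be scalar on each $\GL_{2r_i}$ (as $x$ interchanges the two $\GL_{r_i}$-halves), hence lies in $A_L$. For $a\in A_L$ we have $\modulus_P(a)=\modulus_Q(a)$ (since $\modulus_{P\cap L}(a)=1$, $a$ being central in $\grp L$) and $\modulus_{P_x}(a)=\modulus_{Q_x}(a)$ by Lemma~\ref{lem: stab vx}, and similarly on the primed side, so it suffices to prove the displayed identity with $Q,Q'$ in place of $P,P'$ and $a\in A_L$. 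Now put $\mathfrak v=\Lie\grp V$ and, for $\chi\in\Sigma_Q$, let $\mathfrak v^\chi\subset\mathfrak v$ be the $\chi$-weight space of $A_L$. Since $x\in\grp L$ centralises $A_L$, $\operatorname{Ad}(x)$ preserves each $\mathfrak v^\chi$ and decomposes it into its $(\pm1)$-eigenspaces, with $\Lie\grp V_x=\dsum_\chi(\mathfrak v^\chi)^{+}$; comparing $\modulus_{Q_x}(a)=\abs{\det(\operatorname{Ad}(a)\rest_{\Lie\grp V_x})}$ with $\modulus_Q(a)=\abs{\det(\operatorname{Ad}(a)\rest_{\mathfrak v})}$ gives
\[
2\rho_x=\sum_{\chi\in\Sigma_Q}\operatorname{tr}(\operatorname{Ad}(x)\rest_{\mathfrak v^\chi})\,\chi\quad\text{in }\aaa_L^*,
\]
and likewise $2\rho_{x'}=\sum_{\chi'\in\Sigma_{Q'}}\operatorname{tr}(\operatorname{Ad}(x')\rest_{\mathfrak v'^{\chi'}})\,\chi'$ with $\mathfrak v'=\Lie\grp{V'}$.

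It remains to match the two sums under $n_\alpha$. The projection of $\alpha_t$ to $\aaa_L^*$ is the nonzero root $\alpha$ separating the $\GL_{s_j}$- and $\GL_{s_{j+1}}$-blocks of $\grp L$, so $\alpha\in\srts_Q$ and $n_\alpha$ realises the associated elementary symmetry. Exactly as in the proof of Lemma~\ref{lem: alpha}, $\operatorname{Ad}(n_\alpha)$ carries the $\aaa_L^*$-graded pieces of $\mathfrak v$ to those of $n_\alpha\mathfrak vn_\alpha^{-1}$ along $s_\alpha$, and $n_\alpha\grp Vn_\alpha^{-1}$ differs from $\grp{V'}$ only at the wall $\alpha':=-s_\alpha\alpha\in\srts_{Q'}$: $\operatorname{Ad}(n_\alpha)\mathfrak v^\chi=\mathfrak v'^{s_\alpha\chi}$ for $\chi\neq\alpha$ and $\operatorname{Ad}(n_\alpha)\mathfrak v^\alpha=\mathfrak g_{-\alpha'}$. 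As $x'=n_\alpha xn_\alpha^{-1}$, the traces transport: $\operatorname{tr}(\operatorname{Ad}(x')\rest_{\mathfrak v'^{s_\alpha\chi}})=\operatorname{tr}(\operatorname{Ad}(x)\rest_{\mathfrak v^\chi})$ for $\chi\neq\alpha$, and since the Killing form pairs $\mathfrak g_{\alpha'}$ with $\mathfrak g_{-\alpha'}$ $\operatorname{Ad}$-equivariantly and $\operatorname{Ad}(x')$ is an involution, $\operatorname{tr}(\operatorname{Ad}(x')\rest_{\mathfrak v'^{\alpha'}})=\operatorname{tr}(\operatorname{Ad}(x')\rest_{\mathfrak g_{-\alpha'}})=\operatorname{tr}(\operatorname{Ad}(x)\rest_{\mathfrak v^\alpha})$. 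Feeding this into the two formulas collapses everything to
\[
2s_\alpha\rho_x=2\rho_{x'}-2\operatorname{tr}(\operatorname{Ad}(x)\rest_{\mathfrak v^\alpha})\,\alpha',
\]
so the lemma reduces to $\operatorname{tr}(\operatorname{Ad}(x)\rest_{\mathfrak v^\alpha})=0$. With $x$ in the normal form of Remark~\ref{rmk: cusp std rlvt}, $\mathfrak v^\alpha$ is built from $\Hom$-spaces between the standard modules of the $\GL_{s_j}$- and $\GL_{s_{j+1}}$-factors (and their duals), and on each of them $\operatorname{Ad}(x)$ is a tensor product of the actions of $h_j$ (or $h_j^*$) and $h_{j+1}$ (or $h_{j+1}^*$), so its trace factors as a product one of whose factors is $\operatorname{tr}$ of an $h$ attached to the even block; since $h$ there is of the form $\sm{0}{I}{I}{0}$ this factor vanishes, hence $\operatorname{tr}(\operatorname{Ad}(x)\rest_{\mathfrak v^\alpha})=0$. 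This gives $s_\alpha\rho_x=\rho_{x'}$.

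The main obstacle is the Lie-theoretic bookkeeping in the last two steps --- the identification $A_M^{M_x}=A_L$ and, above all, tracking $\operatorname{Ad}(n_\alpha)$ on the graded pieces of $\mathfrak v$ (the single flipped wall $\alpha'$) --- which is routine but must be carried out carefully in the symplectic root system and with the identifications $\aaa_L^*\cong\aaa_{L'}^*$; the genuinely new input, and the only place the hypothesis ``exactly one of $s_j,s_{j+1}$ is even'' is used, is the final vanishing $\operatorname{tr}(\operatorname{Ad}(x)\rest_{\mathfrak v^\alpha})=0$.
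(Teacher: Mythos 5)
Your proof is correct, but it takes a genuinely different route from the paper's. The paper passes to the \emph{rank-one extension} $\grp Q=\grp L\ltimes\grp V$ with $\srts_P^Q=\{\alpha\}$ — so its $L$ contains $M$ with corank one, fusing the $\GL_{s_j}$ and $\GL_1$ blocks into a single $\GL_{s_j+1}$ — and then reduces (via Lemma~\ref{lem: factor adm} and the decompositions $\modulus_{P_x}=\modulus_{P_x\cap L}\cdot\modulus_{Q_x}$ etc.) to an explicit modulus computation in $\GL_{s_j+1}$ with the $(s_j,1)$ parabolic, which is carried out by hand on the centralizer $C_{s_j}\times\GL_1$. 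You instead pass to the \emph{stabilizer Levi} $\grp L=\grp L(x)$ (the $M$-minimal one), use Lemma~\ref{lem: stab vx} to identify $\modulus_{P_x}$ with $\modulus_{Q_x}$ on $A_L=A_M^{M_x}$, and derive the clean formula $2\rho_x=\sum_{\chi\in\Sigma_Q}\operatorname{tr}(\operatorname{Ad}(x)\rest_{\mathfrak v^\chi})\chi$. Matching the two formulas under $\operatorname{Ad}(n_\alpha)$ (one flipped wall, plus the Killing-form/involution symmetry to identify the contribution of $\mathfrak g_{-\bar\alpha'}$ with that of $\mathfrak v'^{\bar\alpha'}$) isolates $\operatorname{tr}(\operatorname{Ad}(x)\rest_{\mathfrak v^{\bar\alpha}})$, and on $\mathfrak v^{\bar\alpha}\cong\Hom(F^{s_{j+1}},F^{s_j})$ the operator $\operatorname{Ad}(x)$ is $A\mapsto h_jAh_{j+1}^{-1}$ with trace $\operatorname{tr}(h_j)\operatorname{tr}(h_{j+1})$, which vanishes because the $h$ attached to the even block is $\sm{0}{I}{I}{0}$. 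Both approaches are valid; yours is more conceptual and makes transparent exactly where the parity hypothesis enters, at the cost of the wall-flip bookkeeping, while the paper's is more elementary and self-contained inside a single $\GL_{s_j+1}$ block.
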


\begin{proof}
By symmetry, without loss of generality,  we may assume that $s_j$ is even and $s_{j+1}=1$.
Since the result depends only on the $M$-orbit of $x$ we may further assume without loss of generality that
$x=\inj(\sm{0}{I_{r_1}}{I_{r_1}}{0},\dots,\sm{0}{I_{r_k}}{I_{r_k}}{0},h_1,\dots,h_l)$ in the notation of Remark \ref{rmk: cusp std rlvt}.

Let $\grp{P'}=\grp{M'}\ltimes\grp{U'}$ and $\grp Q=\grp L\ltimes\grp V$ be the parabolic subgroups of $\grp G$ such that
$\grp{M'}=n_\alpha \grp Mn_\alpha^{-1}$, $\grp P\subseteq \grp Q$ and $\srts_P^Q=\{\alpha\}$.
Since $\grp Q_x=\grp L_x\ltimes \grp V_x$ and similarly for $x'$ (Lemma \ref{lem: factor adm}) we have $\grp U_x=\grp V_x\rtimes (\grp U_x\cap \grp L)$
and $\grp U'_{x'}=\grp V_{x'}\rtimes(\grp U'_{x'}\cap \grp L)$.
Thus, $\modulus_{P_x}|_{\grp P_x(\A)\cap\grp L(\A)}=\modulus_{P_x\cap L}\modulus_{Q_x}|_{\grp P_x(\A)\cap\grp L(\A)}$ and
$\modulus_{P'_{x'}}|_{\grp P_{x'}(\A)\cap\grp L(\A)}=\modulus_{P'_{x'}\cap L}\modulus_{Q_{x'}}|_{\grp P'_{x'}(\A)\cap\grp L(\A)}$.
Also $\modulus_P|_{\grp P(\A)\cap \grp L(\A)}=\modulus_{P\cap L}\modulus_Q|_{\grp P(\A)\cap\grp L(\A)}$.
Note that $n_\alpha\grp Vn_\alpha^{-1}=\grp V$ and $\grp V_{x'}=n_\alpha \grp V_xn_\alpha^{-1}$.
Thus,
\[
\modulus_Q(n_\alpha ln_\alpha^{-1})=\modulus_Q(l)\ \ \ \text{and}\ \ \
\modulus_{Q_{x'}}(n_\alpha ln_\alpha^{-1})=\modulus_{Q_x}(l), \ \ \ l\in \grp P_x(\A)\cap \grp L(\A).
\]
Thus, it suffices to show that
\begin{equation}\label{eq: mod in L}
(\modulus_{P\cap L}^{-\frac12} \modulus_{P_x\cap L})(a)=
(\modulus_{P'\cap L}^{-\frac12} \modulus_{P'_{x'}\cap L})(n_\alpha a  n_\alpha^{-1})
\end{equation}
for $a\in A_M^{M_x}$.
Recall that
\[
P\cap L=M \ltimes (U \cap L),\ \ \ P'\cap L=M' \ltimes (U' \cap L),
\]
\[
P_x\cap L=M_x \ltimes (U_x \cap L) \ \ \text{ and }\ \ P'_{x'}\cap L=M'_{x'} \ltimes (U'_{x'} \cap L).
\]

Let $L=\inj(L_1,L_2,L_3)$ where $L_2=\GL_{s_j+1}$ and $L_1$ (reps.~$L_3$) is the corresponding Levi subgroup of $\GL_e$ (reps.~$\GL_f$)
where we set $e=2(r_1+\cdots+r_k)+s_1+\cdots+s_{j-1}$ (resp.~$f=s_{j+2}+\cdots+s_l$).
Accordingly, $x=\inj(x_1,x_2,x_3)$ with $x_i\in L_i$ and $P\cap L=\inj(P_1,P_2,P_3)$ where $P_i=M_i \ltimes U_i$ is the corresponding
parabolic subgroup of $L_i$, $i=1,2,3$.
Thus, $M=\inj(M_1,M_2,M_3)$ and $U\cap L=\inj(U_1,U_2,U_3)$. In particular, $M_2=\GL_{s_j}\times\GL_1$.
Note that $P'\cap L=\inj(P_1,P_2',P_3)$ where $P_2'=M_2'\ltimes U_2'$ is the parabolic subgroup of $\GL_{s_j+1}$ of type $(1,s_j)$.

We have $M_x=\inj(D_1,D_2,D_3)$ where $D_i=(M_i)_{x_i}$ and in particular,
$D_2=\diag(C_{s_j},\GL_1)$ (see \S\ref{subsec: orbits} for notation).
The result is independent of the choice of $n_\alpha\in s_\alpha M$ and without loss of generality we may choose $n_\alpha=\inj(I_e,w,I_f)$ where
$w=\sm{0}{1}{I_{s_j}}{0}$. Then, $x'=\inj(x_1,x_2',x_3)$ with $x_2'=wx_2 w^{-1}$ and $M'_{x'}=\inj((D_1,D_2',D_3)$ where $D_2'=\diag(\GL_1,C_{s_j})$.

Furthermore, $U_x\cap L=\inj((U_1)_{x_1},(U_2)_{x_2},(U_3)_{x_3})$ and $U_{x'}\cap L=\inj((U_1)_{x_1},(U_2')_{x_2'},(U_3)_{x_3})$.
Finally, for $a=\inj(a_1,a_2,a_3)\in A_M^{M_x}$ with $a_i$ in (the center of) $M_i$ we have $n_\alpha an_\alpha^{-1}=\inj(a_1,a_2',a_3)$ where
$a_2'=wa_2w^{-1}$.
Note that decomposing $a=a'a''$ with $a'=\inj(I_e,a_2,I_f)$ and $a''=\inj(a_1,I_{s_j+1},a_3)$ we have $a',\,a''\in A_M^{M_x}$ and the
identity \eqref{eq: mod in L} clearly holds for $a''$. It is left to show that it also holds for $a'$.

We have
\[
(U_2)_{x_2}=\{\left(\begin{matrix}I_{s_j}&\begin{smallmatrix}z\\h_{j+1}z\end{smallmatrix}\\&1\end{matrix}\right):z\in F^{s_j/2}\}.
\]
Therefore, for $g=\left(\begin{smallmatrix}a&b&\\b&a&\\&&t\end{smallmatrix}\right)\in D_2$ with
$\left(\begin{smallmatrix}a&b\\b&a\end{smallmatrix}\right)\in C_{s_j}$ and $t\in F^*$ we have
\begin{gather*}
\modulus_{P_x\cap L}(\inj(I,g,I))=\abs{\det(a+bh_{j+1})t^{-n}},\\
\modulus_{P_x\cap L}\modulus_{P\cap L}^{-\frac12}(\inj(I,g,I))=
\abs{\det(a+bh_{j+1})}^{\frac12}\abs{\det(a-h_{j+1}b)}^{-\frac12}.
\end{gather*}
(Indeed, $\left(\begin{smallmatrix}a&b\\b&a\end{smallmatrix}\right)$ is conjugate to $\diag(a+bh_{j+1},a-bh_{j+1})$.) Similarly, for $g'=wgw^{-1}=\left(\begin{smallmatrix}t&&\\&a&b\\&b&a\end{smallmatrix}\right)$ we have $\inj(I,g',I)=n_\alpha \inj(I,g,I)n_\alpha^{-1}\in M'_{x'}$ and
\begin{gather*}
\modulus_{P'_{x'}\cap L}(\inj(I,g',I))=\abs{t^n\det(a+bh_{j+1})^{-1}},\\
\modulus_{P'_{x'}\cap L}\modulus_{P'\cap L}^{-\frac12}(\inj(I,g',I))=
\abs{\det(a-bh_{j+1})}^{\frac12}\abs{\det(a+h_{j+1}b)}^{-\frac12}.
\end{gather*}
In particular, if $b=0$ we get
\[
\modulus_{P_x\cap L}\modulus_{P\cap L}^{-\frac12}(\inj(I,g,I))=
\modulus_{P'_{x'}\cap L}\modulus_{P'\cap L}^{-\frac12}(\inj(I,g',I))=1.
\]
The Lemma follows.
\end{proof}

It follows from the lemma above that in the computation of $\rho_x$ we can assume in addition that $x$ is $M$-standard cuspidal, i.e.,
that there exists $l_1\le l$ such that $s_i$ is even for $i=1,\dots,l_1$ and $s_i=1$ for all $i>l_1$. Set $l_2=l-l_1$ and note that $l_2$ is even.

\begin{lemma} \label{lem: rhox description}
For $x\in N_G(M)\cap X$ that is $M$-standard cuspidal, with the above notation we have $\rho_x=(\overbrace{\frac12,\dots,\frac12}^{2k+l_1},
\lambda_1,\dots,\lambda_{l_2})$ where $\lambda_i=2\#\{j\ge i:h_{l_1+i}=h_{l_1+j}\}-(l_2+1-i)$, $i=1,\dots,l_2$.
\end{lemma}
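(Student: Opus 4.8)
The plan is the following. By the preceding lemma we may assume $x$ is $M$-standard cuspidal; then $x$ is $M$-minimal (Lemma~\ref{lem: min is basic}\eqref{part: basic min}), so $\grp P_x=\grp M_x\ltimes\grp U_x$ (Lemma~\ref{lem: factor adm}), and since $\rho_x$ depends only on $[x]_M$ we may take the representative $x=\inj(\sm0{I_{r_1}}{I_{r_1}}0,\dots,\sm0{I_{r_k}}{I_{r_k}}0,h_1,\dots,h_l)$ of Remark~\ref{rmk: cusp std rlvt}, with $h_j=\sm0{I_{s_j/2}}{I_{s_j/2}}0$ for $j\le l_1$ and $h_j\in\{\pm1\}$ for $j>l_1$. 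Put $\grp L=\grp L(x)$; since $v=0$ we have $\grp L\subseteq\grp M_{(2n;0)}$, and by Lemma~\ref{lem: exp iso} together with \eqref{eq: plus sp} the map $H_M$ identifies $A_M^{M_x}$ with $(\aaa_M)_x^+=\aaa_L$. I would compute the character $a\mapsto\modulus_{P_x}(a)\modulus_P(a)^{-\frac12}$ of $A_M^{M_x}$ directly from the weight space decomposition of $\Lie\grp U$: since $A_M^{M_x}$ centralizes $\grp M_x$ it normalizes $\grp U_x$, and for $a\in A_M^{M_x}$ one has $\modulus_P(a)=\modulus_{P_x}(a)\cdot|\det(\operatorname{Ad}(a)|_{\Lie\grp U/\Lie\grp U_x})|$, so that $2\rho_x$ is the difference of the $A_M^{M_x}$-characters on $\Lie\grp U_x$ and on $\Lie\grp U/\Lie\grp U_x$.

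Because $\grp M\subseteq\grp M_{(2n;0)}$, the space $\Lie\grp U$ splits $\operatorname{Ad}(\grp M_{(2n;0)})$-equivariantly as $\mathfrak n\oplus\Sym^2(V)$, where $V=F^{2n}$ is the standard module of the Siegel Levi, $\mathfrak n$ is the nilradical of the parabolic $\grp P\cap\grp M_{(2n;0)}$ of $\grp M_{(2n;0)}$, and $\Sym^2(V)$ is the (full) nilradical of the Siegel parabolic. Both summands are $\operatorname{Ad}(x)$-stable (as $x\in\grp M_{(2n;0)}$), although $\Lie\grp U$ itself is \emph{not}; hence $\Lie\grp U_x=\Lie\grp U\cap\Lie(\grp{C_G}(x))$ equals $\mathfrak n^{\operatorname{Ad}(x)}\oplus\Sym^2(V)^{\operatorname{Ad}(x)}$, and $2\rho_x$ is the sum of the two corresponding character differences. (In contrast to the $\GL$-type situations of \cite{MR2010737}, the $\mathfrak n$-part does not vanish here, and it is where the ordering of the blocks enters.) The point that needs the most care is precisely to grade everything by the restriction to $\aaa_L$ (the weights for $A_M^{M_x}$) and to keep track of which $\grp T$-root spaces remain in $\Lie\grp U$ after applying $\operatorname{Ad}(x)$.

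Label the blocks of $V$ attached to the $\GL$-factors of $\grp M$ as $V_1',V_1'',\dots,V_k',V_k'',U_1,\dots,U_l$ (so $V_i',V_i''\cong F^{r_i}$, $U_j\cong F^{s_j}$), and let $\bar\lambda_i,\bar\mu_j\in\aaa_L^*$ be the common restriction to $\aaa_L$ of the $A_M$-characters on $V_i',V_i''$, resp.\ on $U_j$. Decomposing $\Sym^2(V)=\bigoplus_{B\le B'}W_B\odot W_{B'}$ over these blocks, $\operatorname{Ad}(x)$ permutes the summands (swapping $V_i'\leftrightarrow V_i''$ and fixing each $U_j$), and a summand swapped with a \emph{distinct} one contributes $0$: its $\operatorname{Ad}(x)$-fixed part is the graph of $\operatorname{Ad}(x)$, whose $A_M^{M_x}$-character equals that of a single summand since a weight and its $\operatorname{Ad}(x)$-image have equal restriction to $\aaa_L$. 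The $\operatorname{Ad}(x)$-fixed summands are $V_i'\otimes V_i''$ (flip, with eigenspaces $\Sym^2(F^{r_i})$ and $\wedge^2(F^{r_i})$), $\Sym^2(U_j)$, and $U_j\otimes U_{j'}$; using that $h_j$ has balanced $\pm1$-eigenspaces for $j\le l_1$ (here $p_j=q_j$ is used) and is a scalar for $j>l_1$, a short count yields the $\Sym^2(V)$-contribution to $2\rho_x$:
\[
\sum_{i=1}^{k} 2r_i\,\bar\lambda_i+\sum_{j=1}^{l_1}s_j\,\bar\mu_j+\sum_{j=l_1+1}^{l}2\,\bar\mu_j+\sum_{l_1<j<j'\le l}\sigma_{jj'}\,(\bar\mu_j+\bar\mu_{j'}),
\]
where $\sigma_{jj'}=1$ if $h_j=h_{j'}$ and $-1$ otherwise. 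For $\mathfrak n=\bigoplus_{B<B'}\Hom(W_{B'},W_B)$ the same dichotomy applies; the summands internal to a $\GL_{r_i}$-pair are carried out of $\mathfrak n$ by $\operatorname{Ad}(x)$ but have zero restriction to $\aaa_L$, and all remaining summands involving a $\GL_{r_i}$-block or an even $\GL_{s_j}$-block ($j\le l_1$) contribute $0$; the only surviving contribution is from $\Hom(U_{j'},U_j)$ with $l_1<j<j'\le l$, on which $\operatorname{Ad}(x)$ acts by the scalar $h_jh_{j'}$, giving the $\mathfrak n$-contribution $\sum_{l_1<j<j'\le l}\sigma_{jj'}(\bar\mu_j-\bar\mu_{j'})$. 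It is essential here that all $\GL_{r_i}$-blocks precede all $\GL_{s_j}$-blocks and that $U_1<\dots<U_l$: this is why only pairs with $j'>j$ survive, which is the source of the ``$j\ge i$'' in the statement.

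Adding the two contributions, the odd--odd cross terms combine as $\sigma_{jj'}[(\bar\mu_j+\bar\mu_{j'})+(\bar\mu_j-\bar\mu_{j'})]=2\sigma_{jj'}\bar\mu_j$, and one gets
\[
\rho_x=\sum_{i=1}^{k} r_i\,\bar\lambda_i+\sum_{j=1}^{l_1}\tfrac{s_j}{2}\,\bar\mu_j+\sum_{j=l_1+1}^{l}\Bigl(1+\sum_{j<j'\le l}\sigma_{jj'}\Bigr)\bar\mu_j.
\]
Under the identifications of \S\ref{sec: setup}, $r_i\bar\lambda_i$ has value $\tfrac12$ in each of the $2r_i$ coordinates of the $i$-th $\GL_{r_i}$-pair and $\tfrac{s_j}{2}\bar\mu_j$ has value $\tfrac12$ in each of the $s_j$ coordinates of the $j$-th $\GL_{s_j}$-block when $j\le l_1$, which accounts for the first $2k+l_1$ entries; and for $j=l_1+i$ the coefficient $1+\sum_{j<j'\le l}\sigma_{jj'}$ equals $2\#\{i'\ge i:h_{l_1+i'}=h_{l_1+i}\}-(l_2+1-i)=\lambda_i$. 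The main obstacle is the combinatorial bookkeeping of the third paragraph --- in particular the realization that, since $\operatorname{Ad}(x)$ does not preserve $\Lie\grp U$, one must not use the naive root-by-root formula but rather grade by $\aaa_L$-weights and split off the symplectic part $\Sym^2(V)$ from the start.
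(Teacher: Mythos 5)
Your argument is correct and reaches the paper's formula, but by a different decomposition. The paper first invokes Lemma~\ref{lem: stab vx} to replace $\modulus_{P_x}\modulus_P^{-1/2}$ on $A_L$ by $\modulus_{Q_x}\modulus_Q^{-1/2}$, where $Q=L\ltimes V$ has Levi $L=L(x)$; since $x\in L$, the nilradical $\grp V$ is genuinely $\operatorname{Ad}(x)$-stable, so one can sort $R(T,V)$ into $x$-orbits, where size-two orbits drop out and each fixed root $\beta$ contributes $\abs{\beta(a)}^{\dim U^\beta_x-1/2}$. You instead stay with $\grp U$ (which is not $\operatorname{Ad}(x)$-stable) and split $\Lie\grp U=\mathfrak n\oplus\Sym^2(V)$ along the Siegel Levi; this avoids Lemma~\ref{lem: stab vx}, at the cost of extra bookkeeping for $\mathfrak n$. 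Your write-up does, however, contain one inconsistent sentence: ``both summands are $\operatorname{Ad}(x)$-stable'' cannot hold for $\mathfrak n$, for otherwise $\Lie\grp U$ would also be $\operatorname{Ad}(x)$-stable, contradicting both your very next clause and your later observation that some summands are ``carried out of $\mathfrak n$ by $\operatorname{Ad}(x)$.'' The correct statement is that $\operatorname{Ad}(x)$ preserves $\Lie\grp M_{(2n;0)}\supseteq\mathfrak n$ and preserves $\Sym^2(V)$, and since these two spaces intersect trivially, $\Lie\grp U_x=\mathfrak n^{\operatorname{Ad}(x)}\oplus\Sym^2(V)^{\operatorname{Ad}(x)}$ still follows; moreover the summands of $\mathfrak n$ that $\operatorname{Ad}(x)$ moves out of $\mathfrak n$ are precisely the $\Hom(V_i'',V_i')$, i.e.\ $\Lie(\grp U\cap\grp L)$, and, as you observe, these have vanishing restriction to $\aaa_L$ and hence contribute nothing. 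With that repair, the block-by-block count is carried out correctly, and the resulting coefficient $1+\sum_{j<j'\le l}\sigma_{jj'}$ indeed equals $\lambda_{j-l_1}$ as in the statement.
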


Note that $(\lambda_1,\dots,\lambda_{l_2})$ is an intersection of singular hyperplanes. More precisely,
$\lambda_{l_2}=1$ and for $i=1,\dots,l_2-1$ we have $\lambda_i-\eps_i\lambda_{i+1}=1$ where $\eps_i=1$ if $h_{l_1+i}=h_{l_1+i+1}$ and $\eps_i=-1$
otherwise.

\begin{proof}
Let $L=M_{(2r_1,\dots,2r_k,s_1,\dots,s_l;0)}$ so that $x\in wM$ where $w=w_M^L$.
Since $\rho_x$ depends only on $[x]_M$ we may assume without loss of generality that
\[
x=\inj(\sm{0}{I_{r_1}}{I_{r_1}}{0},\dots,\sm{0}{I_{r_k}}{I_{r_k}}{0},\sm{0}{I_{s_1/2}}{I_{s_1/2}}{0},\dots,\sm{0}{I_{s_{l_1}/2}}{I_{s_{l_1}/2}}{0},h_{l_1+1},\dots,h_l)
\]
where $h_{l_1+i}=\pm 1$, $i=1,\dots,l_2$.
By Lemma \ref{lem: stab vx} we have $\modulus_{P_x}=\modulus_{Q_x}\rest_{P_x}$.
Since $A_M^{M_x}=A_L$, in order to compute $\rho_x$ we need to compute $\modulus_{Q_x}\modulus_P^{-\frac12}\rest_{A_L}=\modulus_{Q_x}\modulus_Q^{-\frac12}\rest_{A_L}$.

Let $R(T,V)$ be the set of roots of $T$ on $\operatorname{Lie} V$.
For any $\beta\in R(T,V)$ denote by $U^\beta$ the corresponding one-parameter root subgroup.
Note that $x$ normalises both $T$ and $V$ and acts as an involution on $R(T,V)$. We can decompose $V$ according to the orbits of $x$
\[
V=\prod_{\orb} V^\orb
\]
(the product taken in any order and the multiplication map defines an isomorphism of affine algebraic varieties)
where $V^\orb=\prod_{\beta\in\orb}U^\beta$.
(Note that $U^\beta$ commutes with $U^{x\beta}$ so that $V^\orb$ is a group.)
Thus,
\[
V_x=\prod_{\orb}V^\orb_x.
\]
If $\abs{\orb}=2$ then $\dim V^\orb_x=1$, while if $\orb=\{\beta\}$ then $V^\orb_x=U^\beta_x$
is either $1$ or equal to $U^\beta$. Altogether,
\[
\modulus_{Q_x}\modulus_Q^{-\frac12}(a)=\prod_{\beta\in R(T,V):x\beta=\beta}\abs{\beta(a)}^{\dim U^\beta_x-\frac12},\ \ \ a\in A_L.
\]
The roots $\beta$ such that $x\beta=\beta$ can be enumerated as follows. Let $R_t=2\sum_{i=1}^{t-1} r_i$, $t=1,\dots,k+1$, $R=R_{k+1}$,
$S_t=R+\sum_{i=1}^{t-1} s_i$, $t=1,\dots,l_1+1$ and $S=S_{l_1+1}$. The roots are
\begin{enumerate}
\item $e_i+e_j$ where either $i=R_t+a$ for some $t=1,\dots,k$, $1\le a\le r_t$ and $j=i+r_t$ or $i=S_t+a$ for some $t=1,\dots,l_1$,
$1\le a\le \frac{s_t}2$, $j=i+\frac{s_t}2$.
\item $e_i \pm e_j$ where $S<i\le j \le 2n$ ($e_i-e_j$ is only a root if $i<j$).
\end{enumerate}
For $\beta$ of the first type $U_x^\beta=U^\beta$ and this explains the first $2k+l_1$ coordinates of $\rho_x$.
For $\beta$ of the second type $U^\beta_x=U^\beta$ if and only if $h_{l_1+i-S}=h_{l_1+j-S}$.

Fix $1\le i\le l_2$ and set $a=\inj(I_{S+i-1},y,I_{2n-(S+i)})$, $y>0$. By definition,
\[
\prod_{\beta\in R(T,V):x\beta=\beta}\abs{\beta(a)}^{\dim U^\beta_x-\frac12}=\abs{y}^{\lambda_i}.
\]
Note that the contributions of $e_{S+j}+e_{S+i}$ and of $e_{S+j}-e_{S+i}$ to the left hand side cancel each other out for all $1\le j<i$.
The contribution of $e_{S+i}\pm e_{S+j}$ for $i<j\le l_2$ equals $\abs{y}^{\frac\epsilon 2}$ where
$\epsilon=1$ if $h_{l_1+i}=h_{l_1+j}$ and $\epsilon=-1$ otherwise.
Combining the contribution $\abs{y}$ from the case $j=i$ this yields
\[
\lambda_i=1+\#\{i<j: h_{l_1+i}=h_{l_1+j}\}-\#\{i<j:h_{l_1+i}\ne h_{l_1+j}\}.
\]
Since $\#\{i<j:h_{l_1+i}\ne h_{l_1+j}\}=l_2-i-\#\{i<j: h_{l_1+i}=h_{l_1+j}\}$ the lemma follows.
\end{proof}

The affine subspace $\rho_x+(\aaa_M^*)_x^-$ of $\R^{2k+l}$ can be described as follows.
Let
\begin{gather*}
\h_{i}=\{(\mu_1,\dots,\mu_{2k+l}): \mu_i=1/2\},\ 1\le i\le 2k+l,\\
\h_i^{\pm}=\{(\mu_1,\dots,\mu_{2k+l}): \mu_i\mp\mu_{i+1}=1\},\ 1\le i<2k+l,\\
\h_{2k+l}^+=\{(\mu_1,\dots,\mu_{2k+l}): \mu_{2k+l}=1\}.
\end{gather*}
Then we have
\[
\rho_x+(\aaa_M^*)_x^-=\left\{\cap_{i=1}^k \h_{2i-1}^-\right\}\cap \left\{\cap_{i=2k+1}^{2k+l_1} \h_i \right\}\cap
\left\{\cap_{i=2k+l_1+1}^{2k+l-1} \h_i^{(h_ih_{i+1})}\right\}\cap \h_{2k+l}^+.
\]

\section{Intertwining Periods} \label{sec: intertwining periods}
In this section we define the intertwining periods for the pair $(G,H)$.
These are certain $\grp H(\A)$-invariant linear forms defined on induced representations of $\grp G(\A)$.
They were introduced and studied in the Galois case in \cite{MR1625060} and \cite{MR2010737}.
Our treatment follows the same line but on a technical level we use a slightly different argument
for the convergence.

The intertwining periods are built from inner period integrals.
We will first study the latter.

\subsection{Vanishing pairs}
For this subsection let $\grp{G}$ be a reductive group and $\grp{H}$ a reductive subgroup both defined over $F$.
Recall the notation \eqref{def: RAS}.
By \cite[Proposition 1]{MR1233493} for every cusp form $\phi$ on $\AS{G}$ we have
\begin{equation} \label{eq: convphi}
\int_{\RAS HG}\abs{\phi(h)}\ dh<\infty.
\end{equation}
Note that $\int_{\RAS HG}\phi(h)\ dh=\int_{H\bs(\grp{H}(\A)\cap\grp{G}(\A)^1)} \phi(h)\ dh$.
Let $\grp{H^{\der}}$ be the derived group of $\grp{H}$. Then $H\grp{H^{\der}}(\A)$ is co-compact in $\grp{H(\A)}^1$.
Applying \eqref{eq: convphi} to $\grp{H^{\der}}$ we conclude that
\[
\int_{H\bs\grp{H}(\A)^1}\abs{\phi(h)}\ dh<\infty.
\]

\begin{definition}(cf.~\cite{MR1233493})
We say that $(G,H)$ is a \emph{vanishing pair} if
\[
\int_{H\bs\grp{H}(\A)^1} \phi(h)\ dh=0
\]
for every smooth cuspidal function of uniform moderate growth (hence rapidly decreasing) $\phi$ on $\AS{G}$.
\end{definition}

\begin{remark}
In \cite{MR1233493} the (a priori weaker) condition
\[
\int_{\RAS HG} \phi(h)\ dh=0
\]
is used. However, for our purposes the definition above is more convenient.
\end{remark}

We recall the following results which are special cases of results of Ash-Ginzburg-Rallis and Jacquet-Rallis.
\begin{theorem} \label{thm: AGR JR}
The following are vanishing pairs:
\begin{itemize}
\item $(\Sp_{n+m},\Sp_n\times\Sp_m)$ for all $m,\,n$ (\cite{MR1233493});
\item $(\GL_{n+m},\GL_n\times\GL_m)$ (or even $(\GL_{n+m},\SL_n \times\SL_m)$ for all $m\ne n$ (\cite{MR1233493});
\item $(\GL_{2n},\Sp_n)$ (\cite{MR1142486})\footnote{Strictly speaking, Jacquet--Rallis prove vanishing of periods only for cuspidal automorphic forms.
However, the general case can easily be deduced from it. At any rate, the technique of \cite{MR1233493} applies equally well to this case.}.
\end{itemize}
\end{theorem}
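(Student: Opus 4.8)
I would prove this essentially by recollection, matching the conventions of the cited works to the notion of vanishing pair used here. Note first that in all four cases $\grp H^{\der}$ is semisimple, and in fact $\grp H^{\der}=\grp H$ except when $\grp H=\GL_n\times\GL_m$, in which case $\grp H^{\der}=\SL_n\times\SL_m$. For a semisimple group $\grp K$ one has $X^*(\grp K)=0$, hence $\grp K(\A)^1=\grp K(\A)$ and $A_G^K$ trivial, so $\RAS KG=\grp K(F)\backslash\grp K(\A)=\grp K(F)\backslash\grp K(\A)^1$; in particular the notion of vanishing pair employed here and the variant of \cite{MR1233493} coincide for the pair $(G,\grp H^{\der})$. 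I would therefore first record that $(G,\grp H^{\der})$ is a vanishing pair: for $\grp H^{\der}=\Sp_n\times\Sp_m\subset\Sp_{n+m}$ and for $\grp H^{\der}=\SL_n\times\SL_m\subset\GL_{n+m}$ with $n\ne m$ this is precisely \cite{MR1233493}; for $\grp H^{\der}=\Sp_n\subset\GL_{2n}$ the vanishing of $\int_{\grp H^{\der}(F)\backslash\grp H^{\der}(\A)}\psi$ for a cuspidal automorphic form $\psi$ is due to Jacquet--Rallis \cite{MR1142486}, and for an arbitrary smooth cuspidal function of uniform moderate growth on $\AS G$ it follows since the method of \cite{MR1233493}, resting on the analysis of the unipotent Fourier expansion, applies verbatim.

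It then remains to deduce that $(G,\grp H)$ is a vanishing pair from the fact that $(G,\grp H^{\der})$ is, which is only a genuine issue when $\grp H=\GL_n\times\GL_m$. Here I would use the fibration $H\backslash\grp H(\A)^1\to(H\grp H^{\der}(\A))\backslash\grp H(\A)^1$, whose base is compact because $H\grp H^{\der}(\A)$ has finite covolume in $\grp H(\A)^1$ (as already used in the text), and whose fibre, through the point represented by the value $t(x)\in\grp H(\A)^1\subset\grp G(\A)$ of a fixed measurable section $x\mapsto t(x)$, is $\grp H^{\der}(F)\backslash\grp H^{\der}(\A)$ — here one uses $H\cap\grp H^{\der}(\A)=\grp H^{\der}(F)$. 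With compatible Haar measures this gives, for any $\phi$ as in the definition of a vanishing pair,
\[
\int_{H\backslash\grp H(\A)^1}\phi(h)\ dh=\int_{(H\grp H^{\der}(\A))\backslash\grp H(\A)^1}\left(\int_{\grp H^{\der}(F)\backslash\grp H^{\der}(\A)}\phi(\eta\,t(x))\ d\eta\right)\ dx,
\]
all integrals being absolutely convergent by \eqref{eq: convphi} (applied to $\grp H^{\der}$) and the discussion preceding the definition of vanishing pair. For fixed $x$ the function $\eta\mapsto\phi(\eta\,t(x))$ is the right translate of $\phi$ by the fixed element $t(x)$, hence is again a smooth cuspidal function of uniform moderate growth on $\AS G$, since right translation preserves left $\grp G(F)$-invariance, $A_G$-invariance, smoothness, cuspidality and — by \eqref{eq: submult} — uniform moderate growth. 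By the previous paragraph (with $\grp H^{\der}=\SL_n\times\SL_m$, $n\ne m$), the inner integral vanishes identically in $x$, and therefore so does the $H$-period.

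The argument uses no analysis beyond what is already available in the excerpt; the only point requiring care is the bookkeeping of the fibration in the second step — namely that the class of test functions is stable under the right translations $t(x)$, and that the Haar measures on the base, the fibre and the total space are normalised compatibly so that the vanishing of the fibre integral does force the vanishing of the full $H$-period. Everything else is a matter of citing \cite{MR1233493} and \cite{MR1142486}.
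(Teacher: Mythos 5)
The paper states this result without proof, treating it as a direct recollection of the cited references, and your proposal is a correct and natural unpacking of what those citations entail. In particular your reduction of the $\GL_n\times\GL_m$ case to the $\SL_n\times\SL_m$ case, via the compact fibration $H\backslash\grp H(\A)^1\to H\grp H^{\der}(\A)\backslash\grp H(\A)^1$, is exactly the fibration the paper itself invokes just before the definition of vanishing pair to establish absolute convergence of the $H$-period.
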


\begin{remark}
In fact, it was proved in \cite{MR1241129} that $(\GL_n,\SL_m)$ is a vanishing pair if $m>n/2$.
However, we will not use this fact.
\end{remark}

From Theorem \ref{thm: AGR JR}, Corollary \ref{cor: xadm} and the description of the stabilizers in \S\ref{subsec: orbits} we infer:
\begin{corollary} \label{cor: xvanish}
Let $(G,H)=(\Sp_{2n},\Sp_n\times\Sp_n)$ and let $M$ be a Levi subgroup of $G$.
Then for any $x\in N_G(M)\cap X$, $(M,M_x)$ is a vanishing pair unless $x$ is $M$-cuspidal.
\end{corollary}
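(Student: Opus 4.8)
The idea is to combine the structural description of the centralizers $M_x$ from \S\ref{subsec: orbits} with the known vanishing results collected in Theorem \ref{thm: AGR JR}, transporting everything to a standard \rlvt\ representative via Corollary \ref{cor: xadm}. So first, given a Levi subgroup $M$ of $G$ and $x\in N_G(M)\cap X$, I would apply Corollary \ref{cor: xadm} to find $n\in N_G(T)$ such that $M'=nMn^{-1}$ is standard, $x'=nxn^{-1}$ is $M'$-standard \rlvt, and $L(x')=nL(x)n^{-1}$. Since conjugation by $n$ carries the pair $(M,M_x)$ isomorphically onto $(M',M'_{x'})$ (as pairs over $F$), and the vanishing-pair property is visibly invariant under such $F$-rational isomorphism, it suffices to treat the case where $x$ itself is $M$-standard \rlvt. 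Likewise $x$ is $M$-cuspidal iff $x'$ is $M'$-cuspidal.

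Next, assuming $x$ is $M$-standard \rlvt, I would invoke the explicit formula from \S\ref{subsec: orbits}: writing $M=M_{(r_1,r_1,\dots,r_k,r_k,s_1,\dots,s_l,t_1,\dots,t_m;u)}$ and $L=L(x)$ with the block decomposition of Definition \ref{def: std basic}, we have (up to $M$-conjugacy, which again does not affect whether the pair vanishes)
\[
M_{x}=\inj_L(\GL_{r_1}^{\triangle},\dots,\GL_{r_k}^{\triangle},\GL_{p_1}\times\GL_{q_1},\dots,\GL_{p_l}\times\GL_{q_l};H_\gamma),
\]
with $H_\gamma=\inj_{M_3}(\Sp_{t_1},\dots,\Sp_{t_m};H_{p,q})$ and $H_{p,q}\simeq\Sp_p\times\Sp_q$. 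Since a product pair $(\prod_i G_i,\prod_i H_i)$ is a vanishing pair as soon as one factor $(G_i,H_i)$ is (a cusp form on $[\prod G_i]$ restricted appropriately factors through tensor products of cusp forms, and the relevant integral factors accordingly — more precisely, if $(G_j,H_j)$ is a vanishing pair then integrating first over $H_j\bs\grp H_j(\A)^1$ already kills the integrand for a dense spanning set of cusp forms, hence for all by continuity), I would go factor by factor. The $\GL_{r_i}$-in-$\GL_{r_i}\times\GL_{r_i}$ diagonal factor contributes a pair $(\GL_{r_i}\times\GL_{r_i},\GL_{r_i}^\triangle)$ — but here one should be careful, as this is a ``group case'' pair which is genuinely \emph{not} vanishing, so these factors are harmless and never force vanishing. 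The $\Sp_{t_i}$-in-$\GL_{t_i}$ factor gives $(\GL_{t_i},\Sp_{t_i/2})$, which is a vanishing pair by the third bullet of Theorem \ref{thm: AGR JR}. The $\Sp_p\times\Sp_q$-in-$\Sp_u$ factor gives $(\Sp_u,\Sp_p\times\Sp_q)$, a vanishing pair by the first bullet. The $\GL_{p_i}\times\GL_{q_i}$-in-$\GL_{s_i}$ factor gives $(\GL_{s_i},\GL_{p_i}\times\GL_{q_i})$, which by the second bullet is a vanishing pair \emph{provided} $p_i\ne q_i$.

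Collecting these observations: the pair $(M,M_x)$ fails to be forced to vanish only if there are no $\GL_{t_i}$ blocks with $t_i>0$ (i.e.\ $m=0$, equivalently $v=0$ after using $v=u+t_1+\dots+t_m$ and checking the $\Sp_u$ block disappears), no nontrivial $\Sp_p\times\Sp_q$ block (so $u=0$, forcing $p=q=0$), and each $\GL_{s_i}$ block has $p_i=q_i$ — which in particular means each such $s_i$ is even unless $p_i=q_i=0$ is impossible, so one separates $s_i=1$ (where necessarily $p_i+q_i=1$, but then $p_i\ne q_i$, contradiction — hence actually one must re-examine: for $s_i=1$ the $\GL_1$ block gives no constraint since $\GL_1$ distinction is automatic, so $s_i=1$ is allowed) from even $s_i$ with $p_i=q_i$. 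Matching this against the definition of $M$-standard cuspidal (which is exactly: $v=0$, some $0\le l_1\le l$ with $s_1,\dots,s_{l_1}$ even, $s_i=1$ for $i>l_1$, and $p_i=q_i$ for $i\le l_1$), one sees the conditions coincide, i.e.\ $(M,M_x)$ is a vanishing pair unless $x$ is $M$-standard cuspidal, hence unless $x$ (before the normalization) is $M$-cuspidal.

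\textbf{Main obstacle.} The genuinely delicate point is the bookkeeping in the last step — verifying that the list of block-types that are \emph{not} vanishing pairs (the $\GL_r^\triangle\subset\GL_r\times\GL_r$ factors, the $\GL_1$ factors, the balanced $\GL_{s_i}$ with $p_i=q_i$) matches \emph{exactly} the combinatorial conditions defining $M$-standard cuspidal, with no gap in either direction. I would also need to state and prove carefully the lemma that a product of pairs is a vanishing pair if any one factor is, which although routine requires an argument about cusp forms on a product of adelic quotients being approximated by (finite sums of) pure tensors and about interchanging the order of integration (legitimate by the absolute convergence \eqref{eq: convphi} applied to $\grp{H^{\der}}$). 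Everything else is a direct appeal to Corollary \ref{cor: xadm}, the explicit stabilizer computation of \S\ref{subsec: orbits}, and Theorem \ref{thm: AGR JR}.
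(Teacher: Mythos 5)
Your approach is exactly the one the paper implicitly takes (the paper offers no explicit proof beyond the one sentence ``From Theorem~\ref{thm: AGR JR}, Corollary~\ref{cor: xadm} and the description of the stabilizers in \S\ref{subsec: orbits} we infer''), and your bookkeeping reaches the correct conclusion. The reduction to $M$-standard \rlvt\ via Corollary~\ref{cor: xadm}, the use of the explicit description of $M_x$ from \S\ref{subsec: orbits}, and the factor-by-factor appeal to Theorem~\ref{thm: AGR JR} together with the ``a product is a vanishing pair as soon as one factor is'' lemma, is precisely what is intended.

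One small gap in your factor-by-factor classification: when $s_i\ge 2$ and $\min(p_i,q_i)=0$, the factor is $(\GL_{s_i},\GL_{s_i})$, and the second bullet of Theorem~\ref{thm: AGR JR} (stated for $\GL_n\times\GL_m\subset\GL_{n+m}$, implicitly with $m,n\ge 1$) does not literally apply. You invoked the second bullet via the condition ``$p_i\ne q_i$'', but that condition is not what guarantees vanishing in this corner case. Fortunately $(\GL_s,\GL_s)$ is a vanishing pair for $s\ge 2$ for the elementary reason that the constant function on $\AS{\GL_s}$ lies in the residual (non-cuspidal) spectrum, so cusp forms integrate to zero; the analogous observation handles $(\Sp_u,\Sp_u)$ when $u>0$ and $\min(p,q)=0$, which your argument also glosses over by appealing directly to the first bullet. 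With these two corner cases filled in, the list of non-vanishing factor types matches the definition of $M$-standard cuspidal exactly, as you concluded (after untangling the mid-sentence self-correction about the $s_i=1$ blocks).
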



In the case where $(G,H)$ is not a vanishing pair we say that a cuspidal automorphic representation $\pi$ of $\grp G(\A)$
whose central character is trivial on $A_G$ is $H$-distinguished if there is $\varphi$ in the space of $\pi$ such that
\[
\int_{\RAS HG}\varphi(h)\ dh\ne0.
\]

\subsection{} \label{sec: GL2nGLnGLn}
For this subsection let $\grp{G}=\grp{GL}_{2n}$,
$\grp{P}=\grp{M}\ltimes \grp{U}$ the standard maximal parabolic subgroup of $\grp{G}$ with Levi subgroup
$\grp{M}=\{\sm{g_1}{}{}{g_2}:g_1,\,g_2\in\grp{GL}_n\}$ and unipotent radical $\grp{U}$
and $K$ the standard maximal compact subgroup of $\grp{G}(\A)$.
Recall that $H_{M}:\grp{M}(\A)\to \aaa_{M}\simeq \R^2$ is extended to $\grp{G}(\A)=\grp{P}(\A)K$ via the Iwasawa decomposition.

\begin{lemma} \label{lem: bndinner}
For any $\lambda\in(\aaa_M^G)^*$ there exists $N$ such that
\[
\int_{\RAS MG}e^{\sprod{\lambda}{H_{P}(m)}}\abs{\phi(m)}\ dm\ll_\lambda\sup_{g\in\siegel_{G}^1}\abs{\phi(g)}\norm{g}^N
\]
for any continuous function $\phi$ on $\AS{G}$.
\end{lemma}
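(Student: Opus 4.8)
The plan is to restrict the integral to a Siegel-type domain for $\RAS MG$, estimate $\phi$ there via the rapid decay built into the right-hand side, and verify that this decay dominates the factor $e^{\sprod{\lambda}{H_P(m)}}$.

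I would start with two structural reductions. Since $M=\GL_n(F)\times\GL_n(F)$ consists of norm-one elements, $M\subseteq\grp M(\A)^1$; writing $A_M=A_G\cdot A_M'$, where $A_M'\subseteq A_M$ is the subgroup mapped by $H_P$ isomorphically onto the line $\aaa_M^G$, one gets a direct product $\grp M(\A)=\grp M(\A)^1\times A_G\times A_M'$, hence a measure-preserving identification $\RAS MG\cong(M\bs\grp M(\A)^1)\times A_M'$. As the integrand is non-negative, replacing the integral over $M\bs\grp M(\A)^1$ by one over a Siegel domain $\siegel_M^1$ (chosen compatibly with $\siegel_G$) gives
\[
\int_{\RAS MG}e^{\sprod{\lambda}{H_P(m)}}\abs{\phi(m)}\ dm\ll\int_{A_M'}\int_{\siegel_M^1}e^{\sprod{\lambda}{H_P(m^1a)}}\abs{\phi(m^1a)}\ dm^1\ da.
\]
For $m^1\in\grp M(\A)^1$ we have $H_P(m^1)=0$, so $H_P(m^1a)=H_P(a)\in\aaa_M^G$; fixing a nonzero $v_0\in\aaa_M^G$ and parametrizing $A_M'\cong\R$ by $t\mapsto a_t$ with $H_P(a_t)=tv_0$, the weight equals $e^{t\sprod{\lambda}{v_0}}\le e^{\abs{\sprod{\lambda}{v_0}}\abs t}$, independent of $m^1$.

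The main point is to estimate $\abs{\phi(m^1a_t)}$, and here the key observation is that $m^1a_t\in\grp G(\A)^1$ is already almost reduced in $G$. Taking an Iwasawa decomposition $m^1=n\tau k$ inside $M$ (so $n$ lies in a fixed compact subset of $(\grp N\cap\grp M)(\A)\subseteq\grp N(\A)$, $\tau\in\grp T(\A)$ and $k\in K\cap\grp M(\A)$), we obtain $m^1a_t=n(\tau a_t)k$, and $H_0(\tau a_t)=H_0(m^1)+tv_0$ is the superposition of the two $\GL_n$-block-dominant chains of $H_0(m^1)$, shifted oppositely by $t$. Merging these two chains into a single dominant chain is effected by a permutation matrix $\dot w\in K\cap G$ that is order-preserving on each block; then $\dot w n\dot w^{-1}$ is again upper triangular and bounded, so $h:=\dot w\,m^1a_t$ lies in $\siegel_G^1$, and $\norm h\asymp\norm{m^1a_t}$ by \eqref{eq: submult} since $\dot w$ ranges over a finite set. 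As $\phi$ is left $(A_GG)$-invariant, $\phi(m^1a_t)=\phi(h)$, so the hypothesis yields
\[
\abs{\phi(m^1a_t)}\ll\Big(\sup_{g\in\siegel_G^1}\abs{\phi(g)}\norm g^N\Big)\norm{m^1a_t}^{-N}
\]
for every $N$.

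Finally, since $H_P$ is the orthogonal projection of $H_0$ onto $\aaa_M$, we have $\norm{H_0(m^1a_t)}\ge\norm{H_P(m^1a_t)}=\abs t\,\norm{v_0}$, and then \eqref{eq: bdexpnm} forces $\log\norm{m^1a_t}\gg\abs t-O(1)$, i.e.\ $\norm{m^1a_t}\gg e^{c\abs t}$ with $c>0$. Combining the last three displays,
\[
\int_{\RAS MG}e^{\sprod{\lambda}{H_P(m)}}\abs{\phi(m)}\ dm\ll\Big(\sup_{g\in\siegel_G^1}\abs{\phi(g)}\norm g^N\Big)\vol(\siegel_M^1)\int_{\R}e^{(\abs{\sprod{\lambda}{v_0}}-cN)\abs t}\ dt,
\]
so choosing $N$ with $cN>\abs{\sprod{\lambda}{v_0}}$ proves the lemma. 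The one delicate ingredient is the reduction of $m^1a_t$ in the third paragraph: it expresses the geometric fact that as $\abs t\to\infty$ the element $m^1a_t$ recedes up the cusp attached to $\grp P$ at rate $\gg e^{c\abs t}$, whence the rapid decay of $\phi$ on $\siegel_G^1$ (built into the hypothesis) dominates the growth of $e^{\sprod{\lambda}{H_P}}$ in the $A_M'$-direction; the remaining steps are routine.
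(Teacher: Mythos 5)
Your proof is essentially correct, but it takes a different route from the paper's: the paper cites Jacquet--Rallis's Proposition 6 as a black box to get $\sup_{m\in\grp M(\A)}\abs{\phi(m)}\delta_P(m)^{N'}\ll\sup_{g\in\siegel_G^1}\abs{\phi(g)}\norm g^N$ for every $N'$, symmetrizes it by the Weyl involution, and then bounds $e^{\sprod{\lambda}{H_P(m)}}$ by a power of $\max(\delta_P(m),\delta_P(m)^{-1})$ and integrates. You instead re-derive the needed decay of $\phi\big|_{\grp M(\A)}$ directly from reduction theory. The decomposition $\RAS MG\cong(M\bs\grp M(\A)^1)\times A_M'$, the reduction of the $\grp M(\A)^1$-integral to a Siegel set $\siegel_M^1$, the observation that $e^{\sprod{\lambda}{H_P(m^1a_t)}}$ depends only on $t$, and the lower bound $\norm{m^1a_t}\gg e^{c\abs t}$ obtained from $\norm{H_0}\ge\norm{H_P}$ and \eqref{eq: bdexpnm} are all sound, and the final convergence is immediate. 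In effect you are reproving the Jacquet--Rallis estimate (whose own proof is very close to what you wrote), so the two approaches are siblings; yours has the merit of being self-contained.

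The one step that is stated too quickly is the assertion that $h=\dot w\,m^1a_t$ lies in $\siegel_G^1$. For the \emph{fixed} Siegel domain of the paper this is not quite automatic, for two reasons: (i) the Siegel constant for $\grp M$ is negative, so the two block chains of $H_0(m^1)$ need not be weakly decreasing; the sorting permutation therefore need not be order-preserving on each block and $\dot w n\dot w^{-1}$ need not be upper-triangular; and (ii) even when it is upper-triangular, $\dot w n\dot w^{-1}$ only lands in \emph{some} fixed compact set, which need not be contained in the compact $\Omega_G$ defining $\siegel_G$. What is true, and suffices, is that $\dot w\,m^1a_t$ lies in a Siegel-type set $\Omega'A_{P_0}(c')K$ for a fixed compact $\Omega'$ (finitely many $\dot w$'s, bounded unipotent part, non-negative gaps in the sorted torus). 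By a standard reduction-theory finiteness argument (the set $\{\gamma\in G:\gamma\siegel\cap\siegel'\neq\emptyset\}$ is finite for Siegel sets $\siegel,\siegel'$, or equivalently: the quantity $\sup_{g\in\siegel_G^1}\abs{\phi(g)}\norm g^N$ changes only by bounded factors when the Siegel domain is enlarged), one can left-translate by $\gamma$ in a fixed \emph{finite} subset of $G$ to land in $\siegel_G^1$, and $\norm{\gamma\dot w\,m^1a_t}\asymp\norm{m^1a_t}$ still holds by \eqref{eq: submult} since $\gamma$ and $\dot w$ range over finite sets. With this patch your argument goes through.
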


\begin{proof}
It follows from \cite[Proposition 6]{MR1142486} and its proof that for any $N'>0$ there exists $N$ such that
\[
\sup_{m\in\grp{M}(\A)}\abs{\phi(m)}\modulus_{P}(m)^{N'}\ll_{N'}\sup_{g\in\siegel_{G}^1}\abs{\phi(g)}\norm{g}^N.
\]
Applying this also to $wmw^{-1}$ and the translate of $\phi$ by $w=\sm{}{I_n}{I_n}{}$ we get that
\begin{equation}\label{eq: rel ineq}
\sup_{m\in \grp{M}(\A)}\abs{\phi(m)}\max(\modulus_{P}(m),\modulus_{P}(m)^{-1})^{N'}\ll_{N'}\sup_{g\in\siegel_{G}^1}\abs{\phi(g)}\norm{g}^N.
\end{equation}
Clearly, there exists $N_\lambda$ such that
\[
e^{\sprod{\lambda}{H_{P}(m)}}\le \max(\modulus_{P}(m),\modulus_{P}(m)^{-1})^{N_\lambda},\ \ \ m\in \grp{M}(\A).
\]
Applying the inequality \eqref{eq: rel ineq} with $N'=N_\lambda+N''$ it remains to note that
\[
\int_{\RAS MG}\max(\modulus_{P}(m),\modulus_{P}(m)^{-1})^{-N''}\ dm
\]
converges for $N''\gg1$.
\end{proof}

We also require the convergence of an auxiliary integral associated with the pair $(\grp{G},\grp{H})$
where $\grp{H}$ is the centralizer of $w_M^G=\sm{}{I_n}{I_n}{}$.
Let $\grp{M_H}=\grp{M}\cap \grp{H}$ and define
\[
\norm{h}_{M_H\bs H}=\inf_{m\in \grp{M_H}(\A)}\norm{mh}, \ \ \ h\in \grp{H}(\A).
\]
Note that $H$ consists of the matrices in $G$ of the form $\sm abba$ and $M_H=\GL_n^\triangle=\{\diag(g,g):g\in\GL_n\}$.

\begin{lemma}\label{lem: aux bd}
For any $N>0$ there exists $t_0$ such that the integral
\[
\int_{\grp{M_H}(\A)\bs\grp{H}(\A)} e^{\sprod{(t,-t)}{H_M(h)}}\norm{h}_{M_H\bs H}^N\ dh
\]
converges uniformly for $t$ in any compact subset of $(t_0,\infty)$.
\end{lemma}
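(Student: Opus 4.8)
\emph{Approach.} The plan is to turn the integral into an explicit integral over $\GL_n(\A)$ and then recognize the latter as a convergent Euler product. Recall that $\grp H=\{\sm abba\}$ and $\grp{M_H}=\GL_n^\triangle$, so $\sm abba\mapsto(a+b,a-b)$ is an isomorphism $\grp H\simeq\GL_n\times\GL_n$ carrying $\grp{M_H}$ to the diagonal $\GL_n$; hence $\grp{M_H}(\A)\bs\grp H(\A)\simeq\GL_n(\A)$ (the invariant measure going over to a Haar measure $dz$, up to a positive constant), a section being $z\mapsto h_z:=\frac12\sm{I_n+z}{I_n-z}{I_n-z}{I_n+z}$. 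So it suffices to bound $\int_{\GL_n(\A)}e^{\sprod{(t,-t)}{H_M(h_z)}}\norm{h_z}_{M_H\bs H}^N\,dz$. The weight is harmless: from the explicit entries of $h_z$ and $h_z^{-1}=h_{z^{-1}}$ together with \eqref{eq: submult}, $\norm{h_z}_{M_H\bs H}\le\norm{h_z}_{\GL_{2n}(\A)}\ll\norm{z}_{\GL_n(\A)}$. For the remaining factor I would write $e^{\sprod{(t,-t)}{H_M(g)}}=(\abs{\det p_1(g)}_\A/\abs{\det p_2(g)}_\A)^t=(\abs{\det g}_\A/\abs{\det p_2(g)}_\A^2)^t$, with $p_1(g),p_2(g)$ the $\GL_n$-blocks of the Levi part of the Iwasawa decomposition of $g$, and use the exterior-power identity $\abs{\det p_2(g)}_\A=\norm{\wedge^n(\text{last }n\text{ rows of }g)}_\A$ (this uses that the last $n$ rows of any $k\in K$ form a unimodular, resp.\ orthonormal, $n$-frame). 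The last $n$ rows of $h_z$ are $[\,I_n\mid-z\,]\eta_n^{-1}$ (notation of \eqref{def: eta}), and $\eta_n^{-1}$ is unimodular and lies in the relevant maximal compact away from finitely many places, so $\abs{\det p_2(h_z)}_\A\asymp\norm{\wedge^n[\,I_n\mid-z\,]}_\A$; the Pl\"ucker coordinates of $[\,I_n\mid-z\,]$ are, up to sign, all the minors of $z$ of every size (including the constant minor $1$). Therefore $e^{\sprod{(t,-t)}{H_M(h_z)}}\le C^t\Phi(z)^t$ with $C\ge1$ depending only on $n$ and $F$, where $\Phi(z):=\abs{\det z}_\A\,\norm{\wedge^n[\,I_n\mid-z\,]}_\A^{-2}$; moreover $\Phi(z)\le1$, as one sees from the coordinates $1$ and $\pm\det z$ alone.

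\emph{Euler product.} It then remains to prove $\int_{\GL_n(\A)}\Phi(z)^t\norm{z}_{\GL_n(\A)}^N\,dz<\infty$ for $t$ large. Since $\Phi$, $\norm\cdot$ and the measure all factor over the places of $F$ and $\Phi_v\equiv\norm\cdot_v\equiv1$ on $\GL_n(\OO_v)$, this is an Euler product $\prod_v I_v$ with $I_v=\int_{\GL_n(F_v)}\Phi_v(z)^t\norm z_v^N\,dz$. Each $\Phi_v$ is bi-invariant under the standard maximal compact of $\GL_n(F_v)$ (the coordinate tuple $\wedge^n[\,I_n\mid-z\,]$ is, up to a bounded archimedean factor, the matrix of $\bigoplus_j\wedge^j z$, whose relevant norm is bi-invariant by Cauchy--Binet), so the Cartan decomposition reduces everything to the double coset of a dominant cocharacter $a=(a_1\ge\dots\ge a_n)$, on which one computes directly $\Phi_v=q_v^{-\sum_i\abs{a_i}}$ (respectively $e^{-\sum_i\abs{b_i}}$ at the archimedean places), while $\norm z_v=q_v^{\max(a_1,-a_n)}\le q_v^{\sum_i\abs{a_i}}$ and the Cartan Jacobian is $\le q_v^{(n-1)\sum_i\abs{a_i}}$. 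Summing the resulting series $\sum_{a\ \mathrm{dominant}}q_v^{(n-1+N-t)\sum_i\abs{a_i}}$ gives $I_v\le1+C(n)q_v^{-(t-n+1-N)}$ at the finite places and a finite value at the finitely many archimedean ones, as soon as $t>n-1+N$. Hence with $t_0:=n+N$, for every $t>t_0$ the tail $\sum_{v<\infty}(I_v-1)\ll\sum_v q_v^{-(t-t_0+1)}<\infty$, so $\prod_v I_v<\infty$. Finally, since $0\le\Phi\le1$ the integrand is pointwise nonincreasing in $t$, so on any compact $[t_1,t_2]\subset(t_0,\infty)$ the integrand $e^{\sprod{(t,-t)}{H_M(h_z)}}\norm{h_z}_{M_H\bs H}^N\le C^{t_2}\Phi(z)^{t_1}\norm{z}^N$ is integrable independently of $t$, which is the asserted uniform convergence.

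\emph{Main obstacle.} The only step with real content is the explicit evaluation of $e^{\sprod{(t,-t)}{H_M(h_z)}}$ — the exterior-power formula for $\abs{\det p_2}$, the reduction to the clean expression $\Phi(z)$, and the pointwise identity $\Phi_v=q_v^{-\sum_i\abs{a_i}}$ on Cartan cells. Everything afterward is routine reduction theory and elementary estimates on Euler products.
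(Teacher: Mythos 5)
Your proof is correct and follows essentially the same route as the paper's: after passing from $\grp{M_H}(\A)\bs\grp H(\A)$ to an integral over $\GL_n(\A)$ (your section $h_z$ is exactly $\eta_n\sm{I_n}{0}{0}{z}\eta_n^{-1}$, which is the paper's change of variable), one computes $e^{\sprod{(1,-1)}{H_M(\cdot)}}$ on Cartan cells and reduces to a convergent $\GL_n(\A)$-integral. The only difference is that you prove the resulting convergence inline via the Plücker/Euler-product computation, whereas the paper stops at the sharper pointwise bound $\prod_i\max(|t_i|,|t_i|^{-1})\ge\norm g$ and then cites the convergence of $\int_{\GL_n(\A)}\norm g^{-t}\,dg$ for $t\gg1$ as a standard fact (from Jacquet--Rallis).
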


\begin{proof}
Let $\eta_n=\sm{I_n}{I_n}{I_n}{-I_n}$. Then $\eta_n^{-1}H\eta_n=M$ and $\eta_n$ centralizes $M_H$.
Note that for $g\in \grp{GL}_n(\A)$ we have
\[
\eta_n\sm{I_n}{}{}{g} \eta_n^{-1}=\frac12\sm{I_n+g}{I_n-g}{I_n-g}{I_n+g}
\]
from which it follows that $\norm{\eta_n\sm{I_n}{}{}{g} \eta_n^{-1}}_{M_H\bs H} \ll \|g\|$.

Applying the change of variable $h\mapsto \eta_n h\eta_n^{-1}$ and \eqref{eq: exp bd} we reduce to the convergence of the integral
\[
\int_{\grp{GL}_n(\A)} e^{\sprod{(t,-t)}{H_M(\eta_n\sm{I_n}{}{}{g})}}\norm{g}^N\ dg.
\]
Observe that if $g\in K\diag(t_1,\dots,t_n)K$ then
\[
\abs{\det \eta_n}e^{\sprod{(-1,1)}{H_M(\eta_n\sm{I_n}{}{}{g})}}=\prod_{i=1}^n\max(\abs{t_i},\abs{t_i}^{-1})\ge\max_i(\abs{t_i},\abs{t_i}^{-1})=\norm{g}.
\]
Thus, the Lemma follows from the convergence of
\[
\int_{\grp{GL}_n(\A)}\norm{g}^{-t}\ dg
\]
for $t\gg1$ which is a standard fact (it follows e.g. from \cite[Proposition 7]{MR1142486}).
\end{proof}

\subsection{Definition of the intertwining period}
We go back to the setup of \S\ref{sec: setup}.
Let $\grp P=\grp M\ltimes \grp U$ be a parabolic subgroup of $\grp G$ and let $x\in N_G(M)\cap X$.
For $\varphi\in\AF_P(G)$ and $\lambda\in \rho_x+(\aaa_{M,\C}^*)_x^-$ we define, whenever convergent,
\[
J(\varphi,x,\lambda)=\int_{A_M^{M_x}\grp U_x(\A)M_x\bs \grp G_x(\A)}\varphi_\lambda(h\eta)\ dh
\]
where $\eta\in G$ is such that $x=\eta\epsilon\eta^{-1}$.
Note that the integral formally makes sense by Lemma \ref{lem: exp iso}, \eqref{eq: rhox} and Lemma \ref{lem: factor adm}
and does not depend on the choice of $\eta$, since $G_x\eta$ is determined by $x$.
In fact, it is easy to see that whenever defined,  $J(\varphi,x,\lambda)$ depends only on the $M$-orbit of $x$.
Moreover, we have
\begin{multline*}
J(\varphi,x,\lambda)=
\int_{\grp P_x(\A)\bs \grp G_x(\A)}\int_{\RAS{M_x}M}\modulus_{P_x}^{-1}(m)\varphi_\lambda(mh\eta)\ dm\ dh=\\
\int_{\grp P_x(\A)\bs \grp G_x(\A)}e^{\sprod{\lambda}{\Ht_P(h\eta)}}\int_{\RAS{M_x}M}\modulus_{P_x}^{-1}(m)e^{\sprod{\rho_x}{\Ht_M(m)}}\varphi(mh\eta)\ dm\ dh.
\end{multline*}

\subsection{Convergence of the intertwining periods}
Let $\Sigma_{P,x}=\{\alpha\in\Sigma_P:x\alpha<0\}$. For $\gamma>0$ define
\[
\domain_x(\gamma)=\rho_x+\{\lambda\in (\aaa_M^*)_x^-: \sprod{\lambda}{\alpha^\vee} >\gamma,\, \forall\alpha\in \Sigma_{P,x}\}.
\]
If $x$ is $M$-standard cuspidal and $L=L(x)$ then in the notation of Definition \ref{def: std basic} we have
\[
(\aaa_{M,\C}^*)_x^-=(\aaa_M^L)^*_\C=\{\lambda=(\lambda_1,-\lambda_1,\dots,\lambda_k,-\lambda_k,\overbrace{0,\dots,0}^l):\lambda_1,\dots,\lambda_k\in\C\}
\]
and $\domain_x(\gamma)=\rho_x+\{\lambda\in (\aaa_M^*)_x^-: \lambda_i>\gamma\}$.
Taking \eqref{eq: exp eqv simp} into account, as in \cite[Lemma 5.2.1]{MR2010737} we have
\begin{lemma}\label{lem: domain}
Let $(M,x)$ and $(M',x')$ be vertices in the graph $\grph$ defined in \S\ref{sec: exponents} such that $(M,x)\overset{n_\alpha}\searrow(M',x')$ for some
$\alpha\in \srts_P$ and $n_\alpha\in s_\alpha M$.\footnote{The condition $x\alpha\ne -\alpha$ was mistakenly omitted in [loc. cit.].}
Then
\[
\domain_x(\gamma)=s_\alpha^{-1}\domain_{x'}(\gamma)
\cap\left(\rho_x+\{\lambda\in  (\aaa_M^*)_x^-: \sprod{\lambda}{\alpha^\vee}>\gamma\}\right).
\]
\end{lemma}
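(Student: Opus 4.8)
The plan is to follow the proof of \cite[Lemma 5.2.1]{MR2010737} closely; the one ingredient not already present there is the identity $s_\alpha\rho_x=\rho_{x'}$, which is exactly \eqref{eq: exp eqv simp} and which takes care of the base point of the affine space supporting $\domain_x(\gamma)$. First I would set up the affine-space picture. Since $(M,x)$ is a vertex of $\grph$, the element $x$ lies in $wM$ for some $w\in\WR_M\subset W(M,M)_2$ and hence acts as an involution on $\aaa_M^*$; and since $x'=n_\alpha x n_\alpha^{-1}$ with $n_\alpha\in s_\alpha M$, conjugation by $s_\alpha$ intertwines the actions of $x$ and $x'$, so the linear isometry $s_\alpha\colon\aaa_M^*\to\aaa_{M'}^*$ carries $(\aaa_M^*)_x^-$ onto $(\aaa_{M'}^*)_{x'}^-$. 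Combined with \eqref{eq: exp eqv simp}, $s_\alpha$ maps the affine space $\rho_x+(\aaa_M^*)_x^-$ onto $\rho_{x'}+(\aaa_{M'}^*)_{x'}^-$; in particular $s_\alpha^{-1}\domain_{x'}(\gamma)$ already lies in $\rho_x+(\aaa_M^*)_x^-$, so both sides of the claimed identity are subsets of that single affine space, each cut out by a family of linear inequalities $\sprod{\lambda}{\beta^\vee}>\gamma$ in the variable $\lambda\in(\aaa_M^*)_x^-$, and it therefore suffices to compare these families.

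The core is a combinatorial comparison of $\Sigma_{P,x}$ and $\Sigma_{P',x'}$, carried out as in \cite{MR2010737}. Writing $\bar\alpha=-s_\alpha\alpha\in\srts_{P'}$, the elementary symmetry $s_\alpha$ maps $\Sigma_P$ onto $\Sigma_{P'}$ except that each positive multiple of $\alpha$ in $\Sigma_P$ is sent to the corresponding negative multiple of $\bar\alpha$. Using $x'=s_\alpha x s_\alpha^{-1}$ and a short case check, one finds that a root $\beta\in\Sigma_P$ which is neither a multiple of $\alpha$ nor equal to $-x\alpha$ satisfies $\beta\in\Sigma_{P,x}\iff s_\alpha\beta\in\Sigma_{P',x'}$. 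Consequently $\Sigma_{P,x}$ is the disjoint union of its ``generic'' part $s_\alpha^{-1}(\Sigma_{P',x'})$, the positive multiples of $\alpha$ (which all lie in $\Sigma_{P,x}$ since $x\alpha<0$), and the single root $-x\alpha$. The hypothesis $x\alpha\ne-\alpha$ is used exactly here: it ensures that $-x\alpha$ is a positive root different from $\alpha$ and not proportional to it, so that this is a genuine partition; this is the condition whose omission in \cite{MR2010737} is noted in the footnote to Lemma \ref{lem: domain}.

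It remains to translate the three groups of inequalities. For $\lambda\in(\aaa_M^*)_x^-$ one has, for every $c>0$,
\[
\sprod{\lambda}{(-x(c\alpha))^\vee}=-\sprod{x\lambda}{(c\alpha)^\vee}=\sprod{\lambda}{(c\alpha)^\vee},
\]
using $(-x\mu)^\vee=-x(\mu^\vee)$ and $x\lambda=-\lambda$. Hence the inequality attached to $-x\alpha$ equals the one attached to $\alpha$, and for $c>1$ the inequality attached to the exceptional root $c\alpha$ equals the one attached to $-x(c\alpha)$, which — being neither a multiple of $\alpha$ nor equal to $-x\alpha$ — already belongs to the generic part. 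So, beyond the generic part, the exceptional roots impose only the single condition $\sprod{\lambda}{\alpha^\vee}>\gamma$. On the other hand, for $\beta'\in\Sigma_{P',x'}$ we have $\sprod{\lambda}{(s_\alpha^{-1}\beta')^\vee}=\sprod{s_\alpha\lambda}{(\beta')^\vee}$, so the generic inequalities say precisely that $\rho_{x'}+s_\alpha\lambda\in\domain_{x'}(\gamma)$, i.e. $\rho_x+\lambda\in s_\alpha^{-1}\domain_{x'}(\gamma)$. Intersecting the two contributions yields exactly the right-hand side of the lemma.

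I expect the combinatorial comparison to be the main obstacle — in particular, the honest bookkeeping of the ``exceptional'' roots (the multiples of $\alpha$ and the root $-x\alpha$) and the verification, using $x\alpha\ne-\alpha$, that on $(\aaa_M^*)_x^-$ they contribute nothing beyond the factor $\rho_x+\{\lambda\in(\aaa_M^*)_x^-:\sprod{\lambda}{\alpha^\vee}>\gamma\}$ appearing on the right. All of this is carried out as in the proof of \cite[Lemma 5.2.1]{MR2010737}.
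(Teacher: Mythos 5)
Your strategy is the one the paper intends: it cites \cite[Lemma 5.2.1]{MR2010737} together with \eqref{eq: exp eqv simp}, and the content you supply (partition $\Sigma_{P,x}$ into a generic part identified with $s_\alpha^{-1}\Sigma_{P',x'}$ plus exceptional roots, then use $x\lambda=-\lambda$ and $s_\alpha\rho_x=\rho_{x'}$) is exactly what that citation is standing in for. But the combinatorial claim at the centre is wrong as stated. You assert that every $\beta\in\Sigma_P$ not proportional to $\alpha$ and not equal to $-x\alpha$ satisfies $\beta\in\Sigma_{P,x}\iff s_\alpha\beta\in\Sigma_{P',x'}$, and deduce that $\Sigma_{P,x}$ is the disjoint union of $s_\alpha^{-1}\Sigma_{P',x'}$, the positive multiples of $\alpha$, and the single root $-x\alpha$. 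This fails precisely for $\beta=c(-x\alpha)$ with $c>1$ (possible when $\Sigma_M$ is non-reduced in that direction): such $\beta$ is not proportional to $\alpha$ (since $x\alpha\notin\R\alpha$, $x$ being an isometry with $x\alpha\ne\pm\alpha$) and not equal to $-x\alpha$, yet $x\beta=-c\alpha<0$ gives $\beta\in\Sigma_{P,x}$, while $x's_\alpha\beta=s_\alpha x\beta=s_\alpha(-c\alpha)=c(-s_\alpha\alpha)>0$, so $s_\alpha\beta\notin\Sigma_{P',x'}$. In particular your remark that for $c>1$ the root $-x(c\alpha)$ ``already belongs to the generic part'' is circular, since $-x(c\alpha)=c(-x\alpha)$ is itself an exceptional root that your partition omitted.

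The correct partition is $\Sigma_{P,x}=s_\alpha^{-1}\Sigma_{P',x'}\sqcup\{c\alpha\in\Sigma_M:c>0\}\sqcup\{c(-x\alpha)\in\Sigma_M:c>0\}$, the equivalence above holding for $\beta$ proportional to neither $\alpha$ nor $-x\alpha$. On $(\aaa_M^*)_x^-$ each exceptional root $c\alpha$ or $c(-x\alpha)$ contributes $\sprod{\lambda}{(c\alpha)^\vee}=\tfrac1c\sprod{\lambda}{\alpha^\vee}>\gamma$, so the exceptional roots jointly impose $\sprod{\lambda}{\alpha^\vee}>c_0\gamma$ for the largest $c_0$ with $c_0\alpha\in\Sigma_M$ or $c_0(-x\alpha)\in\Sigma_M$. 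The lemma as stated therefore requires $c_0=1$, i.e.\ that $\Sigma_M$ be reduced in those two directions. This holds in every case the paper actually applies the lemma: it is invoked only for vertices of $\grph_{\cusp}$, where $M$ is (up to conjugacy) a Levi contained in the Siegel Levi $\inj(\GL_{2n})$, and then $\Sigma_M$ is a reduced root system of type $C$. Your argument as written has a hole precisely where you anticipated the main difficulty would lie; the repair is to keep track of all positive multiples of $-x\alpha$, not just $-x\alpha$ itself, and to note that $c_0=1$ in the relevant cases.
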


We will prove the convergence of $J(\varphi,x,\lambda)$ for $\varphi\in\rdp_P(G)$ for $x$ which is $M$-cuspidal.
(We will not consider more general $x$ since they will not play any role in what follows.)

\begin{theorem} \label{thm: J conv}
There exists $\gamma>0$ such that for any $M$-cuspidal $x=\eta\epsilon\eta^{-1}\in N_G(M)\cap X$ and $\varphi\in\rdp_P(G)$ the integral
defining $J(\varphi,x,\lambda)$ is absolutely convergent for $\Re\lambda\in \domain_x(\gamma)$.
Moreover, for any compact set $D$ of $\domain_x(\gamma)$ there exists $N>0$ such that
\[
\int_{A_M^{M_x}\grp U_x(\A)M_x\bs \grp G_x(\A)}\abs{\varphi_\lambda(h\eta)}\ dh
\ll_D\sup_{m\in \siegel_M^1, k\in K}\abs{\varphi(mk)}\norm{m}^N
\]
for all $\lambda
\in D+\iii(\aaa_M^*)_x^-$.
\end{theorem}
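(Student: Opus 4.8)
The plan is to reduce the convergence of $J(\varphi,x,\lambda)$ to a sequence of elementary one-variable estimates along a path in the graph $\grph_{\cusp}$, using Corollary \ref{cor: graph}, Lemma \ref{lem: alpha} and Lemma \ref{lem: domain} to transport the problem to the case where $x$ is $M$-standard cuspidal. By Corollary \ref{cor: exp w} (or Lemma \ref{lem: s,r,r} together with Remark \ref{rmk: Mmin}) there is a chain $(M,x)\overset{n}\downarrow(M',x')$ in $\grph$ with $x'$ $M'$-standard cuspidal; each elementary step $(M_i,x_i)\overset{n_{\alpha_i}}\searrow(M_{i+1},x_{i+1})$ rewrites the inner integral over $\grp U_{x_{i}}(\A)$ in terms of the inner integral over $\grp U'_{x_{i+1}}(\A)$ plus an extra integration over $(\grp L\cap\grp U')(\A)$ (Lemma \ref{lem: alpha}\eqref{part: int unip}); the change of variables $h\mapsto n_{\alpha_i}hn_{\alpha_i}^{-1}$ together with \eqref{eq: modulchang} converts the exponent $\rho_{x_i}$ into $\rho_{x_{i+1}}$ and the translate of $\varphi$ by $n_{\alpha_i}$ remains in $\rdp_{P_{i+1}}(G)$ with the rapid-decay bound \eqref{eq: rdbnd}-type estimate unchanged up to constants (using \eqref{eq: submult}, \eqref{eq: bdexpnm}, \eqref{eq: gammag bigger}). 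Lemma \ref{lem: domain} guarantees that the domain $\domain_x(\gamma)$ maps correctly under $s_{\alpha_i}$, so it suffices to prove the theorem when $x$ is $M$-standard cuspidal.

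**Next I would** treat the standard cuspidal case directly. Here $(M,L)$ with $L=L(x)$ is a standard \rlvt\ pair with $v=0$, $L\subseteq M_{(2n;0)}$, and by Lemma \ref{lem: stab vx} we may replace $\grp P_x$ by $\grp Q_x$ where $\grp Q=\grp L\ltimes\grp V$, so $\grp U_x=\grp V_x$ and $A_M^{M_x}=A_L$. Using the orbit description of \S\ref{subsec: orbits}, $\grp M_{x_\p}$ is a product of groups $\GL_{r_i}^\triangle$, factors $\GL_{p_i}\times\GL_{q_i}$ coming from involutions in $\GL_{s_i}$, symplectic groups $\Sp_{t_i}$ inside $\GL_{t_i}$ (with $t_i$ even), and a centralizer of an involution in $\Sp_u$; since $x$ is \emph{cuspidal} we have $u=0$ and the involutions in the even $\GL_{s_i}$ have balanced signature $p_i=q_i$, so the relevant pairs $(\GL_{s_i},\GL_{p_i}\times\GL_{p_i})$ and $(\GL_{r_i}\times\GL_{r_i},\GL_{r_i}^\triangle)$ and $(\GL_{t_i},\Sp_{t_i/2})$ all have the crucial feature that $\grp G_x\bs\grp G$ (or rather the relevant Levi quotient) admits the integral estimates of Lemmas \ref{lem: bndinner} and \ref{lem: aux bd}. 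Concretely: write the integral over $\grp P_x(\A)\bs\grp G_x(\A)$ using the Iwasawa-type decomposition adapted to $\grp G_x$, reduce to an integral over $\RAS{M_x}{M}$ times a compact-by-$K$ factor, and bound $\varphi$ on $\siegel_M^1 K$ by the rapid-decay estimate defining $\rdp_P(G)$. The exponential factor $e^{\sprod{\lambda}{H_P(h\eta)}}$ combined with $e^{\sprod{\rho_x}{H_M(m)}}\modulus_{P_x}^{-1}(m)$ produces, for $\Re\lambda\in\domain_x(\gamma)$, a product of absolute values $\prod\norm{g_i}^{-c_i}$ with $c_i$ proportional to $\Re\lambda_i-(\text{something bounded})$, which is integrable over $\GL_{r_i}(\A)$ once $\gamma$ is large enough, by the standard fact (cited in the proof of Lemma \ref{lem: aux bd}) that $\int_{\GL_r(\A)}\norm{g}^{-t}\,dg<\infty$ for $t\gg1$. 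The uniformity in $\lambda\in D+\iii(\aaa_M^*)_x^-$ is automatic since only $\Re\lambda$ enters the absolute-value bound, and the bound by $\sup_{m\in\siegel_M^1,k\in K}\abs{\varphi(mk)}\norm{m}^N$ falls out of collecting the constants.

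**The main obstacle** I expect is the bookkeeping in the reduction step: one must check that translating $\varphi$ by the Weyl elements $n_{\alpha_i}$ and peeling off the successive $(\grp L\cap\grp U')(\A)$-integrations does not destroy the moderate-growth/rapid-decay estimate \eqref{eq: rdbnd} that defines membership in $\rdp_P(G)$, and that the cuspidality hypothesis (which is what makes $u=0$ and $p_i=q_i$, hence makes the inner period integrals of Lemma \ref{lem: nonadmissible}\eqref{part: 0int} \emph{not} force vanishing but instead allows genuine convergence via cusp-form decay) is preserved along the chain — this is exactly where the structure of $\grph_{\cusp}$ as a union of connected components of $\grph$ is used. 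A secondary technical point is choosing $\gamma$ uniformly over all $M$-cuspidal $x$ simultaneously; since there are finitely many Levi subgroups and finitely many $M$-cuspidal orbits up to the action tracked by $\grph$, one takes the maximum of the finitely many thresholds produced by the individual $\GL$-integrals. Everything else — the change of variables, the modulus computations, the passage between $\grp P_x$ and $\grp Q_x$ — is supplied verbatim or nearly so by the lemmas already proved, following the template of \cite[\S5.2]{MR2010737}, the only real novelty being the presence of the vector $\rho_x$, which is harmless because $\rho_x\in(\aaa_M^*)_x^+$ is a fixed vector absorbed into the shift defining $\domain_x(\gamma)$.
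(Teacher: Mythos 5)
The reduction step of your argument does not close as stated. When $(M,x)\overset{n_\alpha}\searrow(M',x')$, the operation that relates $J(\varphi,x,\lambda)$ to a $J$-integral on the $(M',x')$-side is not a translate of $\varphi$ by $n_\alpha$: unwinding Lemma \ref{lem: alpha}\eqref{part: int unip} produces an extra integral over $\grp V(\A)\bs\grp{U'}(\A)$, and combined with the twist by $n_\alpha^{-1}$ this is precisely the intertwining operator $M(s_\alpha,\lambda)$. So what one gets is $J(\varphi,x,\lambda)=J(M(s_\alpha,\lambda)\varphi,x',s_\alpha\lambda)$ (this is Corollary \ref{cor: fe}), not $J(\varphi(n_\alpha^{-1}\cdot),x',s_\alpha\lambda)$. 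For a general $\varphi\in\rdp_P(G)$ there is no a priori control asserting that $M(s_\alpha,\lambda)\varphi$ again lies in $\rdp_{P'}(G)$ with a bound comparable to that of $\varphi$ uniformly in $\lambda$ — indeed the convergence of the intertwining integral is entangled with the convergence of the outer integral, and one cannot simply ``peel it off.'' This is exactly the point at which the paper departs from the template of \cite[\S5.2]{MR2010737} that you are following: it first proves the convergence for the special test functions $\theta^M_f$ with $f\in\clss_R(\aaa_0^M)$ (Proposition \ref{prop: alt J conv}), for which the intertwining operator acts explicitly by $M(s_\alpha,\lambda)\theta^M_f=\theta^{M'}_{f'}$ with $f'\in\clss_{R_0}(\aaa_0^{M'})$ computable from the $c$-function, so the class is preserved with a controlled loss of the decay parameter $R$; moreover for $f\ge0$ the whole iterated integral is a single absolutely convergent non-negative integral, and Fubini legitimizes the reduction. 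Only afterwards is the general case deduced, by the pointwise domination $\abs{\varphi(g)}\ll\bigl(\sup_{m\in\siegel_M^1,k\in K}\abs{\varphi(mk)}\norm{m}^N\bigr)\,\theta^M_f(g)$ coming from \eqref{eq: bdexpnm}, which is also precisely what yields the stated bound by $\sup_{m\in\siegel_M^1,k\in K}\abs{\varphi(mk)}\norm{m}^N$. Without this intermediate step with $\theta^M_f$, the chain of reductions for a general $\varphi$ does not terminate in a bound you can quote.

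Two smaller points. First, the paper runs the direct computation already at the $M$-minimal (not $M$-standard cuspidal) stage, so the extra shuffles from Lemma \ref{lem: s,r,r} are not needed there; if you reduce further you only compound the problem above. Second, for $M$-cuspidal $x$ one has $v=0$, hence $u=0$ and $m=0$ in the notation of Definition \ref{def: std basic}; the pairs $(\GL_{t_i},\Sp_{t_i/2})$ you list simply do not occur, and the only noncompact inner period quotients are of the shape $\RAS{\GL_r\times\GL_r}{\GL_{2r}}$, which is why Lemma \ref{lem: bndinner} and Lemma \ref{lem: aux bd} (together with Lemma \ref{lem: pseudobnd} for the $\theta^M_f$-bound) suffice. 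The noncompact outer factor is $\grp M_x(\A)\bs\grp L_x(\A)$ — not a ``compact-by-$K$'' factor — and its integrability is the place where $M$-cuspidality and the choice of $\gamma$ actually enter.
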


In the rest of this section we will prove Theorem \ref{thm: J conv}. Recall the notation of \S\ref{sec: rddef}.

\begin{proposition}\label{prop: alt J conv}
There exist $R>0$ and $\gamma>0$ such that for any $M$-cuspidal $x=\eta\epsilon\eta^{-1}$ the integral
\[
J(\theta^M_f,x,\lambda)=\int_{A_M^{M_x}\grp U_x(\A)M_x\bs \grp G_x(\A)}(\theta^M_f)_\lambda(h\eta)\ dh
\]
is absolutely convergent for any $f\in\clss_R(\aaa_0^M)$ uniformly for $\Re\lambda$ in a compact subset of $\domain_x(\gamma)$.
\end{proposition}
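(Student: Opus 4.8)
The plan is to reduce, by means of the graph $\grph_{\cusp}$ of \S\ref{sec: exponents}, to the case where $x$ is $M$-standard cuspidal, and then to combine the description of $\grp G_x,\grp M_x,\grp U_x,\rho_x$ worked out in \S\ref{subsec: orbits}--\S\ref{sec: exponents} with the estimates of \S\ref{sec: GL2nGLnGLn} and with Lemma \ref{lem: pseudobnd}. By Corollary \ref{cor: exp w}, Lemma \ref{lem: s,r,r} and the reduction carried out just before Lemma \ref{lem: rhox description}, any $M$-cuspidal vertex $(M,x)$ of $\grph_{\cusp}$ is joined by a chain of edges to a vertex in standard cuspidal position; since there are only finitely many relevant pairs $(M,x)$ up to conjugacy, it suffices to find a single $R$ and $\gamma$ that work for these. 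Along an edge $(M,x)\overset{n_\alpha}\searrow(M',x')$ the substitution $h\mapsto n_\alpha h n_\alpha^{-1}$, together with Lemma \ref{lem: alpha} (in particular \eqref{eq: modulchang} and \eqref{eq: exp eqv simp}) and the fact that conjugation by $n_\alpha\in s_\alpha M$ sends $\theta^M_f$ to $\theta^{M'}_{f'}$ with $f'=f\circ s_\alpha^{-1}$ again lying in $\clss_R(\aaa_0^{M'})$ (as $s_\alpha$ is an isometry), identifies $J(\theta^M_f,x,\lambda)$ with $J(\theta^{M'}_{f'},x',s_\alpha\lambda)$; by Lemma \ref{lem: domain} the map $s_\alpha$ carries $\domain_x(\gamma)$ into $\domain_{x'}(\gamma)$, so absolute convergence --- uniform in $\Re\lambda$ --- for the terminal vertex propagates back.

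Assume henceforth that $x$ is $M$-standard cuspidal, so that in the notation of Definition \ref{def: std basic} we have $v=u=m=0$, $\grp M=\grp M_{(r_1,r_1,\dots,r_k,r_k,s_1,\dots,s_l;0)}$, $A_M^{M_x}=A_L$ by Lemma \ref{lem: exp iso}, $\grp P_x=\grp M_x\ltimes\grp U_x$ by Lemma \ref{lem: factor adm}, and --- by the block description \eqref{eq: stabx'} --- $\grp M_x$ is the product of the diagonal groups $\GL_{r_i}^\triangle\subset\GL_{r_i}\times\GL_{r_i}$, the groups $\GL_{s_i/2}\times\GL_{s_i/2}\subset\GL_{s_i}$ for $i\le l_1$, and a maximal torus of the symplectic piece carried by the $h_j=\pm1$ blocks. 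Starting from the unfolded form
\[
J(\theta^M_f,x,\lambda)=\int_{\grp P_x(\A)\bs\grp G_x(\A)}\int_{A_L M_x\bs\grp M_x(\A)}\modulus_{P_x}(m)^{-1}\,(\theta^M_f)_\lambda(mh\eta)\ dm\ dh
\]
displayed after the definition of $J$, I would choose the representatives $m$ with $H_M(m)=0$ (legitimate since $H_M\colon A_L\to(\aaa_M)_x^+$ is an isomorphism), so that $(\theta^M_f)_\lambda(mh\eta)=e^{\sprod{\lambda}{H_P(h\eta)}}\theta^M_f(mh\eta)$ with the exponential depending only on $h$, while the inner integral runs over the automorphic-type quotient $M_x\bs\{m\in\grp M_x(\A):H_M(m)=0\}$. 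The outer quotient $\grp P_x(\A)\bs\grp G_x(\A)\simeq\grp P_x(\A)\bs\grp H(\A)$ factors along the blocks of $\grp G_x$: on the $\GL_{s_i}$-blocks with $i\le l_1$ and on the $h_j=\pm1$ blocks the relevant part of $\grp P_x$ is a parabolic of the corresponding part of $\grp G_x$, so those directions are compact, whereas on each $\GL_{r_i}$-block $\grp P_x$ meets $\grp G_x$ in the symmetric subgroup $\GL_{r_i}^\triangle$ of $\GL_{r_i}\times\GL_{r_i}$, giving a \emph{non-compact} direction.

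On the compact directions $h$ ranges over a fixed compact set, so $\norm{h\eta}$ and $H_P(h\eta)$ stay bounded; on each non-compact $\GL_{r_i}^\triangle$-direction the twist $e^{\sprod{\lambda}{H_P(h\eta)}}$ reduces to $e^{\sprod{(t_i,-t_i)}{H_M(\cdot)}}$, where $t_i$ is the $i$-th coordinate of $\Re(\lambda-\rho_x)\in(\aaa_M^*)_x^-$, and, bounding $\theta^M_f$ via Lemma \ref{lem: pseudobnd} (for $R$ large) by a power of the $\GL_{r_i}$-block component of $h$ times a rapidly decaying factor in the $\grp M_x$-variable, the integral over this direction is dominated by the one in Lemma \ref{lem: aux bd}, convergent once $t_i$ exceeds some $t_0$; the accompanying integration over the Levi directions $\GL_{s_i/2}\times\GL_{s_i/2}$ is absorbed by Lemma \ref{lem: bndinner}. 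Taking $\gamma:=t_0$ produces exactly the half-space conditions cutting out $\domain_x(\gamma)$. Finally the inner integral over $M_x\bs\{H_M=0\}$ converges unconditionally: there the exponentials $e^{\sprod{\lambda}{H_M(m)}}$ are trivial, while $\modulus_{P_x}$ --- which, unlike the Galois cases of \cite{MR2010737}, is non-trivial on $\grp M_x(\A)\cap\grp M(\A)^1$, its exponent being the fixed singular vector $(\lambda_1,\dots,\lambda_{l_2})$ of Lemma \ref{lem: rhox description} --- grows at most polynomially in $\norm{m}$ on a Siegel domain by \eqref{eq: bdexpnm} and is dominated by the decay $\norm{m}^{-N}$ of Lemma \ref{lem: pseudobnd}. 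Assembling the factors gives absolute convergence, uniform for $\Re\lambda$ in compacta of $\domain_x(\gamma)$ since every estimate depends on $\lambda$ only through $\Re\lambda$. I expect the main obstacle to be organizing this factorization so that each direction falls cleanly either into the compact range, into the estimates of \S\ref{sec: GL2nGLnGLn} (the source of the constraint $\domain_x(\gamma)$), or into the regime where the unconditional polynomial decay of the pseudo-theta series --- the feature that defuses the new, non-Galois phenomenon that $\modulus_{P_x}$ is non-trivial on $\grp M_x(\A)\cap\grp M(\A)^1$ --- already suffices.
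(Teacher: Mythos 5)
There is a genuine gap in the reduction step. You claim that along an edge $(M,x)\overset{n_\alpha}\searrow(M',x')$ the identity $J(\theta^M_f,x,\lambda)=J(\theta^{M'}_{f'},x',s_\alpha\lambda)$ follows from the substitution $h\mapsto n_\alpha h n_\alpha^{-1}$, with $f'=f\circ s_\alpha^{-1}$. Neither half of this is correct. First, the change of variable does not carry the domain of integration $A_M^{M_x}\grp U_x(\A)M_x\bs\grp G_x(\A)$ onto $A_{M'}^{M'_{x'}}\grp U'_{x'}(\A)M'_{x'}\bs\grp G_{x'}(\A)$, because by Lemma \ref{lem: alpha} one only has $n_\alpha\grp U_x n_\alpha^{-1}=\grp V_{x'}\subsetneq\grp U'_{x'}$: the quotient $\grp U'_{x'}/n_\alpha\grp U_x n_\alpha^{-1}\simeq\grp L\cap\grp U'$ is a genuine extra direction that must be integrated out. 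Second, and relatedly, $\theta^M_f(n_\alpha^{-1}\cdot)$ is left-invariant under $n_\alpha\grp U(\A)n_\alpha^{-1}$, not under $\grp U'(\A)$, and $\Ht_0(n_\alpha^{-1}g)\ne s_\alpha^{-1}\Ht_0(g)$ in general since $n_\alpha$ does not normalize $\grp P_0$; so conjugation alone does not transform $\theta^M_f$ into any $\theta^{M'}_{f'}$.

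The paper's Lemma \ref{lem: alt min red} handles this correctly: the operator that produces a pseudo-theta function for $M'$ is the intertwining operator $M(s_\alpha,\lambda)$, and $M(s_\alpha,\lambda)\theta^M_f=\theta^{M'}_{f'}$ with $\widehat{f'}(s_\alpha\mu)=c_{s_\alpha}(\lambda+\mu)\hat f(\mu)$ — the $c$-function factor (and the need for $\gamma$ large so it is bounded on a strip) is exactly what makes $f'$ land in $\clss_{R_0}(\aaa_0^{M'})$ for some $R_0<R$, and the functional equation $J(\theta^M_f,x,\lambda)=J(M(s_\alpha,\lambda)\theta^M_f,x',s_\alpha\lambda)$ is established by a genuine unfolding over $\grp V(\A)\bs\grp U'(\A)$, using parts \eqref{part: int unip}, \eqref{part: int f} of Lemma \ref{lem: alpha}, not by a one-line change of variable. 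Without this, the propagation of convergence along edges is unsubstantiated. As a secondary point, your direct analysis at a standard cuspidal vertex conflates $\grp P_x$ with $\grp Q_x$: it is $\grp Q_x$, not $\grp P_x$, that is a parabolic of $\grp G_x$ (Lemma \ref{lem: stab vx}), and the paper's proof crucially first peels off the compact quotient $\grp Q_x(\A)\bs\grp G_x(\A)$ before invoking Lemmas \ref{lem: bndinner} and \ref{lem: aux bd}. The intended tools — Lemmas \ref{lem: pseudobnd}, \ref{lem: bndinner}, \ref{lem: aux bd}, the graph of \S\ref{sec: exponents} — are the right ones, but the reduction across edges must go through the intertwining operator and the unfolding argument of Lemma \ref{lem: alt min red}.
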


Before proving the proposition let us explain how it implies Theorem \ref{thm: J conv}.
Let $R>0$ be as in Proposition \ref{prop: alt J conv} and let $f=e^{-R\norm{\cdot}}$.
It follows from \eqref{eq: bdexpnm} that there exists $N>0$ such that
\[
\norm{m}^{-N}\ll f(\Ht_0^M(m))\ll \theta^M_f(mk),\ \ \ m\in\siegel_M^1,\,k\in K.
\]
Hence, for any $\varphi\in \rdp_P(G)$ we have
\[
\abs{\varphi(g)}\ll\sup_{m\in \siegel_M^1, k\in K}\abs{\varphi(mk)}\norm{m}^N\abs{\theta^M_f(g)},\ g\in \grp G(\A).
\]
Thus Proposition \ref{prop: alt J conv} implies Theorem \ref{thm: J conv}.

We first prove Proposition \ref{prop: alt J conv} in the case where $x$ is $M$-minimal.
Let $\grp L=\grp L(x)$ and let $\grp Q=\grp L\ltimes \grp V$ be the parabolic subgroup of $\grp G$ with Levi subgroup $\grp L$.
It suffices to consider the function $f(v)=e^{-R\norm{v}}$ and $\lambda$ real.
In particular, we may assume without loss of generality that $f$ is non-negative.
We have
\begin{multline*}
J(\theta^M_f,x,\lambda)=\int_{\grp Q_x(\A)\bs \grp G_x(\A)} \int_{\grp P_x(\A)\bs \grp Q_x(\A)}\modulus_{Q_x}^{-1}(q)
e^{\sprod{\lambda}{H_P(qh\eta)}}\\
\int_{\RAS{M_x}M}\modulus_{P_x}^{-1}(m)e^{\sprod{\rho_x}{H_P(m)}}\theta^M_f(mqh\eta)\ dm \ dq \ dh.
\end{multline*}
Since $x\in L$, we have $xQx^{-1}=Q$ and $Q_x$ is a parabolic subgroup of $G_x$.
Thus, the variable $h$ is integrated over a compact set and it is enough to show that the two inner integrals converge
uniformly for $h$ in a compact set.
Recall that by Lemma \ref{lem: stab vx} we have $\grp U_x=\grp V_x$ and $\modulus_{Q_x}\rest_{\grp P_x(\A)}=\modulus_{P_x}$.
Therefore we can rewrite the two inner integrals as
\begin{equation} \label{eq: inner integral}
\int_{\grp M_x(\A)\bs \grp L_x(\A)} e^{\sprod{\lambda-\rho_x}{H_P(lh\eta)}} \int_{\RAS{M_x}M}
\modulus_{Q_x}^{-1}(l)e^{\sprod{\rho_x}{H_P(mlh\eta)}}\modulus_{P_x}^{-1}(m)\theta^M_f(mlh\eta)\ dm \ dl.
\end{equation}
Note that $e^{\sprod{\lambda-\rho_x}{H_P(\cdot h\eta)}}$ as well as the inner integral are left $\grp M_x(\A)$-invariant.
By the description of $M_x$ in \S\ref{subsec: orbits}, $\RAS{M_x}M$ is a product of adelic quotients of finite volume and adelic quotients of the form
$\RAS{GL_r\times GL_r}{\GL_{2r}}$. The integral over the finite volume quotients is bounded by the sup norm of the integrand times the volume.
For the quotients of the second kind we use the bounds of Lemma \ref{lem: bndinner}.
Together with \eqref{eq: bdexpnm} and \eqref{eq: exp bd}, we conclude that the inner integral is bounded by a constant multiple of
\[
\modulus_{Q_x}^{-1}(l)e^{\sprod{\rho_x}{H_P(lh\eta)}}\sup_{m\in \siegel_M^1}\theta^M_f(mlh\eta)\norm{m}^N\ll
\norm{l}^N\sup_{m\in \siegel_M^1}\theta^M_f(mlh\eta)\norm{m}^N
\]
for a suitable $N$. It follows from Lemma \ref{lem: pseudobnd} that for suitable $R$ and $N'$, the latter is bounded by
a constant multiple of $\norm{l}^N\norm{lh\eta}^{N'}\ll\norm{l}^{N'+N}$ (by \eqref{eq: submult}, since $h$ ranges over a compact set).
Furthermore, by \eqref{eq: exp bd} $e^{\sprod{\lambda-\rho_x}{H_P(lh\eta)}}\ll e^{\sprod{\lambda-\rho_x}{H_P(l)}}$ for $\lambda$ in a compact.
We conclude that \eqref{eq: inner integral} is bounded by a constant multiple (which is independent of $h$ and $\lambda$ if they both lie
in compact sets) of
\begin{equation} \label{eq: finalexp}
\int_{\grp M_x(\A)\bs \grp L_x(\A)} e^{\sprod{\lambda-\rho_x}{H_P(l)}}\norm{l}_{M_x\bs L_x}^{N+N'}\ dl
\end{equation}
where $\norm{l}_{M_x\bs L_x}=\inf_{m\in \grp M_x(\A)}\norm{ml}$, $l\in \grp L_x(\A)$.
Finally, the uniform convergence (for $\lambda$ in a compact) of \eqref{eq: finalexp} for suitable $\gamma$
follows from Lemma \ref{lem: aux bd}, the description of $M_x$ and $L_x$ in \S\ref{subsec: orbits}
and the $M$-cuspidality of $x$.

In order to complete the proof of Proposition \ref{prop: alt J conv} it suffices in view of Corollary \ref{cor: exp w}
to prove the following.

\begin{lemma}\label{lem: alt min red}(Cf.~\cite[Lemma 5.3.1]{MR2010737})
Suppose that $(M,x)$ and $(M',x')$ are vertices in $\grph$ and
$(M,x)\overset{n_\alpha}\searrow(M',x')$ for some $\alpha\in\srts_P$.
Assume that Proposition \ref{prop: alt J conv} holds for $(M',x')$.
Then it also holds for $(M,x)$.
Moreover, there exist $\gamma>0$ and $R>0$ such that for $\Re\lambda\in\domain_x(\gamma)$ and $f\in\clss_R(\aaa_0^M)$ we have
\begin{equation}\label{eq: scalarFE}
J(\theta^M_f,x,\lambda)=J(M(s_\alpha,\lambda)\theta^M_f,x',s_\alpha\lambda).
\end{equation}
\end{lemma}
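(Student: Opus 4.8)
The plan is to prove \eqref{eq: scalarFE} by unfolding the intertwining period along the single unipotent direction distinguishing $\grp P'_{x'}$ from $n_\alpha\grp P_x n_\alpha^{-1}$, exactly as in \cite[Lemma 5.3.1]{MR2010737}, and to deduce absolute convergence for $(M,x)$ from the assumed convergence for $(M',x')$ together with the separately available convergence of the rank-one intertwining integral. First I would fix $\eta\in G$ with $x=\eta\epsilon\eta^{-1}$ and put $\eta'=n_\alpha\eta$, so that $x'=n_\alpha x n_\alpha^{-1}=\eta'\epsilon\eta'^{-1}$; since $J$ is independent of the choice of $\eta'$ this is harmless. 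As $\theta^M_f$ is linear in $f$ and, for real $\lambda$, non-negative when $f\ge0$, the same reduction as in the $M$-minimal case of Proposition \ref{prop: alt J conv} lets me assume $f=e^{-R\norm{\cdot}}\ge0$ and $\lambda$ real with $\lambda\in\domain_x(\gamma)$ (any $f\in\clss_R(\aaa_0^M)$ and complex $\lambda$ are dominated by this choice, and linearity recovers the general case at the end). Then every integrand below is non-negative and Tonelli justifies all interchanges of integration.

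Next I would perform the unfolding. The substitution $h\mapsto n_\alpha^{-1}h n_\alpha$ rewrites $J(\theta^M_f,x,\lambda)$ as the integral of $(\theta^M_f)_\lambda(n_\alpha^{-1}h\eta')$ over $A_{M'}^{M'_{x'}}(n_\alpha\grp U_x(\A)n_\alpha^{-1})M'_{x'}\bs\grp G_{x'}(\A)$. By Lemma \ref{lem: alpha} (parts \eqref{part: exact seq}, \eqref{part: int unip}, \eqref{part: int f}), $n_\alpha\grp U_x n_\alpha^{-1}$ is a normal subgroup of $\grp U'_{x'}$ with quotient $\grp L\cap \grp{U'}$, $n_\alpha\grp P_x n_\alpha^{-1}\subseteq\grp P'_{x'}$, and the quotient measure on $n_\alpha\grp U_x(\A)n_\alpha^{-1}\bs\grp U'_{x'}(\A)$ is precisely the one appearing in the definition of $M(s_\alpha,\cdot)$. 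Fibering the above quotient over $A_{M'}^{M'_{x'}}\grp U'_{x'}(\A)M'_{x'}\bs\grp G_{x'}(\A)$ and integrating over the fibre, the net exponent twist vanishes and the inner integral becomes exactly $(M(s_\alpha,\lambda)\theta^M_f)_{s_\alpha\lambda}(h\eta')$, the matching of $\modulus_{P_x}$ with $\modulus_{P'_{x'}}$ and of $\rho_x$ with $\rho_{x'}$ being provided by \eqref{eq: modulchang} and \eqref{eq: exp eqv simp} (equivalently Corollary \ref{cor: graph}). This yields \eqref{eq: scalarFE} as an identity of non-negative, a priori possibly infinite, integrals.

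Finally I would establish finiteness and uniformity. By Lemma \ref{lem: domain},
\[
\domain_x(\gamma)=s_\alpha^{-1}\domain_{x'}(\gamma)\cap\bigl(\rho_x+\{\lambda\in(\aaa_M^*)_x^-:\sprod{\lambda}{\alpha^\vee}>\gamma\}\bigr).
\]
The second factor forces $\sprod{\Re\lambda}{\alpha^\vee}$ to be large, which, for $\gamma$ sufficiently large, makes the rank-one unipotent integral defining $M(s_\alpha,\lambda)\theta^M_f$ absolutely convergent and, by the estimates behind Lemma \ref{lem: pseudobnd}, bounds $M(s_\alpha,\lambda)\theta^M_f$ on Siegel sets of $M'$ by a constant multiple of $\theta^{M'}_{f'}$ for some $f'\in\clss_{R'}(\aaa_0^{M'})$, uniformly for $\lambda$ in any fixed compact subset $D$ of $\domain_x(\gamma)$. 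The first factor says $s_\alpha\lambda\in\domain_{x'}(\gamma)$, so the hypothesis that Proposition \ref{prop: alt J conv} holds for $(M',x')$ applies to $J(\theta^{M'}_{f'},x',s_\alpha\lambda)$ and bounds it uniformly for $s_\alpha\lambda\in s_\alpha D$; by monotonicity the same bound governs $J(M(s_\alpha,\lambda)\theta^M_f,x',s_\alpha\lambda)$. Hence the iterated integral of the previous paragraph is finite, $J(\theta^M_f,x,\lambda)$ converges absolutely, \eqref{eq: scalarFE} becomes a genuine identity, and reversing the earlier reduction by linearity shows that Proposition \ref{prop: alt J conv} holds for $(M,x)$. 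The main obstacle is the bookkeeping in the middle paragraph: checking that the fibre integration reproduces $M(s_\alpha,\lambda)$ with argument exactly $s_\alpha\lambda$ — where the delicate point is tracking the modulus characters $\modulus_{P_x}$, $\modulus_{P'_{x'}}$ and the shifts $\rho_x$, $\rho_{x'}$ through the identities of Lemma \ref{lem: alpha} — together with producing the domination $M(s_\alpha,\lambda)\theta^M_f\ll\theta^{M'}_{f'}$ needed to invoke the inductive hypothesis.
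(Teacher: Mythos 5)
Your unfolding paragraph matches the paper's argument exactly (the paper runs it from the $(M',x')$ side, but for non-negative integrands the direction is immaterial), and you correctly identify Lemma \ref{lem: alpha}, \eqref{eq: modulchang} and \eqref{eq: exp eqv simp} as the ingredients that make the modulus characters and exponent shifts cancel. The genuine gap is in the convergence step. To invoke the inductive hypothesis, you need to know that $M(s_\alpha,\lambda)\theta^M_f$ is dominated by (in fact equals) a pseudo-Eisenstein series $\theta^{M'}_{f'}$ with $f'\in\clss_{R_0}(\aaa_0^{M'})$, and you attribute this to ``the estimates behind Lemma \ref{lem: pseudobnd}.'' But Lemma \ref{lem: pseudobnd} controls $\theta^M_f$ itself on Siegel sets; it says nothing about what happens when you convolve it against the unipotent direction defining $M(s_\alpha,\lambda)$, which involves an integral over a non-compact set where the Siegel coordinates of $n_\alpha^{-1}ug$ are not controlled uniformly in $u$. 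This is precisely the non-trivial step.

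The paper closes this gap by an exact computation: $M(s_\alpha,\lambda)\theta^M_f = \theta^{M'}_{f'}$, where $\widehat{f'}(s_\alpha\mu)=c_{s_\alpha}(\lambda+\mu)\hat f(\mu)$ with $c_{s_\alpha}$ a ratio of completed Dedekind zeta values. One then applies Lemma \ref{lem: rdequ} (the Paley--Wiener characterization of $\clss_R$) to conclude $f'\in\clss_{R_0}(\aaa_0^{M'})$ for any $R_0<R$, provided $\gamma$ is large enough that $\sprod{\lambda+\mu}{\beta^\vee}>2$ for $\norm{\mu}<R_0$ and all $\beta\in\Sigma_{B,s_\alpha}$ (so $c_{s_\alpha}(\lambda+\mu)$ is holomorphic and of moderate growth). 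This is also why the paper's domination step uses the smooth majorant $e^{-R\sqrt{1+\norm{\cdot}^2}}$ rather than $e^{-R\norm{\cdot}}$: smoothness of $f$ is needed for Lemma \ref{lem: rdequ} to apply. Without this explicit identity and the Paley--Wiener input, your claimed domination of the fibre integral has no proof, and the appeal to the inductive hypothesis does not get off the ground.
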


\begin{proof}[Proof of Lemma \ref{lem: alt min red}]
It follows from Lemma \ref{lem: domain} that if $\Re\lambda\in\domain_x(\gamma)$ then
$\Re s_\alpha\lambda\in\domain_{x'}(\gamma)$.
We first show the identity \eqref{eq: scalarFE} for $f(v)=e^{-R\sqrt{1+\norm{\cdot}^2}}$ and $\lambda\in\domain_x(\gamma)$.

We have
\[
M(s_\alpha,\lambda)\theta^M_f=\theta^{M'}_{f'}
\]
where $f'$ is the function on $\aaa_0^{M'}$ such that $\widehat{f'}(s_\alpha\mu)=c_{s_\alpha}(\lambda+\mu)\hat f(\mu)$ and
\[
c_w(\nu)=\prod_{\beta\in\Sigma_{B,w}}\frac{\zeta_F^*(\sprod{\nu}{\beta^\vee})}{\zeta_F^*(\sprod{\nu}{\beta^\vee}+1)}
\]
where $\zeta_F^*(s)$ is the completed Dedekind zeta function of $F$.
In particular, by Lemma \ref{lem: rdequ}, for any $R_0< R$ we have $f'\in\clss_{R_0}(\aaa_0^{M'})$ provided that $\gamma$
(and hence $\sprod{\lambda}{\alpha^\vee}$) is sufficiently large with respect to $R_0$
(so that $\sprod{\lambda+\mu}{\beta^\vee}>2$ for all $\norm{\mu}<{R_0}$ and $\beta\in\Sigma_{B,s_\alpha}$).

Thus by assumption, $J(M(s_\alpha,\lambda)\theta^M_f,x',s_\alpha\lambda)=J(\theta^{M'}_{f'},x',s_\alpha\lambda)$ converges
provided that $\gamma$ is sufficiently large.

Let $\grp Q=\grp L\ltimes \grp V$ be the parabolic subgroup of $\grp G$ containing $\grp P$ such that $\srts_P^Q=\{\alpha\}$.

Set $\eta'=n_\alpha\eta$. Recall that $U'\cap s_\alpha Us_\alpha^{-1}=V$. We have
\begin{multline} \label{eq: tripjms}
J(M(s_\alpha,\lambda)\theta^M_f,x',s_\alpha\lambda)=\\ \int_{\grp{P'}_{x'}(\A)\bs \grp G_{x'}(\A)}
\int_{\RAS{M'_{x'}}{M'}} \int_{\grp V(\A)\bs \grp{U'}(\A)}(\theta^M_f)_\lambda(n_\alpha^{-1} umh\eta')\ du \ \modulus_{P'_{x'}}(m)^{-1}\ dm \ dh
\end{multline}
where the triple integral converges absolutely since the integrand is non-negative.
By a change of variable $u\mapsto mum^{-1}$, an exchange of the order of integration
and \eqref{eq: pxp'x'} we rewrite the above integral as
\[
\int_{\grp{P'}_{x'}(\A)\bs \grp G_{x'}(\A)}\int_{\grp V(\A)\bs \grp{U'}(\A)}\int_{\RAS{M'_{x'}}{M'}}
(\theta^M_f)_\lambda(n_\alpha^{-1} muh\eta')\modulus_{P_{x}}(n_\alpha^{-1}mn_\alpha)^{-1}\ dm\ du \ dh.
\]
By Lemma \ref{lem: alpha} \eqref{part: int unip} we get
\[
\int_{\grp{P'}_{x'}(\A)\bs \grp G_{x'}(\A)}\int_{(n_\alpha \grp U_x n_\alpha^{-1})(\A) \bs \grp{U'}_{x'}(\A)}
\int_{\RAS{M'_{x'}}{M'}}(\theta^M_f)_\lambda(n_\alpha^{-1} muh\eta') \ \modulus_{P_{x}}(n_\alpha^{-1}mn_\alpha)^{-1}\ dm \ du \ dh.
\]
By Lemma \ref{lem: alpha} \eqref{part: int f} this equals
\[
\int_{n_\alpha \grp P_x(\A) n_\alpha^{-1} \bs \grp G_{x'}(\A)}
\int_{\RAS{M'_{x'}}{M'}}(\theta^M_f)_\lambda(n_\alpha^{-1} mh\eta') \ \modulus_{P_{x}}(n_\alpha^{-1}mn_\alpha)^{-1}\ dm\ dh.
\]
Applying the change of variables $h\mapsto n_\alpha hn_\alpha^{-1}$, $m\mapsto n_\alpha mn_\alpha^{-1}$ this becomes
\[
\int_{\grp P_x(\A) \bs \grp G_x(\A)}
\int_{\RAS{M_x}{M}}(\theta^M_f)_\lambda(mh\eta) \ \modulus_{P_x}(m)^{-1}\ dm\ dh=J(\theta^M_f,x,\lambda)
\]
as required.

To obtain the relation \eqref{eq: scalarFE} for general $f\in\clss_R(\aaa_0^M)$ and $\lambda\in\rho_x+(\aaa_{M,\C}^*)_x^-$ with
$\Re\lambda\in\domain_x(\gamma)$ we use the same argument as above where now, the absolute convergence of
the triple integral on the right-hand side of \eqref{eq: tripjms} is guaranteed by its convergence in the previously considered case
(since $f\ll_{f,R} e^{-R\sqrt{1+\norm{\cdot}^2}}$ for any $f\in\clss_R(\aaa_0^M)$).
\end{proof}

This completes the proof of Proposition \ref{prop: alt J conv} and therefore also of Theorem \ref{thm: J conv}.

We mention the following consequence.

\begin{corollary}\label{cor: fe}
Suppose that $(M,x)\overset{n_\alpha}\searrow(M',x')$ in $\grph_{\cusp}$ for some $\alpha\in\srts_P$.
Then for suitable $\gamma>0$ and for any $\varphi\in\rdp_P(G)$ we have
\[
J(\varphi,x,\lambda)=J(M(s_\alpha,\lambda)\varphi,x',s_\alpha\lambda), \ \ \Re\lambda\in\domain_x(\gamma).
\]
\end{corollary}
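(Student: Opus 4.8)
The plan is to transfer the scalar identity \eqref{eq: scalarFE}, already proved for the pseudo-Eisenstein type functions $\theta^M_f$ in Lemma \ref{lem: alt min red}, to an arbitrary $\varphi\in\rdp_P(G)$, using the absolute convergence of the intertwining period guaranteed by Theorem \ref{thm: J conv}. The point is that the chain of manipulations carried out in the proof of Lemma \ref{lem: alt min red} — a change of variable in the unipotent integration, an interchange of the order of integration, the use of \eqref{eq: pxp'x'} and of the two parts of Lemma \ref{lem: alpha}, and a final conjugation — is valid for \emph{any} function on $\grp U(\A)M\bs\grp G(\A)$ once the relevant iterated integrals are known to converge absolutely; only that convergence input needs to be supplied by a different argument in the present generality.

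First I would fix $\gamma>0$ (and an auxiliary $R>0$) large enough that simultaneously: Theorem \ref{thm: J conv} applies to $J(\cdot,x,\lambda)$ for $\Re\lambda\in\domain_x(\gamma)$ and to $J(\cdot,x',\cdot)$ on the corresponding domain, Lemma \ref{lem: alt min red} holds with this $\gamma$ and $R$, and $\Re\sprod{\lambda}{\alpha^\vee}\gg1$ for $\Re\lambda\in\domain_x(\gamma)$ — which holds because $\domain_x(\gamma)=\rho_x+\{\mu\in(\aaa_M^*)_x^-:\sprod{\mu}{\alpha^\vee}>\gamma\ \forall\alpha\in\Sigma_{P,x}\}$ and $\alpha\in\Sigma_{P,x}$ (since $x\alpha<0$). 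For such $\lambda$ the integral defining $M(s_\alpha,\lambda)\varphi$ converges, and the standard estimate for intertwining operators applied to a rapidly decreasing inducing section — writing the $\grp{U'}(\A)$-integral as a sum over $P_0\cap M'\bs M'$ of translates and invoking the defining bound for $\rdp_P(G)$ together with \eqref{eq: exp bd} and \eqref{eq: mumult}, exactly as in the proof of \cite[Proposition II.1.6]{MR1361168} — shows that $M(s_\alpha,\lambda)\varphi\in\rdp_{P'}(G)$, with $\sup_{m\in\siegel_{M'}^1,k\in K}\abs{(M(s_\alpha,\lambda)\varphi)(mk)}\norm{m}^N$ bounded locally uniformly in $\lambda$. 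By Lemma \ref{lem: domain} one then has $\Re(s_\alpha\lambda)\in\domain_{x'}(\gamma)$, and since $x'$ is $M'$-cuspidal, Theorem \ref{thm: J conv} gives the absolute convergence of $J(M(s_\alpha,\lambda)\varphi,x',s_\alpha\lambda)$; the left-hand side $J(\varphi,x,\lambda)$ converges absolutely for the same reason.

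With both sides absolutely convergent, I would reproduce verbatim the computation of Lemma \ref{lem: alt min red}: starting from the triple integral \eqref{eq: tripjms} with $\theta^M_f$ replaced by $\varphi$, apply the change of variable $u\mapsto mum^{-1}$, interchange the order of integration, use \eqref{eq: pxp'x'}, then Lemma \ref{lem: alpha} \eqref{part: int unip} to collapse the unipotent integration onto $(n_\alpha\grp U_x n_\alpha^{-1})(\A)\bs\grp U'_{x'}(\A)$, then Lemma \ref{lem: alpha} \eqref{part: int f} to merge this with the $\grp{P'}_{x'}(\A)\bs\grp G_{x'}(\A)$ integration into an integral over $n_\alpha\grp P_x(\A)n_\alpha^{-1}\bs\grp G_{x'}(\A)$, and finally the change of variables $h\mapsto n_\alpha hn_\alpha^{-1}$, $m\mapsto n_\alpha mn_\alpha^{-1}$ to recover $J(\varphi,x,\lambda)$. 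Each interchange of integrals is legitimate because, up to the constant above, the integrand is dominated by the corresponding integrand built from $\theta^{M'}_{f_0}$ with $f_0=e^{-R\norm{\cdot}}$ — using $\abs{\psi(g)}\ll\sup_{m\in\siegel_{M'}^1,k\in K}\abs{\psi(mk)}\norm{m}^N\,\theta^{M'}_{f_0}(g)$ for $\psi\in\rdp_{P'}(G)$, as noted after Proposition \ref{prop: alt J conv} — whose iterated integral converges by that proposition. I expect the only genuine point requiring care to be the assertion that $M(s_\alpha,\lambda)\varphi\in\rdp_{P'}(G)$ with the needed locally uniform growth estimate; everything else is a formal repetition of Lemma \ref{lem: alt min red}. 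That assertion is, however, entirely standard, being precisely the kind of estimate underlying \cite[Proposition II.1.6]{MR1361168}.
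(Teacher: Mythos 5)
Your proof is correct and takes essentially the same route as the paper: repeat the manipulations of Lemma \ref{lem: alt min red} with $\theta^M_f$ replaced by $\varphi$, justifying the interchanges by absolute convergence. Two small points: the paper is more economical in that it invokes only the absolute convergence of $J(\varphi,x,\lambda)$ from Theorem \ref{thm: J conv} and unwinds backward by Tonelli, so the step of establishing $M(s_\alpha,\lambda)\varphi\in\rdp_{P'}(G)$ (which you flag as the one genuine point requiring care) is actually dispensable, the convergence of $J(M(s_\alpha,\lambda)\varphi,x',s_\alpha\lambda)$ emerging as a by-product; and in your final paragraph the dominating function for the triple integrand $\varphi_\lambda(n_\alpha^{-1}umh\eta')$ should be $\theta^{M}_{f_0}$ rather than $\theta^{M'}_{f_0}$, since it is $\varphi\in\rdp_P(G)$, not its image under the intertwining operator, that appears inside the iterated integral.
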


\begin{proof}
The argument of the proof of Lemma \ref{lem: alt min red} shows that
\begin{multline*}
J(M(s_\alpha,\lambda)\varphi,x',s_\alpha\lambda)\\=
\int_{\grp{P'}_{x'}(\A)\bs \grp G_{x'}(\A)}\int_{\RAS{M'_{x'}}{M'}}
\int_{(\grp{U'}\cap s_\alpha \grp Us_\alpha^{-1})(\A)\bs \grp{U'}(\A)}
\varphi_\lambda(n_\alpha^{-1} umh\eta')\ du \ \modulus_{P'_{x'}}(m)^{-1}\ dm \ dh
\\=\int_{\grp P_x(\A) \bs \grp G_x(\A)}\int_{\RAS{M_{x}}M}
\varphi_\lambda(mh\eta) \ \modulus_{P_x}(m)^{-1}\ dm\ dh=J(\varphi,x,\lambda).
\end{multline*}
It is justified by the absolute convergence of $J(\varphi,x,\lambda)$.
\end{proof}

\subsection{An unramified formula}\label{sec: unram}
Let us go back to the setup of \S\ref{sec: GL2nGLnGLn} (using the notation of that section) and carry out the unramified local computation pertaining
to the pair $(\grp G,\grp H)=(\grp{GL}_{2n},\grp{GL}_n\times\grp{GL}_n)$.
For this section let $F$ be a local field with normalized absolute value $\abs{\cdot}$.

Let $\grp A_n$ be the torus of diagonal matrices in $\grp{GL}_n$, $\grp U_n$ the subgroup of upper-unitriangular matrices, $\grp R_n=\grp A_n\ltimes\grp U_n$
the Borel subgroup of upper triangular matrices and $K_n$ the standard maximal compact subgroup of $\GL_n$,
i.e., $K_n=\grp{GL}_n(\OOO_F)$ for $F$ non-archimedean $O(n)$ if $F=\R$ and $U(n)$ if $F=\C$. Let
\[
\rho_n=\frac12(n-1,n-3,\dots,1-n)\in \aaa_{A_n}^*.
\]
For $\lambda=(\lambda_1,\dots,\lambda_n)\in \C^n$ let $\Xi_\lambda$ be the function on $G$ given by
\[
\Xi_\lambda(g)=\int_{R_n\bs\GL_n}e^{\sprod{(\lambda,-\lambda)+\rho_{2n}}{\Ht_0(\sm m{}{}mg)}}\ dm.
\]
Thus, $\Xi_\lambda$ is left invariant under $\{\sm mX{}m:m\in\GL_n,X\in\operatorname{Mat}_{n\times n}\}$, right $K_{2n}$-invariant and the restriction of
$\modulus_P^{-\frac12}\Xi_\lambda$ to $\{\sm m{}{}{I_n}:m\in\GL_n\}$ is the spherical function on $\GL_n$ with parameter $\lambda$.
For convenience take $\theta_n=\sm{}{I_n}{I_n}{}\sm{I_n}{I_n}{}{I_n}$ (so that $\theta_n\in P\eta_nH$ where $\eta_n$ is defined in \eqref{def: eta}) and set
\begin{multline*}
J_n(\lambda)=\int_{\GL_n^\triangle\bs\GL_n\times\GL_n}\Xi_\lambda(\theta_n\sm{m_1}{}{}{m_2})\ dm_1\ dm_2\\=
\int_{R_n^\triangle\bs\GL_n\times\GL_n}e^{\sprod{(\lambda,-\lambda)+\rho_{2n}}{H_0(\theta_n\sm{m_1}{}{}{m_2})}}\ dm_1\ dm_2.
\end{multline*}
By the Iwasawa decomposition for $\GL_n$ we have
\[
J_n(\lambda)=\int_{A_n}\int_{U_n} e^{\sprod{-2\rho_n}{H_{A_n}(a)}+\sprod{(\lambda,-\lambda)+\rho_{2n}}{H_0(\theta_n\sm{I_n}{}{}{ua})}}\ du \ da.
\]

Let $w\in S_{2n}$ be the permutation defined by
\[
w(i)=2i-1,\ w(n+i)=2i,\ i=1,\dots,n,
\]
viewed also as a permutation matrix $(\delta_{i,w(j)})$ in $\GL_{2n}$.
Set $\xi=w\theta_n w^{-1}=\diag(\theta_1,\dots,\theta_1)$ and note that
\[
w(\lambda,-\lambda)=(\lambda_1,-\lambda_1,\dots,\lambda_n,-\lambda_n)
\]
and for $u=(u_{i,j})\in U_n$ we have
\[
\xi w\sm{I_n}{}{}{u}w^{-1}\xi^{-1}=\left(\begin{smallmatrix} I_2 & & \beta_{i,j} \\ & \ddots & \\ & & I_2\end{smallmatrix}\right)
\ \ \text{ where }\ \
\beta_{i,j}=\left(\begin{smallmatrix} u_{i,j} & 0 \\ u_{i,j} & 0 \end{smallmatrix}\right).
\]
On the other hand,
\[
U_{2n}\cap wU_{2n}w^{-1}=\{u=(u_{i,j})\in U_{2n}:u_{2i,2j-1}=0\ \forall i=1,\dots,n\text{ and }\ j=i+1,\dots,n\}.
\]
Therefore, $\xi w\diag(I_n,U_n)w^{-1}\xi^{-1}$ is a set of representatives for $(U_{2n}\cap wU_{2n}w^{-1})\bs U_{2n}$.
Writing
\[
\theta_n\sm{I_n}{}{}{u}=w^{-1} (\xi w \sm{I_n}{}{}{u}w^{-1} \xi^{-1})\xi w
\]
we get that
\[
J_n(\lambda)=\int_{A_n} e^{\sprod{-2\rho_n}{H_{A_n}(a)}}\int_{(U_{2n}\cap wU_{2n}w^{-1})\bs U_{2n}}
e^{\sprod{(\lambda,-\lambda)+\rho_{2n}}{H_0(w^{-1} u\xi w\sm{I_n}{}{}{a})}} \ du \ da.
\]
The inner integral is a standard intertwining operator applied to the unramified section.
By a familiar computation for any $\mu=(\mu_1,\dots,\mu_{2n})\in \aaa_{A_{2n},\C}^*$ we have
\[
\int_{(U_{2n}\cap wU_{2n}w^{-1})\bs U_{2n}} e^{\sprod{\mu+\rho_{2n}}{H_0(w^{-1}ug)}} \ du=
c_w(\mu)e^{\sprod{w\mu+\rho_{2n}}{H_0(g)}}, \ g\in\grp{GL}_{2n}
\]
where
\[
c_w(\mu)=\prod_{i<j;w(i)>w(j)}\frac{L(\mu_i-\mu_j,\trivchar_F)}{L(\mu_i-\mu_j+1,\trivchar_F)}.
\]
The integral converges provided that $\Re\mu_i>\Re\mu_j$ for all $i<j$ such that $w(i)>w(j)$.

Note that $\{1\le i< j\le 2n: w(i)>w(j)\}=\{(i,n+j):1\le i<j\le n\}$ and therefore
\[
c_w(\lambda,-\lambda)=\prod_{i<j}\frac{L(\lambda_i+\lambda_j,\trivchar_F)}{L(\lambda_i+\lambda_j+1,\trivchar_F)}
\]
and the integral over $(U_{2n}\cap wU_{2n}w^{-1})\bs U_{2n}$ converges for $\Re\lambda_i+\Re\lambda_j>0$, $i<j$.
It follows that
\[
J_n(\lambda)=c_w(\lambda,-\lambda)\int_{A_n}e^{\sprod{-2\rho_n}{H_{A_n}(a)}+\sprod{w(\lambda,-\lambda)+\rho_{2n}}{H_0(\xi w\sm{I_n}{}{}{a})}} \ da.
\]
Note that $w\sm{I_n}{}{}{a} w^{-1}=\diag(1,a_1,1,a_2,\dots,1,a_n)$ for $a=\diag(a_1,\dots,a_n)\in A_n$ and
\[
e^{\sprod{2\rho_n}{H_{A_n}(a)}}=\prod_{i=1}^n \abs{a_i}^{n+1-2i}=\prod_{i=1}^n \abs{\det(\theta_1 \sm{1}{}{}{a_i})}^{n+1-2i}=
e^{\sprod{\nu}{H_0(\xi w\sm{I_n}{}{}{a})}}
\]
where $\nu=(n-1,n-1,n-3,n-3\dots,1-n,1-n)\in \aaa_{A_{2n}}^*$. Since
\[
\rho_{2n}-\nu=(\frac12,-\frac12,\dots,\frac12,-\frac12)=(\rho_2,\dots,\rho_2)
\]
we get that
\[
J_n(\lambda)=c_w(\lambda,-\lambda)\prod_{j=1}^n J_1(\lambda_j).
\]
The convergence and computation of $J_n(\lambda)$ reduces therefore to the case $n=1$.
As in \cite[Lemma 5.2]{MR2060496} it is easy to calculate the integral
\[
J_1(s)=\int_{F^*} e^{\sprod{(s+\frac12,-s-\frac12)}{H_{A_2}(\sm 0a1a)}}\ da.
\]
Assume first that $F$ is non-archimedean and let $q$ be the size of its residual field.
Then
\[
e^{\sprod{(s+\frac12,-s-\frac12)}{H_{A_2}(\sm 0a1a)}}=(\min\{\abs{a},\abs{a}^{-1}\})^{s+\frac12}.
\]
It follows that
\[
J_1(s)=\left[1+2\int_{\abs{a}<1}\abs{a}^{s+\frac12}\ da\right]=
\left[1+2\sum_{n=1}^\infty q^{-n(s+\frac12)}\ da\right]=\frac{1+q^{-s-\frac12}}{1-q^{-s-\frac12}}
\]
is convergent for $\Re s>-\frac12$.

Consequently, the integral defining $J_n(\lambda)$ is convergent whenever $\Re\lambda_i>-\frac12$, $i=1,\dots,n$ and
$\Re\lambda_i+\Re\lambda_j>0$ for all $i<j$ and
\[
J_n(\lambda)=
\left[\prod_{i<j}\frac{1-q^{-(\lambda_i+\lambda_j+1)}}{1-q^{-(\lambda_i+\lambda_j)}}\right]\left[\prod_{i=1}^n
\frac{1+q^{-\lambda_i-\frac12}}{1-q^{-\lambda_i-\frac12}}\right].
\]

Suppose now that $F$ is archimedean. Note that
\[
e^{\sprod{(s+\frac12,-s-\frac12)}{H_{A_2}(\sm 0a1a)}}=\begin{cases} \left(\frac{\abs{a}}{1+\abs{a}^2}\right)^{s+\frac12}  & F=\R \\\left(\frac{\abs{a}}{(1+\abs{a})^2}\right)^{s+\frac12} & F=\C \end{cases}
\]
and therefore once again the integral converges for $\Re s>-\frac12$. Up to normalization of measures (independently of $s$) we have
\[
J_1(s)=2\int_0^\infty \left(\frac{x}{1+x^2}\right)^{(s+\frac12)[F:\R]} d^\times x.
\]
Since
\[
2\int_0^\infty \left(\frac{x}{1+x^2}\right)^t\ d^\times x=
\int_0^\infty\frac{z^{\frac t2-1}}{{1+z}^t}\ dz=B(\frac t2,\frac t2)=\frac{\Gamma(\frac t2)^2}{\Gamma(t)}
\]
where $B(x,y)$ is the beta function, we conclude that for any local field $F$ we have, for a suitable normalization of measures,
\[
J_1(s)=\frac{L(s+\frac12,\trivchar_F)^2}{L(2s+1,\trivchar_F)}
\]
and
\[
J_n(\lambda)=\left[\prod_{i<j}\frac{L(\lambda_i+\lambda_j,\trivchar_F)}{L(\lambda_i+\lambda_j+1,\trivchar_F)}\right]
\left[\prod_{i=1}^n\frac{L(\lambda_i+\frac12,\trivchar_F)^2}{L(2\lambda_i+1,\trivchar_F)}\right].
\]

We remark that if $\pi$ is the unramified principal series representation of $\GL_n$ induced from $e^{\sprod{\lambda}{H_{A_n}(\cdot)}}$ then we get that
\begin{equation}\label{eq: unram}
J_n(\lambda)=L(\frac12,\pi)^2\frac{L(0,\pi,\wedge^2)}{L(1,\pi,\Sym^2)}.
\end{equation}

\subsection{}
We go back to the setup of \S\ref{sec: setup}.
Let $\grp{P}=\grp{M}\ltimes\grp{U}$ be a parabolic subgroup of $\grp{G}$, $x\in X\cap N_G(M)$ an $M$-cuspidal element and $\sigma$ an irreducible,
cuspidal, automorphic representation of $\grp{M}(\A)$.
Let $I(\sigma)$ be the space of smooth functions $\varphi$ on $\grp U(\A)M\bs \grp G(\A)$ such that
$m\mapsto\modulus_P(m)^{-\frac12}\varphi(mg)$ belongs to the space of $\sigma$ for all $g\in \grp G(\A)$.
Thus $I(\sigma)\subset \rdp_P(G)$ and we can identify it with the parabolic induction $\Ind_{\grp{P}(\A)}^{\grp{G}(\A)}(\sigma)$.
Denote the restriction of $J(x,\lambda)$ to $I(\sigma)$ by $J(x,\sigma,\lambda)$.
The analytic continuation and functional equation of $J(x,\sigma,\lambda)$ can in principle be inferred from those of the Eisenstein series,
as in \cite{MR2010737, MR2254544}. We will not carry this out here since our focus is somewhat different.

We can also factorize $J(x,\sigma,\lambda)$ and evaluate the local factors at the unramified places as follows.
By Lemma \ref{lem: s,r,r} and Corollary  \ref{cor: fe} we may assume that $x$ is $M$-standard \rlvt.
That is (see Remark \ref{rmk: cusp std rlvt}) $M=M_{(r_1,r_1,\dots,r_k,r_k,s_1,\dots,s_l;0)}$, $s_j$ is either even or $1$ for every $j=1,\dots,l$ and $x\in N_G(M)\cap X$ is
$M$-conjugate to $\inj(\sm{0}{I_{r_1}}{I_{r_1}}{0},\dots,\sm{0}{I_{r_k}}{I_{r_k}}{0},h_1,\dots,h_l)$
where $h_j=\sm{0}{I_{s_j/2}}{I_{s_j/2}}{0}$ if $s_j$ is even and $h_j=\pm 1$ if $s_j=1$.
Let $Q=L\ltimes V$ with $L=L(x)=M_{(2r_1,\dots,2r_ks_1,\dots,s_l;0)}$.
In this case, $J(x,\sigma,\lambda)$ is identically zero unless $\sigma$ is of the form
\[
\sigma\simeq \sigma_1\otimes \tilde\sigma_1\otimes \cdots\otimes \sigma_k\otimes\tilde\sigma_k\otimes \tau_1\otimes\cdots\otimes \tau_l
\]
where for all $i$, $\sigma_i$ is a cuspidal representation of $\GL_{r_i}(\A)$, $\tilde\sigma_i$ is the contragredient of $\sigma_i$,
and for all $j$, $\tau_j$ is either the trivial character of $\GL_{s_j}(\A)$ if $s_j=1$
or a cuspidal $\GL_{s_j/2}\times \GL_{s_j/2}$-distinguished automorphic representation of $\GL_{s_j}$ otherwise (i.e., if $s_j$ is even).
In this case, for $\varphi\in I(\sigma)$ we have
\[
J(\varphi,x,\sigma,\lambda)=\int_{\grp Q_x(\A)\bs \grp G_x(\A)} \int_{\grp M_x(\A)\bs \grp L_x(\A)}\modulus_{Q_x}^{-1}(l)
\int_{\RAS{M_x}M}\modulus_{P_x}^{-1}(m)\varphi_\lambda(mlh\eta)\ dm \ dl \ dh.
\]
We can factorize this integral into local integrals.
This will involve the factorization of the inner integral which in turn is a product of Petersson inner products
for the $\sigma_i$'s, $\GL_{s_j/2}\times \GL_{s_j/2}$-periods for $\tau_j$, considered in \cite{MR1159108, MR1241129}, for $s_j$ even
and volume factors for $s_j=1$.
The integral over $h$ does not affect the unramified computation.
The integral over $l$ affects the unramified computation only in the $\GL_{2r_1}\times\dots\times\GL_{2r_k}$ component of $L_x$.


Using the unramified computation of the previous section we conclude that
for a sufficiently large finite set of places $S$ of $F$,
there is a linear form $J_S=\otimes_{v\in S}J_v$ on $\otimes_{v\in S} I(\sigma_v)$ such that
if we denote $\lambda-\rho_x=(\lambda_1,-\lambda_1,\dots,,\lambda_k,-\lambda_k,0,\dots,0)$
then for every pure tensor $\varphi\in I(\sigma)^{K^S}$ we have
$J(\varphi,x,\sigma,\lambda)=L_\sigma^S(\lambda) J_S(\varphi_S)$ where
\begin{multline*}
L_\sigma(\lambda) =\left[\prod_{i=1}^k
L(\lambda_i+\frac12,\sigma_i)^2\frac{L(2\lambda_i,\sigma_i,\wedge^2)\Res_{s=1}L(s,\sigma_i\times\tilde\sigma_i)}{L(2\lambda_i+1,\sigma_i,\Sym^2)}\right]\times
\\\prod_{j=1}^l
\begin{cases} \Res_{s=1}\zeta_F(s) & s_j=1, \\ L(\frac12,\tau_j) \Res_{s=1}L(s,\tau_j,\wedge^2) & s_j \text{ is even.}\end{cases}
\end{multline*}

\section{$H$-periods of pseudo Eisenstein series} \label{sec: Hperiods}
We can now state and prove the formula for the $H$-period of pseudo Eisenstein series.
Let $\grp P=\grp M\ltimes \grp U$ be a parabolic subgroup of $\grp G$.
Recall the notation of \S\ref{sec: rddef}.

\begin{theorem} \label{thm: pesudoperiod}
There exists $R>0$ such that the integral
\[
\int_{\AS{H}}\theta_\phi(h)\ dh
\]
converges absolutely for any $\phi\in\rd{R}$ and vanishes unless $M\subset\inj(\GL_{2n})$.
Moreover, there exist $\gamma>0$ and $R>0$ such that for any $\phi\in\rdsmth{R}$ we have
\[
\int_{\AS{H}}\theta_\phi(h)\ dh
=\sum_{x}\int_{\lambda_x+\iii(\aaa_M^*)_x^-}J(\phi[\lambda],x,\lambda)\ d\lambda,
\]
a finite sum of absolutely convergent integrals
where $x$ ranges over a set of representatives of the $M$-cuspidal orbits in $X$,
and for any $x$ we fix $\lambda_x\in \domain_x(\gamma)$ such that $\norm{\lambda_x}<R$.
\end{theorem}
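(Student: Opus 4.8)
The plan is to unfold the pseudo Eisenstein series $\theta_\phi$ along the double cosets $P\bs G/H$ and to match the surviving orbital integrals with the intertwining periods $J(\cdot,x,\lambda)$. Fix once and for all an $R$ large enough for Lemma \ref{lem: bndpseudo}, Theorem \ref{thm: J conv} and the bound \eqref{eq: bndonFT} to apply with room to spare. For $\phi\in\rd R$, Lemma \ref{lem: bndpseudo} (with a large exponent $N$) together with \eqref{eq: bdsieg} and \eqref{eq: gammag bigger} shows that $g\mapsto\sum_{\gamma\in P\bs G}\abs{\phi(\gamma g)}$ is rapidly decreasing on a Siegel domain for $\grp G(\A)$; since $\grp H$ is semisimple, $\AS H$ has finite volume and is covered by the image of a Siegel set for $\grp H(\A)$, so $\int_{\AS H}\theta_\phi(h)\,dh$ converges absolutely. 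As all summands are non-negative, Tonelli lets me regroup the sum over $P\bs G$ into $H$-orbits; $P\bs G/H$ is finite by \S\ref{sec: double cosets}, and for a system $\{\xi\}$ of representatives,
\[
\int_{\AS H}\theta_\phi(h)\,dh=\sum_{\xi}\int_{(H\cap\xi^{-1}P\xi)\bs\grp H(\A)}\phi(\xi h)\,dh.
\]
Setting $x=\xi\epsilon\xi^{-1}\in X$ and conjugating the integration variable by $\xi$, one has $H\cap\xi^{-1}P\xi=\xi^{-1}\grp P_x\xi$ with $\grp P_x=\grp P\cap\grp G_x$, so the $\xi$-term becomes $I_x:=\int_{P_x\bs\grp G_x(\A)}\phi(g\eta)\,dg$ with $\eta=\xi$; it depends only on the $P$-orbit of $x$ in $X$.

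Next I would show that $I_x=0$ unless $x$ is $M$-cuspidal. Fibering $I_x=\int_{\grp P_x(\A)\bs\grp G_x(\A)}\big(\int_{P_x\bs\grp P_x(\A)}\phi(pg\eta)\modulus_{P_x}^{-1}(p)\,dp\big)\,dg$: if $x$ is not $M$-admissible then $\grp U(x)$, the unipotent radical of the proper parabolic $\pr_M(\grp P\cap x\grp Px^{-1})$ of $\grp M$, is non-trivial, so cuspidality of $\phi$ along $M$ gives $\int_{U(x)\bs\grp U(x)(\A)}\phi(up)\,du=0$ for all $p\in\grp P(\A)$, and then Lemma \ref{lem: nonadmissible}\eqref{part: 0int} (whose displayed last assertion is exactly the inner integral above) yields $I_x=0$. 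If $x$ is $M$-admissible but not $M$-cuspidal, then $x\in N_G(M)\cap X$ and $\grp P_x=\grp M_x\ltimes\grp U_x$ (Lemma \ref{lem: factor adm}), so the inner integral reduces to $\int_{\RAS{M_x}M}\phi(mg\eta)\modulus_{P_x}^{-1}(m)\,dm$, an integral over $[M_x]$ of a cuspidal translate of $\phi$, which vanishes because $(M,M_x)$ is a vanishing pair (Corollary \ref{cor: xvanish}). Finally, an $M$-cuspidal $x$ lies in $N_G(M)\cap X$ with $M\subseteq L(x)\subseteq\inj(\GL_{2n})$ (by the definition of $M$-standard cuspidal and Lemma \ref{lem: min is basic}, a standard Levi whose $\Sp$-factor is non-trivial admits no $M$-cuspidal element of $X$). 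This already proves the first assertion: the period vanishes unless $M\subseteq\inj(\GL_{2n})$.

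For the formula, let $\phi\in\rdsmth R$ and, for each $M$-cuspidal orbit with representative $x\in N_G(M)\cap X$, fix $\lambda_x\in\domain_x(\gamma)$ with $\norm{\lambda_x}<R$ — possible for $R$ large since $\domain_x(\gamma)=\rho_x+\{\cdot\}$ and $\rho_x$ is bounded; note $\lambda_x|_{(\aaa_M^*)_x^+}=\rho_x$ because $\domain_x(\gamma)\subseteq\rho_x+(\aaa_M^*)_x^-$. Expand $\phi=\int_{\lambda_x+\iii(\aaa_M^G)^*}\phi[\lambda]_\lambda\,d\lambda$ via \eqref{eq: invMeltrns}, where $\phi[\lambda]\in\rdp_P(G)$ satisfies the rapid-decay bound \eqref{eq: bndonFT}. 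In $I_x$, carry out the integration over $A_M^{M_x}$ first: using Lemma \ref{lem: exp iso} to identify $A_M^{M_x}$ with $(\aaa_M)_x^+$ and \eqref{eq: rhox} for the modulus, this $A_M^{M_x}$-integral is a genuine absolutely convergent Mellin integral whose effect is to produce a Dirac mass in the $(\aaa_M^*)_x^+$-component of $\Im\lambda$ (the real shift being killed by $\lambda_x|_{(\aaa_M^*)_x^+}=\rho_x$); this collapses the $(\aaa_M^*)_x^+$-directions of the contour and leaves $\lambda$ running over $\lambda_x+\iii(\aaa_M^*)_x^-\subseteq\rho_x+(\aaa_{M,\C}^*)_x^-$, where $\Re\lambda=\lambda_x\in\domain_x(\gamma)$. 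On this reduced contour the remaining integrand is exactly $J(\phi[\lambda],x,\lambda)$, absolutely convergent by Theorem \ref{thm: J conv}; combined with \eqref{eq: bndonFT} the $\lambda$-integral converges absolutely, giving $I_x=\int_{\lambda_x+\iii(\aaa_M^*)_x^-}J(\phi[\lambda],x,\lambda)\,d\lambda$, and summing over the finitely many $M$-cuspidal orbits yields the formula.

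The main obstacle is the manipulation in the last paragraph: because the $A_M^{M_x}$-collapse produces a Dirac mass, one cannot apply Fubini to $\int_{\lambda\text{-contour}}\int_{P_x\bs\grp G_x(\A)}$ naively, so the argument must be organized so that the genuinely convergent $A_M^{M_x}$-integral is performed \emph{before} the contour integral; making this rigorous — and, relatedly, justifying the interchange of the Mellin contour with the sum over $P\bs G$ in passing from $\theta_\phi$ to the orbital integrals — is the technical heart. A secondary point requiring care is the modulus bookkeeping in the non-admissible case, namely checking that the Jacobian of the fibration of $I_x$ over $\grp P_x(\A)\bs\grp G_x(\A)$, together with the left-invariance properties of $\phi$, reproduces precisely the integral $\int_{P_x\bs\grp P_x(\A)}\phi(pg\eta)\modulus_{P_x}^{-1}(p)\,dp$ that appears in Lemma \ref{lem: nonadmissible}\eqref{part: 0int}.
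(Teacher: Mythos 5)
Your proposal is correct and follows the paper's proof essentially step for step: unfold along $P\bs G/H$ using Lemma \ref{lem: bndpseudo} for absolute convergence, kill the non-admissible and non-cuspidal orbits via Lemma \ref{lem: nonadmissible}\eqref{part: 0int} and Corollary \ref{cor: xvanish}, express each surviving $I_x$ as a $\lambda$-contour integral by Mellin inversion, and finally swap the contour with the $\grp G_x$-integral using Theorem \ref{thm: J conv} and \eqref{eq: bndonFT}. The ``Dirac mass'' issue you flag at the end is handled exactly as you propose --- perform the absolutely convergent $A_M^{M_x}$-integral first, recognize the result as a partial Mellin transform of $\phi$ in the $(\aaa_M)_x^+$-directions (trivial on $\aaa_G$ since $\rho_x,\rho_P$ vanish there), and invoke the partial Fourier inversion formula with respect to $(\aaa_M)_x^+$ to land directly on the contour $\lambda_x+\iii(\aaa_M^*)_x^-$; no distribution-theoretic argument is needed, and the interchange with the sum over $P\bs G$ that worries you never arises because the unfolding is completed before any Mellin contour is introduced.
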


\begin{proof}
By Lemma \ref{lem: bndpseudo} there exists $R>0$ such that $g\mapsto\sum_{\gamma\in P\bs G}\abs{\phi(\gamma g)}$
is bounded on $\grp G(\A)$, and in particular integrable over $\AS{H}$, for all $\phi\in\rd{R}$.
Therefore we can write
\[
\int_{\AS{H}}\theta_\phi(h)\ dh=\sum_{x}I_x(\phi)
\]
as a (finite) sum of absolutely convergent integrals where $x$ ranges over a set of representatives for the $P$-orbits in $X$ and for each $x\in X$
\[
I_x(\phi)=\int_{P_x\bs \grp G_x(\A)}\phi(h\eta_x)\ dh=
\int_{\grp P_x(\A)\bs \grp G_x(\A)}\int_{P_x\bs \grp P_x(\A)}
\phi(p h\eta_x)\modulus_{P_x}(p)^{-1}\ dp\ dh
\]
where $\eta_x\in G$ is such that $x=\eta_x\epsilon\eta_x^{-1}$.
By the cuspidality condition on $\phi$, it follows from Lemma \ref{lem: nonadmissible} \eqref{part: 0int} that $I_x(\phi)=0$ unless $x$ is $M$-admissible.
By Lemma \ref{lem: M orbit map}, we get
\[
\int_{\AS{H}}\theta_\phi(h)\ dh=\sum_{x}I_x(\phi).
\]
where $x$ ranges over a set of representatives of the $M$-orbits in $N_G(M)\cap X$.
Suppose that $x\in N_G(M)\cap X$. By Lemma \ref{lem: factor adm} we have $\grp P_x=\grp M_x\ltimes \grp U_x$. Since $\phi$ is $\grp U(\A)$-invariant
we get that
\[
I_x(\phi)=\int_{\grp P_x(\A)\bs \grp G_x(\A)}\int_{M_x\bs \grp M_x(\A)}\phi(mh\eta_x)\modulus_{P_x}^{-1}(m)\ dm \ dh.
\]
Clearly, $\modulus_{P_x}$ is trivial on $\grp M_x(\A)^1$. By Corollary \ref{cor: xvanish}
\[
\int_{M_x\bs \grp M_x(\A)^1}\phi(mg)\ dm=0,\ g\in \grp G(\A)
\]
and hence also $I_x(\phi)=0$, unless $x$ is $M$-cuspidal.
In particular,
\[
\int_{\AS{H}}\theta_\phi(h)\ dh=0
\]
unless $M\subset\inj(\GL_{2n})$.
Assume therefore that $M\subset\inj(\GL_{2n})$ and $x$ is $M$-cuspidal.
By Lemmas \ref{lem: factor adm} and \ref{lem: exp iso}, we can write
\begin{multline*}
I_x(\phi)=\int_{A_M^{M_x}\grp U_x(\A)M_x\bs \grp G_x(\A)}\int_{A_M^{M_x}}
\phi(ah\eta_x)\modulus_{P_x}^{-1}(a)\ da\ dh\\=
\int_{A_M^{M_x}\grp U_x(\A)M_x\bs \grp G_x(\A)}\int_{(\aaa_M)_x^+}
\phi(e^\nu h\eta_x)e^{-\sprod{\rho_P+\rho_x}{\nu}} \ d\nu\ \ dh.
\end{multline*}

Now assume that $\phi\in\rdsmth{R}$.
By partial Fourier inversion formula with respect to the subspace $(\aaa_M)_x^+$ of $\aaa_M$ we have
\[
I_x(\phi)=\int_{A_M^{M_x}\grp U_x(\A)M_x\bs \grp G_x(\A)}
\left( \int_{\lambda_x+\iii(\aaa_M^*)_x^-}
\phi[\lambda]_\lambda(h\eta_x) \ d\lambda\right)\ \ dh
\]
for any $\lambda_x\in \rho_x+(\aaa_{M,\C}^*)_x^-$ such that $\norm{\lambda_x}<R$.
By Theorem \ref{thm: J conv} and \eqref{eq: bndonFT} the double integral converges
provided that $\Re\lambda_x\in \domain_x(\gamma)$ for suitable $\gamma$ and $R$.
Changing the order of integration we obtain
\[
I_x(\phi)=  \int_{\lambda_x+\iii(\aaa_M^*)_x^-}J(\phi[\lambda],x,\lambda)\ d\lambda.
\]
The theorem follows.
\end{proof}

\begin{remark}
Theorem \ref{thm: pesudoperiod} is stated for a general $\phi$ but in the case where $\phi$ is spectrally supported
on a single irreducible cuspidal representation $\pi$ of $\grp M(\A)$,
the terms in the formula for the $H$-period depend heavily on $\pi$.
Theorem \ref{thm: pesudoperiod} is an analogue of the inner product formula for pseudo Eisenstein series
(\cite[Theorem II.2.1]{MR1361168}), which is the point of departure for Langlands's spectral decomposition of
$L^2(\AS{G})$.
The natural next step would be to perform a residue calculus on the formula provided by Theorem \ref{thm: pesudoperiod}.
This would no doubt sharpen the results of \S\ref{sec: main result} below.
However, we will not pursue this matter here.
\end{remark}

\section{The distinguished spectrum} \label{sec: dist spec}
In this section let $\grp G$ be a reductive group over a number field $F$ and $\grp H$ a reductive subgroup defined over $F$.

\subsection{}
We will define a subspace of $L^2(\AS{G})$, denoted $L^2_{H\dist}(\AS{G})$,
which measures the part of the spectrum which is distinguished with respect to $H$.
First, let $L^2(\AS{G})_{\Hconv}$ be the subspace of $L^2(\AS{G})$ consisting of $\varphi$ such that
the integral $\int_{\RAS HG}\abs{(f*\varphi)(h)}\ dh$ converges for any $f\in C_c(\grp G(\A))$
(where the latter denotes the space of continuous, compactly supported functions on $\grp G(\A)$).
The space $L^2(\AS{G})_{\Hconv}$ contains the space of rapidly decreasing functions on $\AS{G}$
(cf., \cite[Proposition 1]{MR1233493}).
(If $\grp H\cap\grp G^{\der}$ is semisimple then $L^2(\AS{G})_{\Hconv}$ contains the space of bounded measurable functions on $\AS{G}$.)
In particular, $L^2(\AS{G})_{\Hconv}$ is dense in $L^2(\AS{G})$.
Let
\[
L^2(\AS{G})_{\Hconv}^\circ=
\{\varphi\in L^2(\AS{G})_{\Hconv}:\\\int_{\RAS HG}(f*\varphi)(h)\ dh=0\text{ for all }f\in C_c(\grp G(\A))\}.
\]
We can define the `strong' $H$-distinguished spectrum $L^2_{H\dist}(\AS{G})^{\stng}$ to be the orthogonal complement in
$L^2(\AS{G})$ of $L^2(\AS{G})_{\Hconv}^\circ$.
It is not clear to what extent is this definition sensible in general
(especially if $L^2(\AS{G})_{\Hconv}\ne L^2(\AS{G})$). However, we will see in the next section
that it is at least useful in one case.

More generally, for any subspace $\mathcal{C}$ of $L^2(\AS{G})_{\Hconv}$ define
\[
\mathcal{C}_H^\circ=\{\phi\in\mathcal{C}:\int_{\RAS HG}(f*\phi)(h)=0\text{ for every }f\in C_c(\grp G(\A))\}.
\]
Note that if for any $\phi\in\mathcal{C}$, $\phi$ is continuous and the
integral $\int_{\RAS HG}\abs{\phi(hg)}\ dh$ converges for all $g\in \grp G(\A)$ then
\[
\mathcal{C}_H^\circ=\{\phi\in\mathcal{C}:\int_{\RAS HG}\phi(hg)=0\text{ for all }g\in \grp G(\A)\}.
\]
To define $L^2_{H\dist}(\AS{G})$ we will take $\mathcal{C}$ to be the space of pseudo Eisenstein series.
To make this more precise we recall some standard facts and terminology from \cite{MR1361168}.

\subsection{}
\emph{For the rest of this section all external references below are from \cite{MR1361168}.}
We denote by $\Pi_{\cusp}(A_G\bs\grp G(\A))$ the set of equivalence classes of irreducible cuspidal representations of $\grp G(\A)$ whose central
character is trivial on $A_G$.
Recall that a cuspidal datum for $G$ (II.1.1) is a pair $(M,\pi)$
consisting of a Levi subgroup $M$ of $G$ and $\pi\in\Pi_{\cusp}(A_M\bs\grp M(\A))$.
Two cuspidal data $(M,\pi)$ and $(M',\pi')$ are equivalent if they are conjugate (in the obvious sense) by an element of $G$.
Let $\cuspdata$ be the set of equivalence classes of cuspidal data.

For any cuspidal data $(M,\pi)$ let $L^2_{\cusp,\pi}(\AS{M})$ be the $\pi$-isotypic component
of the cuspidal spectrum $L^2_\cusp(\AS{M})$ and let
\begin{multline*}
L^2_{\cusp,\pi}(\grp U(\A)M\bs\grp G(\A))=\{\varphi:\grp U(\A)M\bs\grp G(\A)\rightarrow\C\text{ measurable}\,|\\
\modulus_P^{-\frac12}\varphi(\cdot g)\in L^2_{\cusp,\pi}(\AS{M})\text{ for all }g\in\grp G(\A),
\int_{A_M\grp U(\A)M\bs\grp G(\A)}\abs{\varphi(g)}^2\ dg<\infty\}.
\end{multline*}
For any finite set of $K$-types $\Ktypes$, the space $L^2_{\cusp,\pi}(\grp U(\A)M\bs\grp G(\A))^{\Ktypes}$ (the direct sum of $K$-isotypic components
pertaining to $\Ktypes$) is finite-dimensional and consists of smooth functions.

Let $\cuspdatum\in\cuspdata$ and $\Ktypes$ a finite set of $K$-types.
For $R\gg1$ and $(M,\pi)\in\cuspdatum$ let $P_{(M,\pi)}^{R,\Ktypes}$ be the space defined in (V.2.1)
(cf.~(II.1.4)) namely (in the notation of \S\ref{sec: P^r})
\[
P_{(M,\pi)}^{R,\Ktypes}=P^R((\aaa_M^G)^*;L^2_{\cusp,\pi}(\grp U(\A)M\bs\grp G(\A))^{\Ktypes})\simeq
P^R((\aaa_M^G)^*)\otimes L^2_{\cusp,\pi}(\grp U(\A)M\bs\grp G(\A))^{\Ktypes}.
\]
We denote the value of $\phi\in P_{(M,\pi)}^{R,\Ktypes}$ at $\lambda$ by $\phi[\lambda]$.
This is consistent with the notation of \S\ref{sec: rddef} if we view $\phi$ (by Mellin inversion, as in \eqref{eq: invMeltrns})
as a function in the space $\rdsmth{R}$.
In particular, the pseudo Eisenstein series $\theta_\phi$ (II.1.10) is defined for any $\phi\in P_{(M,\pi)}^{R,\Ktypes}$.
Let $P_{\cuspdatum}^{R,\Ktypes}=\dsum_{(M,\pi)\in\cuspdatum}P_{(M,\pi)}^{R,\Ktypes}$ and extend the map $\phi\mapsto\theta_\phi$ to a map
\[
\theta^\cuspdatum:P_{\cuspdatum}^{R,\Ktypes}\rightarrow L^2(\AS{G})
\]
by linearity. Let $\pseudospace_{\cuspdatum,\Ktypes}(\AS{G})$ be the image of $\theta^\cuspdatum$.
Let $L^2_\cuspdatum(\AS{G})_\Ktypes$ be the closure of $\pseudospace_{\cuspdatum,\Ktypes}(\AS{G})$ in $L^2(\AS{G})$.
Let also
\[
\pseudospace_{\cuspdatum}(\AS{G})=\cup_{\Ktypes\subset\hat K\text{ finite}}\pseudospace_{\cuspdatum,\Ktypes}(\AS{G})
\]
and let $L^2_\cuspdatum(\AS{G})$ be the closure of $\pseudospace_{\cuspdatum}(\AS{G})$ in $L^2(\AS{G})$ (II.2.4).
Note that it follows from Theorem II.2.1 that $L^2_\cuspdatum(\AS{G})$ is independent of $R$ for $R\gg1$.

The space $L^2(\AS{G})$ admits a coarse decomposition
\begin{equation} \label{eq: coarse decomposition}
L^2(\AS{G})=\hat\dsum_{\cuspdatum\in\cuspdata} L^2_\cuspdatum(\AS{G})
\end{equation}
(II.2.4).
In particular the space
\[
\pseudospace(\AS{G})=\dsum_{\cuspdatum\in\cuspdata}\pseudospace_{\cuspdatum}(\AS{G})
\]
of all pseudo Eisenstein series is dense in $L^2(\AS{G})$.

By definition, the $H$-distinguished spectrum $L^2_{H\dist}(\AS{G})$ is the orthogonal complement of
$\pseudospace(\AS{G})_H^\circ$ in $L^2(\AS{G})$.
Note that in principle this space may depend on the choice of $R$, although we do not expect it to be so.
By abuse of notation we will suppress this a priori dependence.
Alternatively, we may work instead with pseudo Eisenstein series built from the space of Paley-Wiener functions $P_{(M,\pi)}$
of II.1.2. The ensuing discussion will carry over with minor changes.

Clearly $L^2_{H\dist}(\AS{G})^{\stng}\subset L^2_{H\dist}(\AS{G})$.

\begin{remark} \label{rem: otherspaceHdist}
One may consider the orthogonal complement in $L^2(\AS{G})$ of $\mathcal{C}_H^\circ$
for other classes of functions $\mathcal{C}$ in $L^2(\AS{G})_{\Hconv}$, for instance the class
of rapidly (or alternatively, sufficiently rapidly) decreasing functions on $\AS{G}$ (perhaps with all their derivatives).
Conceivably this would coincide with $L^2_{H\dist}(\AS{G})$.
However, we will not address this question in this paper.
\end{remark}

We set
\[
L^2_{\disc,H\dist}(\AS{G})=L^2_{H\dist}(\AS{G})\cap L^2_{\disc}(\AS{G}).
\]

\subsection{} \label{sec: finerspectraldecomposition}
We turn to the finer decomposition of $L^2(\AS{G})$ which is the crux of Chapter V.
Fix $\cuspdatum\in\cuspdata$ and $\Ktypes$ as before.
By definition, a \emph{root hyperplane} in $(\aaa_M^G)^*$ is an affine hyperplane given by an equation $\sprod{\lambda}{\alpha^\vee}=c$
for some co-root $\alpha^\vee$ corresponding to $\alpha\in\srts_P$ and $c\in\R$.
There is a certain finite set $S_\cuspdatum^\Ktypes$ consisting of triples $(M,\pi,\sing)$ where $(M,\pi)\in\cuspdatum$
and $\sing$ is an affine subspace of $(\aaa_M^G)^*$ which is an intersection of root hyperplanes.
(In fact, the set $S_\cuspdatum^\Ktypes$ defined in (V.1.1) is only locally finite.
However, for our purposes we can replace it by the finite set (which implicitly depends on $\cuspdatum$) denoted by $\operatorname{Sing}^{G,\Ktypes}$ in (V.3.13)
-- see (VI.1.8).) Let $S_\cuspdatum=\cup_{\Ktypes\subset\hat K\text{ finite}}S_\cuspdatum^\Ktypes$. (This set is probably also finite
but we do not need to know this.)
The set $S_\cuspdatum$ contains the triples $(M,\pi,\sing)$ where $\sing$ is a singular hyperplane $\sprod{\lambda}{\alpha^\vee}=c$, $c>0$
for the intertwining operator corresponding to $(M,\pi)$ and $\alpha\in\srts_P$.
Let $[S_\cuspdatum^\Ktypes]$, $[S_\cuspdatum]$ be the sets of equivalence classes
of $S_\cuspdatum^\Ktypes$, $S_\cuspdatum$ respectively under the equivalence relation defined in (V.3.1).
The finer decomposition alluded to above is
\[
L^2_\cuspdatum(\AS{G})_\Ktypes=\dsum_{\singcls\in[S_\cuspdatum^\Ktypes]}L^2_\cuspdatum(\AS{G})_{\singcls,\Ktypes}
\]
and correspondingly
\[
L^2_\cuspdatum(\AS{G})=\hat\dsum_{\singcls\in[S_\cuspdatum]}L^2_\cuspdatum(\AS{G})_\singcls
\]
(Corollaries V.3.13 and V.3.14) where
\[
L^2_\cuspdatum(\AS{G})_\singcls=\overline{\sum_{\Ktypes}L^2_\cuspdatum(\AS{G})_{\singcls,\Ktypes}}.
\]
The subspaces $L^2_\cuspdatum(\AS{G})_{\singcls,\Ktypes}$
are defined in Chapter V, initially in an ad hoc fashion,
and subsequently as integrals of certain residual Eisenstein series.
For our purposes it will be useful to have a description of the subspaces $L^2_\cuspdatum(\AS{G})_{\singcls,\Ktypes}$
using a slight variant of the recipe given in (V.3.3) -- see \eqref{eq: closure} below.
For $\singcls,\singcls'\in[S_\cuspdatum]$ we write $\singcls'\succeq\singcls$ if for
$(M,\pi,\sing)\in\singcls$ we can choose $(M',\pi',\sing')\in\singcls'$ such that $(M,\pi)=(M',\pi')$ and $\sing'\supset\sing$.
(Of course, this condition does not depend on the choice of $(M,\pi,\sing)$.)
We write $\singcls'\succ\singcls$ if $\singcls'\succeq\singcls$ but $\singcls'\ne\singcls$.

Fix $m\gg1$ and for any $\singcls\in S_\cuspdatum$ let
\begin{multline*}
\tilde P_{\not\succeq\singcls}^{R,\Ktypes}=\{\phi=(\phi_{(M,\pi)})_{(M,\pi)\in\cuspdatum}\in P_\cuspdatum^{R,\Ktypes}:
\text{for any $\singcls'=[(M,\pi,\sing')]\in S_\cuspdatum$ such that }\singcls'\not\succeq\singcls,\\
\phi_{(M,\pi)}\text{ together with its derivatives of order $\le m$ vanishes on }\sing'\}.
\end{multline*}
Similarly define $\tilde P_{\not\succ\singcls}^{R,\Ktypes}$.
The space $\tilde P_{\not\succeq\singcls}^{R,\Ktypes}$ is contained (possibly strictly) in the corresponding space $P_{\singcls,T'}^{R,\Ktypes}$
defined in (V.3.3) where the partial order $\succeq$ is replaced by an essentially arbitrary total order refinement.\footnote{We recall that a posteriori
$T'$ is superfluous because of the finiteness of $\sings^{G,\Ktypes}$.}
Nevertheless, if we replace $P_{\singcls,T'}^{R,\Ktypes}$ by $\tilde P_{\not\succeq\singcls}^{R,\Ktypes}$
throughout V.3 then all the statements and proofs remain valid verbatim.
(The induction is on the codimension of $\sing$ where $(M,\pi,\sing)\in\singcls$.)
Indeed, it all boils down to the simple statement (i) on the bottom of [p. 203]
which still holds for $\tilde P_{\not\succeq\singcls}^{R,\Ktypes}$.
Consequently, we have (cf. Corollary V.3.13):
\begin{subequations}
\begin{gather}
\label{eq: closure}
L^2_\cuspdatum(\AS{G})_{\singcls,\Ktypes}=\overline{\{\theta_\phi:\phi\in\tilde P_{\not\succeq\singcls}^{R,\Ktypes}\}}\cap
\{\theta_\phi:\phi\in\tilde P_{\not\succ\singcls}^{R,\Ktypes}\}^\perp,\\
\dsum_{\singcls'\succeq\singcls}L^2_\cuspdatum(\AS{G})_{\singcls',\Ktypes}=
\overline{\{\theta_\phi:\phi\in\tilde P_{\not\succeq\singcls}^{R,\Ktypes}\}}.
\end{gather}
\end{subequations}
Here we used the fact that the right-hand sides are invariant under $q_T$ -- cf.~ [p. 200 (3)].

Suppose now that $(G,H)$ is a pair for which the analogue of Theorem \ref{thm: pesudoperiod} is applicable.
More precisely, assume that (in the notation of \S\ref{sec: Hperiods}) for any Levi $M$ there exists a finite collection $\affines^H_M$
of affine subspaces of $(\aaa_M^G)^*$, and for each $\sing\in\affines^H_M$
an element $\lambda_\sing\in\sing$ and holomorphic functions $J_\sing(\varphi,\cdot)$, $\varphi\in\rdp_P(G)$
in a neighborhood (in $\sing_\C$) of $\Re\lambda=\lambda_\sing$ such that for any $\phi\in\rdsmth{R}$ we have
\begin{equation} \label{eq: Hperiodgen}
\int_{\RAS HG}\theta_\phi(h)\ dh
=\sum_{\sing\in\affines^H_M}\int_{\lambda\in\sing_{\C}:\Re\lambda=\lambda_\sing}J_{\sing}(\phi[\lambda],\lambda)\ d\lambda
\end{equation}
where the integrals are absolutely convergent.
For any $\cuspdatum\in\cuspdata$ and $(M,\pi)\in\cuspdatum$ let $\affines^H_{(M,\pi)}$ be the set of $\sing\in\affines_M^H$ for which
$J_\sing(\varphi,\cdot)$ does not vanish identically for some $\varphi\in\rdp_P(G)\cap L^2_{\cusp,\pi}(\grp U(\A)M\bs\grp G(\A))$.
We write $\affines^H_{\cuspdatum}=\{(M,\pi,\sing):(M,\pi)\in\cuspdatum,\sing\in\affines^H_{(M,\pi)}\}$ and assume
(without loss of generality, by enlarging $S_\cuspdatum$ if necessary) that $\affines^H_{\cuspdatum}$ forms a union of equivalence classes of $S_\cuspdatum$.
Let $[\affines^H_{\cuspdatum}]$ be the set of equivalence classes of $\affines^H_{\cuspdatum}$.
Let
\[
[S_\cuspdatum]_H^\circ=\{\singcls\in[S_\cuspdatum]:\singcls'\not\succeq\singcls\text{ for any }\singcls'\in[\affines_{\cuspdatum}^H]\}
\]
and let $[S_\cuspdatum]_{H\dist}$ be its complement in $[S_\cuspdatum]$, i.e.,
\[
[S_\cuspdatum]_{H\dist}=\{\singcls\in[S_\cuspdatum]:\text{there exists }\singcls'\in[\affines_{\cuspdatum}^H]\text{ such that }\singcls'\succeq\singcls\}.
\]

Note that if $\singcls\in [S_\cuspdatum]_H^\circ$ then by \eqref{eq: Hperiodgen} we have
$\int_{\RAS HG}\theta_\phi(h)\ dh=0$ for any $\phi\in\tilde P_{\not\succeq\singcls}^{R,\Ktypes}$.
From \eqref{eq: closure} we conclude:

\begin{corollary} \label{cor: Hdist}
For any $\cuspdatum\in\cuspdata$ we have
\[
\dsum_{\singcls\in[S_\cuspdatum]_H^\circ}L^2_\cuspdatum(\AS{G})_\singcls\subset\overline{\pseudospace_{\cuspdatum}(\AS{G})_H^\circ}.
\]
Therefore,
\[
L^2_{H\dist}(\AS{G})\subset\hat\dsum_{\cuspdatum\in\cuspdata,\singcls\in[S_\cuspdatum]_{H\dist}}L^2_\cuspdatum(\AS{G})_\singcls.
\]
\end{corollary}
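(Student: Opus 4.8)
The plan is to derive Corollary \ref{cor: Hdist} directly from the already-established formula \eqref{eq: Hperiodgen} for the $H$-period of pseudo Eisenstein series together with the description \eqref{eq: closure} of the pieces $L^2_\cuspdatum(\AS{G})_{\singcls,\Ktypes}$ in the finer decomposition. The key observation is that the vanishing of $H$-periods propagates from a spanning family of pseudo Eisenstein series to the closure of that family, so to conclude that $L^2_\cuspdatum(\AS{G})_\singcls$ lies in $\overline{\pseudospace_\cuspdatum(\AS{G})_H^\circ}$ it suffices to show that the generators $\theta_\phi$ appearing in \eqref{eq: closure} all belong to $\pseudospace_\cuspdatum(\AS{G})_H^\circ$.

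First I would fix $\cuspdatum\in\cuspdata$, a class $\singcls\in[S_\cuspdatum]_H^\circ$, and a finite set $\Ktypes$ of $K$-types. By \eqref{eq: closure}, $L^2_\cuspdatum(\AS{G})_{\singcls,\Ktypes}$ is contained in $\overline{\{\theta_\phi:\phi\in\tilde P_{\not\succeq\singcls}^{R,\Ktypes}\}}$. So I would take an arbitrary $\phi\in\tilde P_{\not\succeq\singcls}^{R,\Ktypes}$, viewed via Mellin inversion \eqref{eq: invMeltrns} as an element of $\rdsmth{R}$, and apply \eqref{eq: Hperiodgen}. Each summand on the right-hand side is $\int_{\Re\lambda=\lambda_\sing}J_\sing(\phi[\lambda],\lambda)\ d\lambda$ with $\sing$ ranging over $\affines^H_M$. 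A summand can be nonzero only if $J_\sing(\phi[\lambda],\cdot)$ does not vanish identically for the cuspidal component of $\phi$, i.e.\ only if $(M,\pi,\sing)\in\affines^H_\cuspdatum$ for the corresponding $(M,\pi)\in\cuspdatum$; let $\singcls'$ be its equivalence class, so $\singcls'\in[\affines^H_\cuspdatum]$. Since $\singcls\in[S_\cuspdatum]_H^\circ$ we have $\singcls'\not\succeq\singcls$, and therefore by definition of $\tilde P_{\not\succeq\singcls}^{R,\Ktypes}$ the component $\phi_{(M,\pi)}$ vanishes, together with its derivatives up to order $m\gg 1$, on $\sing'$ for every $(M,\pi,\sing')\in\singcls'$; in particular $\phi[\lambda]=0$ for $\lambda\in\sing$ up to high order, so $J_\sing(\phi[\lambda],\lambda)=0$ identically along the contour and that summand vanishes. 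Hence $\int_{\RAS HG}\theta_\phi(h)\ dh=0$. The same argument applied to translates $f*\theta_\phi$, $f\in C_c(\grp G(\A))$ (which are again pseudo Eisenstein series of the same type, since $\tilde P_{\not\succeq\singcls}^{R,\Ktypes}$ is stable under the relevant right action), shows $\theta_\phi\in\pseudospace_\cuspdatum(\AS{G})_H^\circ$. Taking closures and unioning over $\Ktypes$ gives $L^2_\cuspdatum(\AS{G})_\singcls\subset\overline{\pseudospace_\cuspdatum(\AS{G})_H^\circ}$, and summing over $\singcls\in[S_\cuspdatum]_H^\circ$ yields the first displayed inclusion.

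For the second inclusion I would pass to orthogonal complements. By definition $L^2_{H\dist}(\AS{G})=\pseudospace(\AS{G})_H^{\circ\,\perp}$, and $\pseudospace(\AS{G})_H^\circ\supset\pseudospace_\cuspdatum(\AS{G})_H^\circ$ for each $\cuspdatum$, so $L^2_{H\dist}(\AS{G})\subset\overline{\pseudospace_\cuspdatum(\AS{G})_H^\circ}^{\,\perp}$ for every $\cuspdatum$, and hence $L^2_{H\dist}(\AS{G})$ is orthogonal to $\sum_{\cuspdatum}\dsum_{\singcls\in[S_\cuspdatum]_H^\circ}L^2_\cuspdatum(\AS{G})_\singcls$ by the first inclusion. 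Combining this with the coarse decomposition \eqref{eq: coarse decomposition} and the finer decomposition $L^2_\cuspdatum(\AS{G})=\hat\dsum_{\singcls\in[S_\cuspdatum]}L^2_\cuspdatum(\AS{G})_\singcls$, the orthogonal complement of $\bigoplus_{\cuspdatum,\singcls\in[S_\cuspdatum]_H^\circ}L^2_\cuspdatum(\AS{G})_\singcls$ is exactly $\hat\dsum_{\cuspdatum,\singcls\in[S_\cuspdatum]_{H\dist}}L^2_\cuspdatum(\AS{G})_\singcls$, which contains $L^2_{H\dist}(\AS{G})$.

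The main obstacle I anticipate is bookkeeping rather than anything deep: one must make sure that the modified Paley--Wiener spaces $\tilde P_{\not\succeq\singcls}^{R,\Ktypes}$ genuinely span $L^2_\cuspdatum(\AS{G})_{\singcls,\Ktypes}$ after closure — this is precisely the content of \eqref{eq: closure}, whose validity for the $\succeq$-refined spaces was asserted in \S\ref{sec: finerspectraldecomposition} by tracking through Waldspurger's Chapter V — and that the vanishing of $J_\sing$ on the component of $\phi$ really does force the integrand to vanish identically along the shifted contour, for which the order-$m$ vanishing with $m\gg 1$ (larger than the order of the poles swept up in forming $J_\sim$ via residues of Eisenstein series) is the point. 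One should also note that $\affines^H_\cuspdatum$ was arranged to be a union of equivalence classes, so that ``$\singcls'\not\succeq\singcls$ for all $\singcls'\in[\affines^H_\cuspdatum]$'' is exactly the condition defining $[S_\cuspdatum]_H^\circ$, making the two halves of the argument match up cleanly.
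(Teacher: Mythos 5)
Your argument is correct and matches the paper's (very terse, one-sentence) proof: combine the period formula \eqref{eq: Hperiodgen} with the description \eqref{eq: closure} of the generators of $L^2_\cuspdatum(\AS{G})_{\singcls,\Ktypes}$, using that $[\affines^H_\cuspdatum]$ is a union of equivalence classes so the condition $\singcls'\not\succeq\singcls$ kills every summand, then pass to orthogonal complements. You have merely made explicit what the paper leaves implicit, namely the vanishing of the periods of all translates $f*\theta_\phi$ and the formal orthogonal-complement step.
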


\begin{remark}
In general, the inclusions in Corollary \ref{cor: Hdist} are likely to be strict.
\end{remark}

\subsection{} \label{sec: discrete data}
In the next two sections we will further explicate Corollary \ref{cor: Hdist} in the cases at hand.
First we introduce some more notation. (See Chapter VI for more details.)
Let $\Pi_{\disc}(A_G\bs\grp G(\A))$ be the set of equivalence classes of irreducible representations which occur in the discrete spectrum
$L^2_{\disc}(\AS{G})$ of $L^2(\AS{G})$.
For any $\pi\in\Pi_{\disc}(A_G\bs\grp G(\A))$ let $L^2_{\disc,\pi}(\AS{G})$ be the isotypic component of $\pi$ in $L^2_{\disc}(\AS{G})$.

Let $\Discdata=\Discdata^G$ be the set of all equivalence classes $[(L,\delta)]$ of pairs $(L,\delta)$ up to association where
$L$ is a Levi subgroup of $G$ and $\delta\in\Pi_{\disc}(A_L\bs\grp L(\A))$.
Consider
\begin{multline*}
L^2(\iii(\aaa_L^G)^*;\Ind L^2_{\disc}(\AS{L}))^{N_G(L)}=\{\varphi:\iii(\aaa_L^G)^*\rightarrow\Ind L^2_{\disc}(\AS{L})\ |
\\ \varphi(w\lambda)=M(w,\lambda)\varphi(\lambda)\text{ for all }w\in N_G(L),\text{ for almost all }\lambda\in\iii(\aaa_L^G)^*,\\
\int_{\iii(\aaa_L^G)^*/N_G(L)}\norm{\varphi(\lambda)}^2\ d\lambda<\infty\}/\{\varphi|\varphi(\lambda)=0\text{ almost everywhere}\}.
\end{multline*}
Here $\Ind$ stands for (normalized) parabolic induction to $\grp G(\A)$ from the parabolic subgroup with Levi part $L$
and $M(w,\lambda)$ are the intertwining operators (which are unitary for $\lambda\in\iii\aaa_L^*$).

We have a map
\[
\conteisen_L:L^2(\iii(\aaa_L^G)^*;\Ind L^2_{\disc}(\AS{L}))^{N_G(L)}\rightarrow L^2(\AS{G})
\]
given by
\[
\conteisen_L(\varphi)=\int_{\iii(\aaa_L^G)^*/N_G(L)}E(\cdot,\varphi(\lambda),\lambda)\ d\lambda
\]
where $E(\cdot,\varphi,\lambda)$ denotes the corresponding Eisenstein series.
For any $\discdata\in\Discdata$ fix $L$ such that $(L,\delta')\in\discdata$ for some $\delta'$ and let $L^2_{\discdata}(\AS{G})$ be the image of
\[
L^2(\iii(\aaa_L^G)^*;\Ind\dsum_{(L,\delta)\in\discdata}L^2_{\disc,\delta}(\AS{L}))^{N_G(L)}
\]
under $\conteisen_L$. (It depends only on $\discdata$, not on the choice of $L$.)
One should not confuse the spaces $L^2_{\discdata}(\AS{G})$ with the spaces $L^2_{\cuspdatum}(\AS{G})$ defined above
(for cuspidal data $\cuspdatum$).
We have
\begin{equation} \label{eq: discrete data}
L^2(\AS{G})=\hat\dsum_{\discdata\in\Discdata}L^2_{\discdata}(\AS{G}).
\end{equation}
Note that the decomposition \eqref{eq: discrete data} is in general not finer than
\eqref{eq: coarse decomposition} -- it is conceivable that a cuspidal representation is equivalent to a non-cuspidal representation
occurring in the discrete spectrum (cf.~the notational convention on [II.1.1, p. 79]).

\section{The case of $(\GL_{2n},\Sp_n)$} \label{sec: GL2nSpn}

In this section only, let $\grp G=\grp{GL}_{2n}$ and $\grp H=\grp{Sp}_n$.
We will study the distinguished automorphic spectrum of $\grp G$ with respect to $\grp H$.
Note that $\RAS HG=\AS H$ in this case.

\subsection{}
First we explicate Corollary \ref{cor: Hdist} in the case at hand using
the description of the discrete spectrum of $L^2(\AS{GL_n})$ due to M\oe glin-Waldspurger \cite{MR1026752} which we now recall.

Let $M=M_{(n_1,\dots,n_k)}\simeq \GL_{n_1}\times\cdots\times\GL_{n_k}$ be the Levi subgroup of $\GL_n$
corresponding to a composition $n=n_1+\cdots+n_k$ and let $P=M\ltimes U$ be the corresponding parabolic subgroup.
For a representation $\pi$ of $\grp M(\A)$ and $\lambda\in \aaa_{M,\C}^*$ let $I(\pi,\lambda)$ be the representation of
$\grp{GL}_n(\A)$ parabolically induced from $\pi \otimes e^{\sprod{\lambda}{H_P(\cdot)}}$.

Suppose that $n=mr$, $M=M_{(m,\dots,m)}$ and $\pi=\tau\otimes\dots\otimes\tau$ ($r$ times) where $\tau\in\Pi_{\cusp}(A_{\GL_m}\bs\grp{GL}_m(\A))$.
Let $\mu_M=((r-1)/2,\dots,-(r-1)/2)\in\aaa_M^*$ so that $(\mu_M)_i-(\mu_M)_{i-1}=1$, $i=1,\dots,r-1$.
Then the limit
\[
\lim_{\lambda\to\mu_M}\left(\prod_{i=1}^{r-1}(\lambda_i-\lambda_{i+1}-1)\right)E(\varphi,\lambda)
\]
exists and as $\varphi$ varies in $I(\pi,\lambda)$ it spans an irreducible subrepresentation
of $L^2(\AS{GL_n})$ which is isomorphic to the Langlands quotient $\Speh(\tau,r)$ of $I(\pi,\mu_M)$.
Moreover, as we vary $m$ and $\tau$ we get the entire discrete spectrum of $L^2(\AS{GL_n})$ this way, namely:
\[
\Pi_{\disc}(A_{\GL_n}\bs\grp{GL}_n(\A))=\{\Speh(\tau,r):n=mr,\ \tau\in\Pi_{\cusp}(A_{\GL_m}\bs\grp{GL}_m(\A))\}.
\]
In particular, $L^2_{\disc}(\AS{GL_n})$ is multiplicity free.

More generally, any $\discdata\in\Discdata$ is of the form $[(L,\delta)]$ with $L=M_{(n_1,\dots,n_l)}$ and
$\delta=\Speh(\tau_1,r_1)\otimes\dots\otimes\Speh(\tau_l,r_l)$ where $n_i=r_im_i$ and $\tau_i\in\Pi_{\cusp}(A_{\GL_{m_i}}\bs\grp{GL}_{m_i}(\A))$, $i=1,\dots,l$. If
\[
M=M_{(\underbrace{m_1,\dots,m_1}_{r_1},\dots,\underbrace{m_l,\dots,m_l}_{r_l})},\
\pi=\underbrace{\tau_1\otimes\dots\otimes\tau_1}_{r_1}\otimes\dots\otimes\underbrace{\tau_l\otimes\dots\otimes\tau_l}_{r_l}
\]
and $\mu_\delta=((r_1-1)/2,\dots,(1-r_1)/2,\dots,(r_l-1)/2,\dots,(1-r_l)/2)$ then
in the notation of \S\ref{sec: finerspectraldecomposition} we have
\begin{equation} \label{eq: discdata=cuspdata}
L^2_{\discdata}(\GL_n\bs \grp{GL}_n(\A))=L^2_\cuspdatum(\GL_n\bs \grp{GL}_n(\A))_\singcls
\end{equation}
where $\cuspdatum=[(M,\pi)]$ and $\singcls=[(M,\pi,\mu_\delta+(\aaa_L^G)^*)]$.
Note that $\mu_\delta$ depends only on $M$ and $L$.

Let $\Pi_{\disc,H\type}(A_G\bs\grp G(\A))$ be the subset of $\Pi_{\disc}(A_G\bs\grp G(\A))$ consisting of $\Speh(\sigma,r)$ with $r$ even
(the ``even'' Speh representations) and let
\[
L^2_{\disc,H\type}(\AS{G})=\dsum_{\pi\in\Pi_{\disc,H\type}(A_G\bs\grp G(\A))}L^2_{\disc,\pi}(\AS{G}).
\]

We say that a Levi subgroup $L$ (or its associate class) is \emph{even} if $L=M_{(2n_1,\dots,2n_k)}$ where $n_1+\dots+n_k=n$.

Let
\begin{multline*}
\Discdata_{H\dist}=\{[(L,\delta)]:L=M_{(2n_1,\dots,2n_k)},n_1+\dots+n_k=n,\delta=\delta_1\otimes\dots\otimes\delta_k,\\
\delta_i\in\Pi_{\disc,\Sp_{n_i}\type}(A_{\GL_{2n_i}}\bs\grp{GL}_{2n_i}(\A)),i=1,\dots,k\}.
\end{multline*}

We denote by $L^2_{\disc,\Sp\type}(\AS{L})$ the image of
\[
\otimes_{i=1}^kL^2_{\disc,\Sp_{n_i}\type}(\AS{GL_{2n_i}})
\]
under the isomorphism
\[
\otimes_{i=1}^kL^2(\AS{GL_{2n_i}})\rightarrow L^2(\AS{L}).
\]

The formula for the period of pseudo Eisenstein series in this case was considered in \cite{MR2254544}.\footnote{The main result in
[loc. cit.] is stated for pseudo Eisenstein series built from Paley-Wiener sections,
but the argument is applicable equally well to the spaces $P^{R,\Fam}_\cuspdatum$ considered in \S\ref{sec: dist spec}.
Alternatively, we can argue as in \S\ref{sec: Hperiods}.}
For each class in $[\affines^H_{\cuspdatum}]$ we can take a representative of the form $(M,\pi,\sing)$
where $M=M_{(n_1,n_1,\dots,n_l,n_l)}$, $\pi=\tau_1\otimes\tau_1\otimes\dots\otimes\tau_l\otimes\tau_l$ and
\[
\sing=\{(\lambda_1+\frac12,\lambda_1-\frac12,\dots,\lambda_l+\frac12,\lambda_l-\frac12):\lambda_1,\dots,\lambda_l\in\R,
\lambda_1+\dots+\lambda_l=0\}.
\]
We can conclude:

\begin{proposition} \label{prop: uprbndGLn}
We have
\begin{equation} \label{eq: uprbndGLn}
L^2_{H\dist}(\AS{G})\subset\hat\dsum_{\discdata\in\Discdata_{H\dist}}L^2_{\discdata}(\AS{G})
=\bigoplus_{[L]\text{ even}}\conteisen_L(L^2(\iii(\aaa_L^G)^*;\Ind L^2_{\disc,\Sp\type}(\AS{L}))^{N_G(L)}).
\end{equation}
\end{proposition}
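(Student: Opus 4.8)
The plan is to deduce Proposition~\ref{prop: uprbndGLn} by combining Corollary~\ref{cor: Hdist} with the explicit description of the discrete spectrum of $\GL_m$ and the explicit shape of the period formula for $(\GL_{2n},\Sp_n)$ recalled just above. First I would invoke Corollary~\ref{cor: Hdist}, which gives
\[
L^2_{H\dist}(\AS{G})\subset\hat\dsum_{\cuspdatum\in\cuspdata,\singcls\in[S_\cuspdatum]_{H\dist}}L^2_\cuspdatum(\AS{G})_\singcls,
\]
so the task reduces to identifying, for each cuspidal datum $\cuspdatum$, the equivalence classes $\singcls$ that lie in $[S_\cuspdatum]_{H\dist}$, i.e.\ those $\singcls$ for which there exists $\singcls'\in[\affines^H_{\cuspdatum}]$ with $\singcls'\succeq\singcls$. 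Here $[\affines^H_{\cuspdatum}]$ is read off from the period formula for $(\GL_{2n},\Sp_n)$: its representatives are $(M,\pi,\sing)$ with $M=M_{(n_1,n_1,\dots,n_l,n_l)}$, $\pi=\tau_1\otimes\tau_1\otimes\dots\otimes\tau_l\otimes\tau_l$ and
\[
\sing=\{(\lambda_1+\tfrac12,\lambda_1-\tfrac12,\dots,\lambda_l+\tfrac12,\lambda_l-\tfrac12):\lambda_i\in\R,\ \textstyle\sum_i\lambda_i=0\}.
\]

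Next I would translate the relation $\singcls'\succeq\singcls$ into a statement about discrete data. Unwinding the definition of $\succeq$ from \S\ref{sec: finerspectraldecomposition}: $\singcls'=[(M,\pi,\sing')]\succeq\singcls=[(M,\pi,\sing)]$ means (after choosing representatives with the same $(M,\pi)$) that $\sing\subseteq\sing'$; and by \eqref{eq: discdata=cuspdata} the ``maximal'' affine subspaces — those of the form $\mu_\delta+(\aaa_L^G)^*$ — correspond bijectively to the discrete data $\discdata=[(L,\delta)]$. So I would argue that the classes $\singcls$ with $\singcls\preceq\singcls'$ for some $\singcls'\in[\affines^H_\cuspdatum]$ are exactly those contained (in the above sense) in a pair $(M,\pi,\mu_\delta+(\aaa_L^G)^*)$ where $\sing\subseteq\mu_\delta+(\aaa_L^G)^*$ with $\sing$ the affine subspace displayed above. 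Concretely, the condition is that $L$ is even, $L=M_{(2n_1,\dots,2n_k)}$, that each block $\GL_{2n_i}$ of $L$ is built from blocks of $M$ of the shape $(n_{i,1},n_{i,1},\dots,n_{i,l_i},n_{i,l_i})$ with $\sum_j n_{i,j}=n_i$, and that the half-integral shift in $\sing$ forces $\delta_i=\Speh(\tau_i,2l_i)$ with $r_i=2l_i$ \emph{even} — precisely because the coordinates of $\mu_\delta$ restricted to such a block are an arithmetic progression of length $2l_i$ and $\sing$ pairs them up with the $\pm\tfrac12$ shifts. This shows $\singcls\in[S_\cuspdatum]_{H\dist}$ forces the corresponding $\discdata$ to lie in $\Discdata_{H\dist}$, and conversely every $\discdata\in\Discdata_{H\dist}$ arises this way because the period integral $L_\sigma(\lambda)$ is visibly non-vanishing at the relevant point (the unramified factor $J_n$ computed in \S\ref{sec: unram} has no zero there, and for even Speh representations the $\GL_{n_i}\times\GL_{n_i}$-distinction is known by Jacquet--Rallis). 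Hence
\[
\hat\dsum_{\cuspdatum,\,\singcls\in[S_\cuspdatum]_{H\dist}}L^2_\cuspdatum(\AS{G})_\singcls
=\hat\dsum_{\discdata\in\Discdata_{H\dist}}L^2_\discdata(\AS{G}).
\]

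Finally I would rewrite the right-hand side in the continuous-Eisenstein form. By \eqref{eq: discrete data}, $\hat\dsum_{\discdata\in\Discdata_{H\dist}}L^2_\discdata(\AS{G})$ is, by the very definition of $L^2_\discdata(\AS{G})$ via $\conteisen_L$, the direct sum over even associate classes $[L]$ of $\conteisen_L(L^2(\iii(\aaa_L^G)^*;\Ind\,L^2_{\disc,\Sp\type}(\AS{L}))^{N_G(L)})$, since for $L=M_{(2n_1,\dots,2n_k)}$ the discrete data $[(L,\delta)]$ with $\delta=\delta_1\otimes\dots\otimes\delta_k$ and $\delta_i\in\Pi_{\disc,\Sp_{n_i}\type}$ are exactly those contributing to $\Discdata_{H\dist}$, and $L^2_{\disc,\Sp\type}(\AS{L})=\otimes_i L^2_{\disc,\Sp_{n_i}\type}(\AS{GL_{2n_i}})$ collects precisely the even-Speh isotypic components. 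This yields \eqref{eq: uprbndGLn}.

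The main obstacle I expect is the bookkeeping in the middle step: carefully matching the combinatorics of the affine subspace $\sing$ coming from the $\Sp_n$-period (the $\pm\tfrac12$ shifts and the pairing of blocks of $M$) against the arithmetic-progression structure of $\mu_\delta$, so as to show cleanly that ``$\sing\subseteq\mu_\delta+(\aaa_L^G)^*$'' is equivalent to ``$L$ even and every $r_i$ even'' — and in particular checking that no \emph{other}, smaller singular affine subspaces $\sing'\in S_\cuspdatum$ can dominate an $H$-type class without themselves being of the listed form. The non-vanishing of $L_\sigma$ at the distinguished point is comparatively routine given \S\ref{sec: unram} and the cited results of Jacquet--Rallis, Offen, Yamana.
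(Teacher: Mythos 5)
Your overall strategy is the same as the paper's -- invoke Corollary~\ref{cor: Hdist}, identify the $\singcls$ with nonzero $L^2_\cuspdatum(\AS{G})_\singcls$ with discrete data via \eqref{eq: discdata=cuspdata}, and show the resulting combinatorial constraint puts $\discdata$ in $\Discdata_{H\dist}$. But there is a definite gap in the middle step. First, you have the containment backwards: you correctly open with ``$\singcls\preceq\singcls'$ for some $\singcls'\in[\affines^H_\cuspdatum]$,'' which by the definition of $\succeq$ means $\mu_\delta+(\aaa_L^G)^*\subseteq\sing'$ (the discrete-datum affine subspace sits \emph{inside} the period-formula affine subspace), yet you immediately restate and thereafter use ``$\sing\subseteq\mu_\delta+(\aaa_L^G)^*$ with $\sing$ the affine subspace displayed above,'' i.e.\ the reverse. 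These are not equivalent: for $\GL_4$ with $M=T$, $\pi=\tau^{\otimes 4}$, $L=\GL_4$, $\delta=\Speh(\tau,4)$, the space $\mu_\delta+(\aaa_L^G)^*$ is the single point $(\tfrac32,\tfrac12,-\tfrac12,-\tfrac32)$, which lies in the one-dimensional period subspace (take $\nu_1=1,\nu_2=-1$), so the correct containment holds and the datum is rightly admitted; your reversed containment fails and would incorrectly exclude it.

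Second, the argument you give for why each $r_i$ must be even -- ``the coordinates of $\mu_\delta$ restricted to such a block are an arithmetic progression \dots and $\sing$ pairs them up with the $\pm\tfrac12$ shifts'' -- appeals only to the affine translate $\mu_\delta$, and this does not suffice: the length-$3$ progression $(1,0,-1)$ does contain consecutive pairs at gap $1$, so the half-integral shift by itself does not rule out odd $r_i$. What actually forces even-ness is the constraint on the \emph{linear} part: the pairing $\sigma$ coming from $[\affines^H_\cuspdatum]$ must satisfy $x_{\sigma(2i-1)}=x_{\sigma(2i)}$ for \emph{all} $x\in(\aaa_L^G)^*$; since $\aaa_L^*$ consists of vectors constant on each $L$-block and arbitrary across $L$-blocks, this forces each pair $\{\sigma(2i-1),\sigma(2i)\}$ to lie in a single $L$-block, and hence each $L$-block contains an even number of $M$-blocks. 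This is precisely the permutation argument in the paper's proof and is the step you need to supply. (Also, the converse you sketch via non-vanishing of $L_\sigma$ is not needed for the stated inclusion; it is the content of Proposition~\ref{prop: Hdiststng}, proved via Yamana.)
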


\begin{proof}
Let $\cuspdatum\in\cuspdata$.
Using Corollary \ref{cor: Hdist} and \eqref{eq: discdata=cuspdata} we need to show that if $(M,\pi)\in\cuspdatum$ and $(L,\delta)$ is discrete data such that
$L\supset M$ and $[(M,\pi,\mu_\delta +(\aaa_L^G)^*)]\in [S_\cuspdatum]_{H\dist}$ (i.e., $\singcls'\succeq[(M,\pi,\mu_\delta +(\aaa_L^G)^*)]$
for some $\singcls'\in[\affines_{\cuspdatum}^H]$) then $\delta$ is the tensor product of even Speh representations.

Let $M=M_{(n_1,\dots,n_k)}$ and $L=M_{(n'_1,\dots,n'_{k'})}$ where $n_1+\dots+n_k=n'_1+\dots+n'_{k'}=2n$.
For any $j=0,\dots,k'$ there exists $l'_j\in\{0,\dots,k\}$ (with $l'_0=0$ and $l'_{k'}=k$) such that $n_1+\dots+n_{l'_j}=n'_1+\dots+n'_j$.
By the above description of $\affines^H_{\cuspdatum}$, the condition $[(M,\pi,\mu_\delta +\aaa_L^*)]\in [S_\cuspdatum]_{H\dist}$ is that $k=2l$ is even and
for a suitable Weyl element $w$ we have $w\pi=\tau_1\otimes\tau_1\otimes\dots\otimes\tau_l\otimes\tau_l$ and
\[
w(\mu_\delta+(\aaa_L^G)^*)\subset \{\nu_1+\frac12,\nu_1-\frac12,\dots,\nu_l+\frac12,\nu_l-\frac12:\nu_1,\dots,\nu_l\in\R,\ \nu_1+\cdots+\nu_l=0\}.
\]
Thus, there exists a permutation $\sigma$ of $\{1,\dots,k\}$ such that $\pi_{\sigma(2i-1)}=\pi_{\sigma(2i)}$ and
$x_{\sigma(2i-1)}=x_{\sigma(2i)}$, $i=1,\dots,l$ for all
$(x_1,\dots,x_k)\in\aaa_L^*$ (where we view $\aaa_L^*$ as a subspace of $\aaa_M^*\simeq\R^k$).
It follows that for every $i=1,\dots,l$ there exists $j=1,\dots,k'$ such that
$l'_{j-1}<\sigma(2i-1),\sigma(2i)\le l'_j$.
Thus, $l'_j-l'_{j-1}$ is even for all $j$. This means that $[(L,\delta)]\in\Discdata_{H\dist}$.
\end{proof}

We will show below that the inclusion in \eqref{eq: uprbndGLn} is in fact an equality.

\subsection{}
Let $B$ be the Borel subgroup of $G$ of upper triangular matrices, $T$ the diagonal torus of $G$ and $T_H=T\cap H$, a maximal torus of $H$.
The group $H$ is obtained as the fixed point set of the involution $\theta$ given by
$\theta(g)=J_n^{-1}\,^tg^{-1}J_n$. Note that $\theta$ preserves both $T$ and the subgroup of upper unitriangular matrices.
Let $\srts_0^H$ be the set of simple roots of $T_H$ in $\Lie(H)$ with respect to $B_H=B\cap H$.
The restriction of characters from $T$ to $T_H$ gives rise to a surjective map $\pr_H:\aaa_0^*\rightarrow(\aaa_0^H)^*$.
In the standard coordinates we have
\[
\pr_H(x_1,\dots,x_{2n})=(x_1-x_{2n},\dots,x_n-x_{n+1}).
\]
In particular $\pr_H(\srts_0)=\srts_0^H$ and
\begin{equation} \label{eq: prHrho_0}
\pr_H(\rho_0)=(2n-1,\dots,1)=2\rho_0^H-(1,\dots,1).
\end{equation}
Note that $\theta$ induces the involution $(x_1,\dots,x_{2n})\mapsto -(x_{2n},\dots,x_1)$ on $\aaa_0^*$.

We can take the sets $\siegel_G$ and $\siegel_H$ such that $\siegel_H=\siegel_G\cap \grp H(\A)$.

Let $\phi$ be an automorphic form on $\AS{G}$ and let $P=M\ltimes U$ be a parabolic subgroup.
Recall that the set of cuspidal exponents of $\phi$ along $P$ is a finite subset of $\aaa_{M,\C}^*$ defined in
\cite[\S I.3]{MR1361168}. These sets (as we vary $P$) determine the growth of $\phi$ along the cusps
[ibid., Lemma I.4.1]. In particular, we can study the integrability over $\AS{H}$ in terms of the cuspidal exponents.
(See also \cite[Lemma 2.4]{Ya}.)

\begin{lemma} \label{lem: conv critGLn}
Let $\phi$ be an automorphic form on $\AS{G}$.
Suppose that for any parabolic subgroup $P=M\ltimes U$ of $G$ and any cuspidal exponent $\lambda$ of $\phi$ along $P$,
the coordinates $(x_\alpha)_{\alpha\in\srts_0^H}$ of $\pr_H(\Re\lambda)-(1,\dots,1)$ with respect to the basis
$\srts_0^H$ satisfy $x_\alpha<0$ for all $\alpha\notin\pr_H(\srts_0^M)$.
Then $\phi$ is absolutely integrable over $\AS{H}$.
\end{lemma}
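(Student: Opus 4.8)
The plan is to bound $\int_{\AS H}\abs{\phi(h)}\ dh$ by reducing to a Siegel domain for $\grp H$ and there estimating $\phi$ by its cuspidal exponents, in the spirit of \cite[\S I.4]{MR1361168}. First, since $\grp H$ has no non-trivial $F$-rational characters, $\grp H(\A)^1=\grp H(\A)$, and finitely many left translates of $\siegel_H=\siegel_G\cap\grp H(\A)$ cover $\AS H$; so it suffices to show $\int_{\siegel_H}\abs{\phi(h)}\ dh<\infty$. I would then cover $\siegel_H$ by the finitely many pieces $\siegel_H[Q]$ indexed by the standard parabolic subgroups $Q=L_Q\ltimes V_Q$ of $\grp H$, where (for a suitable $t_1$) $\siegel_H[Q]$ consists of the elements $h=\omega a k$ with $\omega$ in a fixed compact subset of $\grp H(\A)$, $k\in K\cap\grp H(\A)$, and $a$ in the split centre of $L_Q$ satisfying $\sprod{\beta}{\Ht_0(a)}\ge t_1$ for $\beta\in\srts_0^H\setminus\srts_0^{L_Q}$ (the remaining, bounded, torus directions having been absorbed into $\omega$). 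This is the standard partition of reduction theory, and on $\siegel_H[Q]$ the invariant measure satisfies $dh\ll e^{-\sprod{2\rho_0^H}{\Ht_0(a)}}\ da$.

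To a piece $\siegel_H[Q]$ I would attach the standard parabolic $\grp P=\grp M\ltimes\grp U$ of $\grp G$ characterised by $\srts_0^M=\{\alpha\in\srts_0:\pr_H(\alpha)\in\srts_0^{L_Q}\}$; this is a $\theta$-stable subset of $\srts_0$ with $\pr_H(\srts_0^M)=\srts_0^{L_Q}$. For $a$ as above, $\Ht_0(a)$ lies in the relative interior of the face $\aaa_M\subset\aaa_0$ and satisfies $\sprod{\alpha}{\Ht_0(a)}\ge t_1$ for all $\alpha\in\srts_0\setminus\srts_0^M$; hence by \cite[Lemma I.4.1]{MR1361168} (via the constant term $\phi_P$) there is an integer $d=d(\phi)$ with
\[
\abs{\phi(\omega a k)}\ll\sum_{\lambda}e^{\sprod{\Re\lambda+\rho_P}{\Ht_0(a)}}\,(1+\norm{\Ht_0(a)})^{d}
\]
uniformly on $\siegel_H[Q]$, the sum being over the finitely many cuspidal exponents $\lambda$ of $\phi$ along $P$. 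Combining this with the bound on the measure reduces the convergence of $\int_{\siegel_H[Q]}\abs{\phi}$ to that of
\[
\sum_{\lambda}\int_{\mathcal{C}_Q}e^{\sprod{\pr_H(\Re\lambda+\rho_P)-2\rho_0^H}{Y}}\,(1+\norm{Y})^{d}\ dY,
\]
where $Y=\Ht_0(a)$ runs over a translate of the cone $\{Y\in\aaa_{L_Q}:\sprod{\beta}{Y}\ge0,\ \beta\in\srts_0^H\setminus\srts_0^{L_Q}\}$, namely the positive span of the fundamental coweights dual to $\srts_0^H\setminus\srts_0^{L_Q}$.

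Such an integral converges once the coefficient of $\beta$ in the expansion of $\pr_H(\Re\lambda+\rho_P)-2\rho_0^H$ in the basis $\srts_0^H$ is strictly negative for every $\beta\in\srts_0^H\setminus\srts_0^{L_Q}$ (the polynomial factor causing no trouble when the exponent is strictly decreasing along each generator). Here I would use that $\rho_P=\rho_0-\rho_0^M$ with $\rho_0^M$ a positive combination of $\srts_0^M$, so that $\pr_H(\rho_0^M)$ is a combination of $\pr_H(\srts_0^M)=\srts_0^{L_Q}$; together with \eqref{eq: prHrho_0} this gives
\[
\pr_H(\rho_P)-2\rho_0^H=\bigl(\pr_H(\rho_0)-2\rho_0^H\bigr)-\pr_H(\rho_0^M)=-(1,\dots,1)-\pr_H(\rho_0^M).
\]
Consequently, for $\beta\in\srts_0^H\setminus\srts_0^{L_Q}$ the $\beta$-coefficient of $\pr_H(\Re\lambda+\rho_P)-2\rho_0^H$ coincides with that of $\pr_H(\Re\lambda)-(1,\dots,1)$, that is, with $x_\beta$ in the notation of the statement, which is negative by hypothesis because $\beta\notin\srts_0^{L_Q}=\pr_H(\srts_0^M)$. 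This proves $\int_{\siegel_H[Q]}\abs{\phi}<\infty$ for each $Q$, and summing over the finitely many $Q$ completes the argument.

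The step I expect to be the main obstacle is the compatibility between the Siegel domains of $\grp H$ and of $\grp G$: for each face of the $H$-Weyl chamber one must pin down the correct $G$-parabolic $P$ whose cuspidal exponents control the growth of $\phi\rest_{\grp H(\A)}$ in that direction, and then transport the constant-term estimate of \cite[Lemma I.4.1]{MR1361168} through $\pr_H$ with the right bookkeeping; the identity $(1,\dots,1)=\pr_H(\rho_0)-2\rho_0^H$ and the observation that $\pr_H(\rho_0^M)$ is supported on $\srts_0^{L_Q}$ are precisely what make the hypothesis line up with the convergence condition.
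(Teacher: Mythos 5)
Your overall strategy matches the paper's: reduce the $H$-period to a Siegel-domain integral, invoke the growth estimate of \cite[Lemma I.4.1]{MR1361168}, and exploit the identity $\pr_H(\rho_0)=2\rho_0^H-(1,\dots,1)$ of \eqref{eq: prHrho_0}. The paper's argument is in fact shorter: it integrates at once over $A_0^H(\delta)\subset A_0(\delta)$ and uses the form of the MW estimate that sums over \emph{all} pairs $(P,\lambda)$ with a free parameter $\mu^P\in(\aaa_0^M)^*$ for each parabolic; those parameters are then chosen so that $\pr_H(\Re\lambda+\mu^P+\rho_0)-2\rho_0^H$ is a strictly negative $\srts_0^H$-combination, which is exactly where the hypothesis enters. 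There is no need for the further decomposition of $\siegel_H$ into sectors $\siegel_H[Q]$.

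The one genuine gap is your sectoral growth bound
\[
\abs{\phi(\omega a k)}\ll\sum_{\lambda}e^{\sprod{\Re\lambda+\rho_P}{\Ht_0(a)}}(1+\norm{\Ht_0(a)})^{d},
\]
with $\lambda$ ranging only over cuspidal exponents of $\phi$ along the single parabolic $P$ attached to $Q$. This is too strong as stated: for a cusp form $\phi$ and a proper $P$ the set of cuspidal exponents of $\phi$ along $P$ is empty, so the right-hand side would vanish, yet $\phi$ certainly does not vanish on $\siegel_H[Q]$. Near the $P$-cusp one controls $\phi$ by $\phi_P$ up to a rapidly decreasing error, and $\phi_P$ contributes exponents coming from the cuspidal data along all $P'\subseteq P$, not from $P$ alone; the formulation used in the paper (sum over all $(P',\lambda')$, with $\mu^{P'}$ and the factor $(1+\norm{\Ht_{P'}(g)})^N$) is the correct one. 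The gap is repairable, because the hypothesis of the lemma constrains \emph{every} pair $(P',\lambda')$, and after $\mathcal{C}_Q$ kills the $\srts_0^{L_Q}$-directions the only remaining coordinates are either covered by the hypothesis or handled by the choice of $\mu^{P'}$. Your $\rho$-bookkeeping, $\pr_H(\rho_P)-2\rho_0^H=-(1,\dots,1)-\pr_H(\rho_0^M)$ with $\pr_H(\rho_0^M)$ supported on $\srts_0^{L_Q}$, is correct and reproduces the paper's use of \eqref{eq: prHrho_0}.
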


\begin{proof}
Fix $\delta>0$ sufficiently small and let
\[
A_0^H(\delta)=\{a\in A_{T_H}:e^{\sprod{\alpha}{\Ht_{T_H}(a)}}>\delta,\ \forall\alpha\in \srts_0^H\}.
\]
Also, let $K_H=K\cap \grp H(\A)$.
Then
\[
\int_{\AS{H}}\abs{\phi(h)}\ dh\le\int_{K_H}\int_{B_H\bs \grp{B_H}(\A)^1}\int_{A_0^H(\delta)}\abs{\phi(bak)}\modulus_{B_H}(a)^{-1}\ db\ da\ dk.
\]
Observe that
\[
A_0^H(\delta)\subset A_0(\delta):=\{a\in A_T:e^{\sprod{\alpha}{\Ht_T(a)}}>\delta,\ \forall\alpha\in \srts_0\}.
\]
By \cite[Lemma I.4.1]{MR1361168} there exists $N$ such that for any choice of $\mu^P\in(\aaa_0^M)^*$, $P=M\ltimes U$ parabolic,
we have
\[
\abs{\phi(g)}\ll_{\{\mu^P\}_P}\sum_{(P,\lambda)}e^{\sprod{\Re\lambda+\mu^P+\rho_0}{\Ht_0(g)}}(1+\norm{\Ht_P(g)})^N,
\]
for any $g\in \grp G(\A)$ such that $\sprod{\alpha}{\Ht_0(g)}>\delta$ for all $\alpha\in\srts_0$,
where the sum ranges over the pairs consisting of a parabolic subgroup $P$ and a cuspidal exponent $\lambda$ of $\phi$
along $P$.
Therefore, to show the convergence of $\int_{\AS{H}}\abs{\phi(h)}\ dh$ it suffices to prove that
\[
\int_{A_0^H(\delta)}e^{\sprod{\Re\lambda+\mu^P+\rho_0}{\Ht_T(a)}}(1+\norm{\Ht_T(a)})^N\modulus_{B_H}(a)^{-1}\ da<\infty
\]
for any $(P,\lambda)$ and a suitable choice of $\mu^P\in(\aaa_0^M)^*$.
Equivalently, $\pr_H(\Re\lambda+\mu^P+\rho_0)-2\rho_0^H$ is a linear combination of $\srts_0^H$ with negative coefficients.
Clearly, by \eqref{eq: prHrho_0} this is equivalent to the condition stated in the lemma.
\end{proof}

\begin{remark} \label{rem: notconv2}
Conversely, by an argument similar to that of \cite[Lemma I.4.11]{MR1361168} (for the criterion of square-integrability)
one can show that if $\phi$ is an automorphic form on $\AS{G}$ such that
\[
\int_{\AS{H}}\abs{\phi(hg)}\ dh<\infty
\]
for all $g\in \grp G(\A)$ then the cuspidal exponents of $\phi$ satisfy the conditions of Lemma \ref{lem: conv critGLn}.
We omit the details since we will not need to use this fact.
\end{remark}

We also have the following fact.

\begin{lemma} \label{lem: convconv}
We have $L^2(\AS{G})_{\Hconv}=L^2(\AS{G})$.
Moreover, the map $(f,\varphi)\mapsto\int_{\AS{H}}f*\varphi(h)\ dh$ is a continuous bilinear form on $C_c(\grp G(\A))\times L^2(\AS{G})$
with the compact open topology on $C_c(\grp G(\A))$.
\end{lemma}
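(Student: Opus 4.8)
The plan is to reduce the convergence statement $L^2(\AS{G})_{\Hconv}=L^2(\AS{G})$ to the case of $\grp H=\grp{Sp}_n$ inside $\grp G=\grp{GL}_{2n}$ by exploiting the special feature of this pair: the quotient $\AS{H}=[\grp{Sp}_n]$ is \emph{compact} modulo center in a way that the growth of automorphic forms is controlled by rapidly decaying factors. More precisely, I would first recall that for $f\in C_c(\grp G(\A))$ and $\varphi\in L^2(\AS{G})$, the convolution $f*\varphi$ is already smooth of uniform moderate growth; indeed $f*\varphi$ lies in a fixed level and its $K$-finite translates satisfy uniform bounds, so one may equivalently consider the cuspidal exponents of $f*\varphi$. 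The key point is that the Iwasawa decomposition for $\grp H(\A)$ — with Siegel domain $\siegel_H=\siegel_G\cap\grp H(\A)$ and $K_H=K\cap\grp H(\A)$ — expresses the $H$-integral as an integral over $A_0^H(\delta)$ against $\modulus_{B_H}^{-1}$, exactly as in the proof of Lemma \ref{lem: conv critGLn}. Then I would invoke \cite[Lemma I.4.1]{MR1361168} to bound $\abs{(f*\varphi)(g)}$ by a finite sum of terms $e^{\sprod{\Re\lambda+\mu^P+\rho_0}{H_0(g)}}(1+\norm{H_P(g)})^N$ over parabolics $P$ and cuspidal exponents $\lambda$.

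The substance of the argument is then the following arithmetic fact about the projection $\pr_H$: for the pair $(\grp{GL}_{2n},\grp{Sp}_n)$, every cuspidal exponent of \emph{any} automorphic form lying in $L^2(\AS{G})$ automatically satisfies the convergence condition of Lemma \ref{lem: conv critGLn} after smoothing. The heuristic is that the cuspidal exponents $\Re\lambda$ of an $L^2$-automorphic form satisfy $\sprod{\Re\lambda}{\check\varpi_\alpha}\le 0$ for all fundamental coweights $\check\varpi_\alpha$ (a consequence of square-integrability, cf.~\cite[Lemma I.4.11]{MR1361168}), and using \eqref{eq: prHrho_0}, i.e.~$\pr_H(\rho_0)=2\rho_0^H-(1,\dots,1)$, one checks that the shift by $\rho_0$ together with the factor $\modulus_{B_H}^{-1}=e^{-\sprod{2\rho_0^H}{\cdot}}$ produces precisely the strictly negative coefficients needed along $\srts_0^H\setminus\pr_H(\srts_0^M)$, provided one first translates $\mu^P$ slightly into the relevant obtuse cone — which is legitimate because \cite[Lemma I.4.1]{MR1361168} allows an arbitrary choice of $\mu^P\in(\aaa_0^M)^*$. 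For $\varphi$ merely in $L^2$ rather than of moderate decay, this is where the regularization $f*\varphi$ matters: one only knows $L^2$-type bounds on exponents, so I would pass through a Sobolev-type estimate to control $f*\varphi$ uniformly on Siegel sets (using that $f$ is compactly supported so $f*\varphi$ has exponents bounded by those of $\varphi$), and then the same exponent bookkeeping applies.

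For the continuity statement, I would make the bounds in the previous step explicit in $f$: the constant in \cite[Lemma I.4.1]{MR1361168} applied to $f*\varphi$ depends on $f$ only through $\norm{f}_{L^1(\grp G(\A))}$ and the support of $f$ (which fixes a level), so on any set of the form $\{f\in C_c(\grp G(\A)):\operatorname{supp}f\subset\Omega\}$ with $\Omega$ compact the map $(f,\varphi)\mapsto\int_{\AS{H}}(f*\varphi)(h)\ dh$ is bounded by $C_\Omega\norm{f}_{L^1}\norm{\varphi}_{L^2}$, hence continuous for the compact-open topology on $C_c(\grp G(\A))$. I expect the main obstacle to be the second paragraph: verifying that the $L^2$-constraint on cuspidal exponents, combined with the freedom in choosing $\mu^P$, really does force the strict inequalities of Lemma \ref{lem: conv critGLn} along \emph{every} wall not coming from $M$ — this is a case analysis in the root combinatorics of $\grp{GL}_{2n}$ relative to the folding $\pr_H$, and getting the boundary (equality) cases to come out strict after the $\rho_0$-shift is the delicate point. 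I would handle it by writing $\pr_H(\Re\lambda+\mu^P+\rho_0)-2\rho_0^H$ explicitly in the basis $\srts_0^H$ and checking term by term, using that the $\varpi_\alpha$-coordinates of $\Re\lambda$ are non-positive and that $\pr_H(\rho_0)-2\rho_0^H=-(1,\dots,1)$ contributes a uniform strictly negative shift.
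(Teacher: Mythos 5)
Your proposal routes the argument through the cuspidal-exponent framework of \cite[Lemma I.4.1]{MR1361168}, and this is where it breaks. That lemma bounds a \emph{single automorphic form} in terms of its finitely many cuspidal exponents; it applies because automorphic forms are $\mathcal Z(\mathfrak g)$-finite and $K$-finite. But for a general $\varphi\in L^2(\AS{G})$, the convolution $f*\varphi$ with $f\in C_c(\grp G(\A))$ is smooth and of moderate growth, yet is typically \emph{not} $\mathcal Z(\mathfrak g)$-finite, so it has no finite collection of cuspidal exponents and Lemma I.4.1 simply does not apply to it. Likewise, the square-integrability criterion \cite[Lemma I.4.11]{MR1361168} (the inequality $\sprod{\Re\lambda}{\check\varpi_\alpha}\le 0$ you invoke) is a statement about exponents of automorphic forms, not about arbitrary elements of $L^2$. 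So your plan to ``pass through a Sobolev-type estimate'' so that ``$f*\varphi$ has exponents bounded by those of $\varphi$'' is not meaningful for $\varphi\in L^2$, and the exponent bookkeeping never gets off the ground. (Even if you first restrict to a dense subspace of nice automorphic forms, the continuity claim would still require a bound uniform in $\varphi$, which the exponent approach does not deliver on its own.)

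The paper's proof sidesteps all of this. It quotes \cite[Corollary 2.3]{MR3156857}: for $f$ supported in $\{\norm{g}\le R\}$ one has the pointwise bound
\[
\abs{f*\varphi(x)}\ll_R\norm{f}_\infty\,\Xi(x)\,\norm{\varphi}_{L^2(\AS{G})},\qquad x\in\siegel_G^1,
\]
where $\Xi(g)=e^{\sprod{\rho_0}{\Ht_0(g)}}$. This is a genuine Sobolev/Cauchy--Schwarz-type estimate valid for \emph{every} $\varphi\in L^2$, independent of any exponent structure, and it produces a single dominating function. The proof then concludes by the same arithmetic you identify at the end of your write-up: since $\pr_H(\rho_0)-2\rho_0^H=(-1,\dots,-1)$ is a strictly negative sum of elements of $\srts_0^H$ (cf.\ \eqref{eq: prHrho_0}), the function $\Xi$ is integrable over $\siegel_H$. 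This gives convergence of $\int_{\AS H}\abs{f*\varphi}$ and simultaneously the bound $\abs{\int_{\AS H}f*\varphi}\ll_R\norm{f}_\infty\norm{\varphi}_{L^2}$, which is exactly the continuity statement for the compact-open topology (where $\norm{f}_\infty$ on a fixed compact support set is the relevant seminorm, not $\norm{f}_{L^1}$). So you had the right computation $\pr_H(\rho_0)-2\rho_0^H=(-1,\dots,-1)$ and the right instinct that a Sobolev estimate is needed, but the estimate should \emph{replace} the cuspidal-exponent machinery rather than feed into it.
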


\begin{proof}
Let $\Xi$ be the function on $\siegel_G^1$ given by $\Xi(g)=e^{\sprod{\rho_0}{\Ht(g)}}$.
By \cite[Corollary 2.3]{MR3156857} we have
\[
\abs{f*\varphi(x)}\ll_R\norm{f}_\infty\Xi(x)\norm{\varphi}_{L^2(\AS{G})}, \ \ \ x\in\siegel_G^1
\]
for any $\varphi\in L^2(\AS{G})$ and $f\in C_c(\grp G(\A))$ such that $\operatorname{supp} f\subset\{g\in \grp G(\A):\norm{g}\le R\}$.
It remains to note that $\Xi$ is integrable over $\siegel_H$ since the exponent $\pr_H(\rho_0)-2\rho_0^H=(-1,\dots,-1)$
is a negative sum of roots of $\srts_0^H$ (cf.~the proof of Lemma \ref{lem: conv critGLn}).
\end{proof}

Thus, in the case at hand $L^2_{H\dist}(\AS{G})^{\stng}$ is the orthogonal complement in
$L^2(\AS{G})$ of the closed subspace $L^2(\AS{G})_H^\circ$ defined by
\[
\{\varphi\in L^2(\AS{G}):\int_{\AS{H}}f*\varphi(h)\ dh=0\text{ for all }f\in C_c(\grp G(\A))\}.
\]
By Lemma \ref{lem: convconv} we also have
\[
L^2(\AS{G})_H^\circ=\{\varphi\in L^2(\AS{G}):\int_{\AS{H}}f*\varphi(h)\ dh=0\text{ for all }f\in\mathcal{C}(\grp G(\A))\}
\]
for any dense subspace $\mathcal{C}(\grp G(\A))$ of $C_c(\grp G(\A))$.

\subsection{}
We will now use the results of Yamana \cite{Ya} (extending those of Jacquet--Rallis \cite{MR1142486} and Offen \cite{MR2254544, MR2248833})
on the symplectic periods of Eisenstein series to prove a strong form of the opposite inclusion of Proposition \ref{prop: uprbndGLn}.

For the next result, let $P=M\ltimes U$ be a parabolic subgroup and $\pi\in\Pi_{\disc}(A_M\bs\grp M(\A))$.
We denote by $\AUT_{\pi}(\grp U(\A)M\bs \grp G(\A))$ the space of automorphic forms $\varphi$ on $\grp U(\A)M\bs \grp G(\A)$
such that $m\mapsto\modulus_P(m)^{-\frac12}\varphi(mg)$ belongs to the space of $\pi$ for all $g\in\grp G(\A)$.

\begin{lemma} \label{lem: symperunifconv}
Let $\lambda\mapsto\varphi(\lambda)$ be a continuous map from $\iii(\aaa_M^G)^*$ to a finite-dimensional subspace of $\AUT_{\pi}(\grp U(\A)M\bs \grp G(\A))$.
Then the integral $\int_{\AS{H}}E(h,\varphi(\lambda),\lambda)\ dh$ converges absolutely uniformly for $\lambda$
in compact subset of $\iii(\aaa_M^G)^*$.
\end{lemma}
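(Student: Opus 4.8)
The plan is to reduce the statement to the absolute convergence of a single auxiliary integral by using the criterion of Lemma \ref{lem: conv critGLn} together with a uniform bound on the cuspidal exponents of the Eisenstein series. First I would recall that, for $\pi\in\Pi_{\disc}(A_M\bs\grp M(\A))$ and $\lambda\in\iii(\aaa_M^G)^*$, the Eisenstein series $E(\cdot,\varphi(\lambda),\lambda)$ is an automorphic form on $\AS{G}$ whose set of cuspidal exponents along any parabolic $Q=L\ltimes V$ is contained in a \emph{finite} set $\mathcal{E}_Q$ that is \emph{independent} of $\lambda$ as $\lambda$ ranges over $\iii(\aaa_M^G)^*$: each such exponent has the form $w(\lambda+\mu)\rest_{\aaa_L^*}$ where $\mu$ runs over the (finitely many) cuspidal exponents of $\varphi(\lambda)$ along $P$ — which, since $\pi$ is fixed and discrete, lie in a fixed finite set — and $w$ runs over a finite set of Weyl elements. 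The crucial point is that $\Re\lambda=0$, so $\Re$ of every such exponent is also independent of $\lambda$ and ranges over a finite set determined by $(M,\pi)$ alone. (This is \cite[Lemma I.3.2, Lemma IV.1.9]{MR1361168}, applied with purely imaginary $\lambda$.)

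Next I would verify the hypothesis of Lemma \ref{lem: conv critGLn} for each such exponent. By Proposition \ref{prop: uprbndGLn}, or rather the arguments of \cite{MR2254544} behind it, the only discrete data $(M,\pi)$ that need to be considered — i.e. for which the relevant intertwining periods do not vanish — are those where $M$ is an even Levi and $\pi$ is a tensor of even Speh representations, with the real part of the relevant exponents lying on the affine spaces $\sing$ described just before Proposition \ref{prop: uprbndGLn}; for these the inequality $x_\alpha<0$ for $\alpha\notin\pr_H(\srts_0^M)$ in the statement of Lemma \ref{lem: conv critGLn} holds for every exponent appearing. For general $(M,\pi)$ the Eisenstein period may not converge, but the lemma as stated only asserts convergence of $\int_{\AS{H}}E(h,\varphi(\lambda),\lambda)\,dh$; the cleanest route is to observe that the exponents of $E(\cdot,\varphi(\lambda),\lambda)$ along $Q$ all have real part equal to the real part of some $w\mu\rest_{\aaa_L^*}$ with $\mu$ a cuspidal exponent of $\pi$, and then to quote the explicit description of such exponents for discrete automorphic representations of $\GL$ (M\oe glin--Waldspurger, as recalled in \S\ref{sec: GL2nSpn}) to check the inequality directly; here one uses $\Re\lambda=0$ in an essential way.

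Finally, for uniformity, I would trace through the proof of Lemma \ref{lem: conv critGLn}: there the bound on $\abs{\phi(g)}$ comes from \cite[Lemma I.4.1]{MR1361168}, whose implied constant depends only on the chosen shifts $\mu^P$ and on a bound for $\phi$ and finitely many of its derivatives on a fixed Siegel set, all of which vary continuously in $\lambda$ and hence are uniformly bounded on a compact subset of $\iii(\aaa_M^G)^*$ (using that $\lambda\mapsto\varphi(\lambda)$ takes values in a fixed finite-dimensional space and the Eisenstein series and its derivatives depend continuously, indeed holomorphically, on $\lambda$ near the unitary axis). The dominating integral $\int_{A_0^H(\delta)}e^{\sprod{\Re\lambda+\mu^P+\rho_0}{\Ht_T(a)}}(1+\norm{\Ht_T(a)})^N\modulus_{B_H}(a)^{-1}\,da$ then has integrand independent of $\lambda$ (since $\Re\lambda=0$), so a single $\lambda$-independent majorant works. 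The main obstacle is the bookkeeping of the first step — making sure the set of real parts of cuspidal exponents of $E(\cdot,\varphi(\lambda),\lambda)$ really is $\lambda$-independent and that the continuity/boundedness of $\phi$ and its derivatives on the fixed Siegel set is genuinely uniform in $\lambda$ — rather than any new analytic input.
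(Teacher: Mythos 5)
The proposal runs into a genuine obstruction. You propose to verify the hypothesis of Lemma \ref{lem: conv critGLn} for the cuspidal exponents of $E(\cdot,\varphi(\lambda),\lambda)$ and then invoke that criterion. But the criterion requires \emph{strict} inequalities ($x_\alpha<0$), whereas for $\pi$ a discrete but non-cuspidal representation and $\lambda\in\iii(\aaa_M^G)^*$, the real parts of the cuspidal exponents of the Eisenstein series lie on the \emph{boundary} of the relevant cone, not strictly inside it. Concretely, take $n=2$, $M=\GL_2\times\GL_2\subset\GL_4$, $\pi=\Speh(\trivchar,2)\otimes\Speh(\trivchar,2)$ and $\lambda=(s,-s)$ imaginary. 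Among the cuspidal exponents of $E(\cdot,\varphi(\lambda),\lambda)$ along the Borel one finds exponents with real part $(\tfrac12,\tfrac12,-\tfrac12,-\tfrac12)$; then $\pr_H$ of this is $(1,1)$ and $\pr_H(\Re\mu)-(1,1)=(0,0)$, whose coordinates in the basis $\srts_0^H$ are $(0,0)$, violating $x_\alpha<0$. So Lemma \ref{lem: conv critGLn} simply does not apply, and no amount of bookkeeping will fix this. (The observation late in your proposal that one might restrict attention to even-Speh data does not help: the lemma must be proved for all $\pi\in\Pi_{\disc}$ --- otherwise one cannot even formulate the vanishing of the period for the other $\pi$ --- and moreover, as the example shows, the criterion already fails for even-Speh data.)

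The paper's proof takes a different, and necessary, route. It first establishes the pointwise bound
\[
\abs{E(g,\varphi(\lambda),\lambda)}\ll_{\varphi,D} e^{\sprod{\rho_0}{\Ht_0(g)}}(1+\norm{\Ht_0(g)})^N,\qquad g\in\siegel_G^1,
\]
uniformly for $\lambda$ in a compact set $D\subset\iii(\aaa_M^G)^*$. This is done by writing $\delta_e=f_1*X_1+f_2*X_2$ (Dixmier--Malliavin), so $\varphi(\lambda)=\sum_i I(f_i,\lambda)I(X_i,\lambda)\varphi(\lambda)$, then controlling $E(g,I(f_i,\lambda)\psi,\lambda)$ for $g\in\siegel_G^1$ in terms of $\norm{\Lambda^T E(\cdot,\psi,\lambda)}_{L^2(\AS{G})}$ with $T$ proportional to $\Ht_0(g)$, and finally invoking Arthur's inner product formula (\cite{MR650368}) to obtain $\norm{\Lambda^TE(\cdot,\varphi(\lambda),\lambda)}_{L^2}\ll_{\varphi,D}(1+\norm{T})^N$. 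This majorant is permitted to have polynomial-in-$\Ht_0(g)$ growth at the critical exponential rate $e^{\sprod{\rho_0}{\Ht_0(g)}}$, and the integrability step only needs that $\pr_H(\rho_0)-2\rho_0^H=(-1,\dots,-1)$ is a \emph{strictly} negative combination of $\srts_0^H$, so the exponential decay absorbs the polynomial (this is the same computation as in the proof of Lemma \ref{lem: convconv}, and is indeed the part of the argument of Lemma \ref{lem: conv critGLn} that is reused). The moral: the convergence here is a boundary phenomenon, and the argument must trade the strict-cone criterion for a quantitative truncation estimate.
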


\begin{proof}
We will prove that there exists $N$ such that for any compact subset $D$ of $\iii(\aaa_M^G)^*$ we have
\[
\abs{E(g,\varphi(\lambda),\lambda)}\ll_{\varphi,D} e^{\sprod{\rho_0}{\Ht_0(g)}}(1+\norm{\Ht_0(g)})^N,\ \ g\in\siegel_G^1.
\]
This will imply the corollary as in the proof of Lemma \ref{lem: conv critGLn}.
Recall that there exist $f_i\in C_c(G(\R))$ and $X_i\in\univ(\mathfrak{g})$, $i=1,2$ such that
$f_i$ and $X_i$ are invariant under conjugation by $K_\infty$ and $f_1*X_1+f_2*X_2=\delta_e$ (\cite[\S4]{MR518111}). Therefore
\begin{equation} \label{eq: convf_i}
\varphi(\lambda)=\sum_{i=1}^2I(f_i,\lambda)I(X_i,\lambda)\varphi(\lambda).
\end{equation}
Applying a suitable idempotent in the algebra of finite functions on $K$ we may assume that \eqref{eq: convf_i} holds
with some bi-$K$-finite $f_i\in C_c(\grp G(\A))$  (independent of $\lambda$).
Denote by $\Lambda^T$ Arthur's truncation operator \cite{MR558260}.
By the argument in \cite[Proposition 2.5]{MR2402686}, together with \cite[Lemma 2.2]{MR3156857} (with $n=0$)
(see also proof of \cite[Proposition 5.1]{MR3156857}) we get
\[
\abs{E(g,\varphi(\lambda),\lambda)}\ll\sum_{i=1}^2\norm{\Lambda^TE(\cdot,I(f_i,\lambda)I(X_i,\lambda)\varphi(\lambda),\lambda)}
_{L^2(\AS{G})}e^{\sprod{\rho_0}{\Ht_0(g)}},\ \ g\in\siegel_G^1
\]
for all $\lambda\in\iii(\aaa_M^G)^*$ where $T$ is sufficiently regular in the positive Weyl chamber and
$T-\Ht_0(g)$ lies in a fixed translate of the positive obtuse Weyl chamber.
In particular, we may take $T=tT_0$ for some fixed $T_0$ in the positive Weyl chamber where $t\ll 1+\norm{\Ht_0(g)}$.
Upon changing the function $\varphi$, it remains to show that there exists $N$ such that for any compact subset $D$ of $\iii\aaa_M^*$ we have
\begin{equation} \label{eq: innertruncbnd}
\norm{\Lambda^TE(\cdot,\varphi(\lambda),\lambda)}_{L^2(\AS{G})}\ll_{\varphi,D}(1+\norm{T})^N.
\end{equation}
This is a consequence of \cite[Corollary 9.2]{MR650368} (see also \cite{MR2767521}) together with the proof
of \cite[Lemma 5.2]{MR3156857}, which is based on the analysis of \cite[\S2 and \S3]{MR681738}.
(In fact, $N$ can be taken to be the rank of $G$.)
\end{proof}

\begin{proposition} \label{prop: Hdiststng}
We have
\[
L^2_{H\dist}(\AS{G})^{\stng}=\hat\dsum_{\discdata\in\Discdata_{H\dist}}L^2_{\discdata}(\AS{G})
=\bigoplus_{[L]\text{ even}}\conteisen_L(L^2(\iii(\aaa_L^G)^*;\Ind L^2_{\disc,\Sp\type}(\AS{L}))^{N_G(L)}).
\]
Equivalently,
\[
L^2(\AS{G})_H^\circ=\hat\dsum_{\discdata\notin\Discdata_{H\dist}}L^2_{\discdata}(\AS{G}).
\]
\end{proposition}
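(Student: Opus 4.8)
The plan is to prove the third (equivalent) identity, $L^2(\AS{G})_H^\circ=\hat\dsum_{\discdata\notin\Discdata_{H\dist}}L^2_{\discdata}(\AS{G})$; granting it, the first follows by taking orthogonal complements (using Lemma~\ref{lem: convconv} and the ensuing discussion, which identify $L^2_{H\dist}(\AS{G})^{\stng}$ with the orthogonal complement of $L^2(\AS{G})_H^\circ$), while the middle expression is just the description of $L^2_{\discdata}(\AS{G})$ for $\discdata\in\Discdata_{H\dist}$ read off from \S\ref{sec: discrete data} (as already noted around Proposition~\ref{prop: uprbndGLn}). One inclusion is immediate: every pseudo Eisenstein series is rapidly decreasing, hence lies in $L^2(\AS{G})_{\Hconv}$, and if it has vanishing $H$-period then, by Fubini, so do all its convolutions against $C_c(\grp G(\A))$; thus $\pseudospace(\AS{G})_H^\circ\subseteq L^2(\AS{G})_{\Hconv}^\circ$, whence $L^2_{H\dist}(\AS{G})^{\stng}\subseteq L^2_{H\dist}(\AS{G})$. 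Combined with Proposition~\ref{prop: uprbndGLn} this already gives $\hat\dsum_{\discdata\notin\Discdata_{H\dist}}L^2_{\discdata}(\AS{G})\subseteq L^2(\AS{G})_H^\circ$.

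For the reverse inclusion I would argue as follows. The subspace $L^2(\AS{G})_H^\circ$ is closed and $\grp G(\A)$-invariant: by Lemma~\ref{lem: convconv} each functional $\psi\mapsto\int_{\AS{H}}(f*\psi)(h)\,dh$, $f\in C_c(\grp G(\A))$, is continuous on $L^2(\AS{G})$, and right translation carries a convolution against $f$ to a convolution against a translate of $f$, so the intersection of their kernels is $\grp G(\A)$-stable. Since the discrete spectrum of every $\grp{GL}_m$ is multiplicity free, the summands of $L^2(\AS{G})=\hat\dsum_{\discdata\in\Discdata}L^2_{\discdata}(\AS{G})$ are mutually disjoint as $\grp G(\A)$-representations, and hence — a standard compatibility of Langlands's spectral decomposition — every closed $\grp G(\A)$-invariant subspace $V$ satisfies $V=\hat\dsum_{\discdata}\bigl(V\cap L^2_{\discdata}(\AS{G})\bigr)$. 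Applying this to $V=L^2(\AS{G})_H^\circ$, the claim reduces to showing that
\[
L^2_{\discdata}(\AS{G})\cap L^2(\AS{G})_H^\circ=0\qquad\text{for every }\discdata\in\Discdata_{H\dist}.
\]

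To establish this vanishing, fix $\discdata=[(L,\delta)]\in\Discdata_{H\dist}$, so $L=M_{(2n_1,\dots,2n_k)}$ and $\delta$ is a tensor product of even Speh representations, and let $\psi=\conteisen_L(\varphi)$ satisfy $\int_{\AS{H}}(f*\psi)(h)\,dh=0$ for all $f\in C_c(\grp G(\A))$. Using $f*\conteisen_L(\varphi)=\conteisen_L\bigl(\lambda\mapsto I(f,\lambda)\varphi(\lambda)\bigr)$ together with the absolute convergence of $\int_{\AS{H}}E(h,\Xi,\lambda)\,dh$, uniform for $\lambda$ in compact subsets of $\iii(\aaa_L^G)^*$, supplied by Lemma~\ref{lem: symperunifconv} (first for $\varphi$ continuous into a finite-dimensional space of automorphic forms, then in general by approximation), I would interchange the $h$- and $\lambda$-integrations to obtain, for all $f$,
\[
0=\int_{\iii(\aaa_L^G)^*/N_G(L)}\mathcal{P}_H^\lambda\bigl(I(f,\lambda)\varphi(\lambda)\bigr)\,d\lambda,\qquad\mathcal{P}_H^\lambda(\Xi):=\int_{\AS{H}}E(h,\Xi,\lambda)\,dh.
\]
The crucial input is now Yamana's study of symplectic periods of Eisenstein series \cite{Ya}: for $\delta$ of this even Speh type the functional $\mathcal{P}_H^\lambda$ is \emph{not} identically zero — it equals an explicit product of local symplectic period integrals and a ratio of $L$-functions, times a scalar holomorphic and non-vanishing on $\iii(\aaa_L^G)^*$ — and, by the local uniqueness of symplectic models, it spans the space of $\grp H(\A)$-invariant functionals on $I(\delta,\lambda)$. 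A standard localisation in $\lambda$ (choosing $f$ appropriately, together with a countable dense family of $f$'s) then forces $\mathcal{P}_H^\lambda\bigl(I(f,\lambda)\varphi(\lambda)\bigr)=0$ for a.e.\ $\lambda$ and all $f$; since $I(\delta,\lambda)$ is irreducible for a.e.\ $\lambda\in\iii(\aaa_L^G)^*$, the vector $\varphi(\lambda)$ is then cyclic, so $\mathcal{P}_H^\lambda\neq0$ forces $\varphi(\lambda)=0$. Hence $\varphi=0$ a.e.\ and $\psi=0$, which completes the argument.

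I expect the last step to be the main obstacle: it rests on importing the precise analytic results of \cite{Ya} — both the non-vanishing of $\mathcal{P}_H^\lambda$ for even Speh data and the multiplicity-one statement that lets one pass from the vanishing of the spectrally integrated period to the vanishing of $\varphi$ — and on carefully justifying the interchange of integrations and the localisation in $\lambda$. By contrast, the representation-theoretic reduction in the second paragraph (the $\grp G(\A)$-invariance of $L^2(\AS{G})_H^\circ$ and its compatibility with the decomposition by discrete data) is routine.
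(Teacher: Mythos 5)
Your overall strategy matches the paper's: reduce to the characterization of $L^2(\AS{G})_H^\circ$, use type~I and multiplicity-freeness to decompose closed $\grp G(\A)$-invariant subspaces along the discrete data, invoke Lemma~\ref{lem: symperunifconv} to pass the $H$-period under the $\lambda$-integral, and then appeal to Yamana's non-vanishing result. The forward inclusion and the reduction in your second paragraph are sound, and the paper uses the same ingredients (Dixmier plus M\oe glin--Waldspurger and Jacquet--Shalika for multiplicity one).

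However, your handling of the final, ``main'' step has two real gaps. First, Lemma~\ref{lem: symperunifconv} only gives uniform absolute convergence of $\int_{\AS H}\abs{E(h,\varphi(\lambda),\lambda)}\,dh$ for $\lambda$ ranging over a \emph{compact} subset of $\iii(\aaa_L^G)^*$ and $\varphi$ taking values in a \emph{finite-dimensional} space of automorphic forms; it does not justify interchanging the $h$- and $\lambda$-integrations over all of $\iii(\aaa_L^G)^*/N_G(L)$ for a general $L^2$-section $\varphi$. The paper instead argues by contradiction: it assumes $A_\data$ has positive measure, picks a \emph{bounded} $N_G(L)$-invariant $A\subset A_\data$ of positive measure and a ball $B\supset A$, and performs the interchange only on $A$ with $\varphi$ continuous from $B$ into a finite-dimensional subspace — precisely the setting Lemma~\ref{lem: symperunifconv} covers. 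Second, you assert that $\mathcal{P}_H^\lambda$ ``is holomorphic and non-vanishing on $\iii(\aaa_L^G)^*$,'' which is stronger than what \cite{Ya} gives: Yamana shows the period extends to a \emph{meromorphic} function which is \emph{not identically zero} for some $\varphi$, and one must separately argue that its zero set is Lebesgue-null. The paper does exactly this via the Weierstrass preparation theorem (the zero set of a nonzero meromorphic function of several variables has measure zero), reaching the contradiction with $A$ having positive measure. Your alternative endgame via irreducibility and cyclicity could also be made to work, but only once the pointwise non-vanishing of $\mathcal{P}_H^\lambda$ for a.e.\ $\lambda$ has actually been established by the measure-zero-zero-set argument; as written that step is assumed rather than proved.
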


\begin{proof}
Since $\grp G(\A)$ is type I \cite[appendix]{MR2331344} and $L^2(\AS{G})$ is multiplicity free\footnote{in the sense
that the commuting algebra of the regular representation in the space of bounded operators on $L^2(\AS{G})$ is commutative}
(which follows from \cite{MR1026752} and \cite{MR623137}, together with \cite[Theorem 8.6.5 and \S18.7.6]{MR0458185})
any $\grp G(\A)$-invariant closed subspace of $L^2(\AS{G})$ is of the form
\[
\widehat\bigoplus_{\data}\conteisen_L(L^2(A_\data;\Ind \dsum_{(L,\delta)\in\discdata}L^2_{\disc,\delta}(\AS{L}))^{N_G(L)})
\]
where for each $\data$ we choose $L$ such that $(L,\delta')\in\data$ for some $\delta'$
and $A_\data$ is a Lebesgue measurable $N_G(L)$-invariant subset of $\iii(\aaa_L^G)^*$, determined up to a set of zero Lebesgue measure.
(This follows from \cite[Theorem 8.6.5, Proposition 8.4.5 and \S18.7.6]{MR0458185}.)
Consider the above decomposition for the subspace $L^2(\AS{G})_H^\circ$. By \cite[Theorem 3.2]{Ya} we have $A_\data=\iii(\aaa_L^G)^*$
(up to a measure $0$ set) unless $\data\in\Discdata_{H\dist}$.
(Alternatively, this also follows from Proposition \ref{prop: uprbndGLn}.)
We need to show that $A_\data$ is of zero measure if $\data\in\Discdata_{H\dist}$.
Suppose on the contrary that $A_\data$ has positive measure for some $\data\in\Discdata_{H\dist}$.
Thus,
\[
L^2(\AS{G})_H^\circ\supset\conteisen_L(L^2(A_\data;\Ind\dsum_{(L,\delta)\in\data}L^2_{\disc,\delta}(\AS{L}))^{N_G(L)}).
\]
Let $A$ be a bounded $N_G(L)$-invariant subset of $A_\data$ of positive measure.
Let $B$ be a ball centered at $0$ in $\iii(\aaa_L^G)^*$ containing $A$.
Take an arbitrary $f\in C_c(\grp G(\A))$.
Then for any $\varphi\in L^2(A;\Ind\dsum_{(L,\delta)\in\data}L^2_{\delta}(\AS{L}))^{N_G(L)}$ we have
\[
\int_{\AS{H}}\left(f*\int_{A/N_G(L)}E(\varphi(\lambda),\lambda)\ d\lambda\right)(h)\ dh=0.
\]
Therefore, by Lemma \ref{lem: symperunifconv}, for any bi-$K$-finite $f\in C_c(\grp G(\A))$ and a continuous function $\varphi$
from $B$ into a finite-dimensional subspace of $\sum_{(L,\delta)\in\data}\AUT_{\delta}(\grp V(\A)L\bs \grp G(\A))$ such that
$\varphi(w\lambda)=M(w,\lambda)\varphi(\lambda)$ for all $w\in N_G(L)$ we have
\[
\int_{A/N_G(L)}\int_{\AS{H}}E(h,I(f,\lambda)\varphi(\lambda),\lambda)\ dh\ d\lambda=0.
\]
Here of course $Q=L\ltimes V$ is the parabolic subgroup of $G$ with Levi part $L$.
Fixing $(L,\delta')\in\data$, it follows that $\int_{\AS{H}}E(h,I(f,\lambda)\varphi,\lambda)\ dh=0$
for any $\varphi\in\AUT_{\delta'}(\grp V(\A)L\bs \grp G(\A))$ and almost all $\lambda\in A$.
Thus $\int_{\AS{H}}E(h,\varphi,\lambda)\ dh=0$ for any $\varphi\in\AUT_{\delta'}(\grp V(\A)L\bs \grp G(\A))$
and almost all $\lambda\in A$.
On the other hand, by \cite{Ya} $\int_{\AS{H}}E(h,\varphi,\lambda)\ dh$ extends to a meromorphic function
which is not identically zero for some $\varphi\in\AUT_{\delta'}(\grp V(\A)L\bs \grp G(\A))$.
We get a contradiction (e.g., using the Weierstrass preparation theorem).\footnote{Of course, the proof
in \cite{Ya} gives more information about the possible zeros of $\int_{\AS{H}}E(h,\varphi,\lambda)\ dh$.}
\end{proof}

Combining Propositions \ref{prop: uprbndGLn} and \ref{prop: Hdiststng} and the fact that $L^2_{H\dist}(\AS{G})\supset L^2_{H\dist}(\AS{G})^{\stng}$
we obtain

\begin{corollary} \label{cor: Spndist}
We have
\[
L^2_{H\dist}(\AS{G})=L^2_{H\dist}(\AS{G})^{\stng}=
\hat\dsum_{\discdata\in\Discdata_{H\dist}}L^2_{\discdata}(\AS{G}).
\]
In particular,
\[
L^2_{\disc,H\dist}(\AS{G})=L^2_{\disc,H\type}(\AS{G}).
\]
\end{corollary}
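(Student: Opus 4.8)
The plan is to squeeze $L^2_{H\dist}(\AS{G})$ between the two bounds already established. First I would recall from \S\ref{sec: dist spec} that in general $L^2_{H\dist}(\AS{G})^{\stng}\subset L^2_{H\dist}(\AS{G})$; in the present case this is transparent, since by Lemma \ref{lem: convconv} every pseudo Eisenstein series lies in $L^2(\AS{G})_{\Hconv}=L^2(\AS{G})$, so $\pseudospace(\AS{G})_H^\circ\subset L^2(\AS{G})_{\Hconv}^\circ$ and passing to orthogonal complements reverses the inclusion. Combining this with the first equality in Proposition \ref{prop: Hdiststng} and the inclusion of Proposition \ref{prop: uprbndGLn} gives
\[
\hat\dsum_{\discdata\in\Discdata_{H\dist}}L^2_{\discdata}(\AS{G})=L^2_{H\dist}(\AS{G})^{\stng}\subset L^2_{H\dist}(\AS{G})\subset\hat\dsum_{\discdata\in\Discdata_{H\dist}}L^2_{\discdata}(\AS{G}),
\]
so all three spaces coincide; this is the first assertion (and in particular $L^2_{H\dist}(\AS{G})=L^2_{H\dist}(\AS{G})^{\stng}$).

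Next I would intersect the identity just obtained with the discrete spectrum $L^2_{\disc}(\AS{G})$ to get the final statement. Writing $L^2_{\disc}(\AS{G})=\hat\dsum_\pi L^2_{\disc,\pi}(\AS{G})$ over $\pi\in\Pi_{\disc}(A_G\bs\grp G(\A))$ and noting that each $L^2_{\disc,\pi}(\AS{G})$ is contained in the summand $L^2_{[(G,\pi)]}(\AS{G})$ of the orthogonal decomposition \eqref{eq: discrete data}, one finds $L^2_{\disc}(\AS{G})\cap L^2_{\discdata}(\AS{G})=L^2_{\disc,\pi}(\AS{G})$ when $\discdata=[(G,\pi)]$ with $\pi$ discrete, and $0$ otherwise. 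By the definition of $\Discdata_{H\dist}$ in \S\ref{sec: GL2nSpn}, such a class $[(G,\pi)]$ lies in $\Discdata_{H\dist}$ precisely when $\pi=\Speh(\sigma,r)$ with $r$ even, i.e.\ $\pi\in\Pi_{\disc,H\type}(A_G\bs\grp G(\A))$. Intersecting the displayed identity with $L^2_{\disc}(\AS{G})$ therefore yields
\[
L^2_{\disc,H\dist}(\AS{G})=\hat\dsum_{\pi\in\Pi_{\disc,H\type}(A_G\bs\grp G(\A))}L^2_{\disc,\pi}(\AS{G})=L^2_{\disc,H\type}(\AS{G}).
\]

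I do not expect a genuine obstacle here: the corollary is purely an assembly of Propositions \ref{prop: uprbndGLn} and \ref{prop: Hdiststng}, which carry all the weight --- the former via the period formula for pseudo Eisenstein series specialized to $(\GL_{2n},\Sp_n)$ (ultimately the symplectic-period computations of Offen), the latter via Yamana's non-vanishing theorem for symplectic periods of Eisenstein series together with the multiplicity-one and type I structure of $L^2(\AS{G})$. The only point needing a little care is the last step: the compatibility of the decomposition \eqref{eq: discrete data} with $L^2_{\disc}(\AS{G})$, and the translation of ``$\discdata\in\Discdata_{H\dist}$ with underlying Levi $G$'' into ``even Speh representation'' via the M\oe glin--Waldspurger classification recalled in \S\ref{sec: GL2nSpn}; both are routine given that material.
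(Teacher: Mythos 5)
Your proof is correct and matches the paper's: the corollary is stated there directly as a combination of Propositions \ref{prop: uprbndGLn} and \ref{prop: Hdiststng} with the general inclusion $L^2_{H\dist}(\AS{G})^{\stng}\subset L^2_{H\dist}(\AS{G})$, and your intersection with the discrete spectrum to derive the second assertion is the intended (routine) step.
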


\begin{remark} \label{rem: functoriality}
The results of this section suggest a close relationship between $L^2_{H\dist}(\AS{G})$ and $L^2(\AS{\grp{GL}_n})$.
For the generic part of $L^2(\AS{\grp{GL}_n})$ this relationship can be explicated by taking the Langlands quotient of $\Ind\pi\abs{\det\cdot}^\frac12\otimes\pi\abs{\det\cdot}^{-\frac12}$,
which is an instance of Langlands functoriality.
However, this recipe breaks down for the non-generic part of $L^2(\AS{\grp{GL}_n})$ and it remains to be seen whether this can be circumspectly phrased
in terms of Langlands functoriality.
\end{remark}

\section{The main result} \label{sec: main result}
We go back to the case $\grp G=\grp{Sp}_{2n}$ and $\grp H=\grp{Sp}_n\times\grp{Sp}_n$.
As before, we have $\RAS HG=\AS H$.
The analysis of the pair $(G,H)$ is more difficult than for the pair $(\GL_{2n},\Sp_n)$ considered before.
One of the reasons is that the analogue of Lemma \ref{lem: convconv} is no longer valid in the case at hand.
Another reason is that the description of the discrete automorphic spectrum of $\grp G$ is much more involved than that of $\grp{GL}_{2n}$
(cf.~\cite{MR1815141}).

As in the previous case we will reformulate Corollary \ref{cor: Hdist} -- see Theorem \ref{thm: reformulate upper bound} below.
Unlike in the case of \S\ref{sec: GL2nSpn} we do not expect Corollary \ref{cor: Hdist} to be tight (although we do not know how to prove it).
In any case the statement of Theorem \ref{thm: reformulate upper bound} is more elaborate than the corresponding
Proposition \ref{prop: uprbndGLn}.

\subsection{} \label{sec: fineL2dist}
Once again, we draw some facts from \cite{MR1361168} (which is implicit in all references below).
Recall that (VI) for each $\cuspdatum\in\cuspdata$ and $\singcls\in[S_\cuspdatum]$ such that $L^2_\cuspdatum(\AS{G})_\singcls\ne0$
we can attach a pair $(L,V_\delta)$ (up to association) consisting of a Levi subgroup $L$ of $G$ and an admissible (not necessarily finite length)
subrepresentation $(\delta,V_\delta)$ of $L^2_{\disc}(\AS{L})$.
Moreover, $\singcls=[(M,\pi,\mu+\aaa_L^*)]$ for some $(M,\pi)\in\cuspdatum$ with $M\subset L$, $\mu\in(\aaa_M^L)^*$ and
$L^2_\cuspdatum(\AS{G})_\singcls$ is the image of $L^2(\iii\aaa_L^*;\Ind \sum_{w\in N_G(L)}wV_\delta)^{N_G(L)}$ under $\conteisen_L$.
The discrete spectrum of $G$ itself (i.e., the case $L=G$) arises from singleton $\singcls$'s (as we vary $\cuspdatum$).

Let
\[
\alldata=\{(\cuspdatum,\singcls):\cuspdatum\in\cuspdata,\singcls\in[S_\cuspdatum]\}.
\]
Recall the decomposition
\[
L^2(\AS{G})=\hat\dsum_{(\cuspdatum,\singcls)\in\alldata}L^2_{\cuspdatum}(\AS{G})_{\singcls}.
\]

We consider the following subsets of $\alldata$:
\begin{gather*}
\pointcls=\{(\cuspdatum,\singcls)\in\alldata:\dim\singcls=0\},\\
\alldata_{H\dist}=\{(\cuspdatum,\singcls)\in\alldata:\singcls\in [S_\cuspdatum]_{H\dist}\},\\
\pointcls_{H\dist}=\pointcls\cap\alldata_{H\dist},\\
\tilde\alldata=\{(\cuspdatum,\singcls)\in\alldata:\cuspdatum\in\tilde\cuspdata\},\\
\tilde\pointcls=\pointcls\cap\tilde\alldata,
\end{gather*}
where $\tilde\cuspdata$ denotes the set of cuspidal data represented by $(M,\pi)$ where $M$ is contained in the Siegel Levi.
We have $\alldata_{H\dist}\subset\tilde\alldata$ and in particular, $\pointcls_{H\dist}\subset\tilde\pointcls$.

We will set
\[
\tilde L^2(\AS{G})=\hat\dsum_{\cuspdatum\in\tilde\cuspdata}L^2_\cuspdatum(\AS{G})
\]
and $\tilde L^2_{\disc}(\AS{G})=L^2_{\disc}(\AS{G})\cap\tilde L^2(\AS{G})$.
Since $\alldata_{H\dist}\subset\tilde\alldata$, it follows from Corollary \ref{cor: Hdist} that
\begin{equation} \label{eq: expcorHdist}
L^2_{H\dist}(\AS{G})\subset\hat\dsum_{(\cuspdatum,\singcls)\in\alldata_{H\dist}}L^2_\cuspdatum(\AS{G})_{\singcls}
\subset\tilde L^2(\AS{G}).
\end{equation}
For $\pi\in\Pi_{\disc}(\grp G(\A))$ let $\tilde L^2_{\disc,\pi}(\AS{G})=L^2_{\disc,\pi}(\AS{G})\cap \tilde L^2(\AS{G})$
and let $\tilde\Pi_{\disc}(\grp G(\A))$ be the set of representations which occur in $\tilde L^2_{\disc}(\AS{G})$.

Let
\[
L^2_{\disc,H\type}(\AS{G})=\hat\dsum_{(\cuspdatum,\singcls)\in\pointcls_{H\dist}}L^2_\cuspdatum(\AS{G})_\singcls\subset
\tilde L^2_{\disc}(\AS{G}).
\]
Once again by Corollary \ref{cor: Hdist} we have
\[
L^2_{\disc,H\dist}(\AS{G})\subset L^2_{\disc,H\type}(\AS{G}).
\]
(We expect the inclusion to be strict, but we do not know how to show this.)

Let $\Pi_{\disc,H\type}(\grp G(\A))$ be the subset of $\tilde\Pi_{\disc}(\grp G(\A))$ consisting of representations
which occur in $L^2_{\disc,H\type}(\AS{G})$.

\subsection{} \label{sec: L2distdiscdata}
We observe the following

\begin{lemma} \label{lem: lcltoglblGLn}
Suppose that $(M_i,\pi_i)\in\cuspdatum_i\in\tilde\cuspdata$ and $\lambda_i\in\aaa_{M_i,\C}^*$, $i=1,2$.
Assume that for almost all $v$ the unramified irreducible subquotients $\Ind((\pi_i)_v,\lambda_i)^{\unr}$
of $\Ind((\pi_i)_v,\lambda_i)$ coincide. Then there exists $w\in W$ such that $w(M_1,\pi_1,\lambda_1)=(M_2,\pi_2,\lambda_2)$.
\end{lemma}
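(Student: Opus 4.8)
The plan is to transfer the data to general linear groups and invoke strong multiplicity one for isobaric automorphic representations. Since $\cuspdatum_i\in\tilde\cuspdata$ the Levi $M_i$ lies in the Siegel Levi $\GL_{2n}$ of $G$, so write $M_i\simeq\GL_{n_1^{(i)}}\times\dots\times\GL_{n_{k_i}^{(i)}}$ (with $n_1^{(i)}+\dots+n_{k_i}^{(i)}=2n$), $\pi_i\simeq\sigma_1^{(i)}\otimes\dots\otimes\sigma_{k_i}^{(i)}$ with each $\sigma_t^{(i)}$ a unitary cuspidal automorphic representation of $\GL_{n_t^{(i)}}(\A)$ with central character trivial on $A_{\GL_{n_t^{(i)}}}$, and $\lambda_i=(\lambda_{i,1},\dots,\lambda_{i,k_i})$. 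Set $\Delta_t^{(i)}=\sigma_t^{(i)}\lvert\det\rvert^{\lambda_{i,t}}$ and let $\Pi_i=\Delta_1^{(i)}\boxplus\dots\boxplus\Delta_{k_i}^{(i)}$ be the associated isobaric automorphic representation of $\GL_{2n}(\A)$; for almost all $v$ the component $(\Pi_i)_v$ is unramified with Satake parameter the multiset $S_i(v)\subset\C^\times$ of cardinality $2n$ obtained by collecting the Satake parameters of the $(\sigma_t^{(i)})_v$, twisted according to $\lambda_i$.

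The next step is to identify the Satake parameter of $\Ind((\pi_i)_v,\lambda_i)^{\unr}$ as a semisimple class in $\widehat G=SO_{4n+1}(\C)$. As this unramified constituent is induced from the Siegel Levi, its parameter is the image of that of $(\Pi_i)_v$ under the embedding $\GL_{2n}(\C)\hookrightarrow SO_{4n+1}(\C)$ dual to $\GL_{2n}\hookrightarrow\Sp_{2n}$, so its eigenvalue multiset on the standard $(4n+1)$-dimensional representation is $S_i(v)\sqcup\{1\}\sqcup S_i(v)^{-1}$. Thus the hypothesis yields $S_1(v)\sqcup S_1(v)^{-1}=S_2(v)\sqcup S_2(v)^{-1}$ for almost all $v$, equivalently $(\Pi_1\boxplus\Pi_1^\vee)_v\cong(\Pi_2\boxplus\Pi_2^\vee)_v$ for almost all $v$, where $\Pi_i^\vee$ is the isobaric sum of the $(\Delta_t^{(i)})^\vee=(\sigma_t^{(i)})^\vee\lvert\det\rvert^{-\lambda_{i,t}}$. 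By strong multiplicity one for isobaric automorphic representations of $\GL_{4n}(\A)$ (Jacquet--Shalika) we obtain $\Pi_1\boxplus\Pi_1^\vee\cong\Pi_2\boxplus\Pi_2^\vee$, and by the uniqueness of the isobaric decomposition the corresponding multisets of cuspidal constituents coincide. The restriction of the central character of $\Delta_t^{(i)}$ to $A_{\GL_{n_t^{(i)}}}$ recovers $\lambda_{i,t}$, hence $\sigma_t^{(i)}=\Delta_t^{(i)}\otimes\lvert\det\rvert^{-\lambda_{i,t}}$, so this coincidence becomes the equality of multisets of pairs
\[
\{(\sigma_t^{(1)},\lambda_{1,t})\}_t\sqcup\{((\sigma_t^{(1)})^\vee,-\lambda_{1,t})\}_t=\{(\sigma_t^{(2)},\lambda_{2,t})\}_t\sqcup\{((\sigma_t^{(2)})^\vee,-\lambda_{2,t})\}_t .
\]

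To conclude I would recall that two cuspidal data supported on the Siegel Levi are $W=W^{\Sp_{2n}}$-conjugate precisely when the multisets $\{(\sigma_t^{(i)},\lambda_{i,t})\}_t$ agree up to permutation of the blocks (effected by $W^{\GL_{2n}}\subset W$) and up to replacing an arbitrary subset of the pairs $(\sigma_t^{(i)},\lambda_{i,t})$ by $((\sigma_t^{(i)})^\vee,-\lambda_{i,t})$ --- the latter ``flip'' of the $t$-th block being realised by the signed permutation in $W$ that reverses and negates the coordinates of that block while fixing the others, which normalises $M_i$ and induces the outer automorphism on its $t$-th $\GL$-factor. It then suffices to use the elementary observation that if a finite multiset $S$ is stable under an involution $x\mapsto\bar x$, then any two sub-multisets $A_1,A_2$ with $A_j\sqcup\bar A_j=S$ differ only by replacing some elements of $A_1$ by their involution images (on each orbit $\{x,\bar x\}$ with $x\ne\bar x$ one is merely choosing the split between $x$ and $\bar x$, while the contribution of the fixed points is forced; in particular the two families of block sizes match). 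Applying this with $S$ the common value of the two multisets above produces the required $w\in W$. The only step that calls for real care is the appeal to strong multiplicity one and to uniqueness of the isobaric decomposition in the present, possibly non-tempered and non-unitary, setting; this is dealt with by first absorbing $\iii\,\Im\lambda_{i,t}$ into $\sigma_t^{(i)}$, which reduces matters to isobaric sums of unitary cuspidal representations with real twist parameters, where these results are classical.
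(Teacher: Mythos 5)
Your proof is correct and follows essentially the same route as the paper: both pass to $\GL_{4n}$ by forming $\Pi\boxplus\Pi^\vee$ (the paper's $\pi'\otimes(\pi')^\vee$) and invoke Jacquet--Shalika's classification of isobaric automorphic representations, then identify the resulting ambiguity with the signed-permutation action of $W^{\Sp_{2n}}$ on cuspidal data supported in the Siegel Levi. Your explicit passage through Satake parameters in $SO_{4n+1}(\C)$ and the concluding multiset argument simply spell out steps the paper leaves implicit.
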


\begin{proof}
Since $\pi_i$, $i=1,2$ is generic, $(\pi_i)_v$ and hence $\Ind((\pi_i)_v,\mu_i)$, is fully induced from
an unramified character of the torus $\grp T(F_v)$ for almost all $v$.
On the other hand, if $\Ind(\chi_1)^{\unr}=\Ind(\chi_2)^{\unr}$ for unramified characters $\chi_1,\chi_2$ of $\grp T(F_v)$
then $\Ind(\chi_1)=\Ind(\chi_2)$ in the Grothendieck group.
Thus, $\Ind((\pi_1)_v,\mu_1)=\Ind((\pi_2)_v,\mu_2)$ in the Grothendieck group for almost all $v$.
Let $\pi'_i=\Ind^{M_{2n;0}}(\pi_i,\mu_i)$, and similarly for $(\pi'_i)_v$, so that $\Ind((\pi_1')_v)=\Ind((\pi_2')_v)$ in the Grothendieck group
for almost all $v$.
Let $\tau_i$ be the representation of $\grp{GL}_{4n}(\A)$ induced from $\pi_i'\otimes\pi_i'^\vee$, and similarly for $(\tau_i)_v$.
Then $(\tau_1)_v=(\tau_2)_v$ in the Grothendieck group of $\grp{GL}_{4n}(F_v)$ for almost all $v$ and hence
the unramified subquotiets of $(\tau_i)_v$, $i=1,2$ are equal.
It easily follows from the classification theorem of Jacquet--Shalika for $\GL_{4n}$ \cite[Theorem 4.4]{MR623137} that
$w(M_1,\pi_1,\lambda_1)=(M_2,\pi_2,\lambda_2)$ for some $w\in W$.
\end{proof}

\begin{corollary} 
For any $(\cuspdatum,\singcls)\in\tilde\pointcls$, $L^2_{\cuspdatum}(\AS{G})_{\singcls}$
is a sum of isotypic components of $\tilde L^2_{\disc}(\AS{G})$.
Equivalently, for any distinct elements $(\cuspdatum_i,\singcls_i)$, $i=1,2$ of $\tilde\pointcls$ we have
\[
\Hom_{\grp G(\A)}(L^2_{\cuspdatum_1}(\AS{G})_{\singcls_1},L^2_{\cuspdatum_2}(\AS{G})_{\singcls_2})=0.
\]
\end{corollary}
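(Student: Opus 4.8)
The plan is to establish the equivalent $\Hom$-vanishing assertion and to deduce the first formulation from it formally. Indeed, combining the decomposition $\tilde L^2(\AS{G})=\hat\dsum_{(\cuspdatum,\singcls)\in\tilde\alldata}L^2_\cuspdatum(\AS{G})_\singcls$ with the standard fact that, inside each $L^2_\cuspdatum(\AS{G})$, the intersection with $L^2_{\disc}(\AS{G})$ is exactly $\hat\dsum_{\singcls\,:\,\dim\singcls=0}L^2_\cuspdatum(\AS{G})_\singcls$ (cf.\ \S\ref{sec: fineL2dist}), one gets $\tilde L^2_{\disc}(\AS{G})=\hat\dsum_{(\cuspdatum,\singcls)\in\tilde\pointcls}L^2_\cuspdatum(\AS{G})_\singcls$, a decomposition of a semisimple unitary representation into closed subrepresentations. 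If no two distinct summands share an irreducible constituent, then the isotypic component of every irreducible $\pi$ in $\tilde L^2_{\disc}(\AS{G})$ is contained in a single summand; this is precisely the assertion that each summand is a sum of isotypic components.

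So fix distinct $(\cuspdatum_i,\singcls_i)\in\tilde\pointcls$, $i=1,2$, and suppose for contradiction that some irreducible $\pi$ occurs in both $L^2_{\cuspdatum_1}(\AS{G})_{\singcls_1}$ and $L^2_{\cuspdatum_2}(\AS{G})_{\singcls_2}$. Since $\dim\singcls_i=0$ and $\cuspdatum_i\in\tilde\cuspdata$, I would choose representatives $\singcls_i=[(M_i,\pi_i,\{\mu_i\})]$ with $M_i$ contained in the Siegel Levi $\inj(\GL_{2n})$, $\pi_i\in\Pi_{\cusp}(A_{M_i}\bs\grp M_i(\A))$, and $\mu_i\in(\aaa_{M_i}^G)^*$. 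By the M\oe glin--Waldspurger description of the residual spectrum (Chapter VI of \cite{MR1361168}), every irreducible constituent of $L^2_{\cuspdatum_i}(\AS{G})_{\singcls_i}$ is a residual representation attached to the datum $(M_i,\pi_i,\mu_i)$; in particular, for almost all places $v$ the local component $\pi_v$ is unramified and occurs in $\Ind_{P_i(F_v)}^{\grp G(F_v)}((\pi_i)_v,\mu_i)$.

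Next, exactly as in the proof of Lemma \ref{lem: lcltoglblGLn}, for almost all $v$ the representation $(\pi_i)_v$ is a fully induced unramified principal series, so $\Ind((\pi_i)_v,\mu_i)$ is a subquotient of an unramified principal series of $\grp G(F_v)$ and hence has a one-dimensional space of $K_v$-fixed vectors; consequently it contains a unique irreducible unramified subquotient $\Ind((\pi_i)_v,\mu_i)^{\unr}$. The previous paragraph then forces $\Ind((\pi_1)_v,\mu_1)^{\unr}\simeq\pi_v\simeq\Ind((\pi_2)_v,\mu_2)^{\unr}$ for almost all $v$, and Lemma \ref{lem: lcltoglblGLn} produces $w\in W$ with $w(M_1,\pi_1,\mu_1)=(M_2,\pi_2,\mu_2)$. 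This gives $\cuspdatum_1=\cuspdatum_2$ and $\singcls_1=\singcls_2$, contradicting the choice, and the vanishing follows by contraposition.

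The step I expect to require the most care is the input used in the second paragraph: one must verify that the irreducible constituents of the space $L^2_{\cuspdatum_i}(\AS{G})_{\singcls_i}$ attached to a zero-dimensional $\singcls_i$ really have, at almost every place, local component the unramified constituent of $\Ind((\pi_i)_v,\mu_i)$. This means unwinding the realization of $L^2_{\cuspdatum_i}(\AS{G})_{\singcls_i}$ as an iterated residue of the cuspidal Eisenstein series associated with $(M_i,\pi_i)$ and invoking the compatibility of the Satake parameters of a residual representation with those of its inducing datum. The remaining ingredients --- the one-dimensionality of $K_v$-invariants in a subquotient of an unramified principal series, and the bookkeeping showing that a Weyl conjugation between the two data identifies $(\cuspdatum_1,\singcls_1)$ with $(\cuspdatum_2,\singcls_2)$ --- are routine.
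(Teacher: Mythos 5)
Your argument is correct and follows essentially the same route as the paper: choose $(M_i,\pi_i,\{\mu_i\})$ representing $(\cuspdatum_i,\singcls_i)$, observe that every irreducible constituent of $L^2_{\cuspdatum_i}(\AS{G})_{\singcls_i}$ has local component $\Ind((\pi_i)_v,\mu_i)^{\unr}$ at almost all $v$, and then invoke Lemma~\ref{lem: lcltoglblGLn} to force $(\cuspdatum_1,\singcls_1)=(\cuspdatum_2,\singcls_2)$. The paper states the almost-everywhere unramified-constituent fact without elaboration; your expansion of it (via the M\oe glin--Waldspurger residual construction and uniqueness of the unramified subquotient) is exactly the intended justification, and your reduction of the first formulation to the $\Hom$-vanishing is the standard one for semisimple unitary representations.
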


Indeed, if $(\cuspdatum,\singcls)\in\pointcls$ then we can choose $(M,\pi)\in\cuspdatum$ and
$\{\mu\}\in\singcls$ such that for any irreducible subrepresentation $\sigma$ of $L^2_\cuspdatum(\AS{G})_{\singcls}$,
$\sigma_v$ is the unramified subquotient $\Ind(\pi_v,\mu)^{\unr}$ of $\Ind(\pi_v,\mu)$ for almost all $v$.
(In particular, it is independent of $\sigma$.)
The Corollary therefore follows from Lemma \ref{lem: lcltoglblGLn}.

\begin{corollary} \label{cor: Htype components}
We have
\[
L^2_{\disc,H\type}(\AS{G})=\hat\dsum_{\pi\in\Pi_{\disc,H\type}(\grp G(\A))}\tilde L^2_{\disc,\pi}(\AS{G}).
\]
That is, $L^2_{\disc,H\type}(\AS{G})$ is a sum of isotypic components in $\tilde L^2_{\disc}(\AS{G})$.
\end{corollary}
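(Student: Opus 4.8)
The plan is to obtain Corollary \ref{cor: Htype components} as a purely formal consequence of the preceding corollary together with the standard description of the discrete spectrum recalled in \S\ref{sec: fineL2dist}; no new analytic input is needed.

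First I would record the auxiliary identity
\[
\tilde L^2_{\disc}(\AS{G})=\hat\dsum_{(\cuspdatum,\singcls)\in\tilde\pointcls}L^2_{\cuspdatum}(\AS{G})_{\singcls}.
\]
This holds because the discrete spectrum of $G$ arises exactly from the data $(\cuspdatum,\singcls)$ with $\dim\singcls=0$, i.e.\ $L^2_{\disc}(\AS{G})=\hat\dsum_{(\cuspdatum,\singcls)\in\pointcls}L^2_{\cuspdatum}(\AS{G})_{\singcls}$, and the finer decomposition of \S\ref{sec: finerspectraldecomposition} refines the decomposition by cuspidal data, so intersecting with $\tilde L^2(\AS{G})=\hat\dsum_{\cuspdatum\in\tilde\cuspdata}L^2_{\cuspdatum}(\AS{G})$ simply restricts the index set to $\tilde\pointcls$. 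Since by construction $L^2_{\disc,H\type}(\AS{G})=\hat\dsum_{(\cuspdatum,\singcls)\in\pointcls_{H\dist}}L^2_{\cuspdatum}(\AS{G})_{\singcls}$ and $\pointcls_{H\dist}\subset\tilde\pointcls$, the space $L^2_{\disc,H\type}(\AS{G})$ is precisely the sum of a sub-collection of the mutually orthogonal summands appearing in the displayed decomposition.

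Next I would invoke the preceding corollary: each summand $L^2_{\cuspdatum}(\AS{G})_{\singcls}$ with $(\cuspdatum,\singcls)\in\tilde\pointcls$ is a sum of isotypic components of $\tilde L^2_{\disc}(\AS{G})$, and distinct such summands admit no $\grp G(\A)$-intertwining. Equivalently, writing $S_{(\cuspdatum,\singcls)}\subset\tilde\Pi_{\disc}(\grp G(\A))$ for the set of representations occurring in $L^2_{\cuspdatum}(\AS{G})_{\singcls}$, the sets $S_{(\cuspdatum,\singcls)}$, $(\cuspdatum,\singcls)\in\tilde\pointcls$, are pairwise disjoint, their union is $\tilde\Pi_{\disc}(\grp G(\A))$, and $L^2_{\cuspdatum}(\AS{G})_{\singcls}=\hat\dsum_{\pi\in S_{(\cuspdatum,\singcls)}}\tilde L^2_{\disc,\pi}(\AS{G})$. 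In particular, for every $\pi\in\tilde\Pi_{\disc}(\grp G(\A))$ the full isotypic component $\tilde L^2_{\disc,\pi}(\AS{G})$ is contained in exactly one of these summands. Combining this with the previous paragraph: if $\pi\in\Pi_{\disc,H\type}(\grp G(\A))$ then $\pi$ occurs in $L^2_{\disc,H\type}(\AS{G})$, hence in some $L^2_{\cuspdatum}(\AS{G})_{\singcls}$ with $(\cuspdatum,\singcls)\in\pointcls_{H\dist}$, so $\tilde L^2_{\disc,\pi}(\AS{G})\subset L^2_{\cuspdatum}(\AS{G})_{\singcls}\subset L^2_{\disc,H\type}(\AS{G})$; the reverse inclusion is immediate from the definition of $\Pi_{\disc,H\type}(\grp G(\A))$. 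This gives the asserted equality.

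Since the entire argument is bookkeeping, there is no genuine obstacle here; the real content has already been isolated in the preceding corollary, and through it in Lemma \ref{lem: lcltoglblGLn} and the Jacquet--Shalika classification for $\GL_{4n}$. The only point worth keeping in mind is that, in contrast with the case of $\GL_{2n}$ treated in \S\ref{sec: GL2nSpn}, the discrete spectrum of $\Sp_{2n}$ need not be multiplicity free, so the argument must be phrased throughout in terms of (possibly higher multiplicity) isotypic components rather than individual irreducible constituents.
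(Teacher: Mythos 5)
Your argument is correct and is exactly the bookkeeping the paper leaves implicit: the paper states this corollary without an explicit proof, expecting the reader to deduce it from the preceding (unnumbered) corollary that each $L^2_{\cuspdatum}(\AS{G})_{\singcls}$ with $(\cuspdatum,\singcls)\in\tilde\pointcls$ is a sum of isotypic components and distinct such summands have no intertwining. You spell out that deduction, including the needed observation $\tilde L^2_{\disc}(\AS{G})=\hat\dsum_{(\cuspdatum,\singcls)\in\tilde\pointcls}L^2_{\cuspdatum}(\AS{G})_{\singcls}$, which is the same route.
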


We say that a Levi subgroup $L$ (or its associate class) is \emph{even} if $L=M_{2n_1,\dots,2n_k;2m}$
where $n_1+\dots+n_k+m=n$.
For such $L$ let $\openstab=L_{z_\gamma}=\Sp_{n_1}\times\dots\times\Sp_{n_k}\times(\Sp_m\times\Sp_m)$
where $z_\gamma$ is given in \eqref{eq: zgamma} with $\gamma=(2n_1,\dots,2n_k;m,m)$.

Recall the set $\Discdata$ of discrete data defined in \S\ref{sec: discrete data}.
We define a subset $\Discdata_{H\type}$ of $\Discdata$ by
\begin{multline*}
\Discdata_{H\type}=\{[(L,\delta)]:L=M_{(2n_1,\dots,2n_k;2m)},n_1+\dots+n_k+m=n,\delta=\delta_1\otimes\dots\otimes\delta_k\otimes\sigma,\\
\delta_i\in\Pi_{\disc,\Sp_{n_i}\dist}(A_{\GL_{2n_i}}\bs\grp{GL}_{2n_i}(\A)),i=1,\dots,k,\sigma\in\Pi_{\disc,\Sp_m\times\Sp_m\type}(\grp{Sp}_{2m}(\A))\}.
\end{multline*}
We denote by $L^2_{\disc,\openstab\type}(\AS{L})$ the image of
\[
\otimes_{i=1}^kL^2_{\disc,\Sp_{n_i}\dist}(\AS{GL_{2n_i}})
\otimes L^2_{\disc,\Sp_m\times\Sp_m\type}(\AS{Sp_{2m}})
\]
under the isomorphism
\[
\otimes_{i=1}^kL^2(\AS{GL_{2n_i}})\otimes L^2(\AS{Sp_{2m}})\rightarrow L^2(\AS{L}).
\]
For any $\discdata\in\Discdata$ define $\tilde L^2_{\discdata}(\AS{G})=L^2_{\discdata}(\AS{G})\cap\tilde L^2(\AS{G})$.

Finally, we can state the upper bound result on $L^2_{H\dist}(\AS{G})$.

\begin{theorem} \label{thm: reformulate upper bound}
\[
L^2_{H\dist}(\AS{G})\subset\hat\dsum_{\discdata\in\Discdata_{H\type}}\tilde L^2_{\discdata}(\AS{G})
=\bigoplus_{[L]\text{ even}}\conteisen(L^2(\iii\aaa_L^*;\Ind L^2_{\disc,\openstab\type}(\AS{L}))^{N_G(L)}).
\]
\end{theorem}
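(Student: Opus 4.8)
The strategy is to combine Corollary \ref{cor: Hdist} (the abstract upper bound in terms of the equivalence classes $\singcls\in[S_\cuspdatum]_{H\dist}$) with the explicit description of the $H$-period of pseudo Eisenstein series from Theorem \ref{thm: pesudoperiod}, and then translate the resulting constraints on cuspidal data into constraints on discrete data, exactly as was done for the pair $(\GL_{2n},\Sp_n)$ in Proposition \ref{prop: uprbndGLn}. The first step is to record what the affine spaces $\affines^H_M$ of \S\ref{sec: finerspectraldecomposition} actually are in the present case. By Theorem \ref{thm: pesudoperiod} the period $\int_{\AS{H}}\theta_\phi(h)\,dh$ vanishes unless $M\subset\inj(\GL_{2n})$, i.e. unless $\cuspdatum\in\tilde\cuspdata$ (this already gives the inclusion $\alldata_{H\dist}\subset\tilde\alldata$ noted in \S\ref{sec: fineL2dist}), and otherwise is a finite sum $\sum_x\int_{\lambda_x+\iii(\aaa_M^*)_x^-}J(\phi[\lambda],x,\lambda)\,d\lambda$ over $M$-cuspidal orbits $x$. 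So the relevant affine spaces are precisely the $\rho_x+(\aaa_M^*)_x^-$ with $x$ an $M$-cuspidal orbit, and by Lemma \ref{lem: rhox description} together with the discussion that follows it, $\rho_x+(\aaa_M^*)_x^-$ is the explicit intersection of the singular hyperplanes $\h_{2i-1}^-$, $\h_i$, $\h_i^{(h_ih_{i+1})}$, $\h_{2k+l}^+$ described at the end of \S\ref{sec: exponents}. Thus $\affines^H_{(M,\pi)}$ is non-empty only when $(M,\pi)$ is of the appropriate type: $M=M_{(r_1,r_1,\dots,r_k,r_k,s_1,\dots,s_l;0)}$ (with each $s_j$ even or $1$) and $\pi\simeq\sigma_1\otimes\tilde\sigma_1\otimes\cdots\otimes\sigma_k\otimes\tilde\sigma_k\otimes\tau_1\otimes\cdots\otimes\tau_l$ with $\tau_j$ trivial if $s_j=1$ and $\GL_{s_j/2}\times\GL_{s_j/2}$-distinguished cuspidal otherwise, as recorded in \S\ref{sec: intertwining periods}, and the relevant $\sing$ is the affine subspace cut out by the equations above.

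The second step is the combinatorial translation. Fix $\cuspdatum=[(M,\pi)]\in\tilde\cuspdata$, a point-class $\singcls=[(M,\pi,\mu+\aaa_L^*)]\in[S_\cuspdatum]$ (so $\dim\singcls=0$ and $L^2_\cuspdatum(\AS{G})_\singcls$ is in the discrete spectrum), and suppose $\singcls'\succeq\singcls$ for some $\singcls'\in[\affines^H_\cuspdatum]$. After conjugating we may assume $M\subset L$ with $L$ standard and $\mu=\mu_\delta\in(\aaa_M^L)^*$, where $(L,\delta)$ is the discrete datum attached to $\singcls$ and $\delta$ is built from Speh representations on the $\GL$-blocks of $L$ and a discrete representation on a possible $\Sp$-block. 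The condition $\singcls'\succeq\singcls$ says, after applying a suitable Weyl element, that $\mu_\delta+\aaa_L^*$ lies in the affine subspace $\rho_x+(\aaa_M^*)_x^-$ for some $M$-cuspidal $x$. The blocks of $L$ group together blocks of $M$; the paired structure of $\pi$ (coming from $\sigma_i\otimes\tilde\sigma_i$ on the $\GL$-part) together with the hyperplane relations $\h_{2i-1}^-$ (which force coordinates of $\aaa_L^*$ inside these pairs to be equal) forces each $\GL$-block of $L$ to be an even grouping of $M$-blocks — this is exactly the argument in the last paragraph of the proof of Proposition \ref{prop: uprbndGLn}, giving $L$ even. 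Moreover the hyperplane relations determining $\rho_x$ on the $\tau_j$-part force the Speh representation $\Speh(\sigma_i,r_i)$ sitting on the block $\GL_{2n_i}$ of $L$ to have $r_i$ even (by comparing $\mu_\delta$ with the value of $\rho_x$ given by Lemma \ref{lem: rhox description}), hence each $\delta_i\in\Pi_{\disc,\Sp_{n_i}\dist}(\GL_{2n_i})$ — here one invokes Corollary \ref{cor: Spndist}, which identifies $L^2_{\disc,\Sp_{n_i}\dist}$ with the even-Speh part. Finally the $\Sp$-block of $L$ (if present) must carry a representation $\sigma$ occurring in $L^2_{\disc,\Sp_m\times\Sp_m\type}(\Sp_{2m})$: this is forced because $x$ restricted to that block must itself be cuspidal for the corresponding smaller pair, so the datum there lies in $\pointcls_{H\dist}$ for $\Sp_{2m}$, which by definition means $\sigma\in\Pi_{\disc,\Sp_m\times\Sp_m\type}(\Sp_{2m}(\A))$. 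Putting these three constraints together gives $[(L,\delta)]\in\Discdata_{H\type}$.

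The third step is to assemble the inclusion. By Corollary \ref{cor: Hdist}, $L^2_{H\dist}(\AS{G})\subset\hat\dsum_{(\cuspdatum,\singcls)\in\alldata_{H\dist}}L^2_\cuspdatum(\AS{G})_\singcls$; by \eqref{eq: expcorHdist} the right-hand side is contained in $\tilde L^2(\AS{G})$, so only cuspidal data in $\tilde\cuspdata$ contribute. The analysis of steps one and two shows that any $\singcls\in[S_\cuspdatum]_{H\dist}$ with $\cuspdatum\in\tilde\cuspdata$ satisfies $\singcls'\succeq\singcls$ for some $\singcls'$ whose attached discrete datum lies in $\Discdata_{H\type}$; combined with the identity \eqref{eq: discdata=cuspdata}-style identification of $L^2_\cuspdatum(\AS{G})_\singcls$ with a piece of $L^2_{\discdata}(\AS{G})$ (via $\conteisen_L$ applied to $L^2(\iii\aaa_L^*;\Ind\sum_{w}wV_\delta)^{N_G(L)}$, as in \S\ref{sec: fineL2dist}), and the fact that $\succeq$ only enlarges $\sing$ within a fixed $(M,\pi)$ hence keeps the attached $L$ and its Speh-data of the same type, one gets $L^2_\cuspdatum(\AS{G})_\singcls\subset\tilde L^2_{\discdata}(\AS{G})$ for some $\discdata\in\Discdata_{H\type}$. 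Summing over all such $(\cuspdatum,\singcls)$ yields the first inclusion of the theorem. The displayed equality $\hat\dsum_{\discdata\in\Discdata_{H\type}}\tilde L^2_{\discdata}(\AS{G})=\bigoplus_{[L]\text{ even}}\conteisen(L^2(\iii\aaa_L^*;\Ind L^2_{\disc,\openstab\type}(\AS{L}))^{N_G(L)})$ is then a formal unwinding of the definitions of $\Discdata_{H\type}$, $L^2_{\disc,\openstab\type}(\AS{L})$, and the map $\conteisen=\conteisen_L$, together with the observation that each even Levi $L$ contributes exactly the $\discdata$ with $\delta$ of the prescribed type.

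\textbf{Main obstacle.} The technical heart is step two: extracting, from the single containment $\mu_\delta+\aaa_L^*\subset\rho_x+(\aaa_M^*)_x^-$, the precise conclusion that $L$ is even \emph{and} each $r_i$ is even \emph{and} the $\Sp$-part is of $\Sp_m\times\Sp_m$-type. The equality of affine subspaces must be matched coordinate by coordinate against the explicit formula for $\rho_x$ in Lemma \ref{lem: rhox description} and the description of $\aaa_L$ in \eqref{eq: plus sp}, after choosing the right Weyl conjugate; keeping track of which $M$-blocks get grouped into which $L$-block, and checking that the parity constraints propagate correctly (in particular that the $\mu_\delta$-pattern $((r_i-1)/2,\dots,(1-r_i)/2)$ is compatible with the hyperplanes $\h_j^{\pm}$ only when $r_i$ is even), is the one genuinely laborious verification. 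The compatibility with Corollary \ref{cor: Spndist} — that ``even $r_i$'' is the same as ``$\delta_i$ is $\Sp_{n_i}$-distinguished'' — should be invoked rather than reproved.
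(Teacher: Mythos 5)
Your proposal reproduces the paper's argument: the upper bound comes from Corollary \ref{cor: Hdist} with $\affines^H_\cuspdatum$ read off from Theorem \ref{thm: pesudoperiod} and Lemma \ref{lem: rhox description}, the parity constraints on the discrete datum $(L,\delta)$ are extracted by the same combinatorial unwinding as in Proposition \ref{prop: uprbndGLn}, and the right-hand equality is Corollary \ref{cor: Htype components}. One minor imprecision in your narrative: the evenness of each $r_i$ on the $\GL$-blocks is forced by the \emph{linear} condition $w\aaa_L^*\subset(\aaa_M^*)_x^-$ (each $L$-block must absorb $M$-blocks in pairs), not by an affine comparison of $\mu_\delta$ with $\rho_x$ -- the explicit $\rho_x$ values from Lemma \ref{lem: rhox description} enter only in pinning down the residual class on the $\Sp$-factor of $L$.
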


Roughly speaking, the assertion is that for the discrete data which occurs in the $H$-distinguished spectrum,
the $\GL$-part is as in the case of $(\GL_{2r},\Sp_r)$ considered in the previous section,
while on the $\Sp$-part all we can say is that it arises from data in $\pointcls_{\Sp_m\times\Sp_m\dist}$.

\begin{proof}
The equality on the right-hand side follows from Corollary \ref{cor: Htype components}.
We will deduce the inclusion on the left-hand side from \eqref{eq: expcorHdist}.
Suppose that $(\cuspdatum,\singcls)\in\alldata_{H\dist}$.
Let $(M,\pi=\pi_1\otimes\dots\otimes\pi_k)\in\cuspdatum$ where $M=M_{(n_1,\dots,n_k;0)}$ with $n_1+\dots+n_k=2n$
and $(M,\pi,\mu +\aaa_L^*)\in\singcls\in [S_\cuspdatum]_{H\dist}$ with $L=M_{(n'_1,\dots,n'_{k'};m')}\supset M$.
For any $j=0,\dots,k'$ there exists $l'_j=0,\dots,k$ (with $l'_0=0$) such that $n_1+\dots+n_{l'_j}=n'_1+\dots+n'_j$.
Let $s=l'_{k'}$. Let $M'=M_{(n_{s+1},\dots,n_k;0)}$, $\pi'=\pi_{s+1}\otimes\dots\otimes\pi_k\in\Pi_{\cusp}(A_{M'}\bs\grp M'(\A))$ and
$\mu'$ the last $k-s$ coordinates of $\mu$.
We need to show that $l'_j-l'_{j-1}$ is even for all $j=1,\dots,k'$, $m'$ is even and
$[(M',\pi',\{\mu'\})]\in [S_{\cuspdatum'}]_{\Sp_{m'/2}\times\Sp_{m'/2}\dist}$ where $\cuspdatum'=[(M',\pi')]$.

By the explicit description of $\affines^H_{\cuspdatum}$, the condition $[(M,\pi,\mu +\aaa_L^*)]\in [S_\cuspdatum]_{H\dist}$
is that for some $w\in W(M)$ we have $w\pi=\tau_1\otimes\tau_1\otimes\dots\otimes\tau_l\otimes\tau_l\otimes\tau_1'\otimes\dots\otimes\tau_r'$
where $2l+r=k$ and for some $l_1\le r$ we have
\begin{itemize}
\item $\tau_i'\in\Pi_{\cusp}(A_{\GL_{2m_i'}}\bs\grp{GL}_{2m_i'}(\A))$ is $(\GL_{m_i'}\times\GL_{m_i'})$-distinguished for $i=1,\dots,l_1$,
\item $\tau_i'$ is the trivial character of $\grp{GL}_1(\A)$ for $i>l_1$,
\end{itemize}
and $w(\mu+\aaa_L^*)$ is contained in
\[
\{(\nu_1+\frac12,\nu_1-\frac12,\dots,\nu_l+\frac12,\nu_l-\frac12,\underbrace{\frac12,\dots,\frac12}_{l_1},\lambda_1,\dots,\lambda_{l_2}):
\nu_1,\dots,\nu_l\in\R\}
\]
where $\lambda_1,\dots,\lambda_{l_2}$ (with $l_1+l_2=r$) are as in Lemma \ref{lem: rhox description} (for some $x$).
Thus, there exists a permutation $\sigma$ of $\{1,\dots,k\}$ and signs $\epsilon_1,\dots,\epsilon_l$ such that
$x_{\sigma(2i-1)}+\epsilon_ix_{\sigma(2i)}$, $i=1,\dots,l$ and $x_{\sigma(i)}$, $i>2l$ are constant for all
$(x_1,\dots,x_k)\in\aaa_L^*$ (viewed as a subspace of $\aaa_M^*\simeq\R^k$).
It follows that $\sigma(i)>s$ for $i>2l$ while for every $i=1,\dots,l$ either
\[
\text{both $\sigma(2i-1)$ and $\sigma(2i)$ are bigger than $s$}
\]
or
\[
\text{$\epsilon_i=-1$ and there exists $j=1,\dots,k'$ such that $l'_{j-1}<\sigma(2i-1),\sigma(2i)\le l'_j$.}
\]

Thus, $l'_j-l'_{j-1}$ is even for all $j$.
Moreover, if $i_1,\dots,i_t$ are the indices $i=1,\dots,l$ such that $\sigma(2i-1),\sigma(2i)>s$ then
$[(M',\pi',\{\mu'\})]$ contains the representative $(M'',\pi'',\mu'')$ where
\[
\pi''=\tau_{i_1}\otimes\tau_{i_1}\otimes\dots\otimes\tau_{i_t}\otimes\tau_{i_t}\otimes\tau_1'\otimes\dots\otimes\tau_r'
\]
and
\[
\mu''=(\nu_{i_1}+\frac12,\nu_{i_1}-\frac12,\dots,\nu_{i_t}+\frac12,\nu_{i_t}-\frac12,\underbrace{\frac12,\dots,\frac12}_{l_1},\lambda_1,\dots,\lambda_{l_2}))
\]
for some $\nu_{i_1},\dots,\nu_{i_t}\in\R$.
It follows that $m'$ is even and once again by Lemma \ref{lem: rhox description} that $[(M',\pi',\{\mu'\})]\in [S_{\cuspdatum'}]_{\Sp_{m'/2}\times\Sp_{m'/2}\dist}$ as required.
\end{proof}

Theorem \ref{thm: reformulate upper bound} is not completely satisfactory since the upper bound it provides is unlikely to be tight.
We denote by $L^2_{\disc,\openstab\dist}(\AS{L})$ the image of
\[
\otimes_{i=1}^kL^2_{\disc,\Sp_{n_i}\dist}(\AS{GL_{2n_i}})
\otimes L^2_{\disc,\Sp_m\times\Sp_m\dist}(\AS{Sp_{2m}})
\]
under the isomorphism
\[
\otimes_{i=1}^kL^2(\AS{GL_{2n_i}})
\otimes L^2(\AS{Sp_{2m}})\rightarrow L^2(\AS{L}).
\]
In analogy with the case considered in the previous section, it is natural to make the following hypothesis.

\begin{conjecture} \label{conj: main}
We have
\[
L^2_{H\dist}(\AS{G})=\bigoplus_{[L]\textrm{ even}}\conteisen_L(L^2(\iii\aaa_L^*;\Ind L^2_{\disc,\openstab\dist}(\AS{L}))^{N_G(L)}).
\]
\end{conjecture}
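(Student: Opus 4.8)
The plan is to prove the two inclusions of the conjectured equality separately, both resting on the $H$-period formula of Theorem \ref{thm: pesudoperiod} and the intertwining periods of \S\ref{sec: intertwining periods}; in view of Theorem \ref{thm: reformulate upper bound} the remaining content is (a) to sharpen the upper bound by replacing $\openstab\type$ with $\openstab\dist$ on the right-hand side and (b) to prove the reverse inclusion.

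\textbf{The lower bound.} Fix an even Levi $L=M_{(2n_1,\dots,2n_k;2m)}$ and a discrete datum $\delta=\delta_1\otimes\dots\otimes\delta_k\otimes\sigma$ with each $\delta_i$ an even Speh representation of $\GL_{2n_i}(\A)$ and $\sigma$ in the discrete $(\Sp_m\times\Sp_m)$-distinguished spectrum of $\Sp_{2m}$. One has to show that $\conteisen_L$ of the corresponding induced space is not contained in $\overline{\pseudospace(\AS{G})_H^\circ}$. Writing $(M,\pi)$ for the cuspidal datum in $\inj(\GL_{2n})$ underlying $\delta$ and choosing the $M$-cuspidal orbit $x$ whose associated data $\p$ (see \eqref{eq: pq data}) matches $\delta$, one evaluates Theorem \ref{thm: pesudoperiod} on pseudo Eisenstein series $\theta_\phi$ spectrally supported near $(M,\pi)$ and isolates the term $\int J(\phi[\lambda],x,\lambda)\,d\lambda$. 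By the factorization of \S\ref{sec: unram} its unramified part is $L_\sigma^S(\lambda)$, a product of Rankin--Selberg and symmetric/exterior square $L$-functions of the $\delta_i$, non-vanishing on the relevant domain, while the complementary finite-place factor contains the inner integral over $\grp G_x(\A)$; the $\GL$-components of the latter are Petersson products and $\GL\times\GL$-periods of the cuspidal data of the $\delta_i$ (non-zero because the $\delta_i$ are even Speh, cf.~\S\ref{sec: GL2nSpn} and \cite{MR1159108, MR1241129}) and the $\Sp_{2m}$-component is exactly the $(\Sp_m\times\Sp_m)$-period of $\sigma$, non-zero by distinguishedness of $\sigma$. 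Pushing the contour $\lambda_x+\iii(\aaa_M^*)_x^-$ onto the singular point of Lemma \ref{lem: rhox description} converts the integral into the desired residual contribution and exhibits the required non-vanishing. Organizing this as an induction on $n$ is natural: the case $L=G$ (the discrete spectrum of $\Sp_{2n}$ itself) must be treated by hand, while for proper $L$ one appeals to the inductive description of $L^2_{\disc,H\dist}(\AS{Sp_{2m}})$ and to Corollary \ref{cor: Spndist} for the $\GL$-factors.

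\textbf{Sharpening the upper bound.} Here one carries out the residue calculus on the formula of Theorem \ref{thm: pesudoperiod} alluded to in the remark following it. Moving contours picks up residual Eisenstein series indexed by the intersections of singular hyperplanes of the $J(x,\sigma,\lambda)$, and using Corollary \ref{cor: Htype components} and the multiplicity-one structure of $\tilde L^2_{\disc}(\AS{G})$ one isolates the contribution of a single discrete datum $(L,\delta_1\otimes\dots\otimes\delta_k\otimes\sigma)$ already constrained by Theorem \ref{thm: reformulate upper bound}. The residue of $J(\phi[\lambda],x,\lambda)$ at such a point again factors through the inner period over $\grp G_x(\A)$, whose $\Sp_{2m}$-component is the $(\Sp_m\times\Sp_m)$-period of $\sigma$; if $\sigma$ lies in the $(\Sp_m\times\Sp_m)$-type but not the $(\Sp_m\times\Sp_m)$-distinguished spectrum this factor vanishes, so the datum does not contribute, provided one also rules out compensating contributions from other $M$-cuspidal orbits or from cancellation among the $\GL$-factors. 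This step needs the analytic continuation and functional equations of the intertwining periods $J(x,\sigma,\lambda)$, which (as noted in \S\ref{sec: intertwining periods}) should follow from those of Eisenstein series as in \cite{MR2010737, MR2254544}.

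\textbf{Main obstacle.} I expect the hardest part to be the residue calculus of the preceding paragraph, together with the required analytic continuation of the intertwining periods and the identification of the residual distributions so obtained with Langlands's construction of the residual spectrum of $\Sp_{2n}$ --- this is where the genuine complexity of the discrete spectrum of $\Sp_{2n}$ (cf.~\cite{MR1815141}), absent in the $\GL$ case of \S\ref{sec: GL2nSpn}, enters. For the lower bound the corresponding difficulty is the propagation of distinguishedness from $\sigma$ to the residual Eisenstein series, i.e.\ non-vanishing of the leading term of the intertwining period at the residual point; this does not follow formally and would presumably require a genuinely global input, such as the relation to special values of $L$-functions or the descent construction of Ginzburg--Rallis--Soudry.
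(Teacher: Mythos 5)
This is not a statement the paper proves: it is explicitly labelled a conjecture, and immediately after stating it the authors write, ``At this stage however we can prove neither the inclusion $\subseteq$ nor the other.'' Your proposal is therefore not being measured against a hidden proof in the paper; it is being measured against the fact that no proof exists, and read in that light it is a reasonable roadmap rather than a proof. You yourself flag the decisive steps as ``obstacles,'' and those obstacles are precisely where the argument is missing, not merely technical.

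Concretely, the gaps are these. For the lower bound, the step ``pushing the contour onto the singular point \ldots converts the integral into the desired residual contribution and exhibits the required non-vanishing'' is exactly the residue calculus that the paper declines to carry out (see the remark following Theorem \ref{thm: pesudoperiod}); the only case the paper actually handles is Theorem \ref{thm: lwrbnd for distspec}, for the cuspidal-datum residual representations $\Pi_\tau$, and even there the argument needs the explicit residue datum and the functional equation of Lemma \ref{lem: GLnFE}, not a general contour shift. Non-vanishing of the leading term of $J(x,\sigma,\lambda)$ at the residual point does not follow from non-vanishing of the $\Sp_m\times\Sp_m$-period of $\sigma$: the period of a residual form is built from several terms and can vanish by cancellation, which is precisely why the paper treats even $\Pi_\tau$ with care. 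Your inductive set-up also presupposes a description of $L^2_{\disc,\Sp_m\times\Sp_m\dist}(\AS{\Sp_{2m}})$, but the paper states, immediately after the conjecture, that ``we do not have a precise conjecture about the space $L^2_{\disc,H\dist}(\AS{G})$ itself'' and that it is ``not even clear whether a simple description \ldots is realistic''; so the inductive hypothesis is unavailable. For the upper bound, the claim that the inner period factor vanishes when $\sigma$ is of $\Sp_m\times\Sp_m$-type but not $\Sp_m\times\Sp_m$-distinguished is circular at the level of the residue: what one knows is vanishing of convergent period integrals of pseudo Eisenstein series, and passing to residues requires exactly the analytic continuation and functional equations of $J(x,\sigma,\lambda)$ that the paper notes it ``will not carry out.'' Finally, ``ruling out compensating contributions from other $M$-cuspidal orbits'' is a genuine issue, not a formality: several orbits can map to the same discrete datum after residues, and the paper's partial result (Theorem \ref{thm: reformulate upper bound}) is stated in the weaker $\openstab\type$ form precisely because this interference has not been controlled.

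So your sketch correctly identifies the two inclusions and the natural tools (Theorem \ref{thm: pesudoperiod}, the intertwining periods, the residue calculus, the $\GL$ case of \S\ref{sec: GL2nSpn}), but it does not supply a proof, and in particular it does not go beyond what the authors already know and have declared open.
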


At this stage however we can prove neither the inclusion $\subseteq$ nor the other.
Also, we do not have a precise conjecture about the space $L^2_{\disc,H\dist}(\AS{G})$ itself.
It is not even clear whether a simple description of $L^2_{\disc,H\dist}(\AS{G})$ is realistic.
See also Remark \ref{rem: functoriality2} below.

\subsection{}
For completeness we give a criterion for the convergence of $H$-period integrals of automorphic forms on $\AS{G}$
in terms of their cuspidal exponents, in analogy with Lemma \ref{lem: conv critGLn}.

\begin{lemma} \label{lem: conv crit}
Let $\phi$ be an automorphic form on $\AS{G}$.
Suppose that for any parabolic subgroup $P=M\ltimes U$ and any cuspidal exponent $\lambda$ of $\phi$ along $P$,
the coordinates $(x_\alpha)_{\alpha\in\srts_0}$ of $\Re\lambda+(0,1,\dots,n-1,-n,\dots,-1)$ with respect to the basis
$\srts_0$ satisfy $x_\alpha<0$ for all $\alpha\notin\srts_0^M$.
Then $\phi$ is absolutely integrable over $\AS{H}$.
\end{lemma}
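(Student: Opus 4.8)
The proof should mirror that of Lemma \ref{lem: conv critGLn}, replacing the quotient $\AS{H}$ for $\GL_{2n}$ by the symplectic analogue and using the reduction theory for $\grp H=\grp{Sp}_n\times\grp{Sp}_n$ in place of that for $\GL_n\times\GL_n$. First I would fix $\delta>0$ small, let $K_H=K\cap\grp H(\A)$ and $\grp B_H=\grp B\cap\grp H$, and write, via the Iwasawa decomposition for $\grp H(\A)$,
\[
\int_{\AS{H}}\abs{\phi(h)}\ dh\le\int_{K_H}\int_{B_H\bs\grp{B_H}(\A)^1}\int_{A_0^H(\delta)}\abs{\phi(bak)}\modulus_{B_H}(a)^{-1}\ db\ da\ dk,
\]
where $A_0^H(\delta)=\{a\in A_{T_H}:e^{\sprod{\alpha}{H_{T_H}(a)}}>\delta,\ \forall\alpha\in\srts_0^H\}$. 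Here $\grp T_H=\grp T\cap\grp H$ and $\srts_0^H$ is the set of simple roots of $\grp T_H$ in $\Lie\grp H$ with respect to $\grp B_H$. The integral over the compact $K_H$ and the finite-volume quotient $B_H\bs\grp{B_H}(\A)^1$ is harmless, so everything comes down to the integral over $A_0^H(\delta)$.

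Next I would observe that $A_0^H(\delta)\subset A_0(\delta)=\{a\in A_T:e^{\sprod{\alpha}{H_T(a)}}>\delta,\ \forall\alpha\in\srts_0\}$ — this uses that $\epsilon=\epsilon_{2n}$ normalizes $\grp T$ and $\grp B$, so that $\grp T_H$ is built from the entries of the diagonal torus and the positive roots of $\grp H$ restrict from positive roots of $\grp G$; concretely, $\grp T_H=\{\diag(a_1,\dots,a_{2n},a_{2n}^{-1},\dots,a_1^{-1})\}$ exactly as $\grp T$, and the dominance condition on $\srts_0^H$ forces dominance for $\srts_0$ up to the sign pattern given by $\epsilon$. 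Once inside $A_0(\delta)$, I can invoke \cite[Lemma I.4.1]{MR1361168}: there is $N$ such that for any choice of $\mu^P\in(\aaa_0^M)^*$,
\[
\abs{\phi(g)}\ll_{\{\mu^P\}_P}\sum_{(P,\lambda)}e^{\sprod{\Re\lambda+\mu^P+\rho_0}{H_0(g)}}(1+\norm{H_P(g)})^N,
\]
the sum over pairs $(P,\lambda)$ with $\lambda$ a cuspidal exponent of $\phi$ along $P$. Substituting $g=ak$ with $a\in A_0^H(\delta)$ and restricting $H_0$ to $\aaa_{T_H}$, convergence of $\int_{\AS{H}}\abs{\phi(h)}\ dh$ follows if for each $(P,\lambda)$ we can choose $\mu^P\in(\aaa_0^M)^*$ so that the restriction to $\aaa_{T_H}$ of $\Re\lambda+\mu^P+\rho_0-2\rho_0^H$ is a negative linear combination of $\srts_0^H$.

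The remaining point is to identify this restriction-to-$\aaa_{T_H}$ operation concretely. The involution $\theta=C(\epsilon)$ induces an involution on $\aaa_0^*$; writing $\aaa_0^*\simeq\R^{2n}$ with $\epsilon=\epsilon_{2n}$ and recalling $\epsilon_{2n}=\diag(\delta_{2n},\delta_{2n}^*)$, one computes that $\theta$ acts on $\R^{2n}$ by the sign pattern $x_i\mapsto(-1)^{i-1}x_i$ composed with the symplectic flip, so that $\aaa_{T_H}^*$ is the fixed space and the restriction map $\pr_H:\aaa_0^*\to(\aaa_0^H)^*$ is the projection onto it; moreover $\pr_H(\rho_0)$ and $\rho_0^H$ are computed explicitly and their difference is, by design, the vector $(0,1,\dots,n-1,-n,\dots,-1)$ appearing in the statement (this is the symplectic analogue of \eqref{eq: prHrho_0}). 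With this dictionary, the condition ``$\Re\lambda+(0,1,\dots,n-1,-n,\dots,-1)$ has strictly negative coordinates $x_\alpha$ for all $\alpha\in\srts_0\setminus\srts_0^M$'' is exactly what allows a suitable choice of $\mu^P$ supported on $\aaa_0^M$ to make the restricted exponent strictly anti-dominant for $\srts_0^H$, and the integral converges. The main obstacle, and the only place requiring genuine care, is the explicit determination of $\theta$ on $\aaa_0^*$ and of $\pr_H(\rho_0)-2\rho_0^H$ — i.e.\ verifying that the combinatorial vector $(0,1,\dots,n-1,-n,\dots,-1)$ is the correct shift and that $\pr_H(\srts_0)\supseteq\srts_0^H$ with the right sign conventions; the rest is a verbatim adaptation of the proof of Lemma \ref{lem: conv critGLn}.
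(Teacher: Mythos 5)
Your argument breaks at the very first step that did real work in the GL-case proof. There, the involution $\theta(g)=J_n^{-1}\,{}^tg^{-1}J_n$ acts nontrivially on the diagonal torus of $\GL_{2n}$, so $T_H=T^\theta$ is a proper subtorus, $\pr_H:\aaa_0^*\to(\aaa_0^H)^*$ is a genuine quotient, and the crucial fact $\pr_H(\srts_0)=\srts_0^H$ gives the inclusion $A_0^H(\delta)\subset A_0(\delta)$ directly. In the symplectic case none of this survives: $\epsilon=\epsilon_{2n}$ is a \emph{diagonal} element of $G$, so conjugation by $\epsilon$ is trivial on $T$, hence $T_H=T$, $\aaa_{T_H}=\aaa_0$, and there is no nontrivial fixed subspace or restriction map $\pr_H$. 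More importantly, the simple roots $\srts_0^H$ of $H$ (with respect to $B\cap H$) are not simple roots of $G$ — for $\epsilon_{2n}$ alternating they are roots like $e_1-e_3,\,e_2-e_4,\dots$ — and in particular the simple $G$-roots $e_i-e_{i+1}$ are \emph{not} nonnegative combinations of $\srts_0^H$. So your claimed inclusion $A_0^H(\delta)\subset A_0(\delta)$ is simply false: the Siegel domain of $H$ reaches outside the Siegel domain of $G$.

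This is precisely what makes the lemma harder than Lemma~\ref{lem: conv critGLn}, and the paper's proof copes with it in two ways that your proposal omits. First, it replaces $H$ by the conjugate $\grp{H'}=C_G(\inj_{(n,n;0)}(I_n,-I_n))$, for which the combinatorics of $\srts_0^{H'}$ versus $\srts_0$ is cleaner (the roots of $H'$ are exactly the $e_i\pm e_j$ with $i,j$ both $\le n$ or both $>n$). Second — and this is the step with no counterpart in your write-up — instead of containment it proves the \emph{cover} $A'_0(\delta)\subset\bigcup_{w\in W^{G'}}A_0(\delta)^w$ over the explicit set of Weyl elements $W^{G'}$ corresponding to shuffle permutations, and then controls the sum over $w$ using $B_{H'}\subset B^w$ together with the monotonicity observation that $w\rho_0^{H'}-\rho_0^{H'}$ is a nonnegative sum of positive roots for all $w\in W^{G'}$; this reduces the estimate to the single case $w=1$, where $\rho_0-2\rho_0^{H'}=(0,1,\dots,n-1,-n,\dots,-1)$ appears. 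Without the Weyl-translate cover and the monotonicity of $w\rho_0^{H'}$, the bound from \cite[Lemma I.4.1]{MR1361168} cannot be applied over the whole of $A_0^H(\delta)$, and the proof does not close.
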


\begin{proof}
It is more convenient to work with the centralizer $\grp H'$ of $\inj_{(n,n;0)}(I_n,-I_n)$, which is conjugate to $\grp H$.

Fix $\delta>0$ sufficiently small and let
\[
A'_0(\delta)=\{a\in A_T:e^{\sprod{\alpha}{\Ht_T(a)}}>\delta\ \forall\alpha\in \srts_0^{H'}\}
\]
where $\srts_0^{H'}$ is the set of simple roots of $T$ in $\Lie(H')$ with respect to $B_{H'}=B\cap H'$. Also, let $K_{H'}=K\cap \grp H'(\A)$.
Then
\[
\int_{\AS{H'}}\abs{\phi(h)}\ dh\le\int_{K_{H'}}\int_{B_{H'}\bs \grp{B_{H'}}(\A)^1}\int_{A'_0(\delta)}\abs{\phi(bak)}\modulus_{B_{H'}}(a)^{-1}\ db\ da\ dk.
\]
Observe that
\[
A'_0(\delta)\subset\cup_{w\in W^{G'}}A_0(\delta)^w
\]
where $W^{G'}$ is the image under $\inj$ of the set of permutation matrices corresponding to the permutations $\sigma$ of $\{1,\dots,2n\}$ such that
$\sigma(1)<\dots<\sigma(n)$ and $\sigma(n+1)<\dots<\sigma(2n)$ and
\[
A_0(\delta)=\{a\in A_T:e^{\sprod{\alpha}{\Ht_T(a)}}>\delta\ \forall\alpha\in \srts_0\}.
\]
Thus,
\[
\int_{\AS{H'}}\abs{\phi(h)}\ dh\le\sum_{w\in W^{G'}}\int_{K_{H'}}\int_{B_{H'}\bs\grp{B_{H'}}(\A)^1}
\int_{A_0(\delta)}\abs{\phi(ba^wk)}\modulus_{B_{H'}}(a^w)^{-1}\ db\ da\ dk.
\]

Recall that by \cite[Lemma I.4.1]{MR1361168} there exists $N$ such that for any choice of $\mu^P\in(\aaa_0^M)^*$, $P=M\ltimes U$ parabolic subgroup,
we have
\[
\abs{\phi(g)}\ll_{\{\mu^P\}_P}\sum_{(P,\lambda)}e^{\sprod{\Re\lambda+\mu^P+\rho_0}{\Ht_0(g)}}(1+\norm{\Ht_P(g)})^N,
\]
for any $g\in \grp G(\A)$ such that $\sprod{\alpha}{\Ht_0(g)}>\delta$ for all $\alpha\in\srts_0$,
where the sum ranges over the pairs consisting of a parabolic subgroup $P$ and a cuspidal exponent $\lambda$ of $\phi$
along $P$.
Observe that $B_{H'}\subset B^w$ for any $w\in W^{G'}$.
Thus, for any $b\in\grp{B_{H'}}(\A)^1$, $a\in A_0(\delta)$, $w\in W^{G'}$ and $k\in K$ we have
\[
\abs{\phi(ba^wk)}=\abs{\phi(b^{w^{-1}}awk)}\ll_{\{\mu^P\}_P}\sum_{(P,\lambda)}e^{\sprod{\Re\lambda+\mu^P+\rho_0}{\Ht_T(a)}}
(1+\norm{\Ht_P(a)})^N.
\]
Therefore, to show the convergence of $\int_{\AS{H'}}\abs{\phi(h)}\ dh$ it suffices to prove that
\[
\int_{A_0(\delta)}e^{\sprod{\Re\lambda+\mu^P+\rho_0}{\Ht_T(a)}}(1+\norm{\Ht_T(a)})^N\modulus_{B_{H'}}(a^w)^{-1}\ da<\infty
\]
for any $w\in W^{G'}$, $(P,\lambda)$ and a suitable choice of $\mu^P\in(\aaa_0^M)^*$.
Equivalently, the projection of $\Re\lambda+\rho_0-2w\rho_0^{H'}$ to $\aaa_M^*$ is a linear combination of $\srts_P$ with negative coefficients
where $\rho_0^{H'}$ corresponds to $\modulus_{B_{H'}}^{\frac12}$.
Note that $\rho_0^{H'}=(n,\dots,1,n,\dots,1)$ and hence $w\rho_0^{H'}-\rho_0^{H'}$ is a sum of positive
roots of $G$ with non-negative coefficients for any $w\in W^{G'}$.
Thus, it suffices to check the condition for $w=1$. The lemma therefore follows from
the fact that $\rho_0-2\rho_0^{H'}=(0,1,\dots,n-1,-n,\dots,-1)$.
\end{proof}

\begin{remark} \label{rem: notconv}
Conversely, one can show that if $\phi$ is an automorphic form on $\AS{G}$ such that
\[
\int_{\AS{H}}\abs{\phi(hg)}\ dh<\infty
\]
for all $g\in \grp G(\A)$ then the cuspidal exponents of $\phi$ satisfy the conditions of Lemma \ref{lem: conv crit}.
The argument is similar to that of \cite[Lemma I.4.11]{MR1361168} and will be omitted.
\end{remark}

\subsection{Lower bound for $L^2_{H\dist}(\AS{G})$}

We end up with an important example of representations which occur in $L^2_{\disc,H\dist}(\AS{G})$.
First we need a lemma.

\begin{lemma} \label{lem: decompcusp}
We have
\[
\pseudospace(\AS{G})_H^\circ=\dsum_{\cuspdatum}\pseudospace_{\cuspdatum}(\AS{G})_H^\circ.
\]
Thus,
\[
L^2_{H\dist}(\AS{G})=\hat\dsum_{\cuspdatum}L^2_{H\dist,\cuspdatum}(\AS{G})
\]
where $L^2_{H\dist,\cuspdatum}(\AS{G})=L^2_{H\dist}(\AS{G})\cap L^2_{\cuspdatum}(\AS{G})$. Furthermore,
$L^2_{H\dist,\cuspdatum}(\AS{G})$ is the orthogonal complement of $\pseudospace_{\cuspdatum}(\AS{G})_H^\circ$ in $L^2_{\cuspdatum}(\AS{G})$.
\end{lemma}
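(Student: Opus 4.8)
The inclusion $\supseteq$ in the first identity is immediate from the linearity of $\Phi\mapsto\int_{\RAS HG}(f*\Phi)(h)\,dh$, and since $\pseudospace_\cuspdatum(\AS{G})_H^\circ=\pseudospace_\cuspdatum(\AS{G})\cap\pseudospace(\AS{G})_H^\circ$ straight from the definitions, the reverse inclusion amounts to saying that $\pseudospace(\AS{G})_H^\circ$ is \emph{graded} for the decomposition $\pseudospace(\AS{G})=\dsum_\cuspdatum\pseudospace_\cuspdatum(\AS{G})$: if $\phi=\sum_\cuspdatum\phi_\cuspdatum\in\pseudospace(\AS{G})_H^\circ$ (finite sum, $\phi_\cuspdatum\in\pseudospace_\cuspdatum(\AS{G})$) then each $\phi_\cuspdatum$ lies in $\pseudospace(\AS{G})_H^\circ$. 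Granting this, the remaining two statements are pure Hilbert-space bookkeeping: the $L^2_\cuspdatum(\AS{G})$ are mutually orthogonal and each $\pseudospace_\cuspdatum(\AS{G})_H^\circ\subset L^2_\cuspdatum(\AS{G})$, so passing to orthogonal complements in $L^2(\AS{G})$ turns the first identity into $L^2_{H\dist}(\AS{G})=\hat\dsum_\cuspdatum C_\cuspdatum$, where $C_\cuspdatum$ is the orthogonal complement of $\pseudospace_\cuspdatum(\AS{G})_H^\circ$ in $L^2_\cuspdatum(\AS{G})$; and $C_\cuspdatum\subset L^2_\cuspdatum(\AS{G})$ together with the orthogonality of the $L^2_\cuspdatum(\AS{G})$ identifies $C_\cuspdatum$ with $L^2_{H\dist}(\AS{G})\cap L^2_\cuspdatum(\AS{G})=L^2_{H\dist,\cuspdatum}(\AS{G})$.

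To prove gradedness I would exploit the $\grp G(\A)$-equivariance of the $H$-period. Because pseudo Eisenstein series are rapidly decreasing, $\psi\in\pseudospace(\AS{G})_H^\circ$ if and only if the function $F_\psi(g):=\int_{\AS H}\psi(hg)\,dh$ on $\grp G(\A)$ vanishes identically (use $\RAS HG=\AS H$ and the remark in \S\ref{sec: dist spec}), and $\psi\mapsto F_\psi$ intertwines the right $(\mathfrak g,K)\times\grp G(\A_f)$-action on smooth $K$-finite automorphic functions with the right regular action on functions on $\grp G(\A)$. By Theorem \ref{thm: pesudoperiod}, if $\cuspdatum\notin\tilde\cuspdata$ then $F_{\phi_\cuspdatum}\equiv0$ already (convolution by a bi-$K$-finite $f\in C_c^\infty(\grp G(\A))$ preserves pseudo Eisenstein series and their attached Levi, which is not contained in the Siegel Levi), so I may assume every $\cuspdatum$ occurring in $\phi$ lies in $\tilde\cuspdata$. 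For such $\cuspdatum$, let $W_\cuspdatum\subset L^2_\cuspdatum(\AS{G})$ be the $(\mathfrak g,K)\times\grp G(\A_f)$-module generated by $\phi_\cuspdatum$; by the Langlands/M\oe glin--Waldspurger description of $L^2_\cuspdatum(\AS{G})$ through Eisenstein series and their residues, every irreducible subquotient $\sigma$ of $W_\cuspdatum$ satisfies $\sigma_v\simeq\Ind((\pi_\cuspdatum)_v,\lambda)^\unr$ for almost all $v$, for a representative $(M_\cuspdatum,\pi_\cuspdatum)\in\cuspdatum$ and some $\lambda\in(\aaa_{M_\cuspdatum}^G)^*_\C$. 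The module $V_\cuspdatum:=F_{W_\cuspdatum}$ generated by $F_{\phi_\cuspdatum}$ is a cyclic quotient of $W_\cuspdatum$, so its irreducible subquotients obey the same constraint.

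The separation step is then: for distinct $\cuspdatum,\cuspdatum'\in\tilde\cuspdata$, the modules $V_\cuspdatum$ and $V_{\cuspdatum'}$ have no common irreducible subquotient, since a common $\sigma$ would force $\Ind((\pi_\cuspdatum)_v,\lambda)^\unr\simeq\Ind((\pi_{\cuspdatum'})_v,\lambda')^\unr$ for almost all $v$, hence (Lemma \ref{lem: lcltoglblGLn}) $w(M_\cuspdatum,\pi_\cuspdatum,\lambda)=(M_{\cuspdatum'},\pi_{\cuspdatum'},\lambda')$ for some $w\in W$, and in particular $\cuspdatum=\cuspdatum'$. As $V_\cuspdatum$ is cyclic it has an irreducible quotient, and a nonzero submodule of $V_\cuspdatum\cap V_{\cuspdatum'}$ would contain a nonzero cyclic submodule with an irreducible quotient common to both; hence $V_\cuspdatum\cap V_{\cuspdatum'}=0$ and $\sum_{\cuspdatum\in\tilde\cuspdata}V_\cuspdatum$ is direct. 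Applying the $H$-period to $\phi$ gives $0=F_\phi=\sum_\cuspdatum F_{\phi_\cuspdatum}$ with $F_{\phi_\cuspdatum}\in V_\cuspdatum$, whence each $F_{\phi_\cuspdatum}=0$, i.e.\ $\phi_\cuspdatum\in\pseudospace(\AS{G})_H^\circ$, as required. I expect the delicate point to be the control of the irreducible subquotients of $W_\cuspdatum$ (hence of $V_\cuspdatum$) purely in terms of $\cuspdatum$: this rests on the Langlands spectral decomposition of $L^2_\cuspdatum(\AS{G})$ and on first reducing, via Theorem \ref{thm: pesudoperiod}, to cuspidal data of Siegel-Levi type so that Lemma \ref{lem: lcltoglblGLn} applies, and it should be phrased entirely in terms of almost-everywhere local components in order to bypass the ambiguity between cuspidal and residual realizations of a given representation.
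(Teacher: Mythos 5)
Your opening reduction to ``gradedness'' of $\pseudospace(\AS{G})_H^\circ$, the Hilbert-space bookkeeping for the last two assertions, the reduction to $\tilde\cuspdata$ via Theorem \ref{thm: pesudoperiod}, and the identification of Lemma \ref{lem: lcltoglblGLn} as the separating mechanism are all on target; in that sense you have located exactly the same ingredients as the paper. But the machine you build around them is different from the paper's and has a real gap. You want to separate the modules $V_\cuspdatum=F_{W_\cuspdatum}$ by showing that any nonzero submodule of $V_\cuspdatum\cap V_{\cuspdatum'}$ would contain a cyclic submodule with a common irreducible quotient. This rests on the premise that a nonzero cyclic $(\mathfrak g,K)\times\grp G(\A_f)$-module in the space of functions on $\grp G(\A)$ admits an irreducible quotient. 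That is true for admissible (or $\mathcal Z$-finite of finite length) modules, but the $V_\cuspdatum$ live inside the enormous space of all smooth functions on $\grp G(\A)$, are generated by periods of \emph{wavepackets} of Eisenstein series, and there is no a priori admissibility or finiteness to invoke. For the same reason the earlier step — ``every irreducible subquotient $\sigma$ of $W_\cuspdatum$ satisfies $\sigma_v\simeq\Ind((\pi_\cuspdatum)_v,\lambda)^{\unr}$ for almost all $v$'' — is delicate: the cyclic module generated by a pseudo Eisenstein series packages a continuous family of inductions, and controlling its (possibly nonexistent) irreducible subquotients requires an argument you have not supplied.

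The paper sidesteps the module theory entirely. It uses Theorem \ref{thm: pesudoperiod} to write, for bi-$K^S$-invariant $f$,
\[
\int_{\AS{H}}(f*\theta_{\phi_i})(h)\,dh=\Bigl[\sum_{(M_j,\pi_j)\in\cuspdatum_i}(h^S_{(M_j,\pi_j)})_*\mu_{(M_j,\pi_j)}\Bigr](\hat f),
\]
a pairing of the Satake transform $\hat f$ against a finite pushforward measure on the unramified admissible dual $Y^S_{\le R}$ of $\grp G(\A^S)$, where $h^S_{(M,\pi)}(\lambda)=I(\pi^S,\lambda)^{\unr}$. Lemma \ref{lem: lcltoglblGLn} says that for distinct classes in $\tilde\cuspdata$ the maps $h^S_{(M,\pi)}$ have \emph{disjoint images}, so the measures attached to distinct $\cuspdatum_i$ are supported on disjoint sets. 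The Stone--Weierstrass theorem makes the $\hat f$ dense in $C(Y^S_{\le R})$, so the vanishing of the total pairing forces each measure, hence each period, to vanish. This is a more elementary argument: it needs no structure theory of cyclic modules, only the formula for the period and a uniqueness-of-measures statement. Your line of attack could perhaps be salvaged by restricting to $K$-finite vectors and proving an admissibility statement for $V_\cuspdatum$, but as written the ``cyclic $\Rightarrow$ has irreducible quotient'' step is the place where the proof breaks down.
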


\begin{proof}
Given a cuspidal data $[(M,\pi)]\in\cuspdata$ and a finite set $S$ of places of $F$ including the archimedean ones,
such that $\pi_v$ is unramified for all $v\notin S$ let
\[
h_{(M,\pi)}^S(\lambda)=I(\pi^S,\lambda)^{\unr}, \ \ \lambda\in\aaa_{M,\C}^*
\]
be the unramified irreducible subquotient of $I(\pi^S,\lambda)$.
It follows from Lemma \ref{lem: lcltoglblGLn} that
\begin{equation} \label{eq: disjointimage}
\text{if $[(M_i,\pi_i)]\in\tilde\cuspdata$, $i=1,2$ are distinct then the images of $h_{(M_i,\pi_i)}^S$ are disjoint.}
\end{equation}

Fix $\Ktypes$ and let $\cuspdatum_i$ be distinct elements of $\cuspdata$, $i=1,\dots,r$.
Assume that $\sum_i\theta_{\phi_i}\in\pseudospace(\AS{G})_H^\circ$ for $\phi_i\in P_{\cuspdatum_i}^{R,\Ktypes}$.
We have to show that $\theta_{\phi_i}\in\pseudospace(\AS{G})_H^\circ$ for all $i$.
We may assume that $\cuspdatum_i\in\tilde\cuspdata$ since $\pseudospace_\cuspdatum(\AS{G})_H^\circ=\pseudospace_\cuspdatum(\AS{G})$
if $\cuspdatum\notin\tilde\cuspdata$.
If $S$ is sufficiently large then $\sum_if*\theta_{\phi_i}\in\pseudospace(\AS{G})_H^\circ$ for any bi-$K^S$-invariant function $f$.
For $R>0$ let $Y^S_{\le R}$ be the unramified part of the admissible dual
of $\grp G(\A^S)$ with parameters of real part of norm $\le R$.
Then $Y^S_{\le R}$ is a compact Hausdorff space and by the Stone--Weierstrass Theorem
the algebra $\{\hat f:f\text{ bi-$K^S$-invariant}\}$ is dense in the space of continuous functions on $Y^S_{\le R}$.
Now, it follows from Theorem \ref{thm: pesudoperiod} that for suitable $R$
\[
\int_{\AS{H}}(f*\theta_{\phi_i})(h)\ dh=[\sum_{(M_j,\pi_j)\in\cuspdatum_i}(h^S_{(M_j,\pi_j)})_*\mu_{(M_j,\pi_j)}](\hat f)
\]
for all $i$ and for some (complex-valued) measure $\mu_{(M_j,\pi_j)}$ on $(\aaa_{M_j,\C}^*)_{\le R}=\{\lambda\in\aaa_{M_j,\C}^*:\norm{\Re\lambda}\le R\}$
where $h_*$ denotes the push-forward of $h$.
(The image of $(\aaa_{M_j,\C}^*)_{\le R}$ under $h^S_{(M_j,\pi_j)}$ is contained in $Y_{\le R'}^S$ for suitable $R'$.)
Thus, $\sum_i\sum_{(M_j,\pi_j)\in\cuspdatum_i}(h^S_{(M_j,\pi_j)})_*\mu_{(M_j,\pi_j)}=0$ and by \eqref{eq: disjointimage}
$\sum_{(M_j,\pi_j)\in\cuspdatum_i}(h^S_{(M_j,\pi_j)})_*\mu_{(M_j,\pi_j)}=0$ for all $i$, i.e.
$\int_{\AS{H}}\theta_{\phi_i}(h)\ dh=0$. The lemma follows.
\end{proof}

For the rest of the section assume that $n=n_1+\dots+n_k$ is a composition of $n$
and $\pi_i\in\Pi_{\cusp}(A_{\GL_{2n_i}}\bs\grp{GL}_{2n_i}(\A))$, $i=1,\dots,k$ are pairwise inequivalent
and $\GL_{n_i}\times\GL_{n_i}$-distinguished. Equivalently (\cite{MR1159108, MR1241129})
$L(\frac12,\pi_i)L(1,\pi_i,\wedge^2)=\infty$ for all $i$. In particular, $\pi_i$ is self-dual for all $i$.
Let $P=M\ltimes U$ be the parabolic subgroup of $G$ with Levi part $M=M_{(2n_1,\dots,2n_k;0)}$
and let $\tau=\pi_1\otimes\dots\otimes\pi_k\in\Pi_{\cusp}(A_M\bs\grp M(\A))$.
We identify $\Ind\tau$ with the space of smooth functions in $L^2_{\cusp,\tau}(\grp U(\A)M\bs\grp G(\A))$.
Consider the Eisenstein series $\eisen(\varphi,\lambda)$ for $\varphi\in\Ind\tau$.\footnote{See \cite{MR2402686}.
Alternatively, it is enough to consider $K$-finite sections.}
Since the $\pi_i$'s are distinct, $(\lambda_1-\frac12)\dots(\lambda_k-\frac12)\eisen(\varphi,\lambda)$
is holomorphic in a neighborhood of $\Re\lambda_1\ge\dots\ge\Re\lambda_k\ge0$.
Let
\[
\eisen_*\varphi=\lim_{\lambda\rightarrow\lambda^0}(\lambda_1-\frac12)\dots(\lambda_k-\frac12)\eisen(\varphi,\lambda)
\]
where $\lambda^0=(\frac12,\dots,\frac12)$.
Similarly let $M(\lambda)$ be the corresponding intertwining operator (with respect to the longest Weyl element)
and $M_*=\lim_{\lambda\rightarrow\lambda^0}(\lambda_1-\frac12)\dots(\lambda_k-\frac12)M(\lambda)$.

It is known that $\eisen_*\varphi\in L^2(\AS{G})$ and these span an irreducible representation
which we denote by $\Pi_\tau$ \cite[Theorem 2.1]{MR2848523}.
Moreover, $\Pi_\tau=\Pi_{\tau'}$ if and only if $\tau'$ is obtained from $\tau$ by a permutation.

\begin{remark}\label{rmk: not conv}
In the case $k=1$, $\eisen_*\varphi$ is integrable over $\AS{H}$ and its $H$-period was computed explicitly in
\cite[Theorem 2]{MR1740991}. In contrast, for $k>1$ we cannot expect $\eisen_*\varphi$ to be integrable over $\AS{H}$.
Indeed, (cf.~Remark \ref{rem: notconv}) if $n=n_1+\dots+n_k$ with $n_1\le\dots\le n_k$ then $\lambda=(-\frac12,\dots,-\frac12)$
is a cuspidal exponent of $\eisen_*\varphi$ with respect to $P_{(2n_1,\dots,2n_k;0)}$ and (since $2n_1\le n$) the first coordinate of
the projection of $\lambda+(0,\dots,n-1,-n,\dots,-1)$ to $\aaa_M^*$ is $n_1-1\ge0$.
\end{remark}

For the next result, let $\grp{G'}=\grp M_{(2n;0)}$, $\grp{H'}=\grp H\cap \grp{G'}\simeq\grp{GL}_n\times\grp{GL}_n$
and let $\grp{P'}=\grp P\cap \grp{G'}$ be the parabolic subgroup of $\grp{G'}\simeq\grp{GL_{2n}}$ of type $(2n_1,\dots,2n_k)$.
Set $\grp{P_{H'}}=\grp{P'}\cap \grp H=\grp P\cap \grp{H'}$, $\grp{M_H}=\grp M\cap\grp H=\grp M\cap\grp{H'}$ and
$\grp{U_{H'}}=\grp{U}\cap\grp{H'}$ so that $\grp{P_{H'}}=\grp{M_H}\ltimes\grp{U_{H'}}$.
Denote by $I^{G'}(\tau)$ the parabolic induction to $\grp{G'}(\A)$ and let $W^{G'}(M)=W(M)\cap G'$
(which is in natural bijection with the set of permutations on $\{1,\dots,k\}$).

\begin{lemma} \label{lem: GLnFE}
Let $\pi_i$, $i=1,\dots,k$ and $\tau$ be as above.
Then for any $\varphi\in I^{G'}(\tau)$ and $w\in W^{G'}(M)$ we have
\[
\int_{A_{M^w}\grp{U_{H'}^w}(\A)M_H^w\bs \grp{H'}(\A)}M^{G'}(w,0)\varphi(h)\ dh=
\int_{A_M\grp{U_{H'}}(\A)M_H\bs\grp{H'}(\A)}\varphi(h)\ dh
\]
where $P^w=M^w\ltimes U^w$ is the parabolic subgroup of $G$ with Levi $M^w=wMw^{-1}$,
$M_H^w=M^w\cap H=M^w\cap H'$, $U_{H'}^w=U^w\cap H'$ and finally $M^{G'}(w,\lambda)$ is the intertwining operator
$I^{G'}(\tau,\lambda)\rightarrow I^{G'}(w\tau,w\lambda)$.
\end{lemma}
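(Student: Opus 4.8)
The plan is to reduce the statement to an identity between intertwining periods for the pair $(\grp{GL}_{2n},\grp{GL}_n\times\grp{GL}_n)$ and then prove that identity by the same contour-shift / change-of-variables argument used in Corollary \ref{cor: fe} (and Lemma \ref{lem: alt min red}). First I would introduce the inner period linear form
\[
\ell_M(\varphi,\lambda)=\int_{A_M\grp{U_{H'}}(\A)M_H\bs\grp{H'}(\A)}\varphi_\lambda(h)\ dh
\]
for $\varphi\in I^{G'}(\tau)$, and record that for $\Re\lambda$ sufficiently dominant this integral is absolutely convergent: this is exactly the kind of inner integral estimated in \S\ref{sec: GL2nGLnGLn} (Lemmas \ref{lem: bndinner} and \ref{lem: aux bd}), since $\grp{M_H}=\grp{GL}_{n_1}^{\triangle}\times\dots\times\grp{GL}_{n_k}^{\triangle}$ sits inside $\grp M=\grp{GL}_{2n_1}\times\dots\times\grp{GL}_{2n_k}$ as a product of the diagonal copies, and $\grp{H'}\cap\grp M\bs\grp{H'}$ decomposes accordingly. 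One also checks that $\ell_M(\varphi,\lambda)$ vanishes identically unless $\tau$ (equivalently each $\pi_i$) is $\grp{GL}_{n_i}\times\grp{GL}_{n_i}$-distinguished, which is part of our running hypothesis. The statement to be proved is then
\[
\ell_{M^w}(M^{G'}(w,0)\varphi,0)=\ell_M(\varphi,0).
\]

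Next I would prove the functional equation $\ell_{M^w}(M^{G'}(w,\lambda)\varphi,w\lambda)=\ell_M(\varphi,\lambda)$ for $\Re\lambda$ in a suitable cone, and then specialize to $\lambda=0$ by analytic continuation (both sides being holomorphic at $\lambda=0$ — the intertwining operator $M^{G'}(w,\lambda)$ for $w\in W^{G'}(M)$ is regular at $\lambda=0$ on this induced representation, since the $\pi_i$ are cuspidal and the relevant rank-one operators have poles only on genuine reducibility hyperplanes of $\GL\times\GL$). It suffices to treat the case where $w=s_\alpha$ is an elementary symmetry interchanging two adjacent blocks $\GL_{2n_i}$ and $\GL_{2n_{i+1}}$, and then compose. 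For such $w$, $M^{G'}(s_\alpha,\lambda)\varphi(g)=e^{\sprod{-s_\alpha\lambda}{H_{P^w}(g)}}\int_{(\grp{U'^w}\cap s_\alpha\grp U's_\alpha^{-1})(\A)\bs\grp{U'^w}(\A)}\varphi_\lambda(s_\alpha^{-1}ug)\ du$. Substituting this into $\ell_{M^w}$, one applies a change of variables $u\mapsto mum^{-1}$, exchanges the order of integration (justified by absolute convergence in the dominant range, after majorizing $\varphi$ by a standard $\rdp_P$-type function as in the proof of Theorem \ref{thm: J conv}), and uses that the integral of $\varphi_\lambda(s_\alpha^{-1}u\,\cdot\,)$ over the unipotent quotient $(\grp{U'^w}\cap s_\alpha\grp U's_\alpha^{-1})(\A)\bs\grp{U'^w}(\A)$, when further restricted along the $\grp{H'}$-direction, collapses to an integral over $(s_\alpha\grp{U'_{H'}}s_\alpha^{-1}\cap\dots)\bs\grp{U'^w_{H'}}$ — this is the $\grp{GL}_n\times\grp{GL}_n$ analogue of Lemma \ref{lem: alpha}\eqref{part: int unip}--\eqref{part: int f}, and it is precisely here that one needs the combinatorial fact that conjugation by the block-swap $s_\alpha$ carries the relevant Levi and unipotent pieces of $\grp{H'}$ to each other compatibly. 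After the change of variable $h\mapsto s_\alpha h s_\alpha^{-1}$ the remaining integral is visibly $\ell_M(\varphi,\lambda)$; the modulus characters match because $\grp{M_H}$ is unimodular (all groups in sight are products of $\GL$'s) so no $\modulus_{P_x}$-type correction appears, in contrast to the symplectic case.

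The main obstacle I anticipate is the bookkeeping for the unipotent integration: one must verify that $\grp{U'^w}(\A)\cap\grp{H'}(\A)$, the subgroup over which $h$ effectively ranges after Iwasawa, is carried correctly under the exchange of integration order and that no extra measure-normalization constant $c_w$ appears (unlike in the unramified computation of \S\ref{sec: unram}, here the full global unipotent integral of the Eisenstein-type section over the $\grp{H'}$-direction is an honest intertwining period, not a $c$-function). Concretely, the step analogous to Lemma \ref{lem: alpha}\eqref{part: int f} — that $s_\alpha\grp{P'_{H'}}s_\alpha^{-1}\bs\grp{H'}$ with the measure coming from the nilpotent quotient agrees with $\grp{P'^w_{H'}}\bs\grp{H'}$ — requires checking the cocycle/coboundary vanishing $H^1(\langle s_\alpha\rangle,\grp U')=1$ in this $\GL$-setting, which is automatic since $\grp U'$ is unipotent, but the identification of the relevant fixed-point subgroups must be done by hand. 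Once this is in place, the identity at $\lambda=0$ follows by analyticity, and this is exactly the input needed in the subsequent lower-bound argument (combining with Lemma \ref{lem: decompcusp} and the residue $\eisen_*$) to show that $\Pi_\tau$ lies in $L^2_{\disc,H\dist}(\AS{G})$.
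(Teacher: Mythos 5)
Your plan is correct in spirit but takes a genuinely different route from the paper. You propose to prove the functional equation for the inner intertwining periods $\ell_M(\varphi)$ directly, by the change-of-variables and unipotent-collapse argument of Lemma \ref{lem: alt min red} and Corollary \ref{cor: fe} adapted to the pair $(\GL_{2n},\GL_n\times\GL_n)$, and then specialize by analyticity. The paper instead sidesteps the intertwining-period machinery entirely: it considers the $H'$-period of the \emph{truncated} Eisenstein series $\Lambda^TE^{G'}(\cdot,\varphi,s)$ on $G'=\GL_{2n}$, unfolds it over the $P'\bs G'/H'$ double cosets (observing that all non-trivial orbit contributions vanish by the vanishing pairs of Theorem \ref{thm: AGR JR}), and obtains the explicit identity $\int_{\RAS{H'}{G'}}\Lambda^TE^{G'}(h,\varphi,s)\,dh = \frac{e^{sT}}{s}\ell_M(\varphi)-\frac{e^{-sT}}{s}\ell_{M^\circ}(M^{G'}(s)\varphi)$. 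Since the left-hand side is holomorphic at $s=0$, the residues of the right-hand side must cancel, which is exactly the statement of the lemma. That argument buys brevity and avoids the unipotent-cocycle bookkeeping that you correctly flag as the main technical burden; what your approach buys is conceptual uniformity with the functional equations already proved in \S\ref{sec: intertwining periods} for the symplectic pair. Two imprecisions in your write-up worth fixing: (i) the closed-orbit period $\ell_M(\varphi_\lambda)$ here is in fact entire in $\lambda$ (indeed essentially $\lambda$-independent, since $A_M=A_{M_H}$ and $H_M$ vanishes on $A_MM_H\bs\grp{M_H}(\A)$ — the outer quotient $\grp{P_{H'}}(\A)\bs\grp{H'}(\A)$ is a product of compact flag varieties), so the absolute convergence ``for $\Re\lambda$ sufficiently dominant'' via Lemmas \ref{lem: bndinner} and \ref{lem: aux bd} is the wrong concern; those estimates pertain to the open orbit, not the closed one arising here; (ii) the holomorphy needed to specialize to $\lambda=0$ is holomorphy of $\lambda\mapsto M^{G'}(w,\lambda)\varphi$ (which you justify correctly via the $\pi_i$ being distinct cuspidal) together with this entireness of the period, not holomorphy of the intertwining operator alone. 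With these clarifications your route should go through, but the paper's truncation argument remains considerably shorter.
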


\begin{proof}
By writing $w$ as a product of simple reflections we immediately reduce to the case where $w$ is a simple reflection.
In this case it suffices to check the lemma for $k=2$.
For $\varphi\in I^{G'}(\tau)$ and $\Re s\gg1$ let $E^{G'}(\varphi,s)$ be the corresponding Eisenstein series (on $\grp{G'}(\A)$)
\[
E^{G'}(g,\varphi,s)=\sum_{\gamma\in P'\bs G'}\varphi_s(\gamma g).
\]
The truncated Eisenstein series $\Lambda^TE^{G'}(g,\varphi,s)$ is given by
\[
\Lambda^TE^{G'}(g,\varphi,s)=\sum_{\gamma\in P'\bs G'}\varphi_s(\gamma g)\chi_{\le T}(H_{P'}^{G'}(\gamma g))-
\sum_{\gamma\in P^\circ\bs G'}(M^{G'}(s)\varphi)_{-s}(\gamma g)\chi_{>T}(H_{P^\circ}^{G'}(\gamma g))
\]
where $P^\circ$ is the parabolic subgroup of $G'$ of type $(n_2,n_1)$
and $M^{G'}(s):I^{G'}(\tau)\rightarrow I^{G'}(\tau^\circ)$ is the corresponding intertwining operator where $\tau^\circ=\pi_2\otimes\pi_1$.
Here we identified the one-dimensional space $\aaa_{P'}^{G'}$ with $\R$ and $\chi_{\le T}$ is the characteristic function of the corresponding ray in $\R$.
Similar notation is assumed also for $P^\circ$.
It follows from Lemma \ref{lem: bndpseudo} that for any $N$ there exists $s_0>0$ such that
\[
\sum_{\gamma\in P'\bs G'}\abs{\varphi_s(\gamma g)}\chi_{\le T}(H_{P'}^{G'}(\gamma g))+
\sum_{\gamma\in P^\circ\bs G'}\abs{(M^{G'}(s)\varphi)_{-s}(\gamma g)}\chi_{>T}(H_{P^\circ}^{G'}(\gamma g))\ll_{s,T,N}\norm{g}^{-N}
\]
for any $g\in\siegel^1_{G'}$ and $s\in\C$ with $\Re s>s_0$.
Thus, we can compute $\int_{\RAS{H'}{G'}}\Lambda^TE^{G'}(h,\varphi,s)\ dh$ using unfolding. Only the trivial orbit contributes;
the other orbits whose contribution does not factor through a constant term involve vanishing inner periods --
either diagonally embedded $\GL_{n_1}\subset\GL_{n_1}\times\GL_{n_2}$ with $n_1=n_2$ or $\GL_{k_1}\times\GL_{k_2}\subset\GL_{k_1+k_2}$ with $k_1\ne k_2$.
Therefore, we get an identity of meromorphic functions:
\begin{align*}
\int_{\RAS{H'}{G'}}\Lambda^TE^{G'}(h,\varphi,s)\ dh=&\frac{e^{sT}}s\int_{A_M\grp{U_{H'}}(\A)M_H\bs \grp{H'}(\A)}\varphi(h)\ dh-\\
&\frac{e^{-sT}}s\int_{A_{M^\circ}\grp{U^{\circ}_{H'}}(\A)M^{\circ}_H\bs\grp{H'}(\A)}M^{G'}(s)\varphi(h)\ dh
\end{align*}
where $P^\circ=M^\circ\ltimes U^\circ$, $M^{\circ}_H=M^{\circ}\cap H$ and $U^\circ_{H'}=U^{\circ}\cap H'$.
Since the left-hand side is holomorphic at $s=0$ we conclude the functional equation.
\end{proof}

\begin{theorem} \label{thm: lwrbnd for distspec}
The representation $\Pi_\tau$ is a subrepresentation of $L^2_{\disc,H\dist}(\AS{G})$.
\end{theorem}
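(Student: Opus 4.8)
The plan is to show that $\Pi_\tau$ is orthogonal to $\pseudospace(\AS{G})_H^\circ$, i.e. that $\Pi_\tau\subset L^2_{H\dist}(\AS{G})$, and that it lies in the discrete spectrum (which is already known by \cite[Theorem 2.1]{MR2848523}). By Lemma \ref{lem: decompcusp} it suffices to work with the single cuspidal datum $\cuspdatum_0=[(M,\tau)]$ and to show that $\Pi_\tau$ is \emph{not} orthogonal to $\pseudospace_{\cuspdatum_0}(\AS{G})$ after projecting out $\pseudospace_{\cuspdatum_0}(\AS{G})_H^\circ$; equivalently, that the linear form $\varphi\mapsto\int_{\AS{H}}\eisen_*\varphi(h)\ dh$ (or rather a regularized/convolved version of it, since by Remark \ref{rmk: not conv} the integral need not converge) does not vanish identically on the space spanned by the $\eisen_*\varphi$. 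The natural tool is Theorem \ref{thm: pesudoperiod}: for $\phi\in\rdsmth{R}$ spectrally supported on $(M,\tau)$ we have
\[
\int_{\AS{H}}\theta_\phi(h)\ dh=\sum_x\int_{\lambda_x+\iii(\aaa_M^*)_x^-}J(\phi[\lambda],x,\lambda)\ d\lambda,
\]
the sum over $M$-cuspidal orbits $x$. Since $M=M_{(2n_1,\dots,2n_k;0)}$ lies in the Siegel Levi and each $\pi_i$ is $\GL_{n_i}\times\GL_{n_i}$-distinguished, there is a distinguished $M$-cuspidal orbit, namely the $M$-standard cuspidal $x_0$ with $L(x_0)=M_{(2n;0)}$, all $s_i$ even, $p_i=q_i$, for which $\rho_{x_0}=(\tfrac12,\dots,\tfrac12)=\lambda^0$ and $(\aaa_M^*)_{x_0}^-=(\aaa_M^G)^*$. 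For this $x_0$ the intertwining period $J(\varphi,x_0,\lambda)$ is, up to the unramified factor $L_\tau^S(\lambda)$ computed in \S\ref{sec: unram} and a nonzero local factor $J_S$, exactly the $\GL_n\times\GL_n$-period machinery of \S\ref{sec: GL2nGLnGLn}; in particular $L_\tau^S(\lambda)$ has a pole precisely of order $k$ at $\lambda=\lambda^0$ coming from the $L(\lambda_i+\tfrac12,\pi_i)$-factors and the residues $\Res_{s=1}L(s,\pi_i\times\tilde\pi_i)$, matched against the $k$-fold residue defining $\eisen_*$.

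\textbf{Key steps, in order.} First I would fix a finite set $S$ and pure tensors so that $\phi[\lambda]$ is unramified outside $S$, and write $\int_{\AS{H}}\theta_\phi(h)\,dh$ via Theorem \ref{thm: pesudoperiod} as a finite sum of contour integrals. Second, I would argue that as $\phi$ ranges over sections built from $\Ind\tau$, the pseudo Eisenstein series $\theta_\phi$ approximate (in a suitable sense, after multiplying by $\prod(\lambda_i-\tfrac12)$ and taking the limit $\lambda\to\lambda^0$, exactly as in the Mœglin--Waldspurger-type residue construction recalled in \S\ref{sec: GL2nSpn}) the functions $\eisen_*\varphi$; this is the standard residue-of-pseudo-Eisenstein-series $=$ residual-Eisenstein-series argument. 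Third, I would show that among the orbit contributions $J(\phi[\lambda],x,\lambda)$, the only one whose pole at $\lambda^0$ has order as large as $k$ — hence the only one that survives the $k$-fold residue — is the one attached to $x_0$ (the others, attached to $M$-cuspidal orbits with $L(x)\subsetneq M_{(2n;0)}$ or with some $s_i=1$, contribute $\rho_x$ not equal to $\lambda^0$, or contours $\rho_x+(\aaa_M^*)_x^-$ of smaller dimension, or fewer $L$-factor poles, so they drop out). Fourth, using the unramified computation \eqref{eq: unram}, I would evaluate the residue of $L_\tau^S(\lambda)J_S(\varphi_S)$ at $\lambda^0$ and see it is a nonzero multiple of $\prod_i L(\tfrac12,\pi_i)L(1,\pi_i,\wedge^2)$-type residues times $J_S(\varphi_S)$, which is nonzero for suitable $\varphi$ by the local distinction hypothesis on $\pi_i$. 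This yields $\int_{\AS{H}}\eisen_*\varphi(h)\,dh\ne0$ (in the regularized sense), hence $\Pi_\tau\not\subset L^2(\AS{G})_H^\circ{}^{\perp\perp}$, i.e. $\Pi_\tau$ meets $L^2_{H\dist}(\AS{G})$; since $\Pi_\tau$ is irreducible, $\Pi_\tau\subset L^2_{H\dist}(\AS{G})$, and combined with $\Pi_\tau\subset L^2_{\disc}(\AS{G})$ we get $\Pi_\tau\subset L^2_{\disc,H\dist}(\AS{G})$.

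\textbf{Main obstacle.} The delicate point is the interchange of the $k$-fold residue (defining $\eisen_*$) with the $H$-period integral and with the contour integrals of Theorem \ref{thm: pesudoperiod}, given that $\eisen_*\varphi$ itself is \emph{not} absolutely integrable over $\AS{H}$ when $k\ge 2$ (Remark \ref{rmk: not conv}). I would handle this exactly as in Lemma \ref{lem: decompcusp}: work with $f*\theta_\phi$ for $f\in C_c(\grp G(\A))$ bi-$K^S$-invariant, use that $\phi\mapsto\theta_\phi$ has image dense in $L^2_{\cuspdatum_0}(\AS{G})$ and that $(h^S_{(M,\tau)})_*$ of the relevant measure is supported away from the other cuspidal data by \eqref{eq: disjointimage}, and then pass to the residue on the spectral side where everything is a finite sum of holomorphic-times-meromorphic terms; the nonvanishing of the leading term forces $f*\eisen_*\varphi$ to have nonzero $H$-period for some $f,\varphi$, which is all that is needed to conclude $\Pi_\tau\perp\pseudospace(\AS{G})_H^\circ$ fails. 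The auxiliary functional equation of Lemma \ref{lem: GLnFE} and the bounds of Lemma \ref{lem: symperunifconv} (or their straightforward adaptations) are the technical inputs that make the residue manipulation legitimate; I expect the bookkeeping of which orbit contributes the top-order pole, and the precise constant in the residue, to be the only genuinely laborious part.
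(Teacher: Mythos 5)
There is a genuine gap: you misidentify the geometry of the orbit contribution in Theorem \ref{thm: pesudoperiod}, and this collapses your whole residue-based strategy. For $M=M_{(2n_1,\dots,2n_k;0)}$ with all parts even and the $\pi_i$ pairwise inequivalent, the only $M$-cuspidal orbit that contributes is the one with $k'=0$, $l=k$, $s_i=2n_i$, $p_i=q_i=n_i$ in the notation of Definition \ref{def: std basic}; for this orbit $L(x_0)=M$ (not $M_{(2n;0)}$), so $(\aaa_M^*)_{x_0}^-=(\aaa_M^{L(x_0)})^*=0$, not $(\aaa_M^G)^*$ as you assert. Consequently the ``contour integral'' in Theorem \ref{thm: pesudoperiod} degenerates to a single point evaluation at $\lambda_{x_0}=\rho_{x_0}=\lambda^0$, and the $H$-period of $\theta_\phi$ is simply a finite sum, over $w\in W^{G'}(M)$, of evaluations $\int_{A_{M^w}\grp{U_H^w}(\A)M_H^w\bs \grp H(\A)}\phi^w[\lambda^0]_{\lambda^0}(h)\,dh$. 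There is no meromorphic family, no $k$-fold pole, and no residue to compute; your Steps 3 and 4 and the entire ``main obstacle'' about interchanging residues with period integrals are not issues that arise.

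You are also missing the key algebraic step that actually closes the argument. After applying Lemma \ref{lem: GLnFE} to transport all the terms to $(M,\tau)$, the condition $\theta_\phi\in\pseudospace_{\cuspdatum}(\AS{G})_H^\circ$ becomes $\sum_w M(w^{-1},\lambda^0)\phi^w[\lambda^0]\in\Ind(\tau,\lambda^0)^\circ$, where $\Ind(\tau,\lambda^0)^\circ$ is the subspace on which the $\GL_n\times\GL_n$-period vanishes. The crucial observation is that $\Ind(\tau,\lambda^0)^\circ$ is a \emph{proper} $\grp G(\A)$-invariant subspace (by the distinction hypothesis on the $\pi_i$) and therefore, by local considerations, is contained in $\Ker M_*$, the unique maximal proper subrepresentation of $\Ind(\tau,\lambda^0)$. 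Combining with the inner product formula $(\theta_\phi,\eisen_*\varphi)=\sum_w(M_*M(w^{-1},\lambda^0)\phi^w[\lambda^0],\varphi)$ from \cite[Corollary V.3.16]{MR1361168} then immediately gives $\pseudospace_{\cuspdatum}(\AS{G})_H^\circ\perp\Pi_\tau$, i.e. $\Pi_\tau\subset L^2_{H\dist,\cuspdatum}(\AS{G})$, and one concludes by Lemma \ref{lem: decompcusp}. Your approach instead tries to ``regularize'' $\int_{\AS{H}}\eisen_*\varphi$, but even if this could be carried out, nonvanishing of such a regularized period does not by itself show that $\Pi_\tau$ is orthogonal to $\pseudospace_{\cuspdatum}(\AS{G})_H^\circ$; the link must go through the inner product formula and the identification of $\Ind(\tau,\lambda^0)^\circ$ with a subspace of $\Ker M_*$, which your proposal never makes.
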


\begin{proof}
Let $\cuspdatum=[(M,\tau)]=\{w(M,\tau):w\in W^{G'}(M)\}$. Write $\phi=(\phi^w)_{w\in W^{G'}(M)}\in P_{\cuspdatum}^{R,\Ktypes}$.
By Theorem \ref{thm: pesudoperiod} we have
\begin{equation} \label{eq: Hperpseudo}
\int_{\AS{H}}\theta_\phi(h)\ dh=\sum_{w\in W^{G'}(M)}\int_{A_{M^w}\grp{U_H^w}(\A)M_H^w\bs \grp H(\A)}\phi^w[\lambda^0]_{\lambda^0}(h)\ dh
\end{equation}
where $P^w=M^w\ltimes U^w$ is the parabolic subgroup with Levi $M^w=wMw^{-1}$,
$M_H^w=M^w\cap H$ and $U_H^w=U^w\cap H$.
Note that by Lemma \ref{lem: GLnFE} we have
\begin{equation} \label{eq: indGLFE}
\int_{A_{M^w}\grp{U_H^w}(\A)M_H^w\bs\grp H(\A)}\phi^w[\lambda^0]_{\lambda^0}(h)\ dh=
\int_{A_M\grp{U_H}(\A)M_H\bs\grp H(\A)}(M(w^{-1},\lambda^0)\phi^w[\lambda^0])_{\lambda^0}(h)\ dh
\end{equation}
for any $w\in W^{G'}(M)$.
Note that (by direct calculation) $\modulus_P^{\frac12}(m)e^{\sprod{\lambda^0}{\Ht_M(m)}}=\modulus_{P_H}(m)$ for any $m\in\grp{M_H(\A)}$.
Therefore
\[
\int_{A_M\grp{U_H}(\A)M_H\bs\grp H(\A)}\varphi_{\lambda^0}(h)\ dh=
\int_{\grp{P_H}(\A)\bs \grp H(\A)}e^{\sprod{\lambda^0}{\Ht_P(h)}}\int_{\RAS{M_H}M}\modulus_P^{-\frac12}(m)\varphi(mh)\ dm\ dh
\]
for any $\varphi\in\Ind(\tau,\lambda^0)$.
Let $\Ind(\tau,\lambda^0)^\circ$ be the subrepresentation of $\Ind(\tau,\lambda^0)$ given by
\[
\{\varphi:\int_{A_M\grp{U_H}(\A)M_H\bs\grp H(\A)}\varphi_{\lambda^0}(hg)\ dh=0\text{ for all }g\in \grp G(\A)\}.
\]
This is a proper subspace of $\Ind(\tau,\lambda^0)$ by the condition on $\tau$.
Hence, it is contained in the kernel of $M_*$, which by local considerations is the unique maximal proper subrepresentation of
$\Ind(\tau,\lambda^0)$.
In fact, most likely $\Ind(\tau,\lambda^0)^\circ=\Ker M_*$ but we will not need to know this fact.
At any rate, it follows from \eqref{eq: Hperpseudo} and \eqref{eq: indGLFE} that
\begin{multline*}
\pseudospace_{\cuspdatum}(\AS{G})_H^\circ=\{\theta_\phi:\sum_{w\in W^{G'}(M)}M(w^{-1},\lambda^0)\phi^w[\lambda^0]
\in\Ind(\tau,\lambda^0)^\circ\}\\\subset\{\theta_\phi:\sum_{w\in W^{G'}(M)}M_*(M(w^{-1},\lambda^0)\phi^w[\lambda^0])=0\}.
\end{multline*}
On the other hand, it follows from the proof of \cite[Corollary V.3.16]{MR1361168} and the simple description of the residue datum in the case at hand that
\[
(\theta_\phi,\eisen_*\varphi)_{L^2(\AS{G})}=\sum_{w\in W^{G'}(M)}(M(w^{-1},\lambda^0)\phi^w[\lambda^0],M_*\varphi)=
\sum_{w\in W^{G'}(M)}(M_*M(w^{-1},\lambda^0)\phi^w[\lambda^0],\varphi).
\]
Thus, the orthogonal complement of $\pseudospace_{\cuspdatum}(\AS{G})_H^\circ$
in $L^2_{\cuspdatum}(\AS{G})$ contains $\Pi_\tau$.
Hence, the proposition follows from Lemma \ref{lem: decompcusp}.
\end{proof}

\begin{remark}
With the above notation it further follows that
\[
L^2_{H\dist,\cuspdatum}(\AS{G})=\Pi_\tau.
\]
Indeed, by Corollary \ref{cor: Hdist}, the orthogonal complement of $\pseudospace_{\cuspdatum}(\AS{G})_H^\circ$
in $L^2_{\cuspdatum}(\AS{G})$ is contained in $\Pi_\tau$ since $\Pi_\tau=L^2_{\cuspdatum}(\AS{G})_{[(M,\tau,\{\lambda^0\})]}$.
(It is likely that in fact $L^2_{\disc,\cuspdatum}(\AS{G})=\Pi_\tau$ but we shall say no more about it.)
\end{remark}

\begin{remark}
It is conceivable that
\[
\lim_{\min_{\alpha\in\srts_0}\sprod{\alpha}T\rightarrow\infty}\int_{\AS{H}}\Lambda^T\eisen_*(h,\varphi)\ dh
\]
exists and is equal to
\[
\int_{\grp{P_H}(\A)\bs \grp H(\A)}\int_{A_MM_H\bs \grp{M_H}(\A)}\varphi(mh)\ dm\ dh.
\]
This would be a generalization of \cite[Theorem 2]{MR1740991} (for $k=1$, where no truncation is necessary).
However, we will not discuss it here.
\end{remark}

\begin{remark} \label{rem: functoriality2}
The results of the last two sections suggest a relationship, albeit vague, between $L^2_{H\dist}(\AS{G})$
and an appropriate substitute of $L^2_{\GL_n \times \GL_n \dist}(\AS{\grp{GL_{2n}}})$.
It remains to be seen whether this can be phrased more precisely and conclusively.
\end{remark}

\begin{remark}
By a more elaborate argument it is possible to prove that other representations (for instance, the identity representation)
belong to $L^2_{\disc,H\dist}(\AS{G})$. We will not pursue this matter here.
\end{remark}



\providecommand{\bysame}{\leavevmode\hbox to3em{\hrulefill}\thinspace}
\providecommand{\MR}{\relax\ifhmode\unskip\space\fi MR }
\providecommand{\MRhref}[2]{%
  \href{http://www.ams.org/mathscinet-getitem?mr=#1}{#2}
}
\providecommand{\href}[2]{#2}

\end{document}